\tikzset{ed/.style={auto,inner sep=2pt,font=\scriptsize}} 
\tikzset{>=stealth'}
\tikzset{vert/.style={draw,circle, minimum size=6mm, inner sep=0pt, fill=white}}
\tikzset{vertbig/.style={draw,circle, minimum size=8mm, inner sep=0pt, fill=white}}
\tikzset{->-/.style={decoration={
      markings,
      mark=at position #1 with {\arrow{>}}},postaction={decorate}}}
\theoremstyle{plain}
\newtheorem{theorem}{Theorem}[subsection]
\newtheorem{proposition}[theorem]{Proposition}
\newtheorem{lemma}[theorem]{Lemma}
\newtheorem{corollary}[theorem]{Corollary}
\newtheorem{conjecture}[theorem]{Conjecture}
\theoremstyle{definition}
\newtheorem{definition}[theorem]{Definition}
\newtheorem{example}[theorem]{Example}
\newtheorem{remark}[theorem]{Remark}
\newcommand{\C}{\mathscr{C}}
\newcommand{\homC}{\underline{\C}}
\newcommand{\D}{\mathscr{D}}
\newcommand{\E}{\mathscr{E}}
\newcommand{\M}{\mathscr{M}}
\newcommand{\N}{\mathscr{N}}
\newcommand{\T}{\mathscr{T}}
\renewcommand{\S}{\mathscr{S}}
\newcommand{\bN}{\mathbb{N}}
\newcommand{\lenslib}{\texttt{lens}}
\newcommand{\Pastro}{\Phi}
\newcommand{\Set}{\mathbf{Set}}
\newcommand{\Cat}{\mathbf{Cat}}
\newcommand{\Prof}{\mathbf{Prof}}
\newcommand{\SymmMonCat}{\mathbf{SymmMonCat}}
\newcommand{\StrictSymmMonCat}{\mathbf{StrictSymmMonCat}}
\newcommand{\Tele}{\mathbf{Tele}}
\newcommand{\StrictTele}{\mathbf{StrictTele}}
\newcommand{\Tamb}{\mathbf{Tamb}}
\newcommand{\Strong}{\mathbf{Strong}}
\newcommand{\App}{\mathbf{App}}
\newcommand{\Traversable}{\mathbf{Traversable}}
\newcommand{\Optic}{\mathbf{Optic}}
\newcommand{\Twoptic}{\mathbf{Optic}^2}
\newcommand{\Lawful}{\mathbf{Lawful}}
\newcommand{\Lens}{\mathbf{Lens}}
\newcommand{\Prism}{\mathbf{Prism}}
\newcommand{\Setter}{\mathbf{Setter}}
\newcommand{\Traversal}{\mathbf{Traversal}}
\newcommand{\switched}{\mathbin{\tilde{\otimes}}}
\newcommand{\conc}{\mathbb{C}}
\newcommand{\conctwice}{\mathbb{C}^2}
\newcommand{\id}{\mathrm{id}}
\newcommand{\op}{\mathrm{op}}
\DeclareMathOperator{\copr}{copr}
\newcommand{\inl}{\mathrm{inl}}
\newcommand{\inr}{\mathrm{inr}}
\DeclareMathOperator{\im}{im}
\newcommand{\act}{\cdot}
\DeclareMathOperator*{\colim}{\mathrm{colim}}
\newcommand{\teletimes}{\mathbin{\boxtimes}}
\newcommand{\defeq}{\mathrel{\vcentcolon=}}
\newcommand*\circled[1]{\tikz[baseline={([yshift=-0.65ex]current bounding box.center)}]{
   \node[shape=circle,draw,inner sep=1pt] (char) {#1};}}
\newcommand{\actL}{{\circled{\tiny$\mathsf{L}$}}}
\newcommand{\actR}{{\circled{\tiny$\mathsf{R}$}}}
\newcommand{\rep}[2]{{\ensuremath \left\langle #1 \mid #2 \right\rangle}}
\newcommand{\repthree}[3]{{\ensuremath \langle #1 \mid #2 \mid #3 \rangle}}
\newcommand{\fget}{\textsc{Get}}
\newcommand{\fput}{\textsc{Put}}
\newcommand{\freview}{\textsc{Review}}
\newcommand{\fcreate}{\textsc{Create}}
\newcommand{\fmatching}{\textsc{Matching}}
\newcommand{\funzip}{\textsc{Unzip}}
\newcommand{\fover}{\textsc{Over}}
\newcommand{\mget}{\textsc{MGet}}
\newcommand{\mput}{\textsc{MPut}}
\newcommand{\munzip}{\textsc{Munzip}}
\newcommand{\inside}{\mathsf{inside}}
\newcommand{\outside}{\mathsf{outside}}
\newcommand{\once}{\mathsf{once}}
\newcommand{\twice}{\mathsf{twice}}
\newcommand{\hto}{\ensuremath{\,\mathaccent\shortmid\rightarrow\,}}
\providecommand{\leftsquigarrow}{%
  \mathrel{\mathpalette\reflect@squig\relax}%
}
\newcommand{\reflect@squig}[2]{%
  \reflectbox{$\m@th#1\rightsquigarrow$}%
}
\title{Categories of Optics}
\author{Mitchell Riley}
\affil{Wesleyan University \\ \texttt{mvriley@wesleyan.edu}}
\date{\vspace{-5ex}}
\begin{document}
\maketitle

\begin{abstract}
Bidirectional data accessors such as lenses, prisms and traversals are all instances of the same general `optic' construction. We give a careful account of this construction and show that it extends to a functor from the category of symmetric monoidal categories to itself. We also show that this construction enjoys a universal property: it freely adds counit morphisms to a symmetric monoidal category. Missing in the folklore is a general definition of `lawfulness' that applies directly to any optic category. We provide such a definition and show that it is equivalent to the folklore profunctor optic laws.
\end{abstract}

\setcounter{tocdepth}{1}
\setlength\cftparskip{-8pt}
\microtypesetup{protrusion=false}
\tableofcontents
\microtypesetup{protrusion=true}

\section{Introduction}

In its most concrete form, a \emph{lens} $S \hto A$ is a pair of maps ${\fget : S \to A}$ and ${\fput : S \times A \to S}$. From an engineering standpoint, such a lens allows us to ``zoom in'' on $S$ to focus on a small part $A$, manipulate $A$ in some way, then ``zoom out'' and have our changes reflected in $S$~\cite{CombinatorsForBidirectionalTreeTransformations}.

So that our lenses better adhere to this intuitive idea of ``zooming in'', we often want them satisfy some conditions known as the \emph{lens laws}:
\begin{center}
\begin{minipage}[b]{0.33333\textwidth}
\begin{center}
\[
\begin{tikzcd}
S \times A \ar[rr, "\fput"] \ar[dr, "\pi_2", swap] && S \ar[dl, "\fget"] \\
& A
\end{tikzcd}
\]
\hspace{0.8cm}$\fput\fget$
\end{center}
\end{minipage}%
\begin{minipage}[b]{0.33333\textwidth}
\begin{center}
\[
\begin{tikzcd}
S \ar[rr, "{[\id_S, \fget]}"] \ar[dr, "\id_S", swap] && S \times A \ar[dl, "\fput"] \\
& S
\end{tikzcd}
\]
\hspace{-0.6cm}$\fget\fput$
\end{center}
\end{minipage}%
\begin{minipage}[b]{0.33333\textwidth}
\begin{center}
\[
\begin{tikzcd}
S \times A \times A \ar[r, "\fput \times A"] \ar[d, "\pi_{1, 3}", swap] & S \times A\ar[d, "\fput"] \\
S \times A \ar[r, "\fput", swap] & A
\end{tikzcd}
\]
\quad$\fput\fput$
\end{center}
\end{minipage}%
\end{center}
We call such lenses lawful. The $\fput\fget$ law states that any update to $A$ is represented faithfully in $S$. The $\fget\fput$ law states that if $A$ is not changed then neither is $S$; and finally, the $\fput\fput$ law states that any update to $A$ completely overwrites previous updates.

Lenses form a category, with the composition of two lenses $(\fget_1, \fput_1) : T \hto S$ and $(\fget_2, \fput_2) : S \hto A$ as indicated:
\begin{align*}
\fget &: T \xrightarrow{\fget_1} S \xrightarrow{\fget_2} A \\
\fput &: T \times A \xrightarrow{[\id_T, \fget_1] \times A} T \times S \times A \xrightarrow{T \times \fput_2} T \times S \xrightarrow{\fput_1} T
\end{align*}
If the two input lenses are lawful then the composite is as well, so we find there is a subcategory of lawful lenses.

Lenses were discovered to be just one of a hierarchy of data accessors, including prisms, setters, traversals and more. These are collectively called \emph{optics} and have been best explored in the widely uesd Haskell \lenslib{} library: see~\cite{LensLibrary}. Each optic variant has a concrete description as a certain collection of maps, with attendant laws under which we consider them well-behaved, similar to the pair $(\fget, \fput)$ above and the lens laws. We begin in Section~\ref{sec:optics} by defining the \emph{category of optics} for a symmetric monoidal category in a sufficiently general way to encompass almost all the optic variants in use in the wild, using lenses as a running example. The category of lenses is precisely the result of this construction when applied to a symmetric monoidal category where the tensor is given by binary product. Section~\ref{sec:lawful-optics} defines the equivalent of the lens laws for a general category of optics. Then in Section~\ref{sec:examples} we see that these generic definitions specialise correctly to the other basic varieties of optic, including the laws. 

When implementing optics, library authors often use a form known as the \emph{profunctor encoding}, which at first glance is completely different to that given in Section~\ref{sec:optics}. (The Haskell \lenslib{} library itself actually uses a variant called the \emph{van Laarhoven encoding}, for reasons of efficiency and backwards compatibility.) As this was being written, Milewski~\cite{ProfunctorOpticsPost} and Boisseau and Gibbons~\cite{YouNeeda} independently described the isomorphism between optics and their profunctor encoding. In Section~\ref{sec:profunctor-optics} we review this isomorphism and verify that the folklore profunctor optic laws are equivalent to lawfulness as defined here.

More recently, concrete lenses have found use in compositional game theory~\cite{CompositionalGameTheory}. The $\fget$ function is thought of as mapping observations on the state of play to choices of what move to make. The $\fput$ function computes the utility of the moves that the players choose. There is interest in generalising this to a probabilistic setting, but it is not yet clear what the right replacement for concrete lenses is.

Much of what is known about optics is folklore, and careful verification of some of their categorical properties has been lacking, especially when working in categories other than $\Set$ (or $\Set$-like categories such as $\mathbf{Hask}$). The aim of the present paper is to fill this gap, with the hope that a better understanding of the general structure of these categories will make it easier to generalise optics to new and exotic settings. This is particularly important with the advent of linear types in Haskell, enabling a new branch of the lens family tree, and also with the new applications to game theory.

\subsection{Contributions}
  \begin{itemize}
  \item A careful account of the folklore optic construction in an arbitrary symmetric monoidal category $\C$, which we show extends to a functor $\Optic : \SymmMonCat \to \SymmMonCat$ (Section~\ref{sec:optics}),
  \item A universal property of the $\Optic$ construction as freely adding counits to a category of `dualisable morphisms' (Section~\ref{sec:teleological-categories}),
  \item A definition of lawfulness for a general optic category that specialises in the correct way to known cases and allows us to derive concrete laws for new kinds of optic (Section~\ref{sec:lawful-optics}),
  \item Commentary on the optic variants used most frequently in the wild (Section~\ref{sec:examples}),
  \item A proof that lawfulness as defined here is equivalent to the folklore profunctor optic laws (Section~\ref{sec:profunctor-optics}).
  \end{itemize}

\subsection{(Co)ends and Yoneda Reduction}

In this paper we will make frequent use of the (co)end calculus. For a comprehensive introduction to ends and coends, see~\cite{CoendCofriend}. We write $\copr_X : F(X, X) \to \int^{X \in \M} F(X, X)$ for the structure maps of a coend. The most important results for us regarding ends and coends are:

\begin{lemma}[Coend as coequaliser]\label{lemma:calculate-coend}
If $\E$ is cocomplete and $\M$ is small, the coend of $P : \M^\op \times \M \to \E$ can be calculated as the coequaliser in the diagram
\[
  \begin{tikzcd}
    \displaystyle \coprod_{M \to N} P(N, M) \ar[r,shift left=.75ex]  \ar[r,shift right=.75ex] & \displaystyle\coprod_{M \in \M} P(M, M) \ar[r] & \displaystyle\int^{M \in \M} P(M, M)
  \end{tikzcd}
\]
\qed
\end{lemma}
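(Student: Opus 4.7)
The plan is to identify the universal property of the coend with that of the displayed coequaliser. Recall that a coend of $P$ is by definition an object $C$ equipped with a dinatural family $\{\copr_X : P(X, X) \to C\}_{X \in \M}$, universal among such families, where dinaturality of the family means that for every $f : M \to N$ in $\M$, the two composites
\[
P(N, M) \xrightarrow{P(f, M)} P(M, M) \xrightarrow{\copr_M} C
\quad\text{and}\quad
P(N, M) \xrightarrow{P(N, f)} P(N, N) \xrightarrow{\copr_N} C
\]
agree.

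First, I would observe that because $\M$ is small, both indexing sets in the diagram (objects of $\M$, and morphisms of $\M$) form genuine sets, so the cocompleteness of $\E$ guarantees that the two coproducts and the coequaliser all exist. Next, by the universal property of coproducts, giving a family $\{\alpha_X : P(X, X) \to E\}_{X}$ is the same as giving a single morphism $\alpha : \coprod_{X} P(X, X) \to E$. I would then check that the dinaturality condition for the family translates precisely into the statement that $\alpha$ coequalises the two parallel arrows: the component of the left-hand arrow at $f : M \to N$ is $P(f, M) : P(N, M) \to P(M, M)$ followed by the coproduct injection $\iota_M$, and similarly the right-hand arrow uses $P(N, f)$ followed by $\iota_N$. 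Post-composition with $\alpha$ yields exactly the two composites that must coincide by dinaturality.

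Consequently, factorisations $\alpha$ through the coequaliser are in natural bijection with dinatural families out of $P$, which is exactly the universal property characterising the coend. The main ``obstacle'' is really just being careful about variances: $P$ is contravariant in its first argument and covariant in its second, so one must verify that $P(f, M)$ and $P(N, f)$ are the correct maps in $\E$ in each case, and that they appear on the correct sides of the parallel pair. Once this bookkeeping is done the identification of the two universal properties is immediate, giving the desired isomorphism between the coequaliser and $\int^{M \in \M} P(M, M)$.
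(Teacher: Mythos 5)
Your argument is correct: identifying morphisms out of $\coprod_{M} P(M,M)$ with families $\{\alpha_M\}$, and checking that coequalising the two parallel arrows is precisely the dinaturality condition, is the standard proof of this fact, and your handling of the variances and of the existence hypotheses (smallness of $\M$, cocompleteness of $\E$) is right. The paper itself states this lemma without proof as a known result of the coend calculus, so there is nothing to compare beyond noting that your route is exactly the expected one.
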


\begin{lemma}[Ninja Yoneda Lemma/Yoneda Reduction]\label{lem:yoneda-reduction}
For every functor $K : \C^\op \to \Set$ and $H : \C \to \Set$, we have the following natural isomorphisms:
\begin{align*}
KX &\cong \int^{C \in \C} KC \times \C(X,C) &
KX &\cong \int_{C \in \C} \Set(\C(C,X), KC) \\
HX &\cong \int^{C \in \C} HC \times \C(C,X)  &
HX &\cong \int_{C \in \C} \Set(\C(X,C), HC)
\end{align*}
where the isomorphisms are given by inclusion with the identity morphism $\C(X, X)$ for the left two, and evaluation at the identity morphism on the right.
\qed
\end{lemma}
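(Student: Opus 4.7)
The plan is to prove the first coend isomorphism $KX \cong \int^{C} KC \times \C(X, C)$ in full, then recover the other three by dualising from coends to ends and by replacing $\C$ with $\C^\op$ to swap the roles of $K$ and $H$.

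First I would exhibit the two candidate maps: the forward map $\eta : KX \to \int^{C} KC \times \C(X, C)$ sending $k \mapsto \copr_X(k, \id_X)$, and the inverse built from the universal property of the coend applied to the family
\[
\phi_C : KC \times \C(X, C) \to KX, \qquad (k, f) \mapsto K(f)(k),
\]
where $K(f) : KC \to KX$ because $K$ is contravariant. Next I would verify dinaturality of $\phi_C$ in $C$: for $g : C \to D$, $k \in KD$ and $h : X \to C$, both composites around the relevant hexagon evaluate to $K(g \circ h)(k)$, which is immediate from functoriality of $K$. This induces the desired map $\phi : \int^{C} KC \times \C(X,C) \to KX$ with $\phi \circ \copr_C = \phi_C$.

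The remaining step is to check that $\eta$ and $\phi$ are mutually inverse. The composite $\phi \circ \eta = \id_{KX}$ falls out of $K(\id_X) = \id_{KX}$. The reverse composite is the only place where a slightly non-trivial step appears: after applying $\phi$ and then $\eta$ to a generator $\copr_C(k, h)$ one obtains $\copr_X(K(h)(k), \id_X)$, which agrees with $\copr_C(k, h)$ only after invoking the coequaliser relation of Lemma~\ref{lemma:calculate-coend} with $f = h$. This is the one point in the argument I expect to require actual care.

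For the end versions, one performs the dual construction: the map $KX \to \int_{C} \Set(\C(C, X), KC)$ sends $k$ to the natural family $(f \mapsto K(f)(k))$, with inverse given by evaluation of such a family at $\id_X$; naturality and the two verifications are formally dual to those above. The remaining two isomorphisms, involving $H : \C \to \Set$, come from applying the statements just proved to the opposite category $\C^\op$. The only bookkeeping is to keep the variance of $K$ and $H$ straight throughout; the algebra itself is forced by the availability of identity morphisms and functoriality of the (co)representable $\C(X, -)$ or $\C(-, X)$.
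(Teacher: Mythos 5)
Your argument is correct and is the standard proof of Yoneda reduction; the paper states this lemma without proof (as a known fact of the coend calculus), but the explicit isomorphisms it records---inclusion with $\id_X$ for the coend forms and evaluation at $\id_X$ for the end forms---are exactly the maps $\eta$ and the evaluation map you construct. Your identification of the coequaliser relation of Lemma~\ref{lemma:calculate-coend} (applied with $f = h$) as the one nontrivial step in checking $\eta \circ \phi = \id$ is exactly right.
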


\begin{theorem}[Fubini Theorem]
For a functor $F : \C^\op \times \C \times \D^\op \times \D \to \E$, there are canonical isomorphisms
\begin{align*}
\int^{C \in \C} \int^{D \in \D} F(C,C,D,D) \cong \int^{(C,D) \in \C \times \D} F(C,C,D,D) \cong \int^{D \in \D} \int^{C \in \C} F(C,C,D,D)
\end{align*}
\qed
\end{theorem}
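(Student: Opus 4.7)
The plan is to show that all three objects co-represent the same functor, namely the functor that sends a test object $E \in \E$ to the set of cowedges from $F$ to $E$ in a suitable sense. Concretely, I would verify that maps from each of the three objects into $E$ correspond naturally to the same data: a family of morphisms $\{ g_{C,D} : F(C,C,D,D) \to E \}_{C \in \C, D \in \D}$ satisfying the appropriate dinaturality. Since a coend is characterised up to unique isomorphism by its universal property as a cowedge, matching universal properties will immediately produce the two canonical isomorphisms.

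First I would unpack cowedges out of $\int^{(C,D) \in \C \times \D} F(C,C,D,D)$: the data is a family $g_{C,D}$ making the standard dinaturality hexagon commute for every morphism $(h,k) : (C,D) \to (C',D')$ in $\C \times \D$. The key observation is that every such morphism factors as $(h,k) = (h, \id_{D'}) \circ (\id_C, k) = (\id_{C'}, k) \circ (h, \id_D)$, so the hexagon for a general $(h,k)$ is implied by the hexagons for the ``pure'' morphisms $(h, \id)$ and $(\id, k)$, and conversely restricting to pure morphisms gives back the component conditions. Thus a cowedge in $\C \times \D$ is equivalent to a family that is dinatural in $C$ for each fixed $D$ and dinatural in $D$ for each fixed $C$. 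Next I would unpack cowedges out of $\int^{C \in \C} \int^{D \in \D} F(C,C,D,D)$: by the universal property of the outer coend this is a family $f_C : \int^D F(C,C,D,D) \to E$ dinatural in $C$, and by the universal property of the inner coend each $f_C$ is in turn a family $g_{C,D}$ dinatural in $D$. Naturality of the passage $f_C \leftrightsquigarrow g_{C,D}$ in $C$ translates the dinaturality of the $f_C$ into dinaturality of $g_{C,D}$ in $C$ for each fixed $D$. The same analysis applied to $\int^D \int^C$ yields the symmetric conclusion.

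Having shown that all three objects are universal cowedges for precisely the same data, the Yoneda lemma produces unique natural isomorphisms between them, obtained concretely by sending each structure map $\copr_{C,D}$ on one side to the appropriate composite $\copr_C \circ \copr_D$ (or its swap) on the other. The step requiring the most care is the factorisation argument linking dinaturality in $\C \times \D$ to separate dinaturality in each factor; once that is in hand, the remainder is a routine application of the universal property of coends and carries no real difficulty, nor any hidden size or cocompleteness hypothesis beyond what is needed for the coends to exist.
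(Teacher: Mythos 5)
The paper does not prove this statement at all: it is listed among the background results on the coend calculus and stated with an immediate \qed, deferring to the standard literature. So there is no proof to compare against, but your argument is the standard one and it is correct. The decomposition into two steps --- (i) a cowedge under $F$ over $\C \times \D$ is the same as a family $g_{C,D}$ that is dinatural in each variable separately, via the factorisation $(h,k) = (\id,k)\circ(h,\id) = (h,\id)\circ(\id,k)$ and functoriality of $F$ in its four slots, and (ii) maps out of the iterated coend unwind, by two applications of the universal property, to exactly such separately dinatural families --- is how the proof is done in the references the paper leans on. The one hypothesis worth making explicit, which your closing sentence slightly underplays, is that for the iterated coend $\int^{C}\int^{D}F(C,C,D,D)$ to be well formed you need the inner coend $\int^{D}F(C,C',D,D)$ to exist for all pairs $(C,C')$, not just on the diagonal, and to be functorial in $(C,C')$ so that the outer coend can be taken; the precise form of Fubini is that given this, existence of any one of the three objects implies existence of the others together with the canonical isomorphisms. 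With that caveat stated, your step (ii) also needs the (routine) observation that the coprojections into the inner coend are jointly epimorphic, so that dinaturality of the $f_C$ in $C$ is genuinely equivalent to dinaturality of the $g_{C,D}$ in $C$ at each fixed $D$. Neither point is a gap in the idea, just bookkeeping you should record when writing the argument out in full.
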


\begin{lemma}[Mute coends]\label{lem:mute-coend}
Consider a functor $F : \C \to \E$ as a functor $\C^\op \times \C \to \E$ that ignores its contravariant argument. Then \[ \int^{C \in \C} F(C) \cong \colim F. \] \qed
\end{lemma}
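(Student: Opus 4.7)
The plan is to compare universal properties directly. For a general $P : \C^\op \times \C \to \E$, a wedge under $P$ with tip $X$ is a family of morphisms $\alpha_C : P(C, C) \to X$ such that for every $f : C \to D$ the two composites $P(D, C) \xrightarrow{P(f, \id_C)} P(C, C) \xrightarrow{\alpha_C} X$ and $P(D, C) \xrightarrow{P(\id_D, f)} P(D, D) \xrightarrow{\alpha_D} X$ agree, and the coend $\int^C P(C, C)$ is by definition the universal such wedge.

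Now specialise to $P(X, Y) = F(Y)$. Since the contravariant argument is ignored, we have $P(D, C) = F(C)$, the contravariant action $P(f, \id_C)$ is the identity $\id_{F(C)}$, and the covariant action $P(\id_D, f)$ is $F(f) : F(C) \to F(D)$. The extranaturality condition accordingly collapses to $\alpha_C = \alpha_D \circ F(f)$, which is precisely the naturality requirement for a cocone on $F$ with tip $X$. Wedges on $P$ are therefore in bijection, naturally in $X$, with cocones on $F$, so the coend and the colimit represent the same functor $\E \to \Set$ and are consequently canonically isomorphic.

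Alternatively, under the cocompleteness of $\E$ and smallness of $\C$ presupposed by Lemma~\ref{lemma:calculate-coend}, one can compute both sides as the same coequaliser $\coprod_{f : C \to D} F(C) \rightrightarrows \coprod_C F(C)$; the two maps dictated by the extranaturality conditions match exactly the two maps appearing in the standard coequaliser presentation of $\colim F$. No step here should present serious difficulty; the verification amounts to tracking identities through a handful of definitions, and there is no real obstacle beyond being careful that the contravariant leg really does become the identity in the mute case.
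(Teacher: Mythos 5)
Your argument is correct: the paper states this lemma without proof as a standard fact of the coend calculus, and your verification that cowedges under the mute bifunctor are exactly cocones under $F$ (because the contravariant leg degenerates to an identity) is precisely the intended standard argument, with the coequaliser computation as a valid alternative under the hypotheses of Lemma~\ref{lemma:calculate-coend}. Nothing is missing.
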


\section{Optics}\label{sec:optics}

We begin by defining the category of optics for a symmetric monoidal category. This category was first defined in~\cite[Section 6]{Doubles} as the `double' of a monoidal category. There it was used for a completely different purpose---to investigate the relationship between Tambara modules and the `center' of a monoidal category. Our definition is almost identical, the only differences being that we have flipped the direction of the morphisms to match the existing work on lenses and restricted our attention to the unenriched setting.

Our definition of optic has as domain and codomain \emph{pairs} of objects of $\C$, one of which behaves covariantly and the other contravariantly. For example, our lenses will be pairs of maps $\fget : S \to A$ and $\fput : S \times A' \to S'$. This generality is important for the applications to game theory, and in fact helps in calculations by making the covariant and contravariant positions more difficult to confuse. Readers more familiar with lenses should ignore the primes.

  In this section we work with a fixed symmetric monoidal category $(\C, \otimes, I)$, with associator $\alpha$ and unitors $\lambda$ and $\rho$. To avoid getting lost in the notation we will use the standard cheat of omitting associativity morphisms and trust that the dedicated reader could insert them everywhere they are needed.

\begin{definition}
  Given two pairs of objects of $\C$, say $(S, S')$ and $(A, A')$, an \emph{optic} $p : (S, S') \hto (A, A')$ is an element of the set
  \begin{align*}
    \Optic_\C((S, S'), (A, A')) := \int^{M \in \C} \C(S, M \otimes A) \times \C(M \otimes A', S')
  \end{align*}
\end{definition}

Because this coend takes place in $\Set$, we can use Lemma~\ref{lemma:calculate-coend} to describe $\Optic_\C((S, S'), (A, A'))$ explicitly. It is the set of pairs $(l, r)$, where $l : S \to M \otimes A$ and $r : M \otimes A' \to S'$, quotiented by the equivalence relation generated by relations of the form
\begin{align*}
  ((f \otimes A) l, r) \sim (l, r (f \otimes A'))
\end{align*}
for any $l : S \to M \otimes A$, $r : N \otimes A' \to S'$ and $f : M \to N$.

For a pair of maps $l : S \to M \otimes A$ and $r : M \otimes A' \to S'$, we write $\rep{l}{r} : (S, S') \hto (A, A')$ for their image in $\Optic_\C((S, S'), (A, A'))$, and say that the object $M$ is the \emph{residual} for this representative. Optics will always be written with a crossed arrow $\hto$ to distinguish them from morphisms of $\C$.

  The residual $M$ should be thought of as a kind of `scratch space'; information from $S$ that we need to remember to construct $S'$. The quotienting imposed by the coend means we cannot inspect this temporary information, indeed, given an optic $S \hto A$ there is not even a canonical choice for the object $M$ in general.
    
  Elements of $\Optic_\C((S, S'), (A, A'))$ have an appealing
  interpretation as string diagrams with a ``hole'' missing. We draw the
  pair $\rep{l}{r}$ as
  \begin{center}
    \begin{tikzpicture}
\node[vert] (l) at (0, 0) {$l$};
\node[vert] (r) at (4, 0) {$r$};

\node (S) [left of=l] {$S$};
\node (A) [below right = 0.7 and 1 of l] {$A$};
\node (S') [right of=r] {$S'$};
\node (A') [below left = 0.7 and 1 of r] {$A'$};

\draw[->] (S) -- (l);
\draw[->] (l) to[out=south east,in=west] (A);

\draw[<-] (S') -- (r);
\draw[<-] (r) to[out=south west,in=east] (A');

\draw[->] (l) to[out=north east, in=west] ++(1,1)
 to ++(2,0)
 to[out=east, in=north west] (r)
;

\node[draw,dashed,fit=(A) (A'), inner xsep = 8pt] (box) {};
\draw[dashed] (box.90) -- +(0,2.25);
\end{tikzpicture}
  \end{center}
  reading left to right, so the portion of the diagram to the left of the line represents $l$ and the right portion $r$. The relation expressed by the coend can be drawn graphically as:
  \begin{center}
    \begin{tikzpicture}
\begin{scope}[on grid]

\node[vert] (l) at (0, 0) {$l$};
\node[vert] (r) at (4, 0) {$r$};

\node (S) [left of=l] {$S$};
\node (A) [below right = 1 and 1.5 of l] {$A$};
\node (S') [right of=r] {$S'$};
\node (A') [below left = 1 and 1.5 of r] {$A'$};

\draw[->] (S) -- (l);
\draw[->] (l) to[out=south east,in=west] (A);

\draw[<-] (S') -- (r);
\draw[<-] (r) to[out=south west,in=east] (A');

\node[vert, above right = 1 and 1.3 of l] (f) {$f$};
\draw[->] (l) to[out=north east, in=west] (f);
\draw[->] (f)
 to[out=east, in=west] +(1.7,0)
 to[out=east, in=north west] (r)
;

\node[draw,dashed,fit=(A) (A'), inner xsep = 8pt] (box) {};
\draw[dashed] (box.90) -- +(0,2.2);

\end{scope}
\end{tikzpicture}
    \hspace{0.7cm} \raisebox{1.35cm}{$\sim$} \hspace{1cm}
    \begin{tikzpicture}
\begin{scope}[on grid]

\node[vert] (l) at (0, 0) {$l$};
\node[vert] (r) at (4, 0) {$r$};

\node (S) [left of=l] {$S$};
\node (A) [below right = 1 and 1.5 of l] {$A$};
\node (S') [right of=r] {$S'$};
\node (A') [below left = 1 and 1.5 of r] {$A'$};

\draw[->] (S) -- (l);
\draw[->] (l) to[out=south east,in=west] (A);

\draw[<-] (S') -- (r);
\draw[<-] (r) to[out=south west,in=east] (A');

\node[vert, above left = 1 and 1.3 of r] (f) {$f$};
\draw[->] (l) 
 to[out=north east, in=west] +(1,1)
 to[out=east, in=west] (f)
;
\draw[->] (f)
 to[out=east, in=north west] (r)
;

\node[draw,dashed,fit=(A) (A'), inner xsep = 8pt] (box) {};
\draw[dashed] (box.90) -- +(0,2.2);

\end{scope}
\end{tikzpicture}
  \end{center}
  We will therefore omit the vertical cut between $l$ and $r$ in most subsequent diagrams; any choice yields a representative of same optic.

A common use of the coend relation is to introduce or cancel isomorphisms. Given $l : S \to M \otimes A$ and $r : M \otimes A \to S$, for any isomorphism $f : M \to N$ we have
\begin{align*}
\rep{l}{r} = \rep{(f^{-1} \otimes A)(f \otimes A)l}{r} = \rep{(f \otimes A)l}{r(f^{-1} \otimes A)}
\end{align*}
Diagrammatically, this is the equality
  \begin{center}
    \begin{tikzpicture}
\node[vert] (l) at (0, 0) {$l$};
\node[vert] (r) at (4, 0) {$r$};

\node (S) [left of=l] {$S$};
\node (A) [below right = 0.7 and 1 of l] {$A$};
\node (S') [right of=r] {$S'$};
\node (A') [below left = 0.7 and 1 of r] {$A'$};

\draw[->] (S) -- (l);
\draw[->] (l) to[out=south east,in=west] (A);

\draw[<-] (S') -- (r);
\draw[<-] (r) to[out=south west,in=east] (A');

\draw[->] (l) to[out=north east, in=west] ++(1,1)
 to ++(2,0)
 to[out=east, in=north west] (r)
;

\node[draw,dashed,fit=(A) (A'), inner xsep = 8pt] (box) {};
\draw[dashed] (box.90) -- +(0,2.25);
\end{tikzpicture}
    \hspace{0.8cm} \raisebox{1.5cm}{$=$} \hspace{1cm}
    \begin{tikzpicture}
\begin{scope}[on grid]
\node[vert] (l) at (0, 0) {$l$};
\node[vert] (r) at (4, 0) {$r$};

\node (S) [left of=l] {$S$};
\node (A) [below right = 1 and 1.5 of l] {$A$};
\node (S') [right of=r] {$S'$};
\node (A') [below left = 1 and 1.5 of r] {$A'$};

\node[vert] (f) [above right = 1 and 1.3 of l] {$f$};
\node[vert] (fi) [above left = 1 and 1.3 of r] {$f^{-1}$};

\draw[->] (S) -- (l);
\draw[->] (l) to[out=south east,in=west] (A);

\draw[<-] (S') -- (r);
\draw[<-] (r) to[out=south west,in=east] (A');

\draw (l) to[out=north east, in=west] (f);
\draw (f) to[out=east, in=west] (fi);
\draw (fi) to[out=east, in=north west] (r);

\node[draw,dashed,fit=(A) (A'), inner xsep = 8pt] (box) {};
\draw[dashed] (box.90) -- +(0,2.25);
\end{scope}
\end{tikzpicture}
  \end{center}
  
  \begin{example}
  ~\begin{enumerate}[(1)]
  \item  For any three objects $M, A, A' \in \C$, there is the \emph{tautological} optic \[t_{M,A,A'} : (M \otimes A, M \otimes A') \hto (A, A')\] given by $\rep{\id_{M \otimes A}}{\id_{M \otimes A'}}$.
  This would be drawn as follows:
  \begin{center}
  \begin{tikzpicture}
\begin{scope}[on grid]
\node (M) at (0, 1) {$M$};
\node (M2) at (4, 1) {$M$};
\node (A) at (0, 0) {$A$};
\node (A2) at (1.5, 0) {$A$};
\node (A') at (2.5, 0) {$A'$};
\node (A'2) at (4, 0) {$A'$};

\draw[->] (M) -- (M2);
\draw[->] (A) -- (A2);
\draw[->] (A') -- (A'2);

\node[draw,dashed,fit=(A2) (A'), inner xsep = 8pt] (box) {};
\end{scope}
\end{tikzpicture}
  \end{center}
  
  \item We also have the \emph{identity} optic $\id_{(S, S')} : (S, S') \hto (S, S')$, given by $\rep{\lambda^{-1}_S}{\lambda_{S'}}$, where $\lambda_S : I \otimes S \to S$ is the left unitor for $S$ and similarly for $S'$.

The identity optic is drawn as
\begin{center}
  \begin{tikzpicture}
\begin{scope}[on grid]
\node[vert] (l) at (0, 0) {$\lambda_S^{-1}$};
\node[vert] (r) at (4, 0) {$\lambda_{S'}$};

\node (S) [left of=l] {$S$};
\node (A) [below right = 0.7 and 1 of l] {$S$};
\node (S') [right of=r] {$S'$};
\node (A') [below left = 0.7 and 1 of r] {$S'$};

\draw[->] (S) -- (l);
\draw[->] (l) to[out=south east,in=west] (A);

\draw[<-] (S') -- (r);
\draw[<-] (r) to[out=south west,in=east] (A');

\draw[->, dotted] (l) to[out=north east, in=west] ++(1,1)
 to ++(2,0)
 to[out=east, in=north west] (r)
;

\node[draw,dashed,fit=(A) (A'), inner xsep = 8pt] (box) {};
\end{scope}
\end{tikzpicture}
\end{center}
This dashed line above the diagram represents the unit object. It is common in string diagrams to omit unitors and the unit object unless they are necessary to make sense of the diagram. We therefore prefer to draw the identity morphism as:
\begin{center}
  \begin{tikzpicture}
\node (Sin) {$S$};
\node (Sout) [right of=Sin] {$S$};
\node (Spout) [right of=Sout] {$S'$};
\node (Spin) [right of=Spout] {$S'$};

\draw[->] (Sin) -- (Sout);
\draw[->] (Spout) -- (Spin);

\node[draw,dashed,fit=(Sout) (Spout), inner xsep = 4pt] (box) {};
\end{tikzpicture}
\end{center}
  \end{enumerate}
\end{example}

Optics compose as follows. The easiest interpretation is graphical: composition corresponds to substituting the first optic for the hole of the second:
\begin{center}
  \begin{tikzpicture}
\node[vert] (l) at (0, 0) {$l$};
\node[vert] (r) at (4, 0) {$r$};

\node (S) [left of=l] {$S$};
\node (A) [below right = 0.7 and 1 of l] {$A$};
\node (S') [right of=r] {$S'$};
\node (A') [below left = 0.7 and 1 of r] {$A'$};

\draw[->] (S) -- (l);
\draw[->] (l) to[out=south east,in=west] (A);

\draw[<-] (S') -- (r);
\draw[<-] (r) to[out=south west,in=east] (A');

\draw[->] (l) to[out=north east, in=west] ++(1,1)
 to ++(2,0)
 to[out=east, in=north west] (r)
;

\node[draw,dashed,fit=(A) (A'), inner xsep = 8pt] (box) {};
\end{tikzpicture}
  \hspace{0.9cm} \raisebox{1.5cm}{$\circ$} \hspace{1cm}
  \begin{tikzpicture}
\node[vert] (l) at (0, 0) {$l'$};
\node[vert] (r) at (4, 0) {$r'$};

\node (R) [left of=l] {$R$};
\node (S) [below right = 0.7 and 1 of l] {$S$};
\node (R') [right of=r] {$R'$};
\node (S') [below left = 0.7 and 1 of r] {$S'$};

\draw[->] (R) -- (l);
\draw[->] (l) to[out=south east,in=west] (S);

\draw[<-] (R') -- (r);
\draw[<-] (r) to[out=south west,in=east] (S');

\draw[->] (l) to[out=north east, in=west] ++(1,1)
 to ++(2,0)
 to[out=east, in=north west] (r)
;

\node[draw,dashed,fit=(S) (S'), inner xsep = 8pt] (box) {};
\end{tikzpicture} \\
  \raisebox{1.5cm}{$:=$}\qquad
  \begin{tikzpicture}
\begin{scope}[on grid]

\node[vert] (l') at (0, 0) {$l'$};
\node[vert, below right = 0.7 and 1 of l'] (l) {$l$};
\node[vert] (r') at (5, 0) {$r'$};
\node[vert, below left = 0.7 and 1 of r'] (r) {$r$};

\node (A) [below right = 0.7 and 1 of l] {$A$};
\node (A') [below left = 0.7 and 1 of r] {$A'$};

\node (R) [left of=l'] {$R$};
\node (R') [right of=r'] {$R'$};

\draw[->] (R) -- (l');
\draw[<-] (R') -- (r');

\draw[->] (l') to[out=north east, in=west] ++(1,1)
 to ++(3,0)
 to[out=east, in=north west] (r')
;

\draw[->] (l) to[out=north east, in=west] ++(1,1)
 to ++(1,0)
 to[out=east, in=north west] (r)
;

\draw[->] (l') to[out=south east,in=west] (l);
\draw[->] (r) to[out=east, in=south west] (r');

\draw[->] (l) to[out=south east,in=west] (A);
\draw[<-] (r) to[out=south west,in=east] (A');

\node[draw,dashed,fit=(l) (r), inner xsep = 6pt, inner ysep = 25pt] (box2) {};
\node[draw,dashed,fit=(A) (A'), inner xsep = 8pt] (box) {};
\end{scope}
\end{tikzpicture}
\end{center}

More formally, we wish to construct a map
\begin{align*}
  &\left(\int^{M \in \C} \C(S, M \otimes A) \times \C(M \otimes A', S')\right) \times \left(\int^{N \in \C} \C(R, N \otimes S) \times \C(N \otimes S', R')\right) \\
  &\quad \to \int^{M \in \C} \C(R, M \otimes A) \times \C(M \otimes A', R').
\end{align*}
The product in $\Set$ preserves colimits, so in particular coends. Using this fact and the Fubini theorem for coends, the domain is isomorphic to
\begin{align*}
  \int^{(M, N) \in \C \times \C} \C(S, M \otimes A) \times \C(M \otimes A', S') \times \C(R, N \otimes S) \times \C(N \otimes S', R').
\end{align*}
So by the universal property of coends, it suffices to construct maps
\begin{align*}
  & \C(S, M \otimes A) \times \C(M \otimes A', S') \times \C(R, N \otimes S) \times \C(N \otimes S', R') \\ &
                                                                                                              \quad \to \int^{M \in \C} \C(R, M \otimes A) \times \C(M \otimes A', R').
\end{align*}
natural in $M$ and $N$. For these we use the composites
\begin{align*}
  &\C(S, M \otimes A) \times \C(M \otimes A', S') \times \C(R, N \otimes S) \times \C(N \otimes S', R')\\
  \to \,& \C(N \otimes S, N \otimes M \otimes A) \times \C(N \otimes M \otimes A', N \otimes S') \times \C(R, N \otimes S) \times \C(N \otimes S', R') && \text{(functoriality of $N \otimes  -$)} \\
  \to \,& \C(R, N \otimes  M \otimes A) \times \C(N \otimes M \otimes A', R') && \text{(composition in $\C$)} \\
  \to \,&\int^{P \in \M} \C(R, P \otimes A) \times \C(P \otimes A', R') && \text{($\copr_{N \otimes M}$)}
\end{align*}
Written equationally, suppose $\rep{l'}{r'} : (R, R') \hto (S, S')$ and $\rep{l}{r} : (S, S') \hto (A, A')$ are optics with $M$ the residual for $\rep{l'}{r'}$. The composite $(R, R') \hto (A, A')$ is then: \[\rep{l}{r} \circ \rep{l'}{r'} := \rep{(M \otimes l)l'}{r'(M \otimes r)}.\]

\begin{proposition}\label{prop:optic-is-cat}
  The above data form a category $\Optic_\C$.
\end{proposition}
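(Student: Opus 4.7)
The plan is to verify three things: well-definedness of composition on equivalence classes, the unit laws, and associativity. Well-definedness is already essentially established by the construction above, since composition was built via the universal property of a product of coends and therefore automatically descends to equivalence classes; there is nothing further to check on representatives.

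For the right unit law I would plug $\id_{(S,S')} = \rep{\lambda_S^{-1}}{\lambda_{S'}}$ (residual $I$) into the composition formula to obtain
\[
\rep{l}{r} \circ \id_{(S,S')} = \rep{(I \otimes l)\lambda_S^{-1}}{\lambda_{S'}(I \otimes r)},
\]
a representative with residual $I \otimes N$, where $N$ is the residual of $\rep{l}{r}$. Naturality of the left unitor rewrites this as $\rep{(\lambda_N^{-1} \otimes A)\, l}{r\,(\lambda_N \otimes A')}$, and the coend relation applied to the isomorphism $\lambda_N^{-1} : N \to I \otimes N$ slides this iso from the left component across to the right, where it cancels $\lambda_N$ and returns $\rep{l}{r}$. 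The left unit law is handled by the analogous calculation on the other side, using the right unitor (or equivalently, rewriting via the triangle identity so that the same coend relation applies).

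For associativity I would apply the composition formula twice in each of the two orders. If $p, q, r$ have representatives with residuals $M_p, M_q, M_r$, then both $(p \circ q) \circ r$ and $p \circ (q \circ r)$ reduce, modulo the suppressed associator of $\C$, to the single representative with residual $M_r \otimes M_q \otimes M_p$, left component $(M_r \otimes M_q \otimes l_p)(M_r \otimes l_q)\, l_r$, and right component $r_r\,(M_r \otimes r_q)(M_r \otimes M_q \otimes r_p)$. Equality of the two expressions then follows from the bifunctoriality of $\otimes$ and the associativity of composition in $\C$.

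The main obstacle is not conceptual but rather the bookkeeping of the associator and unitor morphisms of $\C$ that the text suppresses by convention; written out in full, each step above becomes a modest coherence argument. The cleanest conceptual justification is the graphical one: the identity optic is a diagram with no internal vertices and composition is substitution into the hole, so the category axioms amount to the evident facts that substituting into an empty hole is the identity and that iterated substitution is associative.
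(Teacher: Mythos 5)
Your proposal is correct and follows essentially the same route as the paper's own direct proof: checking on representatives (justified by the universal property/Fubini), cancelling the unitor isomorphisms via the coend relation together with naturality of $\lambda$ and the triangle coherence for the unit laws, and reducing both bracketings to the common representative with residual $M_r \otimes M_q \otimes M_p$ for associativity. The only point the paper makes explicit that you elide is that simultaneous choice of representatives for several optics is licensed by the Fubini theorem, but this is the same observation you make under the heading of well-definedness.
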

\begin{proof}
  In~\cite[Section 6]{Doubles} this is proven abstractly by exhibiting this category as the Kleisli category for a monad in the bicategory $\Prof$. We prefer a direct proof.
  
  Suppose we have representatives of three optics
\begin{align*}
  \rep{l_1}{r_1} &: (R, R) \hto (S, S') \\
  \rep{l_2}{r_2} &: (S, S') \hto (A, A') \\
  \rep{l_3}{r_3} &: (A, A') \hto (B, B'),
\end{align*}
that have residuals $M$, $N$ and $P$ respectively. We must choose these representatives simultaneously but, as in the definition of composition, this is allowed by the Fubini theorem. Then:
  \begin{align*}
    (\rep{l_3}{r_3} \circ \rep{l_2}{r_2}) \circ \rep{l_1}{r_1}
    &= \rep{(N \otimes l_3)l_2}{r_2(N \otimes r_3)} \circ \rep{l_1}{r_1} \\
    &= \rep{(M \otimes ((N \otimes l_3)l_2))l_1}{r_1(M \otimes (r_2(N \otimes r_3)))} \\
    &= \rep{(M \otimes N \otimes l_3)(M \otimes l_2)l_1}{r_1(M \otimes r_2)(M \otimes N \otimes r_3)} \\
    &= \rep{l_3}{r_3} \circ (\rep{(M \otimes l_2)l_1}{r_1(M \otimes r_2)}) \\
    &= \rep{l_3}{r_3} \circ (\rep{l_2}{r_2} \circ \rep{l_1}{r_1})
  \end{align*}

  For the unit laws, suppose we have $\rep{l}{r} : (S, S') \hto (A, A')$ with representative $M$. We calculate:
  \begin{align*}
    \id_{A, A'} \circ \rep{l}{r}
    &= \rep{\lambda^{-1}_A}{\lambda_{A'}} \circ \rep{l}{r} \\
    &= \rep{(M \otimes \lambda^{-1}_A) l}{r (M \otimes  \lambda_{A'})} \\
    &= \rep{(\rho^{-1}_M \otimes  A) l}{r (\rho_M \otimes A')} \\
    &= \rep{l}{r (\rho_M \otimes A') (\rho^{-1}_M \otimes A')} \\
    &= \rep{l}{r} \\
    \rep{l}{r} \circ \id_{S, S'}
    &= \rep{l}{r} \circ \rep{\lambda^{-1}_S}{\lambda_{S'}}  \\
    &= \rep{(I \otimes l)\lambda^{-1}_S}{\lambda_{S'} (I \otimes r)} \\
    &= \rep{(\lambda^{-1}_M \otimes S)l}{r (\lambda_{M} \otimes S')} \\
    &= \rep{l}{r (\lambda_{M} \otimes S')(\lambda^{-1}_M \otimes S')} \\
    &= \rep{l}{r}
  \end{align*}
  In both cases we have used the coend relation to cancel an isomorphism appearing on both sides of an optic.
\end{proof}

  Note that the homsets of $\Optic_\C$ are given by a coend indexed by a possibly large category. If $\C$ is small then these coends always exist, but if $\C$ is not small their existence is not guaranteed by the cocompleteness of $\Set$. Because of this we should be careful to only discuss optic categories where we know that the coends exist by some other means, e.g., by exhibiting an isomorphism of $\Optic_\C((S, S'), (A, A'))$ with a set. For all of the examples we give later we provide such a isomorphism.

\begin{proposition}
If $\C$ is a category with finite products, then $\Lens \defeq \Optic_\C$ is the category of lenses described in the introduction (so long as we restrict to optics of shape $(S, S) \hto (A, A)$). 
\end{proposition}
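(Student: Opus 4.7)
The plan is to exhibit a bijection between the optic homsets $\Optic_\C((S,S),(A,A))$ and the set of concrete lens data $\C(S, A) \times \C(S \times A, S)$, and then check that this bijection intertwines composition and identities. The crucial input is that finite products let us split the covariant hom and apply Yoneda reduction to collapse the coend.

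First I would compute the homset. Since $\otimes = \times$, the universal property of products gives a natural isomorphism $\C(S, M \times A) \cong \C(S, M) \times \C(S, A)$. Substituting into the coend and pulling the factor $\C(S, A)$ outside (it is independent of $M$), we get
\begin{align*}
\Optic_\C((S,S),(A,A)) &\cong \C(S, A) \times \int^{M \in \C} \C(S, M) \times \C(M \times A, S).
\end{align*}
Applying Lemma~\ref{lem:yoneda-reduction} to the functor $H : \C \to \Set$ with $H(M) = \C(M \times A, S)$ yields $\int^{M \in \C} H(M) \times \C(S, M) \cong H(S) = \C(S \times A, S)$. Composing these bijections gives
\begin{align*}
\Optic_\C((S,S),(A,A)) &\cong \C(S, A) \times \C(S \times A, S),
\end{align*}
which is exactly the set of concrete lens data $(\fget, \fput)$.

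Next I would make the bijection explicit in both directions. In one direction, a representative $\rep{l}{r}$ with $l = \langle l_1, l_2 \rangle : S \to M \times A$ and $r : M \times A \to S$ is sent to $(\fget, \fput) = (l_2,\ r \circ (l_1 \times \id_A))$. In the other, a pair $(\fget, \fput)$ is sent to the optic $\rep{\langle \id_S, \fget \rangle}{\fput}$ with residual $M = S$. That these are mutually inverse is a direct consequence of the Yoneda reduction on which the calculation above is based, but can also be seen concretely: going from a representative to $(\fget, \fput)$ and back uses the coend relation along $l_1 : S \to M$ to replace the residual $M$ with $S$.

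Finally I would verify that the categorical structures agree. For the identity, $\id_{(S,S)} = \rep{\lambda_S^{-1}}{\lambda_S}$ has residual $I = 1$ (terminal), and decomposing $\lambda_S^{-1} = \langle !_S, \id_S \rangle$ the bijection sends it to $(\id_S,\ \pi_2 : S \times S \to S)$, which is the identity lens. For composition, given $\rep{l_1}{r_1} : (T,T) \hto (S,S)$ and $\rep{l_2}{r_2} : (S,S) \hto (A,A)$ corresponding to $(\fget_i, \fput_i)$, the optic composite $\rep{(T \times l_2)l_1}{r_1(T \times r_2)}$ has residual $T \times S$; applying the bijection and unwinding gives exactly $\fget_2 \circ \fget_1$ in the $\C(T, A)$ factor and $\fput_1 \circ (T \times \fput_2) \circ ([\id_T, \fget_1] \times A)$ in the $\C(T \times A, T)$ factor, matching the composite in the introduction. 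The only mildly fiddly step is the last one: juggling the product projections to confirm the $\fput$ composite, but this is a routine diagram chase rather than a real obstacle.
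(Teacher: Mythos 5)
Your proposal is correct and follows essentially the same route as the paper: split $\C(S, M\times A)$ via the universal property of the product, collapse the coend by Yoneda reduction, and read off the explicit correspondence $\rep{l}{r} \mapsto (\pi_2 l,\ r(\pi_1 l \times A))$ with inverse $(\fget,\fput) \mapsto \rep{[\id_S,\fget]}{\fput}$. You in fact go slightly further than the paper by sketching the identity and composition checks, which the paper leaves to the reader; the only nitpick is that your $H(M) = \C(M \times A, S)$ is contravariant in $M$, so it is the $K : \C^\op \to \Set$ case of Lemma~\ref{lem:yoneda-reduction} being invoked.
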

\begin{proof}
We see that optics correspond to pairs of $\fget$ and $\fput$ functions via the following isomorphisms:
\begin{align*}
  \Lens((S, S'), (A, A'))
  &= \int^{M \in \C} \C(S, M \times A) \times \C(M \times A', S') \\
  &\cong \int^{M \in \C} \C(S, M) \times \C(S, A) \times \C(M \times A', S') && \text{(universal property of product)} \\
  &\cong \C(S, A) \times \C(S \times A', S') && \text{(Yoneda reduction)}
\end{align*}
This last step deserves some explanation. We are applying the isomorphism $KX \cong \int^{C \in \C} \C(X,C) \times KC$ of Lemma~\ref{lem:yoneda-reduction} to the case $X = S$ and $K = \C(S, A) \times \C(- \times A', S')$.

Explicitly the isomorphism states that, given an optic $\rep{l}{r} : (S, S') \hto (A, A')$, the corresponding concrete lens is the pair $\fget : S \to A$ and $\fput : S \times A' \to S'$, where $\fget = \pi_2 l$ and $\fput = r (\pi_1 l \times A)$. In the other direction, given $(\fget, \fput)$, the corresponding optic is represented by $\rep{[\id_S, \fget]}{\fput}$.

We leave it to the reader to verify that composition in $\Lens$ corresponds to ordinary composition of concrete lenses by using this isomorphism in both directions. (Of course, there is only one sensible way to compose such a collection of morphisms!)
\end{proof}


\begin{proposition}\label{prop:iota-functor}
  There is a functor $\iota : \C \times \C^\op \to \Optic_\C$, which on objects is given by $\iota(S, S') = (S, S')$ and on morphisms $(f, g) : (S, S') \to (A, A')$ by $\iota(f, g) = \rep{\lambda_A^{-1} f}{g \lambda_{A'}}$.
\end{proposition}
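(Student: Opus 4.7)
The plan is to verify the two functoriality axioms directly, with the identity law being essentially definitional and the composition law requiring one application of the coend relation to reconcile the residuals.

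For the identity law, a morphism $(S, S') \to (S, S')$ in $\C \times \C^\op$ is the pair $(\id_S, \id_{S'})$, so $\iota(\id_S, \id_{S'}) = \rep{\lambda_S^{-1} \id_S}{\id_{S'} \lambda_{S'}} = \rep{\lambda_S^{-1}}{\lambda_{S'}}$, which is exactly the identity optic $\id_{(S,S')}$ as defined earlier.

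For composition, I would take $(f,g) : (S, S') \to (A, A')$ and $(f', g') : (A, A') \to (B, B')$ in $\C \times \C^\op$, so that $f : S \to A$, $f' : A \to B$, $g : A' \to S'$, $g' : B' \to A'$ in $\C$; the composite in $\C \times \C^\op$ is then $(f'f, gg')$. The target is
\[
\iota(f'f, gg') = \rep{\lambda_B^{-1} f' f}{g g' \lambda_{B'}},
\]
which has residual $I$. Applying the composition formula for optics with both residuals equal to $I$ gives
\[
\iota(f', g') \circ \iota(f, g) = \rep{(I \otimes \lambda_B^{-1} f')\, \lambda_A^{-1} f}{g \lambda_{A'}\, (I \otimes g' \lambda_{B'})},
\]
which has residual $I \otimes I$. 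I would then use naturality of $\lambda$, in the form $\lambda_Y^{-1} h = (I \otimes h)\lambda_X^{-1}$ and its dual, to collapse both strands, obtaining the representative
\[
\rep{(\lambda_I^{-1} \otimes B)\, \lambda_B^{-1} f' f}{g g' \lambda_{B'}\, (\lambda_I \otimes B')},
\]
using also the coherence identity $\lambda_{I \otimes X} = \lambda_I \otimes X$ (valid under the convention of suppressing associators). Finally, the coend relation applied to the isomorphism $\lambda_I : I \otimes I \to I$ identifies this representative with $\rep{\lambda_B^{-1} f' f}{g g' \lambda_{B'}}$, completing the argument.

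The main obstacle is simply keeping track of residuals: the naive composite has residual $I \otimes I$ while the target has residual $I$, so the equality is not on the nose at the level of representatives but only after one invocation of the coend quotient. Everything else reduces to standard naturality and coherence manipulations of the left unitor, exactly the kind of calculation the paper signals it will gloss over.
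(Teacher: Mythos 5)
Your proposal is correct and follows essentially the same route as the paper: the identity law is definitional, and the composition law is handled by applying the definition of $\circ$, naturality of $\lambda$ together with the unit coherence to rewrite $I \otimes \lambda_{B}^{-1}$ as $\lambda_I^{-1} \otimes B$, and then one application of the coend relation to cancel the isomorphism $\lambda_I : I \otimes I \to I$ relating the two residuals. Your observation that the equality holds only after the coend quotient, not on the nose, is exactly the point the paper's calculation turns on.
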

\begin{proof}
  Graphically, this is:
  \begin{center}
    \begin{tikzpicture}
\node (Sin) {$S$};
\node (f) [vert, right of=Sin] {$f$};
\node (Sout) [right of=f] {$S$};
\node (Spout) [right of=Sout] {$S'$};
\node (g) [vert, right = 0.5 of Spout] {$g$};
\node (Spin) [right of=g] {$S'$};

\draw[->] (Sin) -- (f);
\draw[->] (f) -- (Sout);
\draw[->] (Spout) -- (g);
\draw[->] (g) -- (Spin);

\node[draw,dashed,fit=(Sout) (Spout)] (box) {};
\end{tikzpicture}
  \end{center}

  This preserves identities, as the identity on an object $(S, S')$ in $\Optic_\C$ is defined to be exactly $\rep{\lambda^{-1}_S}{\lambda_{S'}}$. To check functoriality, suppose we have $(f, g) : (S, S') \to (A, A')$ and $(f', g') : (A, A') \to (B, B')$ in $\C \times \C^\op$. Then:
  \begin{align*}
    \iota(f', g') \circ \iota(f, g)
    &= \rep{\lambda^{-1}_B f'}{g' \lambda_{B'}} \circ \rep{\lambda^{-1}_A f}{g \lambda_{A'}} \\
    &= \rep{(I\otimes (\lambda^{-1}_B f'))\lambda^{-1}_A f}{g \lambda_{A'} (I\otimes (g' \lambda_{B'}))} && \text{(By definition of $\circ$)}\\
    &= \rep{(I \otimes \lambda^{-1}_B) (I \otimes f')\lambda^{-1}_A f}{g \lambda_{A'} (I \otimes g')(I\otimes \lambda_{B'})} && \text{(Functoriality of $I \otimes -$)}\\
    &= \rep{(I\otimes \lambda^{-1}_B) \lambda^{-1}_B f' f}{g g' \lambda_{B'} (I\otimes \lambda_{B'})} && \text{(Naturality of $\lambda$)}\\
    &= \rep{(\lambda^{-1}_I \otimes B) \lambda^{-1}_B f' f}{g g' \lambda_{B'} (\lambda_I \otimes B')} && \text{(Unitality of action)} \\
    &= \rep{\lambda^{-1}_B f' f}{g g' \lambda_{B'} (\lambda_I \otimes B') (\lambda^{-1}_I \otimes B')} && \text{(Coend relation)}  \\
    &= \rep{\lambda^{-1}_B f'f}{g g' \lambda_{B'}} \\
    &= \iota(f'f, gg')
  \end{align*}
  Graphically, there is not much to do:
  \begin{center}
    \begin{tikzpicture}
\node (Sin) {$S$};
\node (f) [vert, right of=Sin] {$f$};
\node (f') [vert, right of=f] {$f'$};
\node (Sout) [right of=f'] {$B$};
\node (Spout) [right of=Sout] {$B'$};
\node (g') [vert, right = 0.5 of Spout] {$g'$};
\node (g) [vert, right of=g'] {$g$};
\node (Spin) [right of=g] {$S'$};

\draw[->] (Sin) -- (f);
\draw[->] (f) -- (f');
\draw[->] (f') -- (Sout);
\draw[->] (Spout) -- (g');
\draw[->] (g') -- (g);
\draw[->] (g) -- (Spin);

\node[draw,dashed,fit=(Sout) (Spout)] (box) {};
\end{tikzpicture}
    \qquad \raisebox{0.3cm}{$=$} \qquad
    \begin{tikzpicture}
\node (Sin) {$S$};
\node (f) [vertbig, right of=Sin] {$f'f$};
\node (Sout) [right of=f] {$B$};
\node (Spout) [right of=Sout] {$B'$};
\node (g) [vertbig, right = 0.5 of Spout] {$gg'$};
\node (Spin) [right of=g] {$S'$};

\draw[->] (Sin) -- (f);
\draw[->] (f) -- (Sout);
\draw[->] (Spout) -- (g);
\draw[->] (g) -- (Spin);

\node[draw,dashed,fit=(Sout) (Spout)] (box) {};
\end{tikzpicture}
  \end{center}
\end{proof}

There are some other easy-to-construct optics; specifically, optics out of  and into the monoidal unit $(I, I)$. Such maps in a monoidal category are sometimes called states and costates~\cite{CategoricalQuantumMechanics}.
\begin{proposition}\label{prop:costates}
  The set of costates $(S, S') \hto (I, I)$ is isomorphic to $\C(S, S')$.
\end{proposition}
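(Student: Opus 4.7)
The plan is to directly compute the defining coend by first normalising away the irrelevant copies of the unit $I$, and then applying Yoneda reduction, exactly as was done for lenses in the previous proposition.

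Starting from the definition,
\[
\Optic_\C((S, S'), (I, I)) = \int^{M \in \C} \C(S, M \otimes I) \times \C(M \otimes I, S'),
\]
I would first use the right unitor $\rho_M : M \otimes I \xrightarrow{\cong} M$, which is natural in $M$, to obtain a natural isomorphism of integrands $\C(S, M \otimes I) \times \C(M \otimes I, S') \cong \C(S, M) \times \C(M, S')$, given explicitly by $(l, r) \mapsto (\rho_M l,\, r \rho_M^{-1})$. This yields
\[
\Optic_\C((S, S'), (I, I)) \cong \int^{M \in \C} \C(S, M) \times \C(M, S').
\]

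Next, I would invoke Yoneda reduction (Lemma~\ref{lem:yoneda-reduction}) with $K = \C(-, S') : \C^\op \to \Set$ and $X = S$, which gives $\int^{M \in \C} \C(S, M) \times \C(M, S') \cong \C(S, S')$, where the map from right to left picks out the class of $(\id_S, h)$ at $M = S$, and the map from left to right sends a class $[l, r]$ to the composite $r \circ l$.

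Composing the two isomorphisms yields the map
\[
\Phi : \Optic_\C((S, S'), (I, I)) \to \C(S, S'), \qquad \rep{l}{r} \mapsto r \circ l,
\]
which makes sense directly since $l : S \to M \otimes I$ and $r : M \otimes I \to S'$ are composable. Its inverse sends $h : S \to S'$ to $\rep{\rho_S^{-1}}{h \rho_S}$ with residual $S$. There is essentially no obstacle: the only thing worth remarking is that $\Phi$ is well-defined on the coend, which follows either from the universal property or by noting directly that the relation $((f \otimes I) l, r) \sim (l, r (f \otimes I))$ is sent to the tautology $r(f \otimes I)l = r(f \otimes I)l$. A one-line check in each direction confirms that $\Phi$ and its candidate inverse are mutually inverse, using the coend relation applied to $f = h$ to identify $\rep{\rho_S^{-1}}{h \rho_S}$ with $\rep{\rho_{S'}^{-1} h}{\rho_{S'}}$ when convenient.
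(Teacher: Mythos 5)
Your proof is correct and follows exactly the paper's route: normalise away the tensoring with $I$ via the (natural) right unitor, then apply Yoneda reduction, obtaining the map $\rep{l}{r} \mapsto rl$ with inverse $h \mapsto \rep{\rho_S^{-1}}{h\rho_S}$. The paper states this more tersely but the isomorphisms and explicit formulas are the same; your extra checks (well-definedness on the coend, the identification $\rep{\rho_S^{-1}}{h\rho_S} = \rep{\rho_{S'}^{-1}h}{\rho_{S'}}$ via naturality of $\rho$) are all sound.
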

\begin{proof}
  \begin{align*}
    \Optic_\C((S, S'), (I, I))
    &= \int^{M \in \C} \C(S, M \otimes I) \times \C(M \otimes I, S') \\
    &\cong \int^{M \in \C} \C(S, M) \times \C(M, S') \\
    &\cong \C(S, S')
  \end{align*}
  by Yoneda reduction, so a state $\rep{l}{r} : (S, S') \hto (I, I)$ corresponds to the morphism $rl : S \to S'$, and a morphism $f : S \to S'$ corresponds to the state $\rep{\rho_S^{-1}}{f \rho_S} : (S, S') \hto (I, I)$
\end{proof}

In particular, for any $S \in \C$, the identity $\id_S$ yields an optic $c_S = \rep{\rho_S^{-1}}{\rho_S} : (S, S) \hto (I, I)$ that we call the \emph{connector}:
\begin{center}
  \begin{tikzpicture}
\begin{scope}[on grid]
\node (S) at (0, 0) {$S$};
\node (S') at (4, 0) {$S$};

\draw[->] (S) -- (S');

\node (I) [below right = 0.7 and 1.4 of S] {$I$};
\node (I') [below left = 0.7 and 1.4 of S'] {$I$};

\node[draw,dashed,fit=(I) (I'), inner xsep = 4pt] (box) {};
\end{scope}
\end{tikzpicture}
\end{center}

\begin{proposition}\label{prop:states}
  Suppose the monoidal unit $I$ of $\C$ is terminal. Then the set of states $(I, I) \hto (A, A')$ is isomorphic to $\C(I, A)$.
\end{proposition}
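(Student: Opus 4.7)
The plan is to reduce the defining coend to $\C(I,A)$ by two applications of terminality combined with Yoneda reduction. First, since $I$ is terminal, $\C(M \otimes A', I)$ is a one-element set for every $M$ and $A'$, so
\[
\Optic_\C((I,I),(A,A')) = \int^{M \in \C} \C(I, M \otimes A) \times \C(M \otimes A', I) \cong \int^{M \in \C} \C(I, M \otimes A).
\]

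Second, because $\C(M, I)$ is also a singleton, I can freely reintroduce it as a factor:
\[
\int^{M \in \C} \C(I, M \otimes A) \cong \int^{M \in \C} \C(I, M \otimes A) \times \C(M, I).
\]
The right-hand side is now in precisely the shape required by the covariant half of Lemma~\ref{lem:yoneda-reduction} (the formula $HX \cong \int^{C} HC \times \C(C,X)$), applied to the functor $H := \C(I, - \otimes A) : \C \to \Set$ at $X = I$. The lemma therefore identifies it with $H(I) = \C(I, I \otimes A)$, which is in turn naturally isomorphic to $\C(I, A)$ via post-composition with $\lambda_A$.

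The only remaining task is to unpack the resulting isomorphism concretely. Tracing the chain of reductions, a state $\rep{l}{r} : (I, I) \hto (A, A')$ with residual $M$ corresponds to the morphism $\lambda_A \circ (!_M \otimes A) \circ l : I \to A$, where $!_M : M \to I$ is the unique map to the terminal object; conversely, a morphism $g : I \to A$ corresponds to the state with residual $M = I$ represented by $\rep{\lambda_A^{-1} g}{!_{I \otimes A'}}$. No step presents a serious obstacle; the whole argument is a template application of Yoneda reduction once terminality has been used to normalize both the contravariant hom and the residual. An alternative, more hands-on route would skip the Yoneda step and directly verify that the two displayed maps are mutually inverse, with the coend relation applied to $!_M : M \to I$ providing the key identification needed to push the residual down to $I$.
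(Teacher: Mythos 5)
Your proof is correct, and its first step (using terminality of $I$ to collapse the contravariant factor $\C(M \otimes A', I)$ to a point) is identical to the paper's. The two arguments diverge only in how the remaining coend $\int^{M \in \C} \C(I, M \otimes A)$ is evaluated: the paper observes that the integrand is mute in its contravariant variable, invokes Lemma~\ref{lem:mute-coend} to identify the coend with $\colim \C(I, - \otimes A)$, and then uses that a colimit over a category with a terminal object is the value at that object; you instead pad the integrand with the singleton $\C(M, I)$ and apply the covariant Yoneda reduction of Lemma~\ref{lem:yoneda-reduction} at $X = I$. These are the same computation in different clothing---Yoneda reduction at a terminal object \emph{is} the statement that the colimit is the value there---but your version has the small advantage of reusing a lemma already exercised repeatedly elsewhere in the paper instead of appealing to Lemma~\ref{lem:mute-coend}. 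Your explicit descriptions of the two directions (pushing the residual down to $I$ along $!_M$ via the coend relation, and representing $g : I \to A$ by $\rep{\lambda_A^{-1} g}{!_{I \otimes A'}}$) also match the paper's, and the hands-on alternative you sketch at the end is a perfectly viable third route.
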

\begin{proof}
  First, note that
  \begin{align*}
    \Optic_\C((I,I), (A,A'))
    &= \int^{M \in \C} \C(I, M \otimes A) \times \C(M \otimes A', I) \\
    &\cong \int^{M \in \C} \C(I, M \otimes A).
  \end{align*}
  as $I$ is terminal. The interior of this coend is mute in the contravariant position, so the coend is equal to the colimit of the functor $\C(I, - \otimes A) : \C \to \Set$ by Lemma~\ref{lem:mute-coend}. But $\C$ has terminal object $I$, so 
\begin{align*}
  \int^{M \in \C} \C(I, M \otimes A) 
  &\cong \colim \C(I, - \otimes A) \\
  &\cong \C(I, I \otimes A) \\
  &\cong \C(I, A)
\end{align*}

  Explicitly, a state $f : I \to A$ in $\C$ corresponds to the optic $\rep{\lambda_A^{-1} f}{!_{I \times A'}} : (I, I) \hto (A, A')$, where $!_{I \times A'} : I \times A' \to I$ is the unique map.
\end{proof}

The remainder of this section comprises a proof of the following fact:

\begin{theorem}\label{thm:optic-functor}
  The $\Optic_\C$ construction extends to a functor \[\Optic : \SymmMonCat \to \SymmMonCat,\] where $\SymmMonCat$ denotes the (1-)category of (small) symmetric monoidal categories and strong symmetric monoidal functors.
\end{theorem}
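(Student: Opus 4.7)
The plan has three stages: (i) endow $\Optic_\C$ with a symmetric monoidal structure for each symmetric monoidal $\C$, (ii) for every strong symmetric monoidal $F : \C \to \D$ construct a strong symmetric monoidal $\Optic(F) : \Optic_\C \to \Optic_\D$, and (iii) verify that these assignments respect identities and composition of $\SymmMonCat$.

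For stage (i), I take the tensor on objects componentwise, $(S, S') \otimes (A, A') := (S \otimes A, S' \otimes A')$, with unit $(I, I)$. For two optics $\rep{l_1}{r_1}$ and $\rep{l_2}{r_2}$ with residuals $M_1$ and $M_2$, their tensor has residual $M_1 \otimes M_2$ and is represented by
\begin{align*}
S_1 \otimes S_2 \xrightarrow{l_1 \otimes l_2} M_1 \otimes A_1 \otimes M_2 \otimes A_2 \xrightarrow{M_1 \otimes \sigma \otimes A_2} M_1 \otimes M_2 \otimes A_1 \otimes A_2
\end{align*}
and analogously on the right, where $\sigma$ is the symmetry swapping $A_1$ and $M_2$. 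Well-definedness on coend classes follows by applying the coend relation in each slot separately, using functoriality of $\otimes$. The associator, unitors, and braiding of $\Optic_\C$ are obtained as $\iota$-images (Proposition~\ref{prop:iota-functor}) of the corresponding coherence data of $\C$ paired with their inverses in $\C^\op$.

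For stage (ii), given $F$ with coherence isomorphisms $\phi_{M,A} : F(M) \otimes F(A) \to F(M \otimes A)$ and $\phi_0 : I_\D \to F(I_\C)$, I define $\Optic(F)(S, S') := (F(S), F(S'))$ and send a representative $\rep{l}{r}$ with residual $M$ to
\begin{align*}
\Optic(F)(\rep{l}{r}) := \rep{\phi_{M, A}^{-1} \circ F(l)}{F(r) \circ \phi_{M, A'}}
\end{align*}
in $\Optic_\D$, with residual $F(M)$. Respect for the coend relation follows from naturality of $\phi$: a coend identification induced by $f : M \to N$ in $\C$ becomes the coend identification induced by $F(f)$ in $\D$ after conjugating by $\phi$. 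Preservation of optic composition and identities uses the cocycle and unit axioms of $\phi$; the strong symmetric monoidal structure on $\Optic(F)$ itself is again obtained by transporting $\phi$ and $\phi_0$ through $\iota$.

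The main obstacle is the coherence bookkeeping, both for the hexagon/pentagon/triangle diagrams in $\Optic_\C$ and for the monoidality squares of $\Optic(F)$. The clean way to dispatch these is to observe that $\iota : \C \times \C^\op \to \Optic_\C$ is strong symmetric monoidal when $\C \times \C^\op$ carries the pointwise tensor, so each coherence diagram in $\Optic_\C$ among \emph{$\iota$-images of morphisms of $\C$} reduces to its already-established counterpart in $\C$. The residual tensoring and the use of $\sigma$ above mean that the coherence diagrams for arbitrary optics ultimately reduce to these $\iota$-image cases after one or two applications of the coend relation to absorb the residual-shuffling symmetries. Functoriality in stage (iii) is then the observation that the formula for $\Optic(F)$ is compatible with the standard composition $\phi^{G F}_{M,A} = G(\phi^F_{M,A}) \circ \phi^G_{FM, FA}$ of coherence isomorphisms for composed strong monoidal functors, and that $\Optic(\id_\C)$ acts as the identity on representatives.
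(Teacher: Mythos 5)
Your proposal follows essentially the same route as the paper: componentwise tensor on objects with the residuals combined via the symmetry, coherence data and the monoidal structure of $\Optic(F)$ lifted through $\iota$ from $\C \times \C^\op$, the action of $\Optic(F)$ on a representative given by conjugating $Fl$ and $Fr$ with the structure isomorphisms $\phi$, and functoriality read off from the standard composite structure maps $G(\phi^F) \circ \phi^G$. The only substantive step you leave implicit that the paper checks explicitly is the interchange law $(p_2 \circ p_1) \otimes (q_2 \circ q_1) = (p_2 \otimes q_2) \circ (q_1 \otimes p_1$-style$)$ for the tensor on optics, which the paper establishes by pushing morphisms past the crossing using naturality of the symmetry so that the two extra twists cancel.
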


\begin{proposition}\label{prop:change-of-action-monoidal}
  A monoidal functor $F : \C \to \D$ induces a functor $\Optic(F) : \Optic_\C \to \Optic_\D$, given on objects by $\Optic(F)(S, S') = (FS, FS')$ and on morphisms $\rep{l}{r} : (S, S') \to (A, A')$ by
  \begin{align*}
    \Optic(F)(\rep{l}{r}) := \rep{\phi^{-1}_{M,A} (Fl)}{(Fr) \phi_{M,A'}},
  \end{align*}
  where $\phi_{M,A} : FM \otimes FA \to F(M \otimes A)$ and $\phi_I : I \to FI$ denote the structure maps of the monoidal functor. Graphically:
  \begin{center}
    \begin{tikzpicture}
\begin{scope}[on grid]
\node[vert] (l) at (0, 0) {$Fl$};
\node[vert] (r) at (6.5, 0) {$Fr$};

\node (S) [left of=l] {$FS$};
\node[vert] (phi) [right = 1.2 of l] {$\phi_{M,A}$};
\node (A) [below right = 1 and 1.5 of phi] {$FA$};
\node (S') [right of=r] {$FS'$};
\node[vert] (phii) [left = 1.2 of r] {$\phi^{-1}_{M, A'}$};
\node (A') [below left = 1 and 1.5 of phii] {$FA'$};

\draw[->] (S) -- (l);
\draw[->] (l) -- (phi);
\draw[->] (phi) to[out=south east,in=west] (A);

\draw[<-] (S') -- (r);
\draw[<-] (r) -- (phii);
\draw[<-] (phii) to[out=south west,in=east] (A');

\draw[->] (phi) to[out=north east, in=west] ++(1,1)
 to ($(phii) + (-1, 1)$)
 to[out=east, in=north west] (phii)
;

\node[draw,dashed,fit=(A) (A'), inner xsep = 8pt] (box) {};
\end{scope}
\end{tikzpicture}
  \end{center}
\end{proposition}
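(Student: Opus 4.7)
The plan is to verify three things in order: that $\Optic(F)$ is well-defined on equivalence classes, that it preserves identities, and that it preserves composition.

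First I would show well-definedness. Given $l : S \to M \otimes A$, $r : N \otimes A' \to S'$ and $f : M \to N$, the coend relation identifies $\rep{(f \otimes A)l}{r}$ with $\rep{l}{r(f \otimes A')}$. Pushing both representatives through $\Optic(F)$ produces optics in $\D$ whose residuals are $FN$ and $FM$ respectively, and the key tool is the naturality of $\phi$ with respect to $f$: the square $\phi_{N,A} \circ (Ff \otimes FA) = F(f \otimes A) \circ \phi_{M, A}$ lets us rewrite $\phi^{-1}_{N,A} F((f\otimes A)l)$ as $(Ff \otimes FA) \phi^{-1}_{M, A}(Fl)$, and symmetrically on the right. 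The two images are then related by the coend relation in $\Optic_\D$ along the morphism $Ff : FM \to FN$, so they represent the same optic.

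Next I would check the identity. The identity on $(S, S')$ is $\rep{\lambda^{-1}_S}{\lambda_{S'}}$ with residual $I$, so its image has residual $FI$ and representative $\rep{\phi^{-1}_{I,S}(F\lambda^{-1}_S)}{(F\lambda_{S'})\phi_{I, S'}}$. The unit coherence of the monoidal functor, $\lambda_{FS} = F\lambda_S \circ \phi_{I,S} \circ (\phi_I \otimes FS)$, rewrites the left entry as $(\phi_I \otimes FS)\lambda^{-1}_{FS}$ and (dually) the right entry as $\lambda_{FS'}(\phi_I^{-1} \otimes FS')$. Applying the coend relation along $\phi_I : I \to FI$ absorbs these factors and produces $\rep{\lambda^{-1}_{FS}}{\lambda_{FS'}} = \id_{(FS, FS')}$.

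Finally, for composition, suppose $\rep{l'}{r'} : (R, R') \hto (S, S')$ has residual $M$ and $\rep{l}{r} : (S, S') \hto (A, A')$ has residual $N$, so the composite is $\rep{(M \otimes l)l'}{r'(M \otimes r)}$ with residual $M \otimes N$. Its image under $\Optic(F)$ uses residual $F(M \otimes N)$, while $\Optic(F)(\rep{l}{r}) \circ \Optic(F)(\rep{l'}{r'})$ uses residual $FM \otimes FN$. The associativity coherence $\phi_{M \otimes N, A}(\phi_{M,N} \otimes FA) = \phi_{M, N \otimes A}(FM \otimes \phi_{N,A})$, combined with naturality of $\phi$ applied to $l$ and $r$, produces an equation relating the two representatives that is exactly the coend relation along $\phi_{M,N} : FM \otimes FN \to F(M \otimes N)$; one dualises the argument on the right side of the optic.

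The main obstacle is the composition case. It requires carefully threading the associativity coherence of $F$ together with two separate naturality squares for $\phi$, while keeping track of which of the two residuals ($FM \otimes FN$ versus $F(M \otimes N)$) a given representative uses. Once the relevant identity $\phi^{-1}_{M \otimes N, A} F(M \otimes l) = (\phi_{M, N} \otimes FA)(FM \otimes \phi^{-1}_{N, A} Fl) \phi^{-1}_{M, S}$ is established (and its dual on the right), the result drops out of a single application of the coend relation along $\phi_{M, N}$. A graphical proof using the string diagram for $\Optic(F)(\rep{l}{r})$ drawn above the statement makes the bookkeeping essentially invisible: composing the two pictures vertically and then merging adjacent $\phi$ and $\phi^{-1}$ nodes via coherence reproduces the image of the composed optic.
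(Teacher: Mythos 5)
Your proposal is correct and follows essentially the same route as the paper: unit coherence plus the coend relation along $\phi_I$ for identities, and the hexagon axiom plus naturality of $\phi$, discharged by a single coend relation along $\phi_{M,N}$, for composition. The explicit well-definedness check via naturality of $\phi$ is a welcome addition that the paper leaves implicit.
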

\begin{proof}

  This preserves identities:
  \begin{align*}
  &\Optic(F)(\id_{(S, S')}) \\
  &= \Optic(F)(\rep{\lambda^{-1}_S}{\lambda_{S'}}) &&\text{(Definition of $\id$)} \\
  &= \rep{\phi^{-1}_{I,S} (F\lambda^{-1}_S)}{(F\lambda_{S'}) \phi_{I,S'}} && \text{(Definition of $\Optic(F)$)} \\
  &= \rep{(\phi_I^{-1} \otimes S) \phi^{-1}_{I,S} (F\lambda^{-1}_S)}{(F\lambda_{S'}) \phi_{I,S'}(\phi_I \otimes S) } && \text{(Introducing isomorphism to both sides)} \\
  &= \rep{\lambda^{-1}_{FS}}{\lambda_{FS'}} &&\text{($F$ is a monoidal functor)} \\
  &= \id_{(FS, FS')}
  \end{align*}
  And given two optics $\rep{l}{r} : (S, S') \hto (A, A')$ and $\rep{l'}{r'} : (R, R') \hto (S, S')$ with residuals $M$ and $M'$, it preserves composition:
\begingroup
\allowdisplaybreaks
\begin{align*}
&\Optic(F)(\rep{l}{r} \circ \rep{l'}{r'})  \\
&\qquad \text{(Definition of $\circ$)} \\
&= \Optic(F)(\rep{(M' \otimes l)l'}{r'(M' \otimes r)}) \\
&\qquad \text{(Definition of $\Optic(F)$)} \\
&= \rep{\phi^{-1}_{M' \otimes M,A} F((M' \otimes l)l')}{F(r'(M' \otimes r)) \phi_{M' \otimes M,A'}} \\
&\qquad \text{(Functoriality of $F$)} \\
&= \rep{\phi^{-1}_{M' \otimes M,A} F(M' \otimes l)(Fl')}{(Fr') F(M' \otimes r) \phi_{M' \otimes M,A'}} \\
&\qquad \text{(Introducing isomorphism to both sides)} \\
&= \rep{(\phi^{-1}_{M', M} \otimes A)\phi^{-1}_{M' \otimes M,A} F(M' \otimes l)(Fl')}{(Fr') F(M' \otimes r) \phi_{M' \otimes M,A'}(\phi_{M', M} \otimes A)} \\
&\qquad \text{(Hexagon axiom for $F$)} \\
&= \rep{(FM' \otimes \phi^{-1}_{M,A})\phi^{-1}_{M',M \otimes A}(F(M' \otimes l)) (Fl')}{(Fr') (F(M' \otimes r)) \phi_{M',M \otimes A'}  (FM' \otimes \phi_{M,A'})} \\
&\qquad \text{(Naturality of $\phi$)} \\
&= \rep{(FM' \otimes \phi^{-1}_{M,A})(FM' \otimes Fl)\phi^{-1}_{M',S} (Fl')}{(Fr') \phi_{M',S'} (FM' \otimes Fr) (FM' \otimes \phi_{M,A'})} \\
&\qquad \text{(Functoriality of $\otimes$)} \\
&= \rep{(FM' \otimes \phi^{-1}_{M,A} (Fl))(\phi^{-1}_{M',S} (Fl'))}{((Fr') \phi_{M',S'})(FM' \otimes (Fr) \phi_{M,A'})} \\
&\qquad \text{(Definition of $\circ$)} \\
&= \rep{\phi^{-1}_{M,A} (Fl)}{(Fr) \phi_{M,A'}} \circ \rep{\phi^{-1}_{M',S} (Fl')}{(Fr') \phi_{M',S'}} \\
&\qquad \text{(Definition of $\Optic(F)$)} \\
&= \Optic(F)(\rep{l}{r}) \circ \Optic(F)(\rep{l'}{r'})
\end{align*}
\endgroup
The critical move is adding the isomorphism $(\phi_{M', M} \otimes A)$ to both sides of the coend, so that the hexagon axiom for $F$ may be applied.
\end{proof}

\begin{lemma}\label{lem:iota-commute-with-opticf}
$\iota$ commutes with $\Optic(F)$, in the sense that
\[ \Optic(F)(\iota(f, g)) = \iota(Ff, Fg) \]
\end{lemma}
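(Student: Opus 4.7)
The strategy is direct unfolding of both sides followed by a single application of the coend relation. Specifically, I would unpack $\Optic(F)(\iota(f,g))$ using the definitions and show that it equals $\iota(Ff, Fg)$ up to the coend equivalence generated by the structure morphism $\phi_I : I \to FI$.

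First, by definition $\iota(f, g) = \rep{\lambda^{-1}_A f}{g \lambda_{A'}}$, which has residual $I$. Applying $\Optic(F)$ gives
\[ \Optic(F)(\iota(f,g)) = \rep{\phi^{-1}_{I,A}\,F(\lambda^{-1}_A f)}{F(g \lambda_{A'})\,\phi_{I,A'}}, \]
whose residual has moved to $FI$. Using functoriality of $F$, this separates as
\[ \rep{\phi^{-1}_{I,A}\,(F\lambda^{-1}_A)\,(Ff)}{(Fg)\,(F\lambda_{A'})\,\phi_{I,A'}}. \]
On the other hand $\iota(Ff, Fg) = \rep{\lambda^{-1}_{FA}(Ff)}{(Fg)\lambda_{FA'}}$, with residual $I$ (in $\D$). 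So the two sides have different residuals and we must use the coend relation to bridge them.

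The key step is the left unit coherence axiom for the monoidal functor $F$, which says $\lambda_{FA} = F(\lambda_A) \circ \phi_{I,A} \circ (\phi_I \otimes FA)$. Rearranging, we obtain $\phi^{-1}_{I,A} \circ F\lambda^{-1}_A = (\phi_I \otimes FA) \circ \lambda^{-1}_{FA}$, and dually $F\lambda_{A'} \circ \phi_{I,A'} = \lambda_{FA'} \circ (\phi^{-1}_I \otimes FA')$. Substituting these into the expression above yields
\[ \Optic(F)(\iota(f,g)) = \rep{(\phi_I \otimes FA)\,\lambda^{-1}_{FA}\,(Ff)}{(Fg)\,\lambda_{FA'}\,(\phi^{-1}_I \otimes FA')}. \]
Now the coend relation applied to $\phi_I : I \to FI$ lets us slide $\phi_I \otimes -$ across the dashed line, producing $(\phi^{-1}_I \otimes FA')(\phi_I \otimes FA') = \id$ on the right, and the result is exactly $\iota(Ff, Fg)$.

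The main obstacle is just the bookkeeping: knowing which form of the left unit coherence to invoke, and ensuring the $\phi_I$ factor is moved across in the correct direction. There is no genuine difficulty beyond careful use of the monoidal functor axioms, so the proof is essentially a one-line diagrammatic computation (slide a $\phi_I$ from the left side of the cut to the right, where it cancels its inverse). Graphically, the statement is visually obvious: both sides are pictures with $Ff$ on the left, $Fg$ on the right, and a trivial residual wire above, modulo the cosmetic insertion of $\phi_I$ and $\phi^{-1}_I$ on that wire.
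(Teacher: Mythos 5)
Your proposal is correct and follows essentially the same route as the paper: unfold both sides, use functoriality of $F$, invoke the left-unit coherence of the monoidal functor, and apply the coend relation once to the isomorphism $\phi_I : I \to FI$ so that it cancels against its inverse. The only (immaterial) difference is the order of the last two steps—the paper introduces $\phi_I^{\pm 1}$ on both sides via the coend relation first and then applies the coherence axiom, whereas you apply the coherence first and then slide $\phi_I$ across the cut to cancel.
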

\begin{proof}
This is a straightforward calculation:
  \begin{align*}
    & \Optic(F)(\iota(f, g)) \\
    &\qquad \text{(Definition of $\iota$)} \\    
    &= \Optic(F)(\rep{\lambda_A^{-1} f}{g \lambda_{A'}}) \\
    &\qquad \text{(Definition of $\Optic(F)$)} \\    
    &= \rep{\phi^{-1}_{I,A} (F(\lambda_A^{-1} f))}{(F(g \lambda_{A'})) \phi_{I,A'}} \\
    &\qquad \text{(Functoriality of $F$)} \\    
    &= \rep{\phi^{-1}_{I,A} (F\lambda_A^{-1}) (Ff)}{(Fg)(F \lambda_{A'}) \phi_{I,A'}} \\
    &\qquad \text{(Introducing $\phi_I$ to both sides)} \\    
    &= \rep{(\phi^{-1}_I \otimes FA) \phi^{-1}_{I,A} (F\lambda_A^{-1}) (Ff)}{(Fg)(F \lambda_{A'}) \phi_{I,A'} (\phi_I \otimes FA)} \\
    &\qquad \text{($F$ is a monoidal functor)} \\    
    &= \rep{\lambda_{FA}^{-1} (Ff)}{(Fg)\lambda_{FA'}} \\
    &\qquad \text{(Definition of $\iota$)} \\    
    &= \iota(Ff, Fg)
  \end{align*}
\end{proof}

\begin{proposition}\label{prop:iota-naturality}
$\iota : \C \times \C^\op \to \Optic_\C$ ``lifts natural isomorphisms'', in the following sense. Given monoidal functors $F, G : \C \to \D$ and a monoidal natural isomorphism $\alpha : F \Rightarrow G$, there is an induced natural isomorphism $\Optic(\alpha) : \Optic(F) \Rightarrow \Optic(G)$ with components:
\begin{align*}
{\Optic(\alpha)}_{(S, S')} &: (FS, FS') \to (GS, GS') \\
{\Optic(\alpha)}_{(S, S')} &:= \iota(\alpha_{S}, \alpha^{-1}_{S'})
\end{align*}
\end{proposition}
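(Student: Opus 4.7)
The plan is to verify naturality directly, after a brief check that each component is a well-defined isomorphism in $\Optic_\D$.

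Each component $\iota(\alpha_S, \alpha_{S'}^{-1})$ is a well-defined optic $(FS, FS') \hto (GS, GS')$ since $\alpha_S : FS \to GS$ and $\alpha_{S'}^{-1} : GS' \to FS'$ live in $\D$. Invertibility is immediate from functoriality of $\iota$ (Proposition~\ref{prop:iota-functor}): composing with the proposed inverse $\iota(\alpha_S^{-1}, \alpha_{S'})$ yields $\iota(\id_{FS}, \id_{FS'})$, which is the identity optic on $(FS, FS')$, and symmetrically on the other side.

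The substantive step is naturality: given $\rep{l}{r} : (S, S') \hto (A, A')$ with residual $M$, I need to show
\[
\Optic(G)(\rep{l}{r}) \circ \iota(\alpha_S, \alpha_{S'}^{-1}) = \iota(\alpha_A, \alpha_{A'}^{-1}) \circ \Optic(F)(\rep{l}{r}).
\]
First I would unfold both composites and use naturality of $\lambda$ to absorb the unit factors introduced by $\iota$ into the residuals, leaving the left-hand side with residual $GM$ and the right-hand side with residual $FM$. These two optics are then compared by using the coend relation driven by the isomorphism $\alpha_M : FM \to GM$ (an iso because $\alpha$ is a natural iso): I slide $\alpha_M$ from the right leg of the right-hand side to its left leg, converting its residual from $FM$ to $GM$.

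After this slide, matching the left legs reduces to the identity $(\alpha_M \otimes \alpha_A)(\phi^F_{M,A})^{-1}(Fl) = (\phi^G_{M,A})^{-1}(Gl)\alpha_S$, which follows by applying monoidality of $\alpha$ (the identity $\phi^G_{M,A}(\alpha_M \otimes \alpha_A) = \alpha_{M \otimes A}\phi^F_{M,A}$) and then naturality of $\alpha$ at $l$. A symmetric calculation on the right legs, using the corresponding identity for $\phi^{-1}$ together with naturality of $\alpha^{-1}$ at $r$, completes the match. The main obstacle is pure bookkeeping: tracking residuals through unitor simplifications, picking the correct direction to slide $\alpha_M$ through the coend relation, and carefully invoking monoidality of $\alpha$. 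Conceptually the content is the graphical observation that an $\alpha$-wire passes freely through the residual strand of an optic, transforming its $F$-image into its $G$-image.
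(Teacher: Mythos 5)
Your proposal is correct and follows essentially the same route as the paper's proof: both insert $\alpha_M$ on the residual via the coend relation, commute it past the monoidal structure maps $\phi$ using monoidality of $\alpha$, and then past $Fl$ and $Fr$ using naturality of $\alpha$ (the paper presents this graphically rather than equationally, and omits the routine well-definedness/invertibility check you include). No gaps.
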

\begin{proof}
Suppose $\phi$ and $\psi$ are the structure maps for $F$ and $G$ respectively. We just have to show naturality, i.e.\ that for $p : (S, S') \hto (A, A')$ in $\D$, the equation \[\Optic(\alpha)_{(A, A')} \circ \Optic(F)(p) = \Optic(G)(p) \circ \Optic(\alpha)_{(S, S')}\] holds. Suppose $p = \rep{l}{r}$ with residual $M$. On the left we have:
  \begin{center}
    \begin{tikzpicture}
\begin{scope}[on grid]
\node[vert] (l) at (0, 0) {$Fl$};
\node[vert] (r) at (9, 0) {$Fr$};

\node (S) [left of=l] {$FS$};
\node[vert] (phi) [right = 1.2 of l] {$\phi_{M,A}$};
\node[vert] (alpha) [below right = 1 and 1.5 of phi] {$\alpha_A$};
\node (A) [right = 1.2 of alpha] {$GA$};
\node (S') [right of=r] {$FS'$};
\node[vert] (phii) [left = 1.2 of r] {$\phi^{-1}_{M, A'}$};
\node[vert] (alphai) [below left = 1 and 1.5 of phii] {$\alpha^{-1}_{A'}$};
\node (A') [left = 1.2 of alphai] {$GA'$};

\draw[->] (S) -- (l);
\draw[->] (l) -- (phi);
\draw[->] (phi) to[out=south east,in=west] (alpha);
\draw[->] (alpha) -- (A);

\draw[<-] (S') -- (r);
\draw[<-] (r) -- (phii);
\draw[<-] (phii) to[out=south west,in=east] (alphai);
\draw[<-] (alphai) -- (A');

\draw[->] (phi) to[out=north east, in=west] ++(1,1)
 to ($(phii) + (-1, 1)$)
 to[out=east, in=north west] (phii)
;

\node[draw,dashed,fit=(A) (A'), inner xsep = 6pt] (box) {};
\end{scope}
\end{tikzpicture}
  \end{center}
  We use the coend relation to place an $\alpha$ on either side:
  \begin{center}
    \begin{tikzpicture}
\begin{scope}[on grid]
\node[vert] (l) at (0, 0) {$Fl$};
\node[vert] (r) at (9, 0) {$Fr$};

\node (S) [left of=l] {$FS$};
\node[vert] (phi) [right = 1.2 of l] {$\phi_{M,A}$};
\node[vert] (alpha) [below right = 1 and 1.5 of phi] {$\alpha_A$};
\node[vert] (alpha2) [above right = 1 and 1.5 of phi] {$\alpha_M$};
\node (A) [right = 1.2 of alpha] {$GA$};
\node (S') [right of=r] {$FS'$};
\node[vert] (phii) [left = 1.2 of r] {$\phi^{-1}_{M, A'}$};
\node[vert] (alphai) [below left = 1 and 1.5 of phii] {$\alpha^{-1}_{A'}$};
\node[vert] (alphai2) [above left = 1 and 1.5 of phii] {$\alpha^{-1}_M$};
\node (A') [left = 1.2 of alphai] {$GA'$};

\draw[->] (S) -- (l);
\draw[->] (l) -- (phi);
\draw[->] (phi) to[out=south east,in=west] (alpha);
\draw[->] (alpha) -- (A);

\draw[<-] (S') -- (r);
\draw[<-] (r) -- (phii);
\draw[<-] (phii) to[out=south west,in=east] (alphai);
\draw[<-] (alphai) -- (A');

\draw[->] (phi) to[out=north east, in=west] ++(1,1)
 to (alpha2);
\draw[->] (alpha2) -- (alphai2);
\draw[->] (alphai2)
 to ($(phii) + (-1, 1)$)
 to[out=east, in=north west] (phii)
;

\node[draw,dashed,fit=(A) (A'), inner xsep = 6pt] (box) {};
\end{scope}
\end{tikzpicture}
  \end{center}
  And then monoidality of $\alpha$ to commute it past $\phi$.
  \begin{center}
    \begin{tikzpicture}
\begin{scope}[on grid]
\node[vert] (l) at (0, 0) {$Fl$};
\node[vert] (r) at (9, 0) {$Fr$};

\node (S) [left of=l] {$FS$};
\node[vert] (alpha) [right = 1.2 of l] {$\alpha_{M \otimes A}$};
\node[vert] (phi) [right = 1.4 of alpha] {$\psi_{M,A}$};
\node (A) [below right = 1 and 1.5 of phi] {$GA$};
\node (S') [right of=r] {$FS'$};
\node[vert] (alphai) [left = 1.2 of r] {$\alpha^{-1}_{M \otimes A}$};
\node[vert] (phii) [left = 1.4 of alphai] {$\psi^{-1}_{M, A'}$};
\node (A') [below left = 1 and 1.5 of phii] {$GA'$};

\draw[->] (S) -- (l);
\draw[->] (l) -- (alpha);
\draw[->] (alpha) -- (phi);
\draw[->] (phi) to[out=south east,in=west] (A);

\draw[<-] (S') -- (r);
\draw[<-] (r) -- (alphai);
\draw[<-] (alphai) -- (phii);
\draw[<-] (phii) to[out=south west,in=east] (A');

\draw[->] (phi) to[out=north east, in=west] ++(1,1)
 to ($(phii) + (-1, 1)$)
 to[out=east, in=north west] (phii)
;

\node[draw,dashed,fit=(A) (A'), inner xsep = 8pt] (box) {};
\end{scope}
\end{tikzpicture}
  \end{center}
  Finally, $\alpha$ commutes with $F l$ and $F r$ by naturality.
  \begin{center}
    \begin{tikzpicture}
\begin{scope}[on grid]
\node[vert] (alpha) at (0, 0) {$\alpha_S$};
\node[vert] (alphai) at (9, 0) {$\alpha_{S'}^{-1}$};

\node (S) [left of=alpha] {$GS$};
\node[vert] (l) [right = 1.2 of alpha] {$Gl$};
\node[vert] (phi) [right = 1.4 of l] {$\psi_{M,A}$};
\node (A) [below right = 1 and 1.5 of phi] {$GA$};
\node (S') [right of=alphai] {$GS'$};
\node[vert] (r) [left = 1.2 of alphai] {$Gr$};
\node[vert] (phii) [left = 1.4 of r] {$\psi^{-1}_{M, A'}$};
\node (A') [below left = 1 and 1.5 of phii] {$GA'$};

\draw[->] (S) -- (alpha);
\draw[->] (alpha) -- (l);
\draw[->] (l) -- (phi);
\draw[->] (phi) to[out=south east,in=west] (A);

\draw[<-] (S') -- (alphai);
\draw[<-] (alphai) -- (r);
\draw[<-] (r) -- (phii);
\draw[<-] (phii) to[out=south west,in=east] (A');

\draw[->] (phi) to[out=north east, in=west] ++(1,1)
 to ($(phii) + (-1, 1)$)
 to[out=east, in=north west] (phii)
;

\node[draw,dashed,fit=(A) (A'), inner xsep = 8pt] (box) {};
\end{scope}
\end{tikzpicture}
  \end{center}
  This is the diagram for $\Optic(G)(p) \circ {\Optic(\alpha)}_{(S, S')}$.
\end{proof}

\begin{theorem}
  $\Optic_\C$ is symmetric monoidal, where $(S, S') \otimes (T, T') = (S \otimes T, S' \otimes T')$, the unit object is $(I, I)$, and the action on a pair of morphisms $\rep{l}{r} : (S, S') \hto (A, A')$ and $\rep{l'}{r'} : (T, T') \hto (B, B')$ is given by:
  \begin{center}
    \begin{tikzpicture}
\begin{scope}[on grid]

\node[vert] (l) at (0, 0) {$l$};
\node[vert] (r) at (6, 0) {$r$};

\node (S) [left of=l] {$S$};
\node (A) [below right = 2 and 2 of l] {$A$};
\node (S') [right of=r] {$S'$};
\node (A') [below left = 2 and 2 of r] {$A'$};

\draw[->] (S) -- (l);
\draw[->] (l) to[out=south east,in=west] (A);

\draw[<-] (S') -- (r);
\draw[<-] (r) to[out=south west,in=east] (A');

\draw[->] (l) to[out=north east, in=west] ++(1,1)
 to ++(4,0)
 to[out=east, in=north west] (r)
;

\node[vert] (l') at (0, -2) {$l'$};
\node[vert] (r') at (6, -2) {$r'$};

\node (T) [left of=l'] {$T$};
\node (B) [below right = 1 and 2 of l'] {$B$};
\node (T') [right of=r'] {$T'$};
\node (B') [below left = 1 and 2 of r'] {$B'$};

\draw[->] (T) -- (l');
\draw[->] (l') to[out=south east,in=west] (B);

\draw[<-] (T') -- (r');
\draw[<-] (r') to[out=south west,in=east] (B');

\draw[->] (l') 
 to[out=north east, in=west] ++(2,2)
 to ++(2,0)
 to[out=east, in=north west] (r')
;

\node[draw,dashed,fit=(A) (A') (B) (B'), inner xsep = 8pt] (box) {};

\end{scope}
\end{tikzpicture}
  \end{center}
\end{theorem}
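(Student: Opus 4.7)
Formally, the tensor of representatives $\rep{l}{r}$ and $\rep{l'}{r'}$ with residuals $M$ and $N$ is the optic with residual $M \otimes N$ whose left component is $(M \otimes \sigma_{A,N} \otimes B)(l \otimes l') : S \otimes T \to M \otimes N \otimes A \otimes B$, where $\sigma$ denotes the symmetry of $\C$, and whose right component is the analogous composite built from $r \otimes r'$ using $\sigma_{N, A'}$ on the input side. My first task is to show this is well-defined on coend classes: if a representative of the first factor is changed along some $f : M \to M'$, the combined representative is changed by $f \otimes N : M \otimes N \to M' \otimes N$, and the required relation holds by functoriality of $\otimes$ and naturality of $\sigma$.

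For bifunctoriality, $\id \otimes \id = \id$ reduces to a short calculation with the left unitors and $\sigma_{I,I} = \id$. The interchange law $(p_2 \circ p_1) \otimes (q_2 \circ q_1) = (p_2 \otimes q_2) \circ (p_1 \otimes q_1)$ follows by combining the composition formula $\rep{(M \otimes l)l'}{r'(M \otimes r)}$ with the tensor definition above; the symmetries introduced by the tensor commute past the residual actions of composition by naturality.

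For the structure isomorphisms I apply the functor $\iota : \C \times \C^\op \to \Optic_\C$ of Proposition~\ref{prop:iota-functor} to the corresponding structure maps of $\C \times \C^\op$: for instance, the associator at $(S, S'), (T, T'), (U, U')$ is $\iota(\alpha_{S,T,U}, \alpha^{-1}_{S',T',U'})$, and the unitors and symmetry are defined analogously. These are automatically isomorphisms by functoriality of $\iota$, and the pentagon, triangle, and hexagon coherence axioms reduce directly to the corresponding axioms in $\C \times \C^\op$, which hold because $\C$ is symmetric monoidal.

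The main obstacle is naturality of these structure maps against arbitrary optics, not just against morphisms in the image of $\iota$. For instance, naturality of the associator requires
\[\iota(\alpha, \alpha^{-1}) \circ ((p_1 \otimes p_2) \otimes p_3) = (p_1 \otimes (p_2 \otimes p_3)) \circ \iota(\alpha, \alpha^{-1}).\]
Expanding both sides via the tensor and composition formulas yields representatives on the combined residual $M_1 \otimes M_2 \otimes M_3$ whose left and right components differ only by placement of $\C$-associators relative to this residual; the coend relation then absorbs these associators, identifying the two sides. Naturality of the unitors and symmetry follow by the same pattern.
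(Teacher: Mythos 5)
Your proposal is correct and follows essentially the same route as the paper: the same tensor-of-representatives formula, well-definedness and interchange checked via naturality of the symmetry together with the coend relation, and structure isomorphisms lifted along $\iota$ with the coherence axioms inherited from $\C \times \C^\op$. The only divergence is that you verify naturality of the structure maps against general optics by direct expansion and absorption of isomorphisms on the residual, where the paper instead invokes Proposition~\ref{prop:iota-naturality}; both reduce to the same coend-relation computation.
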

\begin{proof}
  Suppose the two optics have residuals $M$ and $N$ respectively. Written equationally, their tensor is:
  \begin{align*}
    \rep{l}{r} \otimes \rep{l'}{r'} &:= \rep{(M \otimes s_{A,N} \otimes B)(l \otimes l')}{(r \otimes r')(M \otimes s_{A',N} \otimes B')}
  \end{align*}
  This does not depend on the choice of representatives, as demonstrated by the equivalence of the following diagrams:
  \begin{center}
    \begin{tikzpicture}
\begin{scope}[on grid]

\node[vert] (l) at (-1, 0) {$l$};
\node[vert] (r) at (5, 0) {$r$};

\node[vert] (f) at ($(l) + (1,1)$) {$f$};

\node (S) [left of=l] {$S$};
\node (A) [below right = 2 and 3 of l] {$A$};
\node (S') [right of=r] {$S'$};
\node (A') [below left = 2 and 2 of r] {$A'$};

\draw[->] (S) -- (l);
\draw[->] (l) 
to[out=south east,in=west] ($(l) + (1,-1)$)
to[out=east,in=west] (A);

\draw[<-] (S') -- (r);
\draw[<-] (r) to[out=south west,in=east] (A');

\draw[->] (l) to[out=north east, in=west] (f);
\draw[->] (f) to[out=east, in=west] ($(r) + (-1,1)$)
 to[out=east, in=north west] (r)
;

\node[vert] (l') at (-1, -3) {$l'$};
\node[vert] (r') at (5, -3) {$r'$};

\node[vert] (g) at ($(l') + (1,1)$) {$g$};

\node (T) [left of=l'] {$T$};
\node (B) [below right = 1 and 3 of l'] {$B$};
\node (T') [right of=r'] {$T'$};
\node (B') [below left = 1 and 2 of r'] {$B'$};

\draw[->] (T) -- (l');
\draw[->] (l') to[out=south east,in=west] (B);

\draw[<-] (T') -- (r');
\draw[<-] (r') to[out=south west,in=east] (B');

\draw[->] (l') to[out=north east, in=west] (g);
\draw[->] (g) 
 to[out=north east, in=west] ($(g) + (1,1)$)
 to ($(r') + (-2,2)$)
 to[out=east, in=north west] (r')
;

\node[draw,dashed,fit=(A) (A') (B) (B'), inner xsep = 8pt] (box) {};

\end{scope}
\end{tikzpicture}
    \begin{tikzpicture}
\begin{scope}[on grid]

\node[vert] (l) at (0, 0) {$l$};
\node[vert] (r) at (6, 0) {$r$};

\node[vert] (f) at ($(r) + (-1,1)$) {$f$};

\node (S) [left of=l] {$S$};
\node (A) [below right = 2 and 2 of l] {$A$};
\node (S') [right of=r] {$S'$};
\node (A') [below left = 2 and 3 of r] {$A'$};

\draw[->] (S) -- (l);
\draw[->] (l) 
to[out=south east,in=west] ($(l) + (1,-1)$)
to[out=east,in=west] (A);

\draw[<-] (S') -- (r);
\draw[<-] (r) 
to[out=south west,in=east] ($(r) + (-1,-1)$)
to[out=west,in=east] (A');

\draw[->] (l) 
  to[out=north east, in=west] ($(l) + (1,1)$)
  to[out=east, in=west] (f);
\draw[->] (f) to[out=east, in=north west] (r);

\node[vert] (l') at (0, -3) {$l'$};
\node[vert] (r') at (6, -3) {$r'$};

\node[vert] (g) at ($(r') + (-1,1)$) {$g$};

\node (T) [left of=l'] {$T$};
\node (B) [below right = 1 and 2 of l'] {$B$};
\node (T') [right of=r'] {$T'$};
\node (B') [below left = 1 and 3 of r'] {$B'$};

\draw[->] (T) -- (l');
\draw[->] (l') to[out=south east,in=west] (B);

\draw[<-] (T') -- (r');
\draw[<-] (r') to[out=south west,in=east] (B');

\draw[->] (l') 
  to[out=north east, in=west] ($(l') + (2,2)$)
  to[out=east, in = west] ($(g) + (-1,1)$)
  to[out=east, in = north west] (g);
\draw[->] (g) to[out=east, in=north west] (r');

\node[draw,dashed,fit=(A) (A') (B) (B'), inner xsep = 8pt] (box) {};

\end{scope}
\end{tikzpicture}
  \end{center}
  To check functoriality of $\otimes$, suppose we have optics
  \begin{align*}
    \rep{l_1}{r_1} : (S_1, S_1') &\hto (S_2, S_2') \\
    \rep{l_2}{r_2} : (S_2, S_2') &\hto (S_3, S_3') \\
    \rep{p_1}{q_1} : (T_1, T_1') &\hto (T_2, T_2') \\
    \rep{p_2}{q_2} : (T_2, T_2') &\hto (T_3, T_3').
  \end{align*}
  The string diagram for $(\rep{l_2}{r_2} \circ \rep{l_1}{r_1}) \otimes (\rep{p_2}{q_2} \circ \rep{p_1}{q_1})$ is:
  \begin{center}
    \begin{tikzpicture}
\begin{scope}[on grid]


\node[vert] (l1) at (0, 0) {$l_1$};
\node[vert, below right = 0.7 and 1 of l1] (l2) {$l_2$};
\node[vert] (r1) at (7, 0) {$r_1$};
\node[vert, below left = 0.7 and 1 of r1] (r2) {$r_2$};

\node (S3) [below right = 2.5 and 2 of l2] {$S_3$};
\node (S3') [below left = 2.5 and 2 of r2] {$S_3'$};

\node (S1) [left of=l1] {$S_1$};
\node (S1') [right of=r1] {$S_1'$};

\draw[->] (S1) -- (l1);
\draw[<-] (S1') -- (r1);

\draw[->] (l1) to[out=north east, in=west] ++(1,1)
 to ($(r1) + (-1,1)$)
 to[out=east, in=north west] (r1)
;

\draw[->] (l2) to[out=north east, in=west] ++(1,1)
 to ($(r2) + (-1,1)$)
 to[out=east, in=north west] (r2)
;

\draw[->] (l1) to[out=south east,in=west] (l2);
\draw[->] (r2) to[out=east, in=south west] (r1);

\draw[->] (l2) to[out=south east,in=west] (S3);
\draw[<-] (r2) to[out=south west,in=east] (S3');


\node[vert] (p1) at (0, -2.5) {$p_1$};
\node[vert, below right = 0.7 and 1 of p1] (p2) {$p_2$};
\node[vert] (q1) at (7, -2.5) {$q_1$};
\node[vert, below left = 0.7 and 1 of q1] (q2) {$q_2$};

\node (T3) [below right = 0.7 and 2 of p2] {$T_3$};
\node (T3') [below left = 0.7 and 2 of q2] {$T_3'$};

\node (T1) [left of=p1] {$T_1$};
\node (T1') [right of=q1] {$T_1'$};

\draw[->] (T1) -- (p1);
\draw[<-] (T1') -- (q1);

\draw[->] (p1) 
 to[out=north east, in=west] ++(1,1)
 to ($(q1) + (-1,1)$)
 to[out=east, in=north west] (q1)
;

\draw[->] (p2) to[out=north east, in=west] ++(1,1)
 to ($(q2) + (-1,1)$)
 to[out=east, in=north west] (q2)
;

\draw[->] (p1) to[out=south east,in=west] (p2);
\draw[->] (q2) to[out=east, in=south west] (q1);

\draw[->] (p2) to[out=south east,in=west] (T3);
\draw[<-] (q2) to[out=south west,in=east] (T3');

\node[draw,dashed,fit=(S3) (S3') (T3) (T3'), inner xsep = 8pt] (box) {};

\end{scope}
\end{tikzpicture}
  \end{center}
  And for $(\rep{l_2}{r_2} \otimes \rep{p_2}{q_2}) \circ (\rep{l_1}{r_1} \otimes \rep{p_1}{q_1})$ is:
  \begin{center}
    \begin{tikzpicture}
\begin{scope}[on grid]


\node[vert] (l1) at (0, 0) {$l_1$};
\node[vert] (r1) at (9, 0) {$r_1$};

\node (S1) [left of=l1] {$S_1$};
\node[vert] (l2) [below right = 2.5 and 2 of l1] {$l_2$};
\node (S1') [right of=r1] {$S_1'$};
\node[vert] (r2) [below left = 2.5 and 2 of r1] {$r_2$};

\draw[->] (S1) -- (l1);
\draw[->] (l1) to[out=south east,in=west] (l2);

\draw[<-] (S1') -- (r1);
\draw[<-] (r1) to[out=south west,in=east] (r2);

\draw[->] (l1) to[out=north east, in=west] ++(1,1)
 to ($(r1) + (-1,1)$)
 to[out=east, in=north west] (r1)
;

\node[vert] (p1) at (0, -2) {$p_1$};
\node[vert] (q1) at (9, -2) {$q_1$};

\node (T1) [left of=p1] {$T_1$};
\node[vert] (p2) [below right = 2 and 2 of p1] {$p_2$};
\node (T') [right of=q1] {$T_1'$};
\node[vert] (q2) [below left = 2 and 2 of q1] {$q_2$};

\draw[->] (T1) -- (p1);
\draw[->] (p1) to[out=south east,in=west] (p2);

\draw[<-] (T') -- (q1);
\draw[<-] (q1) to[out=south west,in=east] (q2);

\draw[->] (p1) 
 to[out=north east, in=west] ++(2,2)
 to ($(q1) + (-2,2)$)
 to[out=east, in=north west] (q1)
;


\draw[->] (l2) to[out=north east, in=west] ++(1,1)
 to ($(r2) + (-1,1)$)
 to[out=east, in=north west] (r2)
;
\draw[->] (p2) 
 to[out=north east, in=west] ++(2,1.5)
 to ($(q2) + (-2,1.5)$)
 to[out=east, in=north west] (q2)
;

\node(S3) [below right = 1.5 and 2 of l2] {$S_3$};
\node(S3') [below left = 1.5 and 2 of r2] {$S_3'$};

\draw[->] (l2) to[out=south east,in=west] (S3);
\draw[<-] (r2) to[out=south west,in=east] (S3');

\node (T3) [below right = 1 and 2 of p2] {$T_3$};
\node (T3') [below left = 1 and 2 of q2] {$T_3'$};

\draw[->] (p2) to[out=south east,in=west] (T3);
\draw[<-] (q2) to[out=south west,in=east] (T3');

\node[draw,dashed,fit=(S3) (S3') (T3) (T3'), inner xsep = 8pt] (box) {};
the a
\end{scope}
\end{tikzpicture}
  \end{center}
  These two diagrams are equivalent: we can use the naturality of the symmetry morphism to push $l_2$ and $r_2$ past the crossing to be next to $p_2$ and $q_2$ respectively. This creates two extra twists that can be cancelled in the center of the diagram.

  The structure morphisms are all lifted from the structure morphisms in $\C \times \C^\op$:
  \begin{align*}
    \alpha_{(R, R'), (S, S'), (T, T')} &:= \iota(\alpha_{R,S,T}, \alpha_{R',S',T'}^{-1}) \\
    \lambda_{(S, S')} &:= \iota(\lambda_{S}, \lambda_{S'}^{-1}) \\
    \rho_{(S, S')} &:= \iota(\rho_{S}, \rho_{S'}^{-1}) \\
    s_{(S, S'), (T, T')} &:= \iota(s_{S, T}, s_{T', S'})
  \end{align*}

  Note that because $\iota(S, S') = (S, S')$, the equations required  to hold for $\iota$ to be a monoidal functor hold by definition (although we don't yet know that $\Optic_\C$ is monoidal). The pentagon and triangle equations then hold in $\Optic_\C$, as they are the image of the same diagrams in $\C \times \C^\op$ under $\iota$. The only remaining thing to verify is that these structure maps are natural in $\Optic_\C$, but this follows from the previous proposition.
\end{proof}


\begin{proposition}
  For monoidal $F : \C \to \D$, the induced $\Optic(F) : \Optic_\C \to \Optic_\D$ is also monoidal.
\end{proposition}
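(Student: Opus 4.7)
The plan is to define the monoidal coherence data for $\Optic(F)$ as the $\iota$-images of the corresponding data for $F$. Concretely, set
\[
\Phi_I \defeq \iota(\phi_I,\, \phi_I^{-1}) : (I, I) \to (FI, FI)
\]
and
\[
\Phi_{(S,S'),(T,T')} \defeq \iota(\phi_{S,T},\, \phi^{-1}_{S',T'}) : (FS \otimes FT,\, FS' \otimes FT') \to (F(S \otimes T),\, F(S' \otimes T')).
\]
Because $F$ is strong (symmetric) monoidal, each $\phi_{S,T}$ and $\phi_I$ is an isomorphism in $\D$, and a functor sends isomorphisms to isomorphisms, so $\Phi$ consists of isomorphisms in $\Optic_\D$. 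This is what is needed to land in $\SymmMonCat$.

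The verification then splits into two parts. First I would check naturality of $\Phi$: for optics $\rep{l}{r}:(S,S') \hto (A,A')$ and $\rep{l'}{r'}:(T,T') \hto (B,B')$ with residuals $M$ and $N$, I must show
\[
\Phi_{(A,A'),(B,B')} \circ \bigl(\Optic(F)(\rep{l}{r}) \otimes \Optic(F)(\rep{l'}{r'})\bigr) \;=\; \Optic(F)(\rep{l}{r} \otimes \rep{l'}{r'}) \circ \Phi_{(S,S'),(T,T')}.
\]
Unfolding both sides using the formulas for $\Optic(F)$, for $\otimes$ in $\Optic_\D$, and for $\iota$, the two representatives differ only by how the various $\phi$'s cluster. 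The ingredients needed are naturality of $\phi$ (to slide $\phi_{M,A}$ and friends past $Fl, Fl', Fr, Fr'$), the compatibility of $\phi$ with the middle symmetry for symmetric monoidal $F$, namely $\phi_{Y,X} \circ s_{FX,FY} = (Fs_{X,Y}) \circ \phi_{X,Y}$, and finally the coend relation to absorb any leftover isomorphism of the form $\phi_{M,N}^{\pm 1}$ appearing symmetrically on both sides. This is cleanest as a string-diagram argument, mirroring the diagrammatic steps of Propositions~\ref{prop:change-of-action-monoidal} and \ref{prop:iota-naturality}.

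Second, the coherence axioms — the associativity pentagon, the unit triangles, and the symmetric hexagon for $\Phi$ — essentially come for free. By inspection of the preceding theorem, every structure morphism of $\Optic_\C$ and $\Optic_\D$ is of the form $\iota(\text{mate})$, and by Lemma~\ref{lem:iota-commute-with-opticf} we have $\Optic(F) \circ \iota = \iota \circ (F \times F^\op)$. Hence each required coherence diagram in $\Optic_\D$ is the $\iota$-image of the corresponding diagram in $\D \times \D^\op$ built from $\phi$, $\phi^{-1}$ and the structure maps of $\D$. The latter commutes because $F$ is assumed strong symmetric monoidal and $\iota$ is a functor.

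The main obstacle is the naturality step: the tensor of optics is not strictly componentwise but inserts a symmetry crossing between the two residuals, so tracking how $\phi_{M,A}$, $\phi_{N,B}$, and $\phi_{M,N}$ must be reassembled into $\phi_{M \otimes N, A \otimes B}$ on the other side requires an application of the associativity hexagon for $F$ — exactly as in the composition calculation of Proposition~\ref{prop:change-of-action-monoidal}, only now threaded through the symmetry. Once this bookkeeping is discharged, the coherence axioms follow essentially formally.
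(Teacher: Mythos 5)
Your proposal is correct and follows essentially the same route as the paper: the structure maps are defined as $\iota$-lifts of $F$'s coherence data (the paper writes $\iota(\phi_{S,T},\phi_{S',T'})$, leaving the inverse in the contravariant slot implicit where you make it explicit), the coherence axioms are inherited through functoriality of $\iota$ and Lemma~\ref{lem:iota-commute-with-opticf}, and naturality is the one substantive check. The paper discharges that check by citing Proposition~\ref{prop:iota-naturality}, whereas you carry out the equivalent diagrammatic argument directly; both amount to the same use of naturality of $\phi$, the hexagon axiom, and the coend relation.
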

\begin{proof}
The structure morphisms for monoidality are given by lifting the structure morphisms for $F$:
\begin{align*}
\phi_{(S, S'), (T, T')} &:= \iota(\phi_{S, T}, \phi_{S', T'}) &&: F(S, S') \otimes F(T, T') \hto F((S, S') \otimes (T, T')) \\
\phi &:= \iota(\phi, \phi) &&: (I, I) \hto F(I, I)
\end{align*}
The monoidality axioms follow by lifting the axioms for $F$ and naturality follows by Proposition~\ref{prop:iota-naturality}.
\end{proof}

\begin{proof}[Proof of Theorem~\ref{thm:optic-functor}]
The functor is well defined on its domain: if $\C$ is small then $\Optic_\C$ exists. The only property left to check is functoriality, i.e. that for monoidal functors $F : \C \to \D$ and $G : \D \to \E$ we have
\[ \Optic(G) \circ \Optic(F) = \Optic(G \circ F).\]
On objects this is clear, as $\Optic(F)(S, S') = (FS, FS')$. On a morphism $\rep{l}{r} : (S, S') \hto (A, A')$ in $\C$, we check:
\begin{align*}
(\Optic(G) \circ \Optic(F))(\rep{l}{r})
&= \Optic(G) \left(\rep{\phi^{-1}_{M,A} (Fl)}{(Fr) \phi_{M,A'}}\right) \\
&= \rep{\psi^{-1}_{FM,FA} (G(\phi^{-1}_{M,A} (Fl)))}{(G((Fr) \phi_{M,A'}))\psi_{FM,FA'}} \\
&= \rep{\psi^{-1}_{FM,FA} (G\phi^{-1}_{M,A}) (GFl)}{(GFr) (G\phi_{M,A'})\psi_{FM,FA'}} \\
&=\Optic(G \circ F)(\rep{l}{r})
\end{align*}
where $\phi$ and $\psi$ denote the structure maps for $F$ and $G$ respectively, and in the last step we use that $(G\phi_{M,A'})\psi_{FM,FA'}$ is by definition the structure map for $G \circ F$. Checking that the identity is preserved is similar.
%
%
\end{proof}

This doesn't extend to a strict 2-functor $\SymmMonCat \to \SymmMonCat$, as there is only an action of $\Optic$ on natural \emph{isomorphisms}. It is however functorial on natural isomorphisms, giving a 2-functor on the `homwise-core' of $\SymmMonCat$. We do not explore this any further in the present note.

\begin{proposition}
  If $\C$ is a strict symmetric monoidal category then $\Optic_\C$ is strict, and $\iota : \C \times \C^\op \to \Optic_\C$ is a strict monoidal functor. For $F : \C \to \D$ a strict monoidal functor, the induced functor $\Optic(F) : \Optic_\C \to \Optic_\D$ is also strict.
\end{proposition}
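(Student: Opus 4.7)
The plan is to verify each claim using the formulas for the structure maps of $\Optic_\C$, $\iota$, and $\Optic(F)$ given in the preceding propositions, observing that strictness in $\C$ forces all the relevant structure morphisms to be identities.

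First I will show that $\Optic_\C$ is strict. By the construction in the preceding theorem, the associator, left unitor, and right unitor of $\Optic_\C$ are defined as $\iota(\alpha, \alpha^{-1})$, $\iota(\lambda, \lambda^{-1})$, and $\iota(\rho, \rho^{-1})$ respectively. If $\C$ is strict symmetric monoidal, then $\alpha$, $\lambda$, $\rho$ are identities in $\C$, so these structure morphisms become $\iota(\id, \id)$, which by Proposition~\ref{prop:iota-functor} is the identity optic. Moreover, $(S, S') \otimes (T, T') = (S \otimes T, S' \otimes T')$, and since $\otimes$ is strict in $\C$, the tensor on $\Optic_\C$ is strictly associative and unital on objects as well.

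Next I will show $\iota : \C \times \C^\op \to \Optic_\C$ is strict monoidal. On objects $\iota(S, S') = (S, S')$ and the tensor in $\Optic_\C$ agrees strictly with that of $\C \times \C^\op$ on objects, so the structure isomorphisms needed for $\iota$ to be monoidal (namely $\iota(S, S') \otimes \iota(T, T') \to \iota((S, S') \otimes (T, T'))$ and $(I, I) \to \iota(I, I)$) can both be taken to be identity optics. The compatibility of $\iota$'s action on tensored morphisms with these identity structure maps reduces to a diagram chase using the explicit definition of the tensor on morphisms (where the swap $s_{A,N}$ appears, but since $\iota$ sends pairs of morphisms into optics with residual $I$, these swaps involve $I$ and are therefore identities in the strict setting).

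Finally, for a strict monoidal $F : \C \to \D$, the structure maps for $\Optic(F)$ are $\iota(\phi_{S,T}, \phi_{S',T'})$ and $\iota(\phi_I, \phi_I)$ by the preceding proposition. Strictness of $F$ means $\phi_{S,T} = \id$ and $\phi_I = \id$, so these lift to identity optics and $\Optic(F)$ is strict. I will also remark that the formula
\[
\Optic(F)(\rep{l}{r}) = \rep{\phi^{-1}_{M,A}(Fl)}{(Fr)\phi_{M,A'}}
\]
collapses to $\rep{Fl}{Fr}$ when $F$ is strict, confirming the expected behaviour. The only place one might worry is verifying that identity optics really are preserved strictly, but this is immediate from $\iota(\id, \id) = \rep{\lambda^{-1}}{\lambda} = \rep{\id}{\id}$ in the strict case. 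No obstacles arise; every step is a direct substitution of identities into the previously established formulas.
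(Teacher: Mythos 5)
Your proof is correct and follows essentially the same route as the paper: observe that the structure morphisms of $\Optic_\C$, of $\iota$, and of $\Optic(F)$ are all obtained by applying $\iota$ to structure morphisms of $\C$ (respectively of $F$), so strictness downstairs forces them all to be identity optics. The extra details you supply (the collapse of $\Optic(F)(\rep{l}{r})$ to $\rep{Fl}{Fr}$, and the observation that the symmetries $s_{A,I}$ appearing in the tensor of $\iota$-images are identities by the unit coherence axioms) are correct and only elaborate on what the paper leaves implicit.
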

\begin{proof}
  The structure maps of $\Optic_\C$ are given by $\iota$ applied to the structure maps of $\C$. If the latter are identities, then so are the former---the identity morphisms in $\Optic_\C$ are by definition $\iota(\id_S, \id_{S'})$.

  That $\iota$ is strict is clear, as the structure morphisms in $\Optic_\C$ are exactly the structure morphisms in $\C \times \C^\op$ under $\iota$. 
  
  Finally, the structure morphisms of $\Optic(F)$ are lifted from $F$, so if the latter is strict then so is the former.
\end{proof}

\subsection{Teleological Categories}\label{sec:teleological-categories}
In this section we establish a universal property of the $\Optic$ construction. The idea is that every optic $\rep{l}{r} : (S, S') \hto (A, A')$ consists of a morphism $S \to M \otimes A$ and the `formal dual' of a morphism $M \otimes A' \to S'$, composed with a `formal counit' that traces out the object $M$:
\begin{center}
  \begin{tikzpicture}
\node[vert] (l) at (0, 0) {$l$};
\node[vert] (r) at (0, -2) {$r^*$};

\node (S) [left of=l]{$S$};
\node (S') [left of=r]{$S'$};

\node (A) [right =2 of l]{$A$};
\node (A') [right = 2 of r]{$A'$};

\draw[->] (S) -- (l);
\draw[->] (l) -- (A);
\draw[<-] (S') -- (r);
\draw[<-] (r) -- (A');

\draw[->-=0.5] (l) to[out = -20, in = 20, edge label=$M$] (r);

%
\end{tikzpicture}
\end{center}

It will be convenient to equip $\Optic_\C$ with a slightly different symmetric monoidal structure:

\begin{definition}
  The \emph{switched} monoidal product on $\Optic_\C$ is given on objects by
  \begin{align*}
    (S, S') \switched (T, T') := (S \otimes T, T' \otimes S')
  \end{align*}
  And on morphisms $\rep{l}{r} : (S, S') \hto (A, A')$ and $\rep{l'}{r'} : (T, T') \hto (B, B')$ by:
  \begin{center}
    \begin{tikzpicture}
\begin{scope}[on grid]

\node[vert] (l) at (0, 0) {$l$};
\node[vert] (r') at (6, 0) {$r'$};
\node[vert] (l') at (0, -2) {$l'$};
\node[vert] (r) at (6, -2) {$r$};

\node (S) [left of=l] {$S$};
\node (A) [below right = 2 and 2 of l] {$A$};
\node (T') [right of=r'] {$T'$};
\node (B') [below left = 2 and 2 of r'] {$B'$};

\node (T) [left of=l'] {$T$};
\node (B) [below right = 1 and 2 of l'] {$B$};
\node (S') [right of=r] {$S'$};
\node (A') [below left = 1 and 2 of r] {$A'$};

\draw[->] (S) -- (l);
\draw[<-] (T') -- (r');
\draw[->] (T) -- (l');
\draw[<-] (S') -- (r);

\draw[->] (l) to[out=south east,in=west] (A);
\draw[<-] (r') to[out=south west,in=east] (B');
\draw[->] (l') to[out=south east,in=west] (B);
\draw[<-] (r) to[out=south west,in=east] (A');

\draw[->] (l') 
 to[out=north east, in=west] ++(2,1)
 to[out=east, in=west] ++(2,1.6)
 to[out=east, in=north west] (r')
;

\draw[<-] (r) 
 to[out=north west, in=east] ++(-2,1)
 to[out=west, in=east] ++(-2,1.6)
 to[out=west, in=north east] (l)
;

\node[draw,dashed,fit=(A) (A') (B) (B'), inner xsep = 8pt] (box) {};

\end{scope}
\end{tikzpicture}
  \end{center}
\end{definition}

The universal property for $\Optic_\C$ given in this section is an argument for this being the ``morally correct'' tensor, although it does seem a little strange. When we later discuss lawful optics, we are forced to use the unswitched tensor to maintain the invariant that our objects are of the form $(X, X)$.

\begin{proposition}
  $(\Optic_\C, \switched, (I, I))$ is a symmetric monoidal category. 
\end{proposition}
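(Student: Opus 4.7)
The plan is to mirror the proof given for the unswitched monoidal structure on $\Optic_\C$, since the switched tensor differs only by swapping the order of factors in the (contravariant) second component. First I would check that $\switched$ is well-defined on representatives: any isomorphism inserted along a residual strand in one of the two tensored optics can be absorbed by the coend relation exactly as in the unswitched case, using the same string-diagram manipulation. Preservation of identities by $\switched$ is immediate from its graphical definition, since each identity becomes a pair of parallel wires that contribute nothing to the crossings.

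The main obstacle is bifunctoriality. Given four composable optics $\rep{l_i}{r_i}$ and $\rep{p_i}{q_i}$ ($i=1,2$) of suitable shape, I would draw the string diagrams for $(\rep{l_2}{r_2} \circ \rep{l_1}{r_1}) \switched (\rep{p_2}{q_2} \circ \rep{p_1}{q_1})$ and for $(\rep{l_2}{r_2} \switched \rep{p_2}{q_2}) \circ (\rep{l_1}{r_1} \switched \rep{p_1}{q_1})$. Because $\switched$ routes each row's residual across the other row, the two diagrams differ only by extra pairs of crossings of residual wires. These can be canceled using naturality of the symmetry in $\C$ to slide the internal maps past the crossings, together with the symmetric monoidal axioms to collapse the resulting paired twists --- analogously to the computation of bifunctoriality of $\otimes$.

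The structure morphisms are defined by lifting via $\iota$ from the corresponding structure morphisms in $\C$, with signs chosen so the second component (which lives in $\C^\op$) points the correct way:
\[ \alpha^\switched_{(R,R'),(S,S'),(T,T')} := \iota(\alpha_{R,S,T},\, \alpha^{-1}_{T',S',R'}), \]
\[ \lambda^\switched_{(S,S')} := \iota(\lambda_S,\, \rho^{-1}_{S'}), \quad \rho^\switched_{(S,S')} := \iota(\rho_S,\, \lambda^{-1}_{S'}), \]
\[ s^\switched_{(S,S'),(T,T')} := \iota(s_{S,T},\, s_{S',T'}). \]
Since $\iota(S,S') = (S,S')$ on objects, the pentagon, triangle, and hexagon equations for $(\Optic_\C, \switched, (I,I))$ are the $\iota$-images of the corresponding equations in $\C \times \C^\op$ equipped with the symmetric monoidal structure $(S,S') \mathbin{\tilde{\otimes}} (T,T') := (S \otimes T, T' \otimes S')$, which is itself monoidal because $\C$ is symmetric monoidal. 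Naturality of each structure morphism in $\Optic_\C$ then follows from Proposition~\ref{prop:iota-naturality} applied to the appropriate pair of functors, completing the verification that $\switched$ endows $\Optic_\C$ with a symmetric monoidal structure.
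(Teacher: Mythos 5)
Your proposal is correct and takes essentially the same route as the paper: the paper simply observes that the argument is nearly identical to the unswitched case and records the same four structure morphisms $\iota(\alpha_{R,S,T},\alpha^{-1}_{T',S',R'})$, $\iota(\lambda_S,\rho^{-1}_{S'})$, $\iota(\rho_S,\lambda^{-1}_{S'})$, $\iota(s_{S,T},s_{S',T'})$ that you give. Your additional detail on bifunctoriality and on lifting the coherence axioms through $\iota$ is exactly the content of the paper's proof for the unswitched tensor, transported verbatim.
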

\begin{proof}
  The proof that $(\Optic_\C, \switched, (I, I))$ is symmetric monoidal is nearly identical to that for the unswitched tensor. Note that due to the switching, the structure morphisms are slightly different:
  \begin{align*}
    \alpha_{(R, R'), (S, S'), (T, T')} &:= \iota(\alpha_{R,S,T}, \alpha_{T',S',R'}^{-1}) \\
    \lambda_{(S, S')} &:= \iota(\lambda_{S}, \rho_{S'}^{-1}) \\
    \rho_{(S, S')} &:= \iota(\rho_{S}, \lambda_{S'}^{-1}) \\
    s_{(S, S'), (T, T')} &:= \iota(s_{S, T}, s_{S', T'})
  \end{align*}

\end{proof}

\begin{remark}
  Just as in the unswitched case, if $\C$ is a strict monoidal category than so is $(\Optic_\C, \switched, (I, I))$.
\end{remark}

We now define the structure on a symmetric monoidal category universally provided by the $\Optic$ construction.

\begin{definition}[Compare {\cite[Definition 5.1]{CoherenceForLenses}}]
  A \emph{teleological category} is a symmetric monoidal category $(\T, \teletimes, I)$, equipped with:
  \begin{itemize}
  \item A symmetric monoidal subcategory $\T_d$ of \emph{dualisable morphisms} containing all the objects of $\T$, with an involutive symmetric monoidal functor ${(-)}^* : \T_d \to \T_d^\op$, where---not finding a standard symbol for such a thing---we mean $\T_d^\op$ to be the category with both the direction of the arrows \emph{and} the order of the tensor flipped: ${(A \teletimes B)}^* \cong B^* \teletimes A^*$. Note that there is therefore also a canonical isomorphism $\phi : I \cong I^*$
  \item A symmetric monoidal extranatural family of morphisms $\varepsilon_X : X \teletimes X^* \to I$, called \emph{counits}, natural with respect to the \emph{dualisable} morphisms.
  \end{itemize}
\end{definition}
Unpacking the definition, $\varepsilon$ being a symmetric monoidal extranatural transformation amounts to the following diagrams in $\T$ commuting:
\[
  \begin{tikzcd}
    X \teletimes Y^* \ar[r, "f \teletimes Y^*"]  \ar[d, "X \teletimes f^*", swap] & Y \teletimes Y^* \ar[d, "\varepsilon_Y"] \\
    X \teletimes X^* \ar[r, "\varepsilon_X", swap] & I
  \end{tikzcd} \hspace{1cm}
  \begin{tikzcd}
    X^* \teletimes X \ar[r, "s"] \ar[d, "\cong" swap]  & X \teletimes X^* \ar[d, "\varepsilon_X"] \\
   X^* \teletimes {(X^*)}^* \ar[r, "\varepsilon_{X^*}", swap] & I
  \end{tikzcd}\]
\[
  \begin{tikzcd}[column sep = large]
    X \teletimes Y \teletimes Y^* \teletimes X^* \ar[r, "X \teletimes \varepsilon_Y \teletimes X^*"] \ar[d, "\cong" swap]  & X \teletimes X^* \ar[d, "\varepsilon_X"] \\
   X \teletimes Y \teletimes (X \teletimes Y)^* \ar[r, "\varepsilon_{X \teletimes Y}", swap] & I
  \end{tikzcd} \hspace{1cm}
  \begin{tikzcd}
    I \teletimes I^* \ar[r,"I \teletimes \phi"] \ar[dr, swap, "\varepsilon_I"] & I \teletimes I \ar[d, "\cong"] \\
    & I
  \end{tikzcd}
\]
where $f : X \to Y$ is dualisable.


Note that because $\T_d$ is symmetric monoidal and has the same collection of objects as $\T$, the symmetric monoidal structure morphisms of $\T$ must be contained in $\T_d$ and so are dualisable.

\begin{example}
  ~\begin{enumerate}[(1)]
  \item Any compact closed category is a teleological category, where every morphism is dualisable and the unit morphisms have been forgotten.
  \item Any symmetric monoidal category with terminal monoidal unit is trivially teleological, setting the dualisable morphisms to be all isomorphisms.
  \end{enumerate}
\end{example}

This definition of teleological category differs from the original given in~\cite{CoherenceForLenses}, in that the duality switches the order of the tensor product. We do this so that compact closed categories are teleological, but the bookkeeping does admittedly become more confusing.


\begin{definition}
  A \emph{teleological functor} $F : \T \to \S$ is a symmetric monoidal functor that restricts to a functor $F_d : \T_d \to \S_d$ on the dualisable subcategories, commutes with the duality via a monoidal natural isomorphism $d_X : F(X^*) \to {(FX)}^*$, and such that the counits are preserved:
  \[
   \begin{tikzcd}[column sep = large]
    F(X \teletimes X^*) \ar[r, "\phi_{X, X^*}"]  \ar[d, "F\varepsilon_X", swap] & FX \teletimes F(X^*) \ar[r, "FX \teletimes d_X"] & FX \teletimes (FX)^* \ar[d, "\varepsilon_{FX}"] \\
    FI \ar[rr, "\phi_I", swap] & & I
  \end{tikzcd}
  \]
\end{definition}

Together we have $\Tele$, the category of teleological categories and teleological functors. There are evident functors 
\begin{align*}
U &: \Tele \to \SymmMonCat \\
{(-)}_d &: \Tele \to \SymmMonCat 
\end{align*}
that take a teleological category to its underlying symmetric monoidal category and subcategory of dualisable morphisms respectively.

The definition of teleological category suggests a string diagram calculus similar to that for compact closed categories, but where only counits are allowed and only morphisms known to be dualisable may be passed around a counit. We have of course not proven that such a calculus is sound for teleological categories, but we trust that a sceptical reader could verify our arguments equationally.

\begin{proposition}
  $\Optic_\C$ forms a teleological category, where:
  \begin{itemize}
  \item The dualisable morphisms are all morphisms of the form $\iota(f, g)$;
  \item The involution is given on objects by ${(S, S')}^* := (S', S)$, and on morphisms by $\iota{(f, g)}^* := \iota(g, f)$;
  \item The counit $\varepsilon_{(S, S')} : (S, S') \switched {(S, S')}^* = (S \otimes S', S \otimes S') \to (I, I)$ is given by the connector: \[\varepsilon_{(S, S')} := c_{S \otimes S'}.\]
  \end{itemize}
\end{proposition}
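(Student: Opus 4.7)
The plan is to verify each of the three pieces of the teleological structure in turn, making heavy use of the functoriality of $\iota$ and of Proposition~\ref{prop:costates}, which reduces questions about optics with codomain $(I, I)$ to questions about morphisms in $\C$. Since every extranaturality axiom for $\varepsilon$ has codomain $(I, I)$, this reduction converts each categorical equation into an equation in $\C$, where it can be verified by direct manipulation of unitors and functoriality.

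First I would check that the dualisable subcategory $\T_d$ is well-defined, symmetric monoidal, and contains every object. Each of these follows from Proposition~\ref{prop:iota-functor} together with the fact that $(\Optic_\C, \switched, (I, I))$ has all of its structure morphisms in the image of $\iota$: composition closure is functoriality of $\iota$, identities are $\iota(\id, \id)$, objects are trivially included, and tensor closure as well as containment of the coherence isomorphisms are built into the definition of $\switched$. Next I would verify that ${(-)}^*$ is an involutive symmetric monoidal functor to $\T_d^{\op}$: on objects ${(S, S')}^{**} = (S, S')$, on morphisms $\iota(f, g)^{**} = \iota(g, f)^* = \iota(f, g)$, and functoriality and monoidality transport along the bijection $(f, g) \leftrightarrow (g, f)$ of $\C \times \C^{\op}$, which interchanges composition and tensor order in exactly the way required to match $\T_d^{\op}$.

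The substantive step is the four extranaturality axioms for $\varepsilon_{(S, S')} = c_{S \otimes S'} = \rep{\rho^{-1}_{S \otimes S'}}{\rho_{S \otimes S'}}$. I would treat each by unfolding the composite via the formulas for $\switched$-tensor and optic composition, and then identifying the resulting costate $(X, X') \hto (I, I)$ with its corresponding morphism $X \to X'$ in $\C$. For the naturality axiom applied to a dualisable $f = \iota(f_1, f_2) : (S, S') \to (A, A')$, both sides collapse to the morphism $f_1 \otimes f_2 : S \otimes A' \to A \otimes S'$---prepending $f_1$ on the covariant wire is the same as prepending $f_2$ on the contravariant wire, which is exactly what a counit asserts. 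The symmetry axiom follows because the connector is manifestly invariant under swapping its two factors. The binary monoidality axiom reduces to an identity that holds after applying associators and symmetries, and the nullary axiom reduces to $\id_I = \id_I$.

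The main obstacle is the bookkeeping between the switched tensor, the tensor-reversing duality, and the symmetry morphisms needed to massage each diagram into canonical form. The binary monoidality diagram in particular requires an isomorphism $(X \teletimes Y) \teletimes {(X \teletimes Y)}^* \cong X \teletimes (Y \teletimes Y^*) \teletimes X^*$ to see that composing two counits in sequence equals a single counit on the tensor; this is precisely why the tensor was switched, since $\switched$ places the dual-side factors in the correct nested order, so the requisite rearrangement is built entirely from structure isomorphisms lifted via $\iota$ and hence trivially present in $\T_d$.
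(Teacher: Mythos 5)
Your proposal is correct and follows essentially the same route as the paper: the dualisable subcategory and the involution are handled exactly as in the paper via the functoriality of $\iota$, and the extranaturality, symmetry and monoidality axioms for $\varepsilon$ reduce to equations of morphisms in $\C$ because every diagram terminates at $(I, I)$. The only stylistic difference is that you make the costate isomorphism of Proposition~\ref{prop:costates} the main computational engine, whereas the paper verifies the axioms by string diagrams and records that reduction only as a closing remark; your computation that both sides of the naturality square collapse to $f_1 \otimes f_2 : S \otimes A' \to A \otimes S'$ checks out.
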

\begin{proof}
  That morphisms of the form $\iota(f, g)$ constitute a symmetric monoidal subcategory is clear, they are the image of the symmetric monoidal functor $\iota$.
  
  The functor ${(-)}^*$ is a symmetric monoidal involution, in fact it is strictly so:
  \begin{align*}
    {\left( (S, S') \switched (T, T') \right)}^*
    &= {\left( S \otimes T, T' \otimes S' \right)}^* \\
    &= {\left(T' \otimes S', S \otimes T  \right)} \\
    &= (T', T) \switched (S', S) \\
    &= {(T, T')}^* \switched {(S, S')}^*
  \end{align*}

  To check extranaturality of $\varepsilon$, suppose we have a dualisable optic $\iota(f, g) : (S, S') \hto (T, T')$, so $f : S \to T$ and $g : T' \to S'$. Happily, all the switching in the definitions cancels out! Extranaturality is witnessed by the equality of the string diagrams:
  \begin{center}
    \begin{tikzpicture}
\begin{scope}[on grid]
\node (S) at (0, 0) {$S$};
\node (S') at (4, 0) {$T$};
\node (T) [below = 1 of S] {$T'$};
\node (T') [below = 1 of S'] {$S'$};
\node[vert] (f) [right = 1 of S] {$f$};
\node[vert] (g) [right = 1 of T] {$g$};

\draw[->] (S) -- (f);
\draw[->] (f) -- (S');

\draw[->] (T) -- (g);
\draw[->] (g) -- (T');

\node (I) [below right = 1 and 2 of T] {$I$};
\node (I') [below left = 1 and 1 of T'] {$I$};

\node[draw,dashed,fit=(I) (I'), inner xsep = 4pt] (box) {};
\end{scope}
\end{tikzpicture}
    \qquad \raisebox{1.5cm}{$=$} \qquad
    \begin{tikzpicture}
\begin{scope}[on grid]
\node (S) at (0, 0) {$S$};
\node (S') at (4, 0) {$T$};
\node (T) [below = 1 of S] {$T'$};
\node (T') [below = 1 of S'] {$S'$};
\node[vert] (f) [left = 1 of S'] {$f$};
\node[vert] (g) [left = 1 of T'] {$g$};

\draw[->] (S) -- (f);
\draw[->] (f) -- (S');

\draw[->] (T) -- (g);
\draw[->] (g) -- (T');

\node (I) [below right = 1 and 1 of T] {$I$};
\node (I') [below left = 1 and 2 of T'] {$I$};

\node[draw,dashed,fit=(I) (I'), inner xsep = 4pt] (box) {};
\end{scope}
\end{tikzpicture}
  \end{center}
  Symmetry of $\varepsilon$ by:
  \begin{center}
    \begin{tikzpicture}
\begin{scope}[on grid]
\node (S) at (0, 0) {$S$};
\node (S2) at (4, 0) {$S$};
\node (S') [below = 1 of S] {$S'$};
\node (S'2) [below = 1 of S2] {$S'$};

\draw[->] (S) -- ++(1,0) to[out = east, in = west] ++(1,-1) to[out = east, in = west] ++ (1,1) -- (S2);
\draw[->] (S') -- ++(1,0) to[out = east, in = west] ++(1,1) to[out = east, in = west] ++ (1,-1) -- (S'2);

\node (I) [below right = 1 and 1 of S'] {$I$};
\node (I') [below left = 1 and 1 of S'2] {$I$};

\node[draw,dashed,fit=(I) (I'), inner xsep = 4pt] (box) {};
\end{scope}
\end{tikzpicture}
    \qquad \raisebox{1.5cm}{$=$} \qquad
    \begin{tikzpicture}
\begin{scope}[on grid]
\node (S) at (0, 0) {$S$};
\node (S2) at (4, 0) {$S$};
\node (S') [below = 1 of S] {$S'$};
\node (S'2) [below = 1 of S2] {$S'$};

\draw[->] (S) -- (S2);
\draw[->] (S') -- (S'2);

\node (I) [below right = 1 and 1 of S'] {$I$};
\node (I') [below left = 1 and 1 of S'2] {$I$};

\node[draw,dashed,fit=(I) (I'), inner xsep = 4pt] (box) {};
\end{scope}
\end{tikzpicture}
  \end{center}
  And for monoidality of $\varepsilon$ there is essentially nothing to do in the graphical calculus:
  \begin{center}
    \begin{tikzpicture}
\begin{scope}[on grid]
\node (S) at (0, 0) {$S$};
\node (S2) at (4, 0) {$S$};
\node (T) [below = 0.7 of S] {$T$};
\node (T2) [below = 0.7 of S2] {$T$};
\node (T') [below = 0.7 of T] {$T'$};
\node (T'2) [below = 0.7 of T2] {$T'$};
\node (S') [below = 0.7 of T'] {$S'$};
\node (S'2) [below = 0.7 of T'2] {$S'$};

\draw[->] (S) -- (S2);
\draw[->] (T) -- (T2);
\draw[->] (S') -- (S'2);
\draw[->] (T') -- (T'2);

\node (I) [below right = 1 and 1 of S'] {$I$};
\node (I') [below left = 1 and 1 of S'2] {$I$};

\node[draw,dashed,fit=(I) (I'), inner xsep = 4pt] (box) {};
\end{scope}
\end{tikzpicture}
    \qquad \raisebox{2cm}{$=$} \qquad
    \begin{tikzpicture}
\begin{scope}[on grid]
\node (S) at (0, 0) {$S \otimes T$};
\node (S2) at (4, 0) {$S \otimes T$};
\node (T) [below = 0.7 of S] {$T' \otimes S'$};
\node (T2) [below = 0.7 of S2] {$T' \otimes S'$};

\draw[->] (S) -- (S2);
\draw[->] (T) -- (T2);

\node (I) [below right = 1.5 and 1 of T] {$I$};
\node (I') [below left = 1.5 and 1 of T2] {$I$};

\node[draw,dashed,fit=(I) (I'), inner xsep = 4pt] (box) {};
\end{scope}
\end{tikzpicture}
  \end{center}

  Note that the diagrams that are required to commute in the definition of teleological category all terminate with the unit $I$, so in view of Proposition~\ref{prop:costates} we should not be surprised that they correspond to equality of maps in $\C$.
\end{proof}

\begin{proposition}
  The functor $\Optic : \SymmMonCat \to \SymmMonCat$ of Theorem~\ref{thm:optic-functor} extends to a functor to $\Tele$.
\end{proposition}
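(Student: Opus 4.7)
The plan is to construct, for each strong symmetric monoidal functor $F : \C \to \D$, a teleological-functor structure on $\Optic(F) : \Optic_\C \to \Optic_\D$, and then check that the assignment is functorial in $F$. On objects, $\Optic$ sends each $\C$ to the teleological category $\Optic_\C$ built in the previous proposition, so nothing new is needed there.

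I would first verify in passing that $\Optic(F)$ is a strong symmetric monoidal functor for the switched tensor $\switched$, using structure maps $\iota(\phi_{S,T}, \phi_{S', T'})$ and $\iota(\phi_I, \phi_I^{-1})$. The proof is essentially identical to the unswitched case, since the switched structure morphisms of $\Optic_\C$ are themselves all $\iota$-images and $\iota$ commutes with $\Optic(F)$ by Lemma~\ref{lem:iota-commute-with-opticf}. Next, the dualisable subcategory of $\Optic_\C$ consists exactly of morphisms $\iota(f, g)$, and the same lemma gives $\Optic(F)(\iota(f, g)) = \iota(Ff, Fg)$, so $\Optic(F)$ restricts to a symmetric monoidal functor on dualisable subcategories. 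For the natural isomorphism $d_X : \Optic(F)(X^*) \to \Optic(F)(X)^*$, note that on objects $(S, S')^* = (S', S)$ and hence $\Optic(F)((S, S')^*) = (FS', FS) = \Optic(F)(S, S')^*$; on morphisms $\iota(f,g)^* = \iota(g, f)$ the two functors also agree on the nose, so $d$ may simply be taken to be the identity, which trivially satisfies monoidal naturality.

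The substantive step is verifying the counit-preservation square. The counit $\varepsilon_{(S, S')}$ is the connector $c_{S \otimes S'}$, and both legs of the square land in $(I, I)$; so by Proposition~\ref{prop:costates} the required equality reduces to an equality of morphisms $F(S \otimes S') \to F(S \otimes S')$ in $\D$. Unfolding the definitions of the monoidal structure maps of $\Optic(F)$ (which are $\iota$-images of the structure maps of $F$), and using the naturality of $\phi$ together with the coend relation to cancel paired $\phi$ and $\phi^{-1}$, both composites should collapse to $\id_{F(S \otimes S')}$. I expect this bookkeeping to be the main source of difficulty, though graphically the identity is transparent: each leg traces out a bare identity wire closed off by a connector.

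Finally, functoriality of the lifted assignment $\Optic : \SymmMonCat \to \Tele$ should be automatic. Theorem~\ref{thm:optic-functor} already gives $\Optic(G) \circ \Optic(F) = \Optic(G \circ F)$ as symmetric monoidal functors, and since $d$ has been chosen to be the identity everywhere and the dualisable subcategory is the image of $\iota$ (which composes strictly), the teleological data for a composite agrees on the nose with the composite of the teleological data, and identities are preserved by inspection.
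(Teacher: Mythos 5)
Your proposal is correct and follows essentially the same route as the paper: the dualisable morphisms are preserved by Lemma~\ref{lem:iota-commute-with-opticf}, the duality is preserved on the nose so $d$ is the identity, and the only substantive check is counit preservation, which the paper carries out by a direct coend manipulation showing $\Optic(F)(c_{S\otimes S'})=c_{F(S\otimes S')}$ after introducing $\phi_I$ to both sides---exactly the collapse you predict, and your shortcut via Proposition~\ref{prop:costates} is one the paper itself flags as available. (Only nitpick: for the switched tensor the contravariant structure map should be $\phi^{-1}_{T',S'}$ rather than $\phi_{S',T'}$, but this does not affect the argument.)
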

\begin{proof}
 We have seen that $\Optic_\C$ is always teleological. We must show that for a symmetric monoidal functor $F : \C \to \D$, the induced functor $\Optic(F) : \Optic_\C \to \Optic_\D$ is teleological. That $\Optic(F)$ preserves the dualisable morphisms is exactly Lemma~\ref{lem:iota-commute-with-opticf}. It also preserves the counits:
  \begin{align*}
    &\Optic(F)(\varepsilon_{(S, S')})  \\
    &= \Optic(F)(c_{S \otimes S'}) && \text{(Definition of the counit)} \\
    &= \Optic(F)(\rep{\rho_S^{-1}}{\rho_{S'}}) && \text{(Definition of the $c$)} \\
    &= \rep{\phi^{-1}_{S,I} (F \rho_S^{-1})}{(F \rho_{S'}) \phi_{S',I}}&& \text{(Definition of the $\Optic(F)$)} \\ 
    &= \rep{(FS \otimes \phi_I^{-1}) \phi^{-1}_{S,I} (F \rho_S^{-1})}{(F \rho_{S'}) \phi_{S',I} (FS \otimes \phi_I)} && \text{(Introduce $\phi_I$ to both sides)} \\
    &= \rep{\rho_{FS}^{-1}}{\rho_{FS'}} && \text{($F$ is monoidal)} \\
    &= \varepsilon_{(FS, FS')} && \text{(Definition of the counit)}
  \end{align*}

\end{proof}

We will establish the universal property in the somewhat contrived case of \emph{strict} symmetric monoidal categories and \emph{strict} monoidal functors, but anticipate that this result could be weakened to non-strict symmetric monoidal categories at the cost of checking far more coherences.

\begin{definition}
A teleological category is \emph{strict} if it is strict as a symmetric monoidal category and ${(-)}^*$ is a strict monoidal involution, so ${(A \teletimes B)}^* = B^* \teletimes A^*$ and $I^* = I$, and also ${(A^*)}^* = A$. A teleological functor is \emph{strict} if it is strict as a symmetric monoidal functor and strictly preserves the duality and counits.
\end{definition}

We have previously noted that $\Optic_\C$ is strict monoidal if $\C$ is, and that in that case the duality is strict. There are functors
\begin{align*}
\Optic &: \StrictSymmMonCat \to \StrictTele \\
U &: \StrictTele \to \StrictSymmMonCat \\
{(-)}_d &: \StrictTele \to \StrictSymmMonCat 
\end{align*}

The crux is the following proposition that decomposes every optic in a canonical way.

\begin{proposition}\label{prop:optic-decompose}
  Suppose $\rep{l}{r} : (S, S') \hto (A, A')$ has residual $M$. Then
  \begin{align*}
    \rep{l}{r} = ((A, I) \switched \varepsilon_{(M, I)} \switched (I, A'))(j(s_{M,A}l) \switched j{(rs_{A',M})}^*)
  \end{align*}
  where $j : \C \to \Optic_\C$ is the functor $j(A) := \iota(A, I)$.
\end{proposition}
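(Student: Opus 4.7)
The plan is to verify the equation by unpacking both sides using the explicit formulas for $\iota$, the switched tensor, the dual, and composition, and then showing the result coincides with $\rep{l}{r}$. For clarity I would work in the strict monoidal setting (where the earlier proposition says $\Optic_\C$ is strict and $\iota$, $(-)^*$ are strict), since by coherence the non-strict case differs only by inserting unitors and associators.

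First I would unpack the three building blocks. Using $\iota(f,g) = \rep{f}{g}$ strictly, I get $j(s_{M,A}l) = \rep{s_{M,A}l}{\id_I}$ as an optic $(S,I) \hto (A\otimes M, I)$ with residual $I$; $j(rs_{A',M})^* = \iota(\id_I, rs_{A',M}) = \rep{\id_I}{rs_{A',M}}$ as an optic $(I,S') \hto (I, A'\otimes M)$ with residual $I$; and $\varepsilon_{(M,I)} = c_M = \rep{\id_M}{\id_M}$ as an optic $(M,M)\hto(I,I)$ with residual $M$. Next I compute the switched tensor $j(s_{M,A}l) \switched j(rs_{A',M})^*$: since both factors have trivial residual and the tensor units absorb, this gives an optic $(S,S') \hto (A\otimes M, A'\otimes M)$ represented by the pair $(s_{M,A}l, rs_{A',M})$ with residual $I$. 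Then $(A,I) \switched \varepsilon_{(M,I)} \switched (I,A')$ is an optic $(A\otimes M, A'\otimes M) \hto (A,A')$ whose residual is the tensor $I\otimes M\otimes I = M$, and whose representative has covariant component $\id_{A\otimes M}$ and contravariant component $\id_{A'\otimes M}$ (after identifying the residual $M$ appropriately). Finally I apply the composition formula $\rep{l_2}{r_2} \circ \rep{l_1}{r_1} = \rep{(M'\otimes l_2)l_1}{r_1(M'\otimes r_2)}$, which yields a representative of the RHS with residual $M$, covariant map $(\text{swap})\circ s_{M,A} l$, and contravariant map $r \circ s_{A',M} \circ (\text{swap})$. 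After cancelling the swaps $s_{M,A}$ and $s_{A',M}$ against the symmetry arising in the switched tensor, this reduces to $\rep{l}{r}$.

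The main obstacle is keeping track of the interleaved swap morphisms introduced by the switched tensor and the explicit $s_{M,A}$, $s_{A',M}$ placed inside $j(s_{M,A}l)$ and $j(rs_{A',M})^*$. The calculation is cleanest in the graphical calculus: the RHS diagram consists of $l$ producing $A$ into the hole and $M$ into the counit cap, together with $r$ receiving $M$ back from the cap along with $A'$ from the hole, and the two pairs of crossings (one from the swap, one from the switched tensor) cancel by naturality of the symmetry. What remains is precisely the standard string diagram for $\rep{l}{r}$, and to make this rigorous one absorbs the canceling isomorphisms across the coend via the relation $((f\otimes A)l, r) \sim (l, r(f\otimes A'))$ used previously to eliminate isomorphism pairs.
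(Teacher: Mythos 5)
Your proposal is correct and matches the paper's proof in essence: both reduce to the observation that, in the strict setting, $\varepsilon_{(M,I)}$ is the connector $c_M$, the right-hand side is the composite of the optic $(S,S')\hto(A\otimes M, A'\otimes M)$ built from $l$ and $r$ with the optic $(A\otimes M, A'\otimes M)\hto(A,A')$ that traces out $M$, and the two pairs of symmetries cancel to leave the string diagram for $\rep{l}{r}$. The paper presents this purely graphically while you also spell out the representatives and the composition formula, but the argument is the same.
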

The symmetries in the above expression could have been avoided if $\Optic$ had been defined as $\int^{M \in \C} \C(S, A \otimes M) \times \C(A' \otimes M, S')$, but it is too late to change the convention now!
\begin{proof}
  First note that because $\C$ is strict monoidal, the counit $\varepsilon_{(M, I)} : (M \otimes I, M \otimes I) = (M, M) \hto (I, I)$ is equal to the connector $c_M : (M, M) \hto (I, I)$.

  Then, up to strictness of the monoidal unit, we are composing the two optics
  \begin{center}
    \begin{tikzpicture}

\node (l) [vert] at (0, 0) {$l$};
\node (S) [left of=l] {$S$};
\node (A) [above right = 0.2 and 2 of l] {$A$};
\node (M) [below right = 0.2 and 2 of l] {$M$};

\node (r) [vert] at (6, 0) {$r$};
\node (S') [right of=r] {$S'$};
\node (A') [above left = 0.2 and 2 of r] {$A'$};
\node (M') [below left = 0.2 and 2 of r] {$M$};

\draw[->] (S) -- (l);
\draw[->] (r) -- (S');

\draw[->] (l) to[out=south east, in=west] (A);
\draw[->] (l) to[out=north east, in=west] (M);

\draw[<-] (r) to[out=south west, in=east] (A');
\draw[<-] (r) to[out=north west, in=east] (M');

\node[draw,dashed,fit=(A) (A') (M) (M')] (box) {};
\end{tikzpicture}
  \end{center}
  and
  \begin{center}
    \begin{tikzpicture}
\begin{scope}[on grid]

\node (A) at (0, 0) {$A$};
\node (M) [below = 1 of A] {$M$};
\node (A') at (5, 0) {$A'$};
\node (M') [below = 1 of A'] {$M$};

\node (Aout) [below right = 1 and 2 of A] {$A$};
\node (A'out) [below left= 1 and 2 of A'] {$A'$};

\draw[->] (A) to[out=east, in=west] (Aout);
\draw[<-] (A') to[out=west, in=east] (A'out);

\draw[->] (M) to[out=east, in=west] ($(Aout) + (0,1)$)
to[out=east, in=west] ($(A'out) + (0,1)$)
to[out=east, in=west] (M');

\node[draw,dashed,fit=(Aout) (A'out) ] (box) {};

\end{scope}
\end{tikzpicture}
  \end{center}
  so the two pairs of twists cancel, and we are left exactly with the diagram for $\rep{l}{r}$.
\end{proof}

This also holds for monoidal categories that are not necessarily strict, if the unit object and unitors are inserted in the appropriate places.

\begin{proposition}
  Suppose $(\C, \otimes, I)$ is a strict symmetric monoidal category and $(\T, \teletimes, I, {(-)}^*, \varepsilon)$ is a strict teleological category. Given a strict symmetric monoidal functor $F : \C \to \T_d$, there exists a unique strict teleological functor $K : \Optic_\C \to \T$ with the property $Kj = F$.
\end{proposition}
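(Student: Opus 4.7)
The strategy is to let Proposition~\ref{prop:optic-decompose} do the heavy lifting: every optic is written canonically as a composite of morphisms from the image of $j$ together with a counit, so once we fix how $K$ acts on these ingredients everything else is determined. This gives uniqueness almost immediately; the same formula will then serve as the definition for existence.

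For the forced formula, observe that in the strict setting $(S, S') = (S, I) \switched (I, S') = j(S) \switched j(S')^*$, so $Kj = F$ together with strict preservation of $\switched$ and $(-)^*$ force
\[
K(S, S') = F(S) \teletimes F(S')^*.
\]
On morphisms, applying $K$ to the decomposition in Proposition~\ref{prop:optic-decompose} and using that $K$ must preserve $\switched$, $(-)^*$, $\varepsilon$, and composition forces
\[
K(\rep{l}{r}) \;=\; \bigl(F(A) \teletimes \varepsilon_{F(M)} \teletimes F(A')^*\bigr) \circ \bigl(F(s_{M,A}\,l) \teletimes F(r\,s_{A',M})^*\bigr).
\]
This proves uniqueness. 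For existence I take this as the definition and verify it is a well-defined strict teleological functor.

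Well-definedness is where the teleological data do their job: the coend relation $((f \otimes A)l,\, r) \sim (l,\, r(f \otimes A'))$ is witnessed by $f : M \to N$ in $\C$, and $Ff$ lies in $\T_d$ since $F$ factors through $\T_d$. Extranaturality of $\varepsilon$ with respect to dualisable morphisms is exactly what allows $Ff$ to be slid from one leg of $\varepsilon_{F(M)}$ to the other, producing $\varepsilon_{F(N)}$; after applying naturality of the symmetry in $\C$ to rewrite $F(s_{N,A})F(f \otimes A)$ as $(FA \teletimes Ff)F(s_{M,A})$ (and similarly on the right), the two formulae become equal. Identity preservation is then automatic: the residual of $\id_{(S,S')}$ is $I$, and $\varepsilon_{F(I)}$ collapses to the identity under strictness of $F$ and the unit axiom for $\varepsilon$. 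Strict preservation of duality is a one-liner: $K((S, S')^*) = K(S', S) = F(S') \teletimes F(S)^* = (F(S) \teletimes F(S')^*)^* = K(S, S')^*$. Preservation of counits is immediate from the definition.

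The main obstacle is verifying compositionality. Given composable optics with residuals $M$ and $M'$, the composite has residual $M' \otimes M$, and its $K$-image involves a single counit $\varepsilon_{F(M' \otimes M)} = \varepsilon_{F(M') \teletimes F(M)}$. To match this against the composition of the two separately-defined $K$-images, I plan to invoke monoidality of $\varepsilon$ to split this counit into $\varepsilon_{F(M')} \teletimes \varepsilon_{F(M)}$, then use extranaturality to yank the intermediate factors $F(S)$ and $F(S')^*$ through so they reassemble into the connecting $F(l_2)$ and $F(r_2)$. This is structurally the same yanking argument that justifies the snake equations in a compact closed category, but carried out under the weaker hypothesis that only the dualisable morphisms (which include all the $F(-)$ images) are allowed across counits. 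The verification of strict monoidality with respect to $\switched$ follows in the same spirit, once one notes that the order-reversal in the second coordinate of $\switched$ is chosen precisely to align with the order-reversal of $(-)^*$ in $\T$.
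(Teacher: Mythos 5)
Your approach is the same as the paper's: the decomposition of Proposition~\ref{prop:optic-decompose} forces the formula for $K$ on objects and morphisms, which is then checked to be a well-defined strict teleological functor, with extranaturality of $\varepsilon$ handling the coend relation and monoidality of $\varepsilon$ plus naturality of the symmetry handling compositionality. The one place you are too quick is the claim that preservation of counits is ``immediate from the definition'': the counit of $\Optic_\C$ at $(S,S')$ is the connector $c_{S\otimes S'}$, so your formula sends it to $\varepsilon_{F(S\otimes S')} = \varepsilon_{FS \teletimes FS'}$, whereas the counit required at $K(S,S') = FS \teletimes (FS')^*$ is $\varepsilon_{FS \teletimes (FS')^*}$ --- a counit at a \emph{different} object. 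Identifying the two takes monoidality of $\varepsilon$ to peel off the inner factor, the symmetry axiom for $\varepsilon$ together with strictness of the duality to obtain $\varepsilon_{FS'} = \varepsilon_{(FS')^*}$, and monoidality again to reassemble; this is exactly the step the paper singles out as the critical move in that verification. You also omit the check that $K$ carries dualisable morphisms of $\Optic_\C$ to dualisable morphisms of $\T$, which is part of the definition of a teleological functor (a short computation showing $K(\iota(f,g)) = Ff \teletimes (Fg)^*$). Neither issue threatens the overall argument, but both need to be filled in.
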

\begin{proof}
  We construct $K$ as follows. Note that any object $(S, S')$ in $\Optic_\C$ can be written uniquely as $j(S) \switched {j(S')}^*$, so we are forced to define $K(S, S') = FS \teletimes {(FS')}^*$. Suppose $\rep{l}{r} : (S, S') \hto (A, A')$ is an optic. By the previous Proposition,
  \begin{align*}
    \rep{l}{r} = ((A, I) \switched \varepsilon_{(M, I)} \switched (I, A'))(j(s_{M,A}l) \switched {j(rs_{A', M})}^*)
  \end{align*}
  So if a $K$ with $Kj = F$ exists, it must hold that
  \begin{align*}
      K\rep{l}{r} 
      &= K((A, I) \switched \varepsilon_{(M, I)} \switched (I, A')) K(j(s_{M,A}l) \switched {j(rs_{A', M})}^*) \\
      &\qquad\text{($K$ is monoidal)} \\
      &= (K(A, I) \teletimes K\varepsilon_{(M, I)} \teletimes K(I, A')) (K(j(s_{M,A}l)) \teletimes K( {j(rs_{A', M})}^*)) \\
      &\qquad\text{($K$ preserves the counit and duality)} \\
      &= (K(A, I) \teletimes \varepsilon_{K(M,I)} \teletimes K(I, A')) (K(j(s_{M,A}l)) \teletimes K( {j(rs_{A', M})})^*) \\
      &\qquad\text{($K$ satisfies $Kj = F$)} \\
      &= (FA \teletimes \varepsilon_{FM} \teletimes {(FA')}^*) (F(s_{M,A}l) \teletimes F(rs_{A', M})^*)
  \end{align*}
  We therefore take \[  K\rep{l}{r} = (FA \teletimes \varepsilon_{FM} \teletimes {(FA')}^*) (F(s_{M,A}l) \teletimes F(rs_{A', M})^*)) \] as our definition of $K$. The diagram for $K\rep{l}{r}$ in $\T$ is as follows:
  \begin{center}
    \begin{tikzpicture}
\begin{scope}[on grid]

\node[vert] (l) at (0, 0) {$Fl$};
\node[vert] (r) at (0, -2) {$Fr^*$};

\node (S) [left = 1.5 of l]{$FS$};
\node (S') [left = 1.5 of r]{$FS'^*$};

\node (A) [right =3 of l]{$FA$};
\node (A') [right = 3 of r]{$FA'^*$};

\draw[->] (S) -- (l);
\draw[->] (l) -- (A);
\draw[<-] (S') -- (r);
\draw[<-] (r) -- (A');

\draw[->-=0.5, ->] (l) 
to[out=north east, in=west] ($(l) + (0.5,0.5)$)
to[out=east, in=west] ($(l) + (1.5,-0.5)$)
to[out=east, in=east] ($(r) + (1.5,0.5)$)
to[out=west, in=east] ($(r) + (0.5,-0.5)$)
to[out=west, in=south east] (r);
\end{scope}
\end{tikzpicture}
  \end{center}
It remains to show that $K$ so defined is indeed a strict teleological functor. There are several things to check:
\begin{itemize}
\item Well-definedness: Suppose we have two optics related by the coend relation:
  \begin{align*}
    \rep{(f \otimes A) l}{r} = \rep{l}{r (f \otimes A')}
  \end{align*}
  Then well-definedness is shown by the equivalence of diagrams
  \begin{center}
    \begin{tikzpicture}
\begin{scope}[on grid]

\node[vert] (l) at (0, 0) {$Fl$};
\node[vert] (r) at (0, -2) {$Fr^*$};
\node[vert] (f) at (1, 0.5) {$Ff$};

\node (S) [left = 1.5 of l]{$FS$};
\node (S') [left = 1.5 of r]{$FS'^*$};

\node (A) [right =3 of l]{$FA$};
\node (A') [right = 3 of r]{$FA'^*$};

\draw[->] (S) -- (l);
\draw[->] (l) -- (A);
\draw[<-] (S') -- (r);
\draw[<-] (r) -- (A');

\draw (l) 
to[out=north east, in=west] (f);

\draw[->-=0.4, ->] (f) 
to[out=east, in=west] ($(f) + (1,-1)$)
to[out=east, in=east] ($(r) + (2,0.5)$)
to[out=west, in=east] ($(r) + (1,-0.5)$)
to[out=west, in=south east] (r);
\end{scope}
\end{tikzpicture}
    \qquad \raisebox{1.5cm}{$=$} \qquad
    \begin{tikzpicture}
\begin{scope}[on grid]

\node[vert] (l) at (0, 0) {$Fl$};
\node[vert] (r) at (0, -2) {$Fr^*$};
\node[vert] (fd) at (1, -2.5) {$Ff^*$};

\node (S) [left = 1.5 of l]{$FS$};
\node (S') [left = 1.5 of r]{$FS'^*$};

\node (A) [right =3 of l]{$FA$};
\node (A') [right = 3 of r]{$FA'^*$};

\draw[->] (S) -- (l);
\draw[->] (l) -- (A);
\draw[<-] (S') -- (r);
\draw[<-] (r) -- (A');

\draw[->, ->-=0.6] (l) 
to[out=north east, in=west] ($(l) + (1,0.5)$)
to[out=east, in=west] ($(l) + (2,-0.5)$)
to[out=east, in=east] ($(fd) + (1,1)$)
to[out=west, in=east] (fd);
\draw[->] (fd)
to[out=west, in=south east] (r);
\end{scope}
\end{tikzpicture}
  \end{center}
  using naturality of the symmetry and extranaturality of the counit.
\item Functoriality: We have an equivalence of diagrams
  \begin{center}
    \begin{tikzpicture}
\begin{scope}[on grid]

\node[vert] (l) at (0, 0) {$Fl_1$};
\node[vert] (r) at (0, -2) {$Fr_1^*$};

\node (S) [left = 1.2 of l]{$FS$};
\node (S') [left = 1.2 of r]{$FS'^*$};

\node[vert] (l') [right = 2.2 of l] {$Fl_2$};
\node[vert] (r') [right = 2.2 of r] {$Fr_2^*$};

\node (A) [right = 2 of l']{$FA$};
\node (A') [right = 2 of r']{$FA'^*$};

\draw[->] (S) -- (l);
\draw[->] (l) -- (l');
\draw[->] (l') -- (A);
\draw[<-] (S') -- (r);
\draw[<-] (r) -- (r');
\draw[<-] (r') -- (A');

\draw[->-=0.5, ->] (l) 
to[out=north east, in=west] ($(l) + (0.5,0.5)$)
to[out=east, in=west] ($(l) + (1.5,-0.5)$)
to[out=east, in=east] ($(r) + (1.5,0.5)$)
to[out=west, in=east] ($(r) + (0.5,-0.5)$)
to[out=west, in=south east] (r);

\draw[->-=0.5, ->] (l') 
to[out=north east, in=west] ($(l') + (0.5,0.5)$)
to[out=east, in=west] ($(l') + (1.5,-0.5)$)
to[out=east, in=east] ($(r') + (1.5,0.5)$)
to[out=west, in=east] ($(r') + (0.5,-0.5)$)
to[out=west, in=south east] (r');
\end{scope}
\end{tikzpicture}
    \quad \raisebox{1.5cm}{$=$} \quad
    \begin{tikzpicture}
\begin{scope}[on grid]

\node[vert] (l) at (0, 0) {$Fl_1$};
\node[vert] (r) at (0, -2) {$Fr_1^*$};

\node (S) [left = 1.2 of l]{$FS$};
\node (S') [left = 1.2 of r]{$FS'^*$};

\node[vert] (l') [right = 1 of l] {$Fl_2$};
\node[vert] (r') [right = 1 of r] {$Fr_2^*$};

\node (A) [right = 2.2 of l']{$FA$};
\node (A') [right = 2.2 of r']{$FA'^*$};

\draw[->] (S) -- (l);
\draw[->] (l) -- (l');
\draw[->] (l') -- (A);
\draw[<-] (S') -- (r);
\draw[<-] (r) -- (r');
\draw[<-] (r') -- (A');

\draw[->-=0.5, ->] (l) 
to[out=north east, in=west] ($(l) + (1.5,0.8)$)
to[out=east, in=west] ($(l) + (2.5,-0.5)$)
to[out=east, in=east] ($(r) + (2.5,0.5)$)
to[out=west, in=east] ($(r) + (1.5,-0.8)$)
to[out=west, in=south east] (r);

\draw[->-=0.5, ->] (l') 
to[out=north east, in=west] ($(l') + (0.5,0.5)$)
to[out=east, in=west] ($(l') + (1.1,-0.5)$)
to[out=east, in=east] ($(r') + (1.1,0.5)$)
to[out=west, in=east] ($(r') + (0.5,-0.5)$)
to[out=west, in=south east] (r');
\end{scope}
\end{tikzpicture}
  \end{center}
using naturality of the symmetry and monoidality of the counit.
\item Monoidality:
\begin{align*}
K((S, S') \switched (T, T'))
&= K(S \otimes T, T' \otimes S) \\
&= F(S \otimes T) \teletimes {F(T' \otimes S')}^* \\
&= FS \teletimes FT \teletimes {(FS')}^* \teletimes {(FT')}^* \\
&= FS \teletimes {(FS')}^* \teletimes FT \teletimes {(FT')}^* \\
&= K(S, S') \teletimes K(T, T')
\end{align*}
and
\begin{align*}
K(I, I)
&= FI \teletimes {(FI)}^* \\
&= I \teletimes I^* \\
&= I
\end{align*}
\item Preservation of duals:
\begin{align*}
K({(S, S')}^*)
= K(S', S)
= FS' \teletimes {(FS)}^*
= {(FS \teletimes {(FS')}^*)}^*
= {(K(S, S'))}^*
\end{align*}

\item Preservation of dualisable morphisms:  For a morphism $\iota(f, g)$:
\begin{align*}
K(\iota(f, g))
    &= K(\rep{\lambda_A^{-1} f}{g \lambda_{A'}}) \\
    &= (FA \teletimes \varepsilon_{FI} \teletimes {(FA')}^*)(F(s_{I,A}\lambda_A^{-1} f) \teletimes {(F(g \lambda_{A'}s_{A', I}))}^* ) \\
    &= (FA \teletimes {(FA')}^*)(Ff \teletimes {(Fg)}^* ) \\
    &= Ff \teletimes {(Fg)}^*
\end{align*}
and this is dualisable, as dualisability is preserved by taking the monoidal product and duals.
\item Preservation of counits:
\begin{align*}
K(\varepsilon_{(S, S')})
&= K(c_{S \otimes S'}) \\
&= K(\rep{\rho_{S \otimes S'}^{-1}}{\rho_{S \otimes S'}}) \\
&= (FI \teletimes \varepsilon_{F(S \otimes S')} \teletimes {(FI)}^*)(F(s_{S \otimes S',I}\rho_{S \otimes S'}^{-1}) \teletimes (F(\rho_{S \otimes S'} s_{I, S \otimes S'}))^* ) \\
&= (\varepsilon_{F(S \otimes S')})(F(S \otimes S') \teletimes F{(S \otimes S')}^* ) \\
&= \varepsilon_{F(S \otimes S')} \\
&= \varepsilon_{FS \otimes FS'} \\
&= \varepsilon_{FS}(FS \otimes \varepsilon_{FS'} \otimes {(FS)}^*) \\
&= \varepsilon_{FS}(FS \otimes \varepsilon_{{FS'}^*} \otimes {(FS)}^*) \\
&= \varepsilon_{FS \otimes {FS'}^*} \\
&= \varepsilon_{K(S, S')}
\end{align*}
The critical move is applying the equality $\varepsilon_{FS'} = \varepsilon_{{FS'}^*}$, which follows because $\varepsilon$ is a symmetric monoidal transformation and the duality is strict.
\end{itemize}

%
%
%
\end{proof}

\begin{theorem}\label{thm:optic-is-free-teleological-cat}
  $\Optic : \StrictSymmMonCat \to \StrictTele$ is left adjoint to the `underlying dualisable morphisms' functor ${(-)}_d : \StrictTele \to \StrictSymmMonCat$.
\end{theorem}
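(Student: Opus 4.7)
The plan is to promote the previous proposition, which already gives a hom-set level universal property, into an adjunction. The unit of the putative adjunction at $\C \in \StrictSymmMonCat$ will be $j_\C = \iota(-, I) : \C \to (\Optic_\C)_d$; this lands in the dualisable subcategory because dualisable morphisms are by definition those of the form $\iota(f,g)$.

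First I would verify that $j_\C$ is strict symmetric monoidal. Strictness amounts to checking $\iota(S \otimes T, I) = \iota(S, I) \switched \iota(T, I)$ and $\iota(I, I) = (I, I)$, both of which hold because $\C$ is strict (so $I \otimes I = I$) and $\iota$ itself is strict monoidal. Naturality of $j$ in $\C$, namely $\Optic(h) \circ j_\C = j_{\C'} \circ h$ for any strict monoidal $h : \C \to \C'$, follows immediately from Lemma~\ref{lem:iota-commute-with-opticf} applied to the pair $(h, \id_I)$, using strictness of $h$ on the unit.

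The adjunction is then established via the natural bijection
\[
\StrictTele(\Optic_\C, \T) \;\cong\; \StrictSymmMonCat(\C, \T_d),
\]
with forward map $K \mapsto K_d \circ j_\C$ (well-defined because a strict teleological $K$ restricts to $K_d : (\Optic_\C)_d \to \T_d$) and backward map $F \mapsto K_F$ given by the previous proposition. These maps are mutually inverse: in one direction, the previous proposition's conclusion $K_F \circ j_\C = F$ gives $F \mapsto K_F \mapsto F$; in the other, uniqueness of the extension forces $K \mapsto K_d \circ j_\C \mapsto K$, since $K$ itself is a strict teleological functor satisfying $K \circ j_\C = K_d \circ j_\C$.

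What remains is naturality of the bijection in each variable. Naturality in $\C$ is a direct consequence of naturality of $j$. Naturality in $\T$ uses that if $L : \T \to \T'$ is strict teleological then $L \circ K_F$ is itself strict teleological and satisfies $(L \circ K_F) \circ j_\C = L_d \circ F$; uniqueness then forces $L \circ K_F = K_{L_d \circ F}$. The main obstacle is essentially nonexistent: all the heavy lifting has been done by the previous proposition, and what remains is straightforward bookkeeping that relies on the strictness hypotheses to avoid coherence complications.
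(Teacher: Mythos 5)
Your proposal is correct and follows essentially the same route as the paper: both establish the bijection $\StrictTele(\Optic_\C, \T) \cong \StrictSymmMonCat(\C, \T_d)$ by precomposition with $j$, invoke the preceding existence-and-uniqueness proposition to see it is an isomorphism, and obtain naturality in $\C$ from Lemma~\ref{lem:iota-commute-with-opticf} (the paper dispatches naturality in $\T$ as automatic, which matches your uniqueness argument). Your additional checks that $j_\C$ is strict symmetric monoidal are sensible bookkeeping that the paper leaves implicit.
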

\begin{proof}
Precomposition with $j$ gives a function
\begin{align*}
\StrictTele(\Optic_\C, \T) \to \StrictSymmMonCat(\C, \T_d)
\end{align*}
and the previous proposition states that this is a isomorphism. This is automatically natural in $\T$. Naturality in $\C$ follows by Lemma~\ref{lem:iota-commute-with-opticf}. 
\end{proof}

\begin{remark}
The above theorem and its proof have much in common with~\cite[Proposition 5.2]{JoyalStreetVerity}, which gave a similar universal property for their $\mathrm{Int}$ construction on traced monoidal categories.
\end{remark}

Working with strict monoidal categories made it significantly easier to prove the universal property. There is likely to be a 2-categorical universal property of $\Optic$ for non-strict monoidal categories, so long as we restrict our attention to the sub-2-category $\SymmMonCat_\mathrm{homcore}$ of $\SymmMonCat$ that only contains natural isomorphisms. We leave this to future work:

\begin{definition}
  A \emph{teleological natural isomorphism} $\alpha : F \Rightarrow G$ is a monoidal natural isomorphism whose components are all dualisable and that is additionally compatible with the dualisation:
  \[
  \begin{tikzcd}
    {(FX)}^* \ar[r, "\cong"]  \ar[d, "(\alpha_X)^*", swap] & F(X^*) \ar[d, "\alpha_{X^*}"] \\
    {(GX)}^* \ar[r, "\cong", swap] & G(X^*)
  \end{tikzcd}
  \]
  There is a (strict) 2-category $\Tele$ consisting of teleological categories, functors and natural isomorphisms.
\end{definition}

\begin{conjecture}
\[ \Optic : \SymmMonCat_\mathrm{homcore} \to \Tele \] is left biadjoint to \[(-)_d : \Tele \to \SymmMonCat_\mathrm{homcore}\]
\end{conjecture}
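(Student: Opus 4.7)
The plan is to imitate the strict adjunction of Theorem~\ref{thm:optic-is-free-teleological-cat} in the pseudo setting, replacing equalities by coherent invertible 2-cells throughout. The biadjunction will be witnessed by a pseudonatural equivalence of hom-categories
\[ \Tele(\Optic_\C, \T) \simeq \SymmMonCat_\mathrm{homcore}(\C, \T_d) \]
induced by precomposition with the 1-cell $j : \C \to {(\Optic_\C)}_d$ sending $A$ to $(A, I)$.

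First I would package the existing data into honest 2-functors. The action of $\Optic$ on 1-cells is Proposition~\ref{prop:change-of-action-monoidal}, and on 2-cells is Proposition~\ref{prop:iota-naturality}; it remains only to check that the resulting $\Optic(\alpha)$ is in fact a \emph{teleological} natural isomorphism, i.e., compatible with the duality. By the formula $\Optic(\alpha)_{(S,S')} = \iota(\alpha_S, \alpha_{S'}^{-1})$, this reduces to an instance of Lemma~\ref{lem:iota-commute-with-opticf} together with monoidality of $\alpha$. The 2-functoriality of ${(-)}_d$ is immediate, since teleological natural isomorphisms have dualisable components by definition.

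Next I would build the biuniversal arrow. Given a strong symmetric monoidal $F : \C \to \T_d$, define $\tilde{F} : \Optic_\C \to \T$ on objects by $\tilde{F}(S,S') = FS \teletimes {(FS')}^*$ and on an optic $\rep{l}{r}$ with residual $M$ by the evident composite that wires $Fl$ and ${(Fr)}^*$ into $\varepsilon_{FM}$, now with the associators, unitors and symmetries of $\T$ explicitly inserted where the strict proof could omit them. Well-definedness under the coend relation uses naturality of the symmetry and extranaturality of $\varepsilon$ along dualisable morphisms (here we use that $F$ lands in $\T_d$, so every $Ff$ is dualisable); functoriality and monoidality of $\tilde{F}$ then follow from the same diagrammatic arguments as in the strict case, read modulo the coherence theorem for symmetric monoidal categories. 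A monoidal natural isomorphism $\eta^F : \tilde{F} \circ j \cong F$ is assembled componentwise from the right unitor of $\T$, $\phi_I$, and the canonical isomorphism $I \cong I^*$ in $\T$.

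The hard part is biuniversality: given two teleological functors $K_1, K_2 : \Optic_\C \to \T$ and a monoidal natural isomorphism $\beta : K_1 j \cong K_2 j$, one must exhibit a unique teleological natural isomorphism $\tilde{\beta} : K_1 \Rightarrow K_2$ whose restriction along $j$ is $\beta$. The candidate is obtained on $(S,S')$ by tensoring $\beta_S$ with the dual of $\beta_{S'}$, transported across the duality compatibility isomorphism of each $K_i$; naturality against a general optic then follows by decomposing it via Proposition~\ref{prop:optic-decompose} and using that the $K_i$ preserve $\varepsilon$. The main obstacle is not any single conceptual point but the sheer volume of coherence verification: every use of an equation in the strict proof must be replaced by a check that a specific string of associators, unitors, $\phi$'s and duality isomorphisms commutes, and then the biadjunction triangle identities must be promoted to invertible modifications. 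A cleaner organisation would be to first prove that ${(-)}_d : \Tele \to \SymmMonCat_\mathrm{homcore}$ is pseudo-monadic and derive the biadjunction from standard 2-categorical machinery, but either route requires substantial bookkeeping beyond the scope of the present note.
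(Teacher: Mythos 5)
This statement is stated in the paper as a \emph{conjecture} and is explicitly left to future work; the paper contains no proof of it, so there is nothing to compare your argument against. What you have written is a plausible strategy --- essentially ``redo Theorem~\ref{thm:optic-is-free-teleological-cat} with coherence cells'' --- and it is the strategy the author clearly has in mind, since Proposition~\ref{prop:optic-decompose} is already remarked to hold non-strictly once unitors are inserted. But as it stands it is a plan, not a proof: every place where the strict argument uses an equation on the nose (well-definedness of $\tilde F$ under the coend relation, monoidality of $\tilde F$, preservation of counits, the triangle identities as invertible modifications) is exactly where the content of the conjecture lives, and you defer all of it. A referee could not accept this as closing the conjecture.

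Two concrete thin spots. First, your claim that compatibility of $\Optic(\alpha)$ with the duality ``reduces to an instance of Lemma~\ref{lem:iota-commute-with-opticf}'' does not parse: that lemma relates $\Optic(F)$ and $\iota$, whereas what must be checked is that ${\bigl(\iota(\alpha_S,\alpha_{S'}^{-1})\bigr)}^* = \iota(\alpha_{S'}^{-1},\alpha_S)$ agrees with $\Optic(\alpha)_{{(S,S')}^*} = \iota(\alpha_{S'},\alpha_S^{-1})$ up to the duality-compatibility isomorphisms $d$ of $\Optic(F)$ and $\Optic(G)$ --- note these two dualisable optics are mutually \emph{inverse}, so the verification hinges on the variance conventions in the square defining a teleological natural isomorphism, which the paper itself leaves slightly ambiguous. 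Second, in the biuniversality step you need not just that $\tilde\beta$ restricted along $j$ is $\beta$, but that $\tilde\beta$ is the \emph{unique} teleological natural isomorphism with that property; your uniqueness argument implicitly uses that the component at $(S,S')$ is forced by monoidality and duality-compatibility of $\tilde\beta$ together with the decomposition $(S,S') = j(S) \switched {j(S')}^*$, and in the non-strict setting this decomposition only holds up to coherence isomorphism, so ``forced'' must itself be argued. Neither point looks fatal, but until they and the remaining coherence diagrams are actually checked, the conjecture remains open.
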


\subsection{Optics for a Monoidal Action}

To capture more of the optic variants available in the Haskell \lenslib{} library, we generalise to the case of a monoidal action of one category on another.

\begin{definition}
  Let $\C$ be a category and $(\M, \otimes, I)$ a monoidal category. An \emph{action of $\M$ on $\C$} is a monoidal functor $a : \M \to [\C, \C]$. For two objects $M \in \M$ and $A \in \C$, the action $a(M)(A)$ is abbreviated $M \act A$. 
\end{definition}

Given such an action, we define
\begin{align*}
  \Optic_\M((S, S'), (A, A')) := \int^{M \in \M} \C(S, M \act A) \times \C(M \act A', S')
\end{align*}
This subsumes the earlier definition, taking $\M = \C$ and having $\C$ act on itself via left-tensor:
\begin{align*}
a : \C &\to [\C, \C] \\
X &\mapsto X \otimes -
\end{align*}
We henceforth write this case as $\Optic_\otimes$, to emphasise the action on $\C$ that is used.

\begin{proposition}
  We have a category $\Optic_\M$ and a functor $\iota : \C \times \C^\op \to \Optic_\M$ defined analogously to Propositions~\ref{prop:optic-is-cat} and~\ref{prop:iota-functor}. \qed
\end{proposition}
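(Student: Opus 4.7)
The plan is to mimic the proofs of Propositions~\ref{prop:optic-is-cat} and~\ref{prop:iota-functor} almost verbatim, replacing every occurrence of $M \otimes A$ in which $M$ plays the role of a residual by the action $M \act A$ (now with $M \in \M$), and replacing composition of residuals in $\C$ by the tensor in $\M$. Wherever the original proofs used the associator or left unitor of $\otimes$ to rearrange residuals past base objects, we substitute the coherence isomorphisms of the monoidal functor $a : \M \to [\C, \C]$: its compositor, whose component at $A \in \C$ I will write $\phi_{N, M, A} : (N \otimes M) \act A \to N \act (M \act A)$, and its unitor $\phi_{I, A} : I \act A \to A$.

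First I would construct composition. By Fubini the product of coends rewrites as a single coend over $\M \times \M$, and by the universal property of coends it suffices to produce, naturally in $M, N \in \M$, a map sending representatives $(l : S \to M \act A,\, r : M \act A' \to S')$ and $(l' : R \to N \act S,\, r' : N \act S' \to R')$ to the optic
\[ \rep{\phi^{-1}_{N,M,A}\,(N \act l)\,l'}{r'\,(N \act r)\,\phi_{N,M,A'}} \]
with residual $N \otimes M \in \M$. Well-definedness under the coend relation follows from naturality of $\phi$ in the residual variable together with functoriality of the endofunctor $a(N) = N \act -$. For the identity on $(S, S')$ I use $\rep{\phi^{-1}_{I, S}}{\phi_{I, S'}}$ with residual $I \in \M$.

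Associativity is then verified exactly as in Proposition~\ref{prop:optic-is-cat}, simultaneously picking three representatives (justified by Fubini) and running the same equational chain, with the pentagon axiom for the monoidal functor $a$ playing the role that the associator of $\C$ played originally. The unit laws likewise mirror the original proof: one uses the triangle axiom for $a$ and then appeals to the coend relation to cancel a symmetric pair of isomorphisms on both sides of the optic. The functor $\iota : \C \times \C^\op \to \Optic_\M$ is then defined on objects by $\iota(S, S') = (S, S')$ and on morphisms by $\iota(f, g) \defeq \rep{\phi^{-1}_{I, A}\,f}{g\,\phi_{I, A'}}$, with functoriality a direct transcription of the calculation in Proposition~\ref{prop:iota-functor}, with $\lambda$ replaced by $\phi_{I, -}$ throughout.

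The only real obstacle is bookkeeping: tracking where the coherences of $a$ must be inserted, and verifying that each appeal to naturality or to the pentagon/triangle lands in the correct position. Graphically nothing changes, since a residual wire $M \act -$ is drawn the same way as a wire $M \otimes -$ was in the self-action case, and the monoidal-functor coherences for $a$ perform precisely the rewiring that the associator and unitor of $\C$ performed before.
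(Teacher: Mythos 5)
Your proposal is correct and is exactly the argument the paper intends: the paper omits the proof entirely, asserting only that the constructions are ``analogous'' to Propositions~\ref{prop:optic-is-cat} and~\ref{prop:iota-functor}, and your transcription --- residual $N \otimes M \in \M$, composite $\rep{\phi^{-1}_{N,M,A}\,(N \act l)\,l'}{r'\,(N \act r)\,\phi_{N,M,A'}}$, identity $\rep{\phi^{-1}_{I,S}}{\phi_{I,S'}}$, and $\iota(f,g) = \rep{\phi^{-1}_{I,A} f}{g\,\phi_{I,A'}}$ --- is the intended generalisation. Two minor points only: the coherences you invoke are the associativity and unit axioms for the monoidal functor $a$ (what the paper calls the ``hexagon axiom'' in Proposition~\ref{prop:change-of-action-monoidal}) rather than the pentagon and triangle, and your use of $\phi^{-1}$ tacitly requires $a$ to be \emph{strong} monoidal, which is the convention the paper assumes throughout.
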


\begin{definition}
Given two categories equipped with monoidal actions $(\M, \C)$ and $(\N, \D)$, a \emph{morphism of actions} is a monoidal functor $F^\bullet : \M \to N$ and a functor $F : \C \to \D$ that commutes with the actions, in the sense that there exists a natural isomorphism
  \begin{align*}
  \phi_{M,A} &: F(M \act A) \to (F^\bullet M) \act (F A)
  \end{align*}
satisfying conditions analogous to those for a monoidal functor.

\end{definition}

\begin{proposition}\label{prop:change-of-action}
If $F : (\M, \C) \to (\N, \D)$ is a morphism of actions, there is an induced functor $\Optic(F) : \Optic_\M \to \Optic_\N$. \qed
\end{proposition}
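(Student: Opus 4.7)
The plan is to imitate the proof of Proposition~\ref{prop:change-of-action-monoidal} almost verbatim, with two notational substitutions. Wherever that proof invokes the lax structure map $\phi_{M,A}\colon FM \otimes FA \to F(M \otimes A)$ of a monoidal functor, we instead use the action-compatibility isomorphism $\phi_{M,A}\colon F(M \act A) \to (F^\bullet M) \act (FA)$ guaranteed by $F$ being a morphism of actions; and wherever a tensor product of two residuals $M \otimes N$ in $\C$ appears, we use $F^\bullet M \otimes F^\bullet N$ as the residual in $\N$, connected to $F^\bullet(M \otimes N)$ via the monoidal structure of $F^\bullet$.

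First I would define $\Optic(F)$ on objects by $\Optic(F)(S, S') := (FS, FS')$, and on a representative $\rep{l}{r}$ with residual $M \in \M$ by
\[
  \Optic(F)(\rep{l}{r}) := \rep{\phi_{M,A}^{-1}(Fl)}{(Fr)\phi_{M,A'}},
\]
taking $F^\bullet M \in \N$ as the new residual. Well-definedness on the coend amounts to checking that for any $f\colon M \to N$ in $\M$, the two substitutions agree in $\Optic_\N$; this follows by naturality of $\phi$, which rewrites $\phi^{-1}_{N,A} F(f \act A) = (F^\bullet f \act FA) \phi^{-1}_{M,A}$, after which the coend relation in $\Optic_\N$ indexed by $F^\bullet f$ absorbs the correction. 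Preservation of the identity $\iota(\id_S, \id_{S'})$ follows as in the earlier proof using the unit coherence of $F^\bullet$ and $\phi$.

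The main obstacle, as before, is preservation of composition. Given optics with residuals $M, M' \in \M$, the composite has residual $M' \otimes M$, and after applying $\Optic(F)$ we need to match this against the composite in $\Optic_\N$, whose residual is $F^\bullet M' \otimes F^\bullet M$. The bridge is the hexagon-style axiom for a morphism of actions, which expresses $\phi_{M' \otimes M, A}$ in terms of $\phi_{M', M \act A}$, $\phi_{M, A}$, and the monoidal structure map of $F^\bullet$; introducing the latter on both sides of the coend (as an isomorphism, so harmless) reduces the verification to the same chain of steps naturality of $\phi$, functoriality of the action, definition of $\circ$ used for Proposition~\ref{prop:change-of-action-monoidal}. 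I would not write out the long diagrammatic computation in full; I would simply remark that every line of that earlier proof translates, substituting the action-compatibility data for the monoidal-functor data and $F^\bullet$ for $F$ on residuals.
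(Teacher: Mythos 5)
Your proposal is correct and is exactly the argument the paper intends: the paper states this proposition with no written proof, deferring implicitly to the analogy with Proposition~\ref{prop:change-of-action-monoidal}, and your substitutions (the action-compatibility isomorphism for the monoidal structure map, $F^\bullet$ on residuals, the monoidal structure of $F^\bullet$ plus the hexagon-style axiom to mediate between $F^\bullet M' \otimes F^\bullet M$ and $F^\bullet(M' \otimes M)$) are precisely what is needed, and your well-definedness check via naturality of $\phi$ and the coend relation indexed by $F^\bullet f$ is right. The only cosmetic point is the direction of $\phi_{M,A}$: the paper's definition of morphism of actions orients it as $F(M \act A) \to (F^\bullet M)\act(FA)$, so strictly your formula should use $\phi$ rather than $\phi^{-1}$ on the left leg, but since the paper itself later flips this convention and $\phi$ is an isomorphism, nothing is affected.
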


For the remainder of the paper we work in this more general setting. 

\section{Lawful Optics}\label{sec:lawful-optics}
Typically we want our optics to obey certain laws. The `constant-complement' perspective suggests declaring an optic $\rep{l}{r}$ to be lawful if $l$ and $r$ are mutual inverses. There are a couple of issues with this definition. Firstly, it is not invariant under the coend relation, so the condition holding for one representative is no guarantee that it holds for any other. Still, we might say that an optic is lawful if it has \emph{some} representative that consists of mutual inverses. In our primary example of an optic variant, lenses in $\Set$, this does indeed correspond to the concrete lens laws. However, this fact relies on some extra structure possessed by $\Set$: the existence of pullbacks, and that all objects (other than the empty set) have a global element.

In this section we make a different definition of lawfulness that at first seems strange, but which in the case of lenses corresponds \emph{exactly} to the three concrete lens laws with no additional assumptions on $\C$ required. As further justification for this definition, in Section~\ref{sec:profunctor-optics} we will see an interpretation of (unlawful) optics as maps between certain comonoid objects. Lawfulness in our sense corresponds exactly to this map being a comonoid homomorphism.

The optic laws only make sense for optics of the form $p : (S,S) \hto (A, A)$. In this section we will abbreviate $\Optic_\M((S, S), (A, A))$ as $\Optic_\M(S, A)$ and $p : (S, S) \hto (A, A)$ as $p : S \hto A$.

\begin{remark}
We use $;$ to denote composition of $\C$ in diagrammatic order. The reason for this is that the coend relation can be applied simply by shifting the position of $\mid$ in a representative:
\begin{align*}
\rep{l;(\phi \act A)}{r} = \rep{l}{(\phi \act A);r}
\end{align*}
\end{remark}

Let $\Twoptic_\M(S, A)$ denote the set \[ \int^{M_1, M_2 \in \M} \C(S, M_1 \act A) \times \C(M_1 \act A, M_2 \act A) \times \C(M_2 \act A, S). \]
Using the universal property of the coend, we define three maps:
\begin{align*}
  \outside &: \Optic_\M(S, A) \to \C(S, S) \\
  \once, \twice &: \Optic_\M(S, A) \to \Twoptic_\M(S, A)
\end{align*}
by
\begin{align*}
  \outside(\rep{l}{r}) &= l;r \\
  \once(\rep{l}{r}) &= \repthree{l}{\id_{M\act A}}{r} \\
  \twice(\rep{l}{r}) &= \repthree{l}{r;l}{r}
\end{align*}

\begin{definition}
  An optic $p : S \hto A$ is \emph{lawful} if
  \begin{align*}
    \outside(p) &= \id_S \\
    \once(p) &= \twice(p)
  \end{align*}
\end{definition}

Returning to ordinary lenses, we can show that this is equivalent to the laws we expect.

\begin{proposition}\label{prop:lawful-lens-laws}
  A concrete lens described by $\fget$ and $\fput$ is lawful (in our sense) iff it obeys the three concrete lens laws.
\end{proposition}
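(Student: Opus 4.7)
The plan is to reduce both conditions $\outside(p) = \id_S$ and $\once(p) = \twice(p)$ to concrete equations about $\fget$ and $\fput$, and then match them up with the three lens laws. I'll fix the canonical representative $\rep{[\id_S, \fget]}{\fput}$ identified in the proof of the lens proposition, so the residual is $M = S$, $l = [\id_S, \fget]$, and $r = \fput$.

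First I would handle $\outside$. This is the easy half: by definition $\outside(\rep{l}{r}) = l;r = \fput \circ [\id_S, \fget]$, and this equals $\id_S$ precisely when $\fput(s, \fget(s)) = s$, i.e.\ the $\fget\fput$ law. So this one condition captures exactly one of the three lens laws.

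The substantive step is working out $\once$ and $\twice$ concretely, which requires a Yoneda reduction for $\Twoptic$ analogous to the one used for $\Lens$. Starting from
\[\Twoptic_\M(S, A) = \int^{M_1, M_2} \C(S, M_1 \times A) \times \C(M_1 \times A, M_2 \times A) \times \C(M_2 \times A, S),\]
I would apply the universal property of the product to split each hom-set containing a product target, then reduce in $M_1$ first (using that the remaining integrand is contravariant in $M_1$) to substitute $M_1 := S$, and then in $M_2$ to substitute $M_2 := S \times A$. The result is a natural isomorphism
\[\Twoptic_\M(S, A) \cong \C(S, A) \times \C(S \times A, A) \times \C(S \times A \times A, S),\]
under which a representative $\repthree{\langle l_1, l_2\rangle}{\langle m_1, m_2\rangle}{r}$ corresponds to $(l_2,\, m_2 \circ (l_1 \times A),\, r \circ (m_1 \times A) \circ (l_1 \times A \times A))$.

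With this explicit isomorphism in hand, I would just read off the triples corresponding to $\once(p)$ and $\twice(p)$. For $\once(p)$ one uses $m = \id_{S \times A}$, giving the triple
\[(\fget,\; \pi_A,\; (s, a, a') \mapsto \fput(s, a'));\]
for $\twice(p)$ one uses $m = l \circ r = [\id_S, \fget] \circ \fput$, giving
\[(\fget,\; \fget \circ \fput,\; (s, a, a') \mapsto \fput(\fput(s, a), a')).\]
Comparing components: the first components agree automatically; the second components agree iff $\fget(\fput(s, a)) = a$, which is the $\fput\fget$ law; and the third components agree iff $\fput(s, a') = \fput(\fput(s, a), a')$, which is the $\fput\fput$ law. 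Combined with the analysis of $\outside$, this establishes the equivalence in both directions.

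The main obstacle is carrying out the Yoneda reduction for $\Twoptic$ cleanly and tracking how a given representative transforms under it; this is the source of all the real content, since once the concrete form of the three components is in hand the identification with the classical laws is immediate. One mild subtlety to watch is that the Yoneda reduction on $M_1$ forces $l_1 = \id_S$, so one must first use the coend relation to bring the representative into the form with residual $S$ before reading off the triple; this is exactly the move already used in the isomorphism $\Lens((S, S'), (A, A')) \cong \C(S, A) \times \C(S \times A', S')$.
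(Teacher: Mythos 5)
Your proposal is correct and follows essentially the same route as the paper: compute the concrete form of $\Twoptic_\times(S,A)$ via the universal property of the product and two Yoneda reductions, read off the triples for $\once(p)$ and $\twice(p)$ on the canonical representative $\rep{[\id_S,\fget]}{\fput}$, and match components against the $\fput\fget$ and $\fput\fput$ laws, with $\outside$ giving $\fget\fput$. Your explicit formula for the isomorphism agrees with the paper's $\Phi$, and the resulting triples $(\fget,\pi_2,\pi_{1,3};\fput)$ and $(\fget,\fput;\fget,(\fput\times A);\fput)$ are exactly those in the paper's calculation.
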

\begin{proof}
  We begin by giving $\Twoptic_\times(S, A)$ the same treatment as we did $\Optic_\times(S, A)$. Using the universal property of the product and Yoneda reduction twice each, we have:
  \begin{align*}
  \Twoptic_\times(S, A)
  &= \int^{M_1, M_2 \in \C} \C(S, M_1 \times A) \times \C(M_1 \times A, M_2 \times A) \times \C(M_2 \times A, S) \\
  &\cong \int^{M_1, M_2 \in \C} \C(S, M_1) \times \C(S, A) \times \C(M_1 \times A, M_2 \times A) \times \C(M_2 \times A, S) \\
  &\cong \int^{M_2 \in \C} \C(S, A) \times \C(S \times A, M_2 \times A) \times \C(M_2 \times A, S) \\
  &\cong \int^{M_2 \in \C} \C(S, A) \times \C(S \times A, M_2) \times \C(S \times A, A) \times \C(M_2 \times A, S) \\
  &\cong \C(S, A) \times \C(S \times A, A) \times \C(S \times A \times A, S)
  \end{align*}
  Written equationally, the isomorphism $\Phi : \Twoptic_\times(S, A) \to \C(S, A) \times \C(S \times A, A) \times \C(S \times A \times A, S)$ is given by:
  \begin{align*}
    \Phi(\repthree{l}{c}{r}) = (\quad&l;\pi_2, \\
                                       &(l;\pi_1 \times A);c;\pi_2, \\
                                       &((l;\pi_1 \times A);c;\pi_1 \times A);r \quad )
  \end{align*}
  Now suppose we are given a lens $p$ that corresponds concretely to $(\fget, \fput)$, so $p = \rep{[\id_S, \fget]}{\fput}$. Evaluating $\outside$ on this gives:
  \begin{align*}
    \outside(\rep{[\id_S, \fget]}{\fput}) = [\id_S, \fget];\fput 
  \end{align*}
  so requiring $\outside(p) = \id_S$ is precisely the $\fget\fput$ law.

  We now have to slog through evaluating $\Phi(\once(p))$ and $\Phi(\twice(p))$.
  \begingroup
  \allowdisplaybreaks
  \begin{alignat*}{3}
    \Phi(\once(\rep{[\id_S, \fget]}{\fput})) &=
    \Phi(&&\repthree{[\id_S, \fget]}{\id_{S \times A}}{\fput}) \\
    &= (&& [\id_S, \fget];\pi_2, \\
    &&& ( [\id_S, \fget];\pi_1 \times A);\id_{S \times A};\pi_2, \\
    &&& (( [\id_S, \fget];\pi_1 \times A);\id_{S \times A};\pi_1 \times A) ; \fput \quad) \\
    &= (&&\fget, \\
    &&& \pi_2, \\
    &&& \pi_{1,3} ;\fput \quad) \\
    \Phi(\twice(\rep{[\id_S, \fget]}{\fput})) &=
    \Phi(&&\repthree{[\id_S, \fget]}{\fput;[\id_S, \fget]}{\fput}) \\
    &= (&& [\id_S, \fget];\pi_2, \\
    &&& ([\id_S, \fget];\pi_1  \times A);\fput;[\id_S, \fget];\pi_2, \\
    &&& (([\id_S, \fget];\pi_1  \times A);\fput;[\id_S, \fget];\pi_1 \times A);\fput \quad) \\
    &= (&&\fget, \\
    &&& (\id_S \times A);\fput;\fget, \\
    &&& ((\id_S \times A);\fput  \times A);\fput \quad) \\
    &= (&&\fget, \\
    &&& \fput;\fget, \\
    &&& (\fput \times A);\fput \quad)
  \end{alignat*}
  \endgroup
  So comparing component-wise, $\Phi(\once(p))$ being equal to $\Phi(\twice(p))$ is exactly equivalent to the $\fput\fget$ and $\fput\fput$ laws holding.
\end{proof}

We can also check when some other of our basic optics are lawful.

\begin{proposition}
If $p = \rep{l}{r} : S \hto A$ is an optic such that $l$ and $r$ are mutual inverses, then $p$ is lawful.
\end{proposition}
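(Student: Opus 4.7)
The proof here should be essentially immediate once we unfold the definitions of $\outside$, $\once$, and $\twice$ against the hypothesis that $l$ and $r$ are mutual inverses, so the plan is to simply carry out these two substitutions in order.

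First I would verify the $\outside$ condition. By definition, $\outside(\rep{l}{r}) = l;r$. The assumption that $l$ and $r$ are mutual inverses means in particular that $l;r = \id_S$ (this is the direction of the inverse relation living on $S$, since $l : S \to M \act A$ and $r : M \act A \to S$). Therefore $\outside(p) = \id_S$, which is the first of the two lawfulness conditions.

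Next I would verify that $\once(p) = \twice(p)$ by reading off the definitions. We have $\twice(p) = \repthree{l}{r;l}{r}$, and the other direction of the mutual inverse relation gives $r;l = \id_{M \act A}$. Substituting, $\twice(p) = \repthree{l}{\id_{M \act A}}{r} = \once(p)$, which is the second lawfulness condition. No coend manipulation is needed for either step: both equalities follow directly inside the chosen representative.

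Since both parts reduce to a single substitution, there is really no main obstacle to flag; the only thing to be careful about is reading off which composition equals which identity from the phrase ``mutual inverses'' (namely $l;r = \id_S$ on the $S$-side and $r;l = \id_{M \act A}$ on the $(M \act A)$-side). The proof is therefore a short two-line computation with no auxiliary lemmas required.
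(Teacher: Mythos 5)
Your proof is correct and follows exactly the same route as the paper's: compute $\outside(\rep{l}{r}) = l;r = \id_S$ from one direction of the inverse relation, and substitute $r;l = \id_{M\act A}$ inside $\twice(\rep{l}{r}) = \repthree{l}{r;l}{r}$ to obtain $\once(\rep{l}{r})$. Nothing further is needed.
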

\begin{proof}
The conditions are easy to check:
\begin{align*}
\outside(\rep{l}{r}) &= l;r = \id_S \\
\twice(\rep{l}{r})
&= \repthree{l}{r;l}{r} \\
&= \repthree{l}{\id_{M\act A}}{r} \\
&= \once(\rep{l}{r})
\end{align*}
\end{proof}

\begin{corollary}\label{cor:iota-lawful}
  If $f : S \to A$ and $g : A \to S$ are mutual inverses, then $\iota(f, g) : S \hto A$ is a lawful optic, so $\iota$ restricts to a functor $\iota : \mathrm{Core}(\C) \to \Lawful_\M$. \qed
\end{corollary}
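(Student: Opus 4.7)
The plan is to reduce the claim to the immediately preceding proposition, which asserts that an optic $\rep{l}{r}$ is lawful whenever the representative components $l, r$ are mutually inverse morphisms of $\C$. By Proposition~\ref{prop:iota-functor}, $\iota(f,g)$ has canonical representative $\rep{\lambda_A^{-1} f}{g \lambda_A}$, so it suffices to check that $l \defeq \lambda_A^{-1} f$ and $r \defeq g \lambda_A$ are mutual inverses in $\C$.

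This is a one-line calculation using invertibility of the left unitor together with the hypotheses $g \circ f = \id_S$ and $f \circ g = \id_A$: we have $r \circ l = g \lambda_A \lambda_A^{-1} f = g f = \id_S$ and $l \circ r = \lambda_A^{-1} f g \lambda_A = \lambda_A^{-1} \lambda_A = \id_{I \otimes A}$. The previous proposition then immediately gives lawfulness of $\iota(f,g)$.

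For the functoriality clause, I would observe that there is an evident functor $\mathrm{Core}(\C) \to \C \times \C^\op$ sending each isomorphism $f : S \to A$ to the pair $(f, f^{-1})$, where the existence of the inverse is precisely what is needed to produce a well-defined contravariant second component. Composing with the $\iota$ of Proposition~\ref{prop:iota-functor} produces a functor $\mathrm{Core}(\C) \to \Optic_\M$, and the first half of the argument shows its image lies in the lawful subcategory $\Lawful_\M$. I do not expect any genuine obstacle here; the whole statement is just an unwinding of the definition of $\iota$ followed by an appeal to the preceding proposition.
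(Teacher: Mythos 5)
Your proposal is correct and matches the paper's intended argument exactly: the corollary is stated with no proof precisely because, as you observe, $\iota(f,g)=\rep{\lambda_A^{-1}f}{g\lambda_A}$ has mutually inverse components and the preceding proposition applies. The only cosmetic quibble is that in the general monoidal-action setting the second composite is $\id_{I\act A}$ rather than $\id_{I\otimes A}$, which changes nothing.
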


\begin{corollary}\label{cor:tautological-lawful}
   For any two objects $A \in \C$ and $M \in \M$, the tautological optic $M \act A \hto A$ is lawful. \qed
\end{corollary}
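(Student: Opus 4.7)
The plan is to deduce this immediately from the previous proposition. The tautological optic $M \act A \hto A$ is, in the setting where both source and target are of the form $(X, X)$, represented by $\rep{\id_{M \act A}}{\id_{M \act A}}$, i.e.\ both $l$ and $r$ are the same identity morphism on $M \act A$. In particular, $l$ and $r$ are mutual inverses in the trivial sense that $l;r = r;l = \id_{M \act A}$, so the preceding proposition applies and we are done.

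Alternatively (and essentially equivalently), one can check the two lawfulness conditions directly from the definitions of $\outside$, $\once$ and $\twice$: for the outer law,
\[
\outside(\rep{\id_{M \act A}}{\id_{M \act A}}) = \id_{M \act A}; \id_{M \act A} = \id_{M \act A},
\]
and for the once/twice equation,
\[
\once(\rep{\id_{M \act A}}{\id_{M \act A}}) = \repthree{\id_{M \act A}}{\id_{M \act A}}{\id_{M \act A}} = \repthree{\id_{M \act A}}{\id_{M \act A};\id_{M \act A}}{\id_{M \act A}} = \twice(\rep{\id_{M \act A}}{\id_{M \act A}}).
\]
There is no real obstacle here; the only subtlety worth flagging is the notational one that in this subsection we have restricted attention to optics of the form $(X, X) \hto (Y, Y)$, so the tautological optic is being read with both primes equal to the unprimed object. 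Once this is noted, the proof is a one-line appeal to the previous proposition.
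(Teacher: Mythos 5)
Your proof is correct and is exactly the argument the paper intends: the corollary is stated immediately after the proposition that an optic $\rep{l}{r}$ with $l$ and $r$ mutual inverses is lawful, and the tautological optic $\rep{\id_{M\act A}}{\id_{M\act A}}$ is the degenerate instance of that proposition. Your direct verification of the $\outside$ and $\once=\twice$ conditions is also fine, just redundant.
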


\begin{proposition}
A costate $p : (S, S) \hto (I, I)$ corresponding to a morphism $f : S \to S$ via Proposition~\ref{prop:costates} is lawful iff $f = \id_S$.
\end{proposition}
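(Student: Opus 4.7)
The plan is to pick the canonical representative of $p$ via Proposition~\ref{prop:costates} and then check the two lawfulness conditions directly.

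By Proposition~\ref{prop:costates}, the costate corresponding to $f : S \to S$ is $p = \rep{\rho_S^{-1}}{f \rho_S}$, with residual $M = S$. The first step is to unfold $\outside(p)$: in diagrammatic order this is $\rho_S^{-1} \mathbin{;} f\rho_S$, which collapses via $\rho_S^{-1}\rho_S = \id$ to $f$. Hence $\outside(p) = \id_S$ if and only if $f = \id_S$, which already settles the ``only if'' direction.

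For the ``if'' direction, assume $f = \id_S$, so that $p = \rep{\rho_S^{-1}}{\rho_S} = c_S$. Since $\rho_S^{-1}$ and $\rho_S$ are mutually inverse, lawfulness of $c_S$ is immediate from the preceding proposition that any optic represented by a pair of mutual inverses is lawful. If one prefers to verify it by hand, the middle component of $\twice(c_S)$ is $\rho_S \mathbin{;} \rho_S^{-1} = \id_{S \otimes I}$, which matches the middle component of $\once(c_S)$ on the nose, while $\outside(c_S) = \id_S$ was just shown.

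There is no substantive obstacle here: once Proposition~\ref{prop:costates} is invoked, everything reduces to the identity $\rho_S \rho_S^{-1} = \id$. The one point worth staying alert to is the diagrammatic-order convention --- the middle slot of $\twice$, written $r \mathbin{;} l$, means the endomorphism $l \circ r$ of $M \act A$, not $r \circ l$, and similarly $\outside(\rep{l}{r}) = l \mathbin{;} r = r \circ l$.
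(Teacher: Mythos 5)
Your proof is correct and follows essentially the same route as the paper: the ``only if'' direction falls out of $\outside(p) = f$ and the first law, and the ``if'' direction invokes the preceding proposition that a representative consisting of mutual inverses is lawful. The extra by-hand check of $\once$ versus $\twice$ is also accurate.
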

\begin{proof}
The first law states that $\outside(p) = \id_S$, so if $p$ is lawful we have
\[  \id_S = \outside(\rep{\rho_S^{-1}}{\rho_S;f}) =  \rho_S^{-1};\rho_S;f = f \]
On the other hand, if $f  = \id_S$ then $\rep{\rho_S^{-1}}{\rho_S}$ is lawful because its components are mutual inverses.
\end{proof}


\begin{proposition}\label{prop:lawful-category}
  There is an subcategory $\Lawful_\M$ of $\Optic_\M$ given by objects of the form $(S, S)$ and lawful optics between them.
\end{proposition}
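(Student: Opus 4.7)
The plan is to verify the two conditions defining a subcategory: that identity optics are lawful and that lawfulness is preserved by composition.

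For identities, note that $\id_{(S,S)} = \iota(\id_S, \id_S)$ and $\id_S$ is its own inverse, so Corollary~\ref{cor:iota-lawful} immediately gives lawfulness. For composition, let $p = \rep{l_p}{r_p}: S \hto A$ and $q = \rep{l_q}{r_q}: A \hto B$ be lawful, with residuals $M_p$ and $M_q$. Writing $L := l_p;(M_p \act l_q)$ and $R := (M_p \act r_q); r_p$, the composite is $q \circ p = \rep{L}{R}$ with residual $M_p \otimes M_q$. The $\outside$ condition is immediate: $L;R = l_p;(M_p \act (l_q; r_q));r_p = l_p; r_p = \id_S$, using functoriality of $M_p \act -$, $\outside(q) = \id_A$, and $\outside(p) = \id_S$.

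The harder task is $\once(q \circ p) = \twice(q \circ p)$. My plan is to introduce two auxiliary ``sandwich'' maps
\begin{align*}
\Psi_p &: \Twoptic_\M(A, B) \to \Twoptic_\M(S, B), & \repthree{l'}{c'}{r'} &\mapsto \repthree{l_p;(M_p \act l')}{M_p \act c'}{(M_p \act r'); r_p} \\
\Phi_q &: \Twoptic_\M(S, A) \to \Twoptic_\M(S, B), & \repthree{l}{c}{r} &\mapsto \repthree{l;(N_1 \act l_q)}{(N_1 \act r_q);c;(N_2 \act l_q)}{(N_2 \act r_q); r}
\end{align*}
where $N_1, N_2$ denote the residuals of the input representative. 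Applying $\Psi_p$ to both sides of the lawfulness equation $\once(q) = \twice(q)$ yields $\once(q \circ p) = \repthree{L}{M_p \act (r_q; l_q)}{R}$, and applying $\Phi_q$ to both sides of $\once(p) = \twice(p)$ yields $\twice(q \circ p) = \repthree{L}{M_p \act (r_q; l_q)}{R}$. Comparison of these two equations gives the desired equality.

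The main obstacle is verifying that $\Psi_p$ and $\Phi_q$ are well-defined on $\Twoptic_\M$-equivalence classes, i.e., that they respect the coend relations. This amounts to a diagram chase using naturality of the monoidal functor $\M \to [\C, \C]$ defining the action: for any morphism $\phi: N \to N'$ in $\M$, the component $\phi \act X$ commutes past any morphism $f: X \to Y$ of $\C$ via the naturality square $(\phi \act X);(N' \act f) = (N \act f);(\phi \act Y)$. Combined with the monoidal structure isomorphisms $(N \otimes M_q) \act B \cong N \act (M_q \act B)$, this translates each coend relation on the $\M$-index $N$ for the input of $\Phi_q$ into a corresponding coend relation on the index $N \otimes M_q$ for the output, and analogously for $\Psi_p$. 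The remaining computations, including checking that the formula I gave for $\twice(q \circ p)$ indeed emerges from $\Phi_q$ applied to $\twice(p)$ via the identity $r_p;l_p$ replacing the middle entry, are purely formal.
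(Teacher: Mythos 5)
Your proposal is correct and follows essentially the same route as the paper: your maps $\Psi_p$ and $\Phi_q$ are exactly the paper's ``transplanting of chains of generating relations'' packaged as well-defined maps on $\Twoptic_\M$, and both arguments meet at the same intermediate representative $\repthree{L}{M_p \act (r_q;l_q)}{R}$ via naturality of the action. The identity case likewise reduces, as in the paper, to the mutual-inverses criterion.
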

\begin{proof}
  This will follow from our description of lawful profunctor optics later, but we give a direct proof. The identity optic is lawful as by definition it has a representative $\rep{\lambda_S^{-1}}{\lambda_S}$ consisting of mutual inverses. We just have to show that lawfulness is preserved under composition.

  Suppose we have two lawful optics $\rep{l}{r} : R \hto S$ and $\rep{l'}{r'} : S \hto A$ with residuals $M$ and $N$ respectively. We must show that $\rep{l;(M\act l')}{(M\act r');r}$ is also a lawful optic. Showing the first law is straightforward:
  \begin{align*}
    \outside(\rep{l; (M\act l')}{(M\act r') ; r})
    &= l ; (M \act l') ; (M \act r') ; r \\
    &= l ; (M \act l'r') ; r \\
    &= l ; (M \act \id_{N \act A}) ; r \\
    &= l ; r \\
    &= \id_S
  \end{align*}

  For the second law, we must show that
  \[ \repthree{ l;(M\act l')}{(M\act r'); r;l;(M\act l')}{(M\act r') ; r} = \repthree{l;(M\act l')}{\id_{M \act N \act A}}{(M\act r') ; r}. \]
  The idea is that, by the lawfulness of $\rep{l}{r}$ and $\rep{l'}{r'}$, there are chains of coend relations that prove
  \begin{align*}
  \repthree{l}{r;l}{r} &= \repthree{l}{\id_{M \act S}}{r} \\
  \repthree{l'}{r';l'}{r'} &= \repthree{l'}{\id_{N \act A}}{r'}
  \end{align*}
  The result is achieved by splicing these chains of relations together in the following way.

  Consider one of the generating relations $\repthree{l;(\phi \act S)}{c}{r} = \repthree{l}{(\phi \act S) ; c}{r}$ in $\Twoptic_\M(R, S)$, where $\phi : M \to M'$. By the functoriality of the action, we calculate:
  \begin{align*}
    &\repthree{l;(\phi \act S);(M' \act l')}{(M'\act r'); c ;(M\act l')}{(M\act r') ; r} \\
    &= \repthree{l;(M \act l');(\phi \act N \act A)}{(M' \act r'); c ;(M \act l')}{(M \act r') ; r} && \text{(functoriality)} \\
    &= \repthree{l;(M \act l')}{(\phi \act N \act A);(M' \act r'); c ;(M \act l')}{(M \act r') ; r} && \text{(coend relation)} \\
    &= \repthree{l;(M \act l')}{(M \act r');(\phi \cdot S); c ;(M \act l')}{(M \act r') ; r} && \text{(functoriality)}
  \end{align*}
  And similarly for the other generating relation, $\repthree{l}{c;(\phi \act S)}{r} = \repthree{l}{c}{(\phi \act S); r}$.
  
  So indeed by replicating the same chain of relations that proves $\repthree{l}{r;l}{r} =\repthree{l}{\id_{M \act S}}{r}$, we see
  \begin{align*}
    \repthree{l;(M \act l')}{(M \act r');r;l;(M \act l')}{(M \act r') ; r} 
    &= \repthree{l;(M \act l')}{(M \act r');\id_{M \act S};(M \act l')}{(M \act r') ; r} \\
    &= \repthree{l;(M \act l')}{(M \act r';l')}{(M \act r') ; r}.
  \end{align*}

  Now that the $r;l$ in the center has been cleared away, we turn to the chain of relations proving $\repthree{l'}{r';l'}{r'} = \repthree{l'}{\id_{N\act A}}{r'}$. A generating relation $\repthree{l';(\psi \act A)}{c'}{r'} = \repthree{l'}{(\psi \act A) ; c'}{r'}$ in $\Twoptic_\M(S, A)$ implies that
  \begin{align*}
    \repthree{l;(M \act l';\psi \act A)}{M \act c'}{(M \act r') ; r}
    &= \repthree{l;(M\act l');(M \act \psi \act A)}{M \act c'}{(M \act r') ; r} \\
    &= \repthree{l;(M \act l')}{(M \act \psi \act A) ; (M \act c')}{(M\act r') ; r} \\
    &= \repthree{l;(M \act l')}{M \act ((\psi \act A);c')}{(M \act r') ; r}
  \end{align*}
  And similarly for the generating relation on the other side. So again we can replicate the chain of relations proving $\repthree{l'}{r';l'}{r'} = \repthree{l'}{\id_{M\act A}}{r'}$ to show that
  \begin{align*}
    \repthree{l;(M \act l')}{(M \act r';l')}{(M \act r') ; r} 
    &= \repthree{l;(M \act l')}{M(\id_{N\act A})}{(M \act r') ; r} \\
    &= \repthree{l;(M \act l')}{\id_{M \act N \act A}}{(M \act r') ; r}
  \end{align*}
as required. We conclude that composition preserves lawfulness, so $\Lawful_\M$ is indeed a subcategory of $\Optic_\M$.
\end{proof}

\begin{proposition}
  In the case that $\C$ is symmetric monoidal and $\M = \C$ acts by left-tensor, then $\Lawful_\otimes$ is symmetric monoidal with the unswitched tensor.
\end{proposition}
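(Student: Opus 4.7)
The plan is to verify, in turn, that the unswitched tensor preserves the diagonal form of objects, that the structure isomorphisms of $\Optic_\otimes$ are lawful, and that the tensor of two lawful optics is lawful.

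First, observe that with the unswitched tensor, $(S, S) \otimes (T, T) = (S \otimes T, S \otimes T)$, so objects of the form $(X, X)$ are closed under the product, and the unit $(I, I)$ is of this form. Second, the structure morphisms (associator, unitors, symmetry) of $\Optic_\otimes$ are all of the shape $\iota(\sigma, \sigma^{-1})$ for a structure isomorphism $\sigma$ of $\C$, so by Corollary~\ref{cor:iota-lawful} each is a lawful optic in $\Lawful_\otimes$.

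The main task is to show that for lawful $p : S \hto A$ and $q : T \hto B$, the tensor $p \otimes q : S \otimes T \hto A \otimes B$ is again lawful. My plan is to avoid a direct frontal attack by exploiting the interchange law, which holds in any symmetric monoidal category:
\[ p \otimes q \;=\; (p \otimes \id_{(B,B)}) \circ (\id_{(S,S)} \otimes q). \]
Since $\Lawful_\otimes$ is closed under composition by Proposition~\ref{prop:lawful-category}, it suffices to prove that $\id_{(S,S)} \otimes q$ and $p \otimes \id_{(B,B)}$ lie in $\Lawful_\otimes$ whenever $q$ and $p$ respectively do. The symmetry of $\Optic_\otimes$ (itself lawful by the previous paragraph) reduces both of these to a single case: tensoring a lawful optic on the left with an identity.

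The key step is therefore to check that $\id_{(S,S)} \otimes q$ is lawful whenever $q = \rep{l}{r}$ is. A representative has residual $I \otimes N$ with left and right legs built from $l$ and $r$ tensored with $S$, with some symmetry isomorphisms inserted by the definition of the unswitched tensor. For the first law, $\outside(\id \otimes q)$ unravels diagrammatically to $\id_S \otimes (l;r) = \id_S \otimes \id_T = \id_{S \otimes T}$, after absorbing the symmetries via the coend relation. For the second law, $\once(\id \otimes q) = \twice(\id \otimes q)$, the strategy mirrors the splicing argument used in Proposition~\ref{prop:lawful-category}: each generating coend relation witnessing $\once(q) = \twice(q)$ in $\Twoptic_\otimes(T, B)$ can be tensored on the left with $\id_S$ and transported through the symmetries to yield a corresponding generating relation in $\Twoptic_\otimes(S \otimes T, S \otimes B)$, by the functoriality of $\otimes$ and naturality of the symmetry. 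Replicating the whole chain produces an equality $\once(\id \otimes q) = \twice(\id \otimes q)$.

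The principal obstacle is bookkeeping the symmetry isomorphisms $s_{A,N}$ introduced by the unswitched tensor; these do not appear in the analogous calculation for composition, but they can be absorbed into the residual via the coend relation in exactly the style of the hexagon step in Proposition~\ref{prop:change-of-action-monoidal}. Once this is done, the monoidal product indeed restricts to $\Lawful_\otimes$, and the coherence diagrams hold automatically since they are inherited from $\Optic_\otimes$.
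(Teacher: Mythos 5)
Your proposal is correct, but it organises the argument differently from the paper. The paper's proof handles the tensor of two general lawful optics head-on: the first law is immediate from $\outside(p)\otimes\outside(q)$, and for the second law it interleaves the two chains of coend relations witnessing $\once(p)=\twice(p)$ and $\once(q)=\twice(q)$ directly, in the same style as the composition argument of Proposition~\ref{prop:lawful-category}. You instead factor $p \otimes q = (p \otimes \id) \circ (\id \otimes q)$ by interchange, conjugate $p\otimes\id$ to $\id\otimes p$ by the (lawful) symmetry, and then invoke closure of $\Lawful_\otimes$ under composition so that only the single-sided case $\id_{(S,S)} \otimes q$ ever needs the splicing argument. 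This buys a genuine simplification: you only ever transport one chain of relations at a time (tensored with a fixed identity), rather than managing two interleaved chains, and the burden of "combining" them is delegated to the already-proven Proposition~\ref{prop:lawful-category}. The cost is some extra bookkeeping up front (interchange, symmetry conjugation, and the residual $I \otimes N$ with its unitor), but all of that is routine and you handle it correctly; your observation that the leftover symmetries $s_{S,N}$ cancel under the coend relation in the $\outside$ computation, and can be transported through each generating relation by naturality for the $\once=\twice$ computation, is exactly what is needed. Both proofs are sound; yours is arguably the more modular write-up of the same underlying splicing idea.
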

This would of course make no sense with the switched tensor, as the tensor of two objects would typically no longer be of the form $(X, X)$.
\begin{proof}
Due to Corollary~\ref{cor:iota-lawful}, the structure maps of $\Optic_\M$ are all lawful. We just have to check that $\otimes : \Optic_\M \times \Optic_\M \to \Optic_\M$ restricts to a functor on $\Lawful_\M$. 

Given two lawful optics $p : S \hto A$ and $q : T \hto B$, the first law for $p \otimes q$ follows immediately from the first law for $p$ and $q$. To prove the second law, we follow the same strategy as used in the previous proposition: the two chains of relations proving $p$ and $q$  lawful can be combined to prove the law for $p \otimes q$.

\end{proof}

\begin{proposition}
  Suppose $F : (\M, \C) \to (\N, \D)$ is a morphism of actions. Then $\Optic(F) : \Optic_\M \to \Optic_\N$ restricts to a functor $\Lawful_\M \to \Lawful_\N$.
\end{proposition}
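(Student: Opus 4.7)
The plan is to verify the two lawfulness conditions for $\Optic(F)(p)$ separately, given a lawful $p = \rep{l}{r} : S \hto A$ in $\Optic_\M$ with some representative of residual $M \in \M$. Recall that $\Optic(F)(\rep{l}{r}) = \rep{Fl;\phi^{-1}_{M,A}}{\phi_{M,A};Fr}$, where $\phi_{M,A} : F^\bullet M \act FA \to F(M \act A)$ is the coherence isomorphism coming from the morphism of actions.

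The outside condition is immediate by direct computation:
\[ \outside(\Optic(F)(\rep{l}{r})) = (Fl;\phi^{-1}_{M,A});(\phi_{M,A};Fr) = F(l;r) = F(\id_S) = \id_{FS}, \]
using $\phi^{-1};\phi = \id$ and functoriality of $F$.

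For $\once = \twice$, my plan is to lift $\Optic(F)$ to the two-optic sets. Concretely, I would introduce a function $\Twoptic(F) : \Twoptic_\M(S,A) \to \Twoptic_\N(FS,FA)$ defined on representatives by
\[ \Twoptic(F)(\repthree{l}{c}{r}) := \repthree{Fl;\phi^{-1}_{M_1,A}}{\phi_{M_1,A};Fc;\phi^{-1}_{M_2,A}}{\phi_{M_2,A};Fr}. \]
Well-definedness requires invariance under the two coend relations (one for each of $M_1, M_2$). In both cases the calculation is of the same shape as the coend-check for $\Optic(F)$ carried out in Proposition~\ref{prop:change-of-action-monoidal}: one inserts the action hexagon for $F$ and uses naturality of $\phi$ to push a morphism $(\psi \act FA)$ from one side of the $\phi$'s to the other. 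This is the only real labour in the proof.

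Once $\Twoptic(F)$ is in hand, I would verify by direct calculation that it intertwines $\once$ and $\twice$ with $\Optic(F)$, i.e.\ $\Twoptic(F) \circ \once = \once \circ \Optic(F)$ and $\Twoptic(F) \circ \twice = \twice \circ \Optic(F)$. For $\once$ the center $\phi;F(\id_{M\act A});\phi^{-1}$ collapses to $\id_{F^\bullet M \act FA}$; for $\twice$ the center $\phi_{M,A};F(r;l);\phi^{-1}_{M,A}$ agrees with $(\phi_{M,A};Fr);(Fl;\phi^{-1}_{M,A})$ by functoriality of $F$ together with one cancellation of $\phi^{-1};\phi$. Applying $\Twoptic(F)$ to the equality $\once(p) = \twice(p)$ then yields $\once(\Optic(F)(p)) = \twice(\Optic(F)(p))$, so $\Optic(F)(p)$ is lawful.
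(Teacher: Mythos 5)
Your proof is correct and takes essentially the same approach as the paper: the paper verifies the first law by the same direct computation, and handles the second law by showing that each generating coend relation in $\Twoptic_\M$ is carried, via naturality of $\phi$, to a valid relation in $\Twoptic_\N$ and then ``replicating the chain of relations'' proving $\once(p)=\twice(p)$ --- which is precisely the well-definedness of your $\Twoptic(F)$ together with its intertwining of $\once$ and $\twice$ with $\Optic(F)$. One small correction: that well-definedness check needs only naturality of $\phi$ in the $\M$-variable (to push $(\psi \act A)$ across the $\phi$'s), not the hexagon axiom, which enters only in the proof that $\Optic(F)$ preserves composition (Proposition~\ref{prop:change-of-action-monoidal}).
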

\begin{proof}
  If $p = \rep{l}{r}$ is lawful, then verifying the first equation is easy:
  \begin{align*}
  \outside(\Optic(F)(\rep{l}{r}))
  &= \outside\left(\rep{(Fl);\phi^{-1}_{M,A}}{\phi_{M,A'};(Fr)}\right) \\
  &= (Fl);\phi^{-1}_{M,A};\phi_{M,A'};(Fr)\\
  &= (Fl);(Fr)\\
  &= \id_{FS}
  \end{align*}
  where $\phi_{M,A'} : (F^\bullet M) \act (FA) \to F(M \act A)$ is the structure map that commutes $F$ with the actions.

  For the second equation, consider a generating relation $\repthree{l;(\psi \act A)}{c}{r} = \repthree{l}{(\psi \act A) ; c}{r}$. We can use the naturality of $\phi$ to show
  \begin{align*}
  \repthree{F(l;\psi \act A);\phi^{-1}_{M,A}}{\phi_{M,A};Fc;\phi^{-1}_{N,A}}{\phi_{N,A};Fr}
  &= \repthree{Fl;F(\psi \act A);\phi^{-1}_{M,A}}{\phi_{M,A};Fc;\phi^{-1}_{N,A}}{\phi_{N,A};Fr} \\
  &= \repthree{Fl;\phi^{-1}_{M',A};(F^\bullet \psi) \act A}{\phi_{M,A};Fc;\phi^{-1}_{N,A}}{\phi_{N,A};Fr} \\
  &= \repthree{Fl;\phi^{-1}_{M',A}}{(F^\bullet \psi) \act A;\phi_{M,A};Fc;\phi^{-1}_{N,A}}{\phi_{N,A};Fr} \\
  &= \repthree{Fl;\phi^{-1}_{M',A}}{\phi_{M',A};F(\psi \act A);Fc;\phi^{-1}_{N,A}}{\phi_{N,A};Fr} \\
  &= \repthree{Fl;\phi^{-1}_{M',A}}{\phi_{M',A};F(\psi \act A;c);\phi^{-1}_{N,A}}{\phi_{N,A};Fr}
  \end{align*}
  Similarly,
  \begin{align*}
    \repthree{Fl;\phi^{-1}_{M,A}}{\phi_{M,A};Fc;\phi^{-1}_{N,A}}{\phi_{N,A};F((\psi \act A);r)}
    &= \repthree{Fl;\phi^{-1}_{M,A}}{\phi_{M,A};F(c;(\psi \act A));\phi^{-1}_{N,A}}{\phi_{N,A};Fr}
  \end{align*}

  If $\rep{l}{r}$ is lawful, we can therefore replicate the chain of relations proving $\twice(\rep{l}{r}) = \once(\rep{l}{r})$ to show:
  \begin{align*}
  \twice(\Optic(F)(\rep{l}{r}))
  &= \repthree{Fl;\phi^{-1}_{M,A}}{\phi_{M,A};F(r;l);\phi^{-1}_{M,A}}{\phi_{M,A};Fr}  \\
  &= \repthree{Fl;\phi^{-1}_{M,A}}{\phi_{M,A};F(\id_{M \act A});\phi^{-1}_{M,A}}{\phi_{M,A};Fr}  \\
  &= \repthree{Fl;\phi^{-1}_{M,A}}{\id_{(F^\bullet M) \act (FA)}}{\phi_{M,A};Fr}  \\
  &= \once(\Optic(F)(\rep{l}{r}))
  \end{align*}
\end{proof}

We end with some commentary on the optic laws. The requirement that $\once(p) = \twice(p)$ is mysterious, but there are sufficient conditions that are easier to verify.

\begin{proposition}
  Let $\rep{l}{r} : S \hto A$ be an optic. If $l;r = \id_S$ and $r;l = \phi \act A$ for some $\phi : M \to M$ in $\M$, then $\rep{l}{r}$ is lawful.
\end{proposition}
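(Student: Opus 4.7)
The first condition $l;r = \id_S$ immediately discharges the outside law, since $\outside(\rep{l}{r}) = l;r$ by definition.

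For the second law, I need to show $\twice(\rep{l}{r}) = \once(\rep{l}{r})$, i.e.\
\[ \repthree{l}{r;l}{r} = \repthree{l}{\id_{M \act A}}{r}. \]
The plan is to substitute the assumption $r;l = \phi \act A$ into the middle slot, then use the coend relation in $\Twoptic_\M(S, A)$ to shift $\phi \act A$ out of the middle and onto $l$ (since $\phi \act A$ lies in the image of $\phi : M \to M$ under the action). Explicitly, the generating coend relation
\[ \repthree{l;(\psi \act A)}{c}{r} = \repthree{l}{(\psi \act A);c}{r} \]
applied with $\psi = \phi$ and $c = \id_{M \act A}$ will let me rewrite $\repthree{l}{\phi \act A}{r}$ as $\repthree{l;(\phi \act A)}{\id_{M \act A}}{r}$.

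At that point the two hypotheses combine: $l;(\phi \act A) = l;r;l = \id_S; l = l$, using $r;l = \phi \act A$ to introduce the composite and then $l;r = \id_S$ to cancel. Stringing the steps together gives
\[ \repthree{l}{r;l}{r} = \repthree{l}{\phi \act A}{r} = \repthree{l;(\phi \act A)}{\id_{M \act A}}{r} = \repthree{l;r;l}{\id_{M \act A}}{r} = \repthree{l}{\id_{M \act A}}{r} \]
as required. There is no real obstacle here: the only delicate point is recognizing that the hypothesis $r;l = \phi \act A$ is precisely what one needs to invoke the coend relation (which only applies to morphisms of the form $\psi \act A$), and this is exactly why the hypothesis is stated with that particular shape rather than just an arbitrary endomorphism of $M \act A$.
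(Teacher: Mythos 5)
Your proof is correct and follows essentially the same route as the paper's: substitute $r;l = \phi \act A$ into the middle slot, apply the coend relation to move $\phi \act A$ onto $l$, then use $l;(\phi\act A) = l;r;l = l$ to conclude. No gaps.
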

\begin{proof}
  The statement $\outside(\rep{l}{r}) = l;r = \id_S$ is exactly the first law. And for the second, we verify:
  \begin{align*}
    \twice(\rep{l}{r})
    &= \repthree{l}{r;l}{r} \\
    &= \repthree{l}{\phi \act A}{r} && \text{($r;l = \phi \act A$)}\\
    &= \repthree{l ; (\phi \act A)}{\id_{M\act A}}{r} && \text{(coend relation)} \\
    &= \repthree{l;r;l}{\id_{M\act A}}{r} && \text{($r;l = \phi \act A$ again)}\\
    &= \repthree{l}{\id_{M\act A}}{r}  && \text{($l;r = \id_S$)}\\
    &= \once(\rep{l}{r})
  \end{align*}
\end{proof}

Even if $r;l = \phi \act A$ for some $\phi$, the same is not necessarily true for other representatives of the same optic. Let $\inside : \Optic_\M(S, A) \to \int^{M \in \M} \C(M \act  A, M \act  A)$ be the map induced by $\inside(\rep{l}{r}) = \langle r ; l \rangle$. We might ask that instead of requiring $r;l = \phi \act A$ exactly, we have $\langle r ; l \rangle = \langle \phi \act A \rangle$ in $\int^{M \in \M} \C(M \act  A, M \act  A)$. In fact, this is equivalent:

\begin{proposition}\label{prop:onthenose}
  Suppose $p : S \hto A$ satisfies $\outside(p) = \id_S$ and $\inside(p) = \langle \phi \act A \rangle$. Then there exists a representative $\rep{l}{r}$ such that $r;l = \psi \act A$ on the nose for some (possibly different) $\psi : M \to M$.
\end{proposition}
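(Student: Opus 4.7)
The plan is to promote the coend-level witness $\langle r;l \rangle_M = \langle \phi \act A \rangle_{M^*}$ into an explicit finite zigzag of single generating relations (via Lemma~\ref{lemma:calculate-coend}), and then march along this zigzag, at each step modifying the representative of $p$ so as to control what happens to the inside morphism $r;l$ on the nose. Concretely, I fix a representative $\rep{l}{r}_M$ of $p$, set $c_0 = r;l$, and obtain a zigzag $\langle c_0 \rangle_{N_0} \sim \cdots \sim \langle c_n \rangle_{N_n}$ with $N_0 = M$, $N_n = M^*$, $c_n = \phi \act A$, each step being a generating relation parameterised by some $\chi_i$ in $\M$ and an auxiliary $d_i$ in $\C$.

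The crucial tool is a one-step lemma: if $\rep{l}{r}_N$ represents $p$, $l;r = \id_S$, and $r;l = (\chi \act A);c$ for some $\chi : N \to N'$ and $c : N' \act A \to N \act A$, then $\rep{l;(\chi \act A)}{c;r}_{N'}$ also represents $p$ and has new inside morphism $(c;(\chi \act A))^2$. Indeed, the optic coend relation gives $\rep{l;(\chi \act A)}{c;r}_{N'} = \rep{l}{(r;l);r}_N$, and the hypothesis $l;r = \id_S$ collapses the middle to recover $\rep{l}{r}_N$; the new inside is immediate from the definition. A mirror statement with representative $\rep{l;c}{(\chi \act A);r}_{N'}$ handles the symmetric factorisation $r;l = c;(\chi \act A)$.

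Iterating this lemma along the zigzag, I will build representatives $\rep{l^{(i)}}{r^{(i)}}_{N_i}$ of $p$ with $r^{(i)};l^{(i)} = c_i^{2^i}$ by induction on $i$. The key step is recognising that the factorisation supplied by the zigzag propagates under powering: if $c_{i-1} = (\chi_i \act A);d_i$ with $c_i = d_i;(\chi_i \act A)$, then reassociating yields $c_{i-1}^{2^{i-1}} = (\chi_i \act A);(c_i^{2^{i-1}-1};d_i)$, so the one-step lemma applies and produces new inside $(c_i^{2^{i-1}-1};d_i;(\chi_i \act A))^2 = (c_i^{2^{i-1}})^2 = c_i^{2^i}$. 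After $n$ steps the final representative lives at residual $M^*$ with inside $(\phi \act A)^{2^n} = \phi^{2^n} \act A$ by functoriality of the action, so $\psi := \phi^{2^n} : M^* \to M^*$ realises the conclusion on the nose.

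The main obstacle is the bookkeeping in the inductive step: I need to verify that the telescoping which doubles the exponent at each step is robust against the two possible orientations of each generating relation, so that the symmetric version of the one-step lemma slots into the "backward" steps of the zigzag without disturbing the pattern $r^{(i)};l^{(i)} = c_i^{2^i}$. Once this is in hand, well-definedness of the construction (each $\rep{l^{(i)}}{r^{(i)}}$ actually equalling $p$) follows from a single use of the optic coend relation combined with the inductive outside condition.
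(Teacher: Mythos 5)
Your proposal is correct and follows essentially the same route as the paper: both arguments unpack the coend equality $\inside(p) = \langle \phi \act A\rangle$ into a finite zigzag of generating relations and then walk along it, using the optic coend relation together with $l;r = \id_S$ to replace the representative at each step while squaring the inside morphism, ending with $\phi^{2^n} \act A$ (the paper phrases the induction as recursing on the squared, shortened chain rather than tracking the exponent $2^i$ explicitly, but this is only a bookkeeping difference). Your one-step lemma and its mirror are exactly the paper's rightward/leftward cases, and the orientation bookkeeping you flag does go through as you anticipate.
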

\begin{proof}
  The generating relation for $\int^{M \in \M} \C(M \act A, M \act  A)$ is
  \[ \langle f; (\phi \act A) \rangle = \langle (\phi \act A); f \rangle \]
  whenever $f : N \act A \to M \act A$ and $\phi : M \to N$. This relation $f; (\phi \act A) \rightsquigarrow (\phi \act A); f$ is not likely to be symmetric or transitive in general. Note that if $f; (\phi \act A) \rightsquigarrow (\phi \act A); f$ then $f ;(\phi \act A) ;f; (\phi \act A) \rightsquigarrow (\phi \act A) ;f ;(\phi \act A); f$. More generally, if $f \rightsquigarrow g$ then $f^n \rightsquigarrow g^n$ for any $n$.

  Now let $\rep{l}{r}$ be a representative for $p$, so $l;r = \id_S$ and $\langle r;l \rangle = \langle \psi \act A\rangle$. There therefore exists a finite chain of relations $r;l = u_1 \leftrightsquigarrow \dots \leftrightsquigarrow u_n = \psi \act A$. Suppose the first relation faces rightward, so there exists a $k$ and $\phi$ with $r;l = (\phi \act A);k$ and $u_2 = k;(\phi \act A)$. Define $l' = l;(\phi \act A)$ and $r' = k;r$. Then:
\begin{align*}
\rep{l'}{r'}
&= \rep{l ; (\phi \act A)}{k ; r} \\
&= \rep{l}{(\phi \act A) ; k ; r} \\
&= \rep{l}{r ; l; r} \\
&= \rep{l}{r}
\end{align*}
This new representative satisfies
\begin{align*}
l';r' &= l;(\phi \act A);k;r = l;r;l;r = \id_S \\
r';l' &= k;r;l;(\phi \act A) = k;(\phi \act A);k;(\phi \act A) = u_2^2
\end{align*}

  A symmetric argument shows that if instead the relation faces leftward, so $lr \leftsquigarrow u_2$, there again exists $l'$ and $r'$ so that $\rep{l}{r} = \rep{l'}{r'}$, and both $l';r' = \id_S$ and $r';l' = u_2^2$.

  We can now inductively apply the above argument to the shorter chain \[l'r' = u_1^2 \leftrightsquigarrow \dots \leftrightsquigarrow u_n^2 \leftrightsquigarrow {(\psi \act A)}^2 = \psi^2 \act A,\] obtained by squaring each morphism in the original chain, until we are left with a representative $\rep{l^*}{r^*}$ such that $r^*;l^* = \psi^N \act A$, for some $N>0$. This pair $\rep{l^*}{r^*}$ is the required representative.
\end{proof}

The above argument has a similar form to those that appear in~\cite{OnTheTrace}, which considered (among other things) coends of the form $\int^{c \in \C} \C(c, Fc)$ for an endofunctor $F : \C \to \C$.

\section{Examples}\label{sec:examples}

The general pattern is as follows. Once we choose a particular monoidal action $\M \to [\C, \C]$, we find an isomorphism between the set of optics $(S, S') \hto (A, A')$ and a set $\conc((S, S'), (A, A'))$ that is easier to describe. We follow~\cite{ProfunctorOptics} (and others) in calling elements of $\conc$ \emph{concrete optics}. There is no canonical choice for this set; our primary goal is to find a way to eliminate the coend so that we no longer have to deal with equivalence classes of morphisms.

Ideally, we then also find a simplified description $\conctwice(S, A)$ for the set $\Twoptic_\C(S, A)$. We can then ``read off'' what conditions on are needed on a concrete optic to ensure that the corresponding element of $\Optic_\M(S, A)$ is lawful. We will call these conditions the \emph{concrete laws}.

It is worth emphasising that once a monoidal action has been chosen and a concrete description of the corresponding optics found, no further work is needed to show that the result forms a category with a subcategory of lawful optics. This is especially useful when devising new optic variants, as we do later.

\subsection{Lenses}

The founding example, that of lenses, has already been discussed in the previous sections. We add a couple of remarks.

\begin{remark}\label{lens-iota-not-faithful}
  For the category of sets, the functor $\iota : \Set \times \Set^\op \to \Optic_\times$ is not faithful. The problem is the empty set: the functor $0 \times (-)$ is not faithful. Any pair of maps $f : 0 \to A$, $g : A' \to S'$ yield equivalent optics $\iota(f, g)$, as the corresponding $\fget$ and $\fput$ functions must be the unique maps from $0$.
\end{remark}

\begin{remark}
  In the case that $\C$ is cartesian closed, $\Optic_\times$ is monoidal closed via the astonishing formula
  \begin{align*}
    [(S, S'), (A, A')] := (\homC(S, A) \times \homC(S \times A', S'), \, S \times A')
  \end{align*}
  where $\homC(-, -)$ denotes the internal hom. For a proof see~\cite[Section 1.2]{DialecticaCategories}. This cannot be extended to non-cartesian closed categories, the isomorphism
  \begin{align*}
    \Lens((S, S') \otimes (T, T'), (A, A')) \cong \Lens((S, S'),  [(T, T'), (A, A')])
  \end{align*}
  uses the diagonal maps of $\C$ in an essential way.
\end{remark}

If we ask more of our category $\C$, we can show that a lens $S \hto A$ implies the existence of a complement $\C$ with $S \cong C \times A$.  This doesn't appear to follow purely from the concrete lens laws---an argument that a definition of lawfulness based on constant complements is not the correct generalisation. For completeness we include a proof in our notation.

\begin{proposition}[{Generalisation of~\cite[Corollary 13]{AlgebrasAndUpdateStrategies}}]
  Suppose $\C$ has pullbacks and that there is a morphism $x : 1 \to A$. If $p : S \hto A$ is lawful then there exists $C \in \C$ and mutual inverses $l : S \to C \times A$ and $r : C \times A \to S$ so that $p = \rep{l}{r}$.
\end{proposition}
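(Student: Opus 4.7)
By Proposition~\ref{prop:lawful-lens-laws}, lawfulness of $p$ amounts to the three concrete lens laws on its canonical representative $\rep{[\id_S, \fget]}{\fput}$. The strategy is to use the global element $x : 1 \to A$ to produce an idempotent on $S$ whose splitting provides the complement $C$.

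First I would define $c \defeq \fput \circ \langle \id_S, x \rangle : S \to S$, the ``overwrite the $A$-part with $x$'' map. The $\fput\fput$ law gives $c \circ c = c$ directly. Because $\C$ has both finite products (we are in the lens setting) and pullbacks, equalizers exist, and in particular idempotents split: let $i : C \hookrightarrow S$ be the equalizer of $c$ and $\id_S$, and use $c \circ c = c$ to factor $c = i \circ c'$ for a unique $c' : S \to C$; monicity of $i$ then forces $c' \circ i = \id_C$.

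Next I set $l \defeq \langle c', \fget \rangle : S \to C \times A$ and $r \defeq \fput \circ (i \times A) : C \times A \to S$ and verify they are mutually inverse using the lens laws. The composite $r \circ l$ equals $\fput \circ \langle c, \fget \rangle$, which collapses to $\fput \circ \langle \id_S, \fget \rangle = \id_S$ by $\fput\fput$ followed by $\fget\fput$. For $l \circ r = \id_{C \times A}$, the $A$-component is handled by $\fput\fget$, while the $C$-component is checked after postcomposing with the monic $i$: one shows $c \circ \fput = c \circ \pi_1$ (by $\fput\fput$) and then uses $c \circ i = i$ to conclude.

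Finally, $p = \rep{l}{r}$ follows from two applications of the coend relation. Applying the relation with the map $c : S \to S$ yields $\rep{[c, \fget]}{\fput} = \rep{[\id_S, \fget]}{\fput \circ (c \times A)}$, and $\fput \circ (c \times A) = \fput$ by $\fput\fput$, so the right-hand side is $p$. Since $c = i c'$, the left-hand side equals $\rep{(i \times A) \circ l}{\fput}$, and a second coend move with $i : C \to S$ transports this to $\rep{l}{\fput \circ (i \times A)} = \rep{l}{r}$. The only delicate step is constructing and using the splitting of the idempotent cleanly in categorical language; the lens-law manipulations are routine once the diagrams are set up.
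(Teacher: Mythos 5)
Your argument is correct and follows essentially the same route as the paper: both proofs take $l = \langle j, \fget\rangle$ and $r = \fput\,(i\times A)$ for a splitting $(i,j)$ of $c = \fput\,\langle \id_S, x\,!_S\rangle$, verify that $l$ and $r$ are mutual inverses from the three lens laws, and then use the coend relation to identify $\rep{[\id_S,\fget]}{\fput}$ with $\rep{l}{r}$. The only (inessential) divergence is in how the complement is produced: you observe $cc=c$ via $\fput\fput$ and split the idempotent as the equalizer of $c$ and $\id_S$ (constructible from pullbacks and products), whereas the paper takes $C$ to be the pullback of $\fget$ along $x$ and induces $j$ from its universal property via the $\fput\fget$ law; the two constructions give canonically isomorphic splittings of the same idempotent.
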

\begin{proof}
  Set $C$ to be the pullback of $\fget$ along $x$, so there is a map $i : C \to S$ with $\fget \, i = x !_C$. There is also a map $j : S \to C$ induced by the following diagram:
  \[
    \begin{tikzcd}
      S \ar[ddr, bend right = 20] \ar[dr, "j", dashed] \ar[r, "{[\id_S, x !_S]}"] & S \times A \ar[dr, "\fput"] & \\
      & C \ar[r, "i"] \ar[d] \arrow[dr, phantom, "\lrcorner", very near start] & S \ar[d, "\fget"] \\
      & 1\ar[r, "x", swap] & A
    \end{tikzcd}
  \]
  which commutes by the $\fput\fget$ law. Note that $ji = \id_C$ by the universal property of pullbacks.

  Now take $l : [j,\fget] : S \to C \times A$  and $r : \fput (i \times A) : C \times A \to S$. That they are mutual inverses is easily checked:
  \begin{align*}
    \fput (i \times A)[j,\fget] &= \fput [ij,\fget] \\
                                &= \fput [\fput [\id_S, x!_S],\fget] && \text{(by definition of $j$)} \\
                                &= \fput [\id_S,\fget] && \text{(by $\fput\fput$)} \\
                                &= \id_S && \text{(by $\fget\fput$)}
  \intertext{and}
    [j,\fget]\fput (i \times A) &= [j\fput (i \times A),\fget\,\fput (i \times A)] && \text{(by universal property of product)} \\
                                &= [j\fput (i \times A), \pi_2 (i \times A)] && \text{(by $\fput\fget$)} \\
                                &= [j\fput (i \times A), \pi_2] && \\
                                &= [jij\fput (i \times A), \pi_2] && \\
                                &= [j\fput [\id_S,x !_S] \fput (i \times A), \pi_2] && \\
                                &= [j\fput [\id_S, x !_S] \pi_1 (i \times A), \pi_2] && \text{(by $\fput\fput$)}\\   
                                &= [jij \pi_1 (i \times A), \pi_2] && \\
                                &= [jiji \pi_1, \pi_2] && \\
                                &= [\pi_1, \pi_2] && \\
                                &= \id_{C \times A}
  \end{align*}

  Finally, the coend relation gives that \[\rep{[j,\fget]}{\fput (i \times A)} = \rep{(i \times A)[j,\fget]}{\fput} = \rep{[\id_S, \fget]}{\fput}\] as elements of $\Optic_\M(S, A)$.
\end{proof}

\begin{remark}
Much of the work on bidirectional transformations~\cite{CombinatorsForBidirectionalTreeTransformations} considers lenses that are only `well-behaved', not `very well-behaved': they obey the $\fput\fget$ and $\fget\fput$ laws but not the $\fput\fput$ law.

For example, the ``change counter'' lens $\bN \times A \hto A$ from~\cite{AClearPictureOfLensLaws} has $\fput$ and $\fget$ given by:
  \begin{align*}
    \fget(n, a) &= a \\
    \fput((n, a), a') &= \begin{cases}
      (n, a) & \text{if } a = a' \\
      (n+1, a') & \text{otherwise}
    \end{cases}
  \end{align*}
  This example is typical of (merely) well-behaved lenses: there is metadata stored alongside the target of a lens that mutates as the lens is used.

Lenses that satisfy only the two laws correspond to pairs $\rep{l}{r}$ such that $rl = \id_S$ and $\pi_2lr = \pi_2$. This condition seems unavoidably tied to the product structure on $\C$; there is no obvious way generalise this to other optics variants.
\end{remark}


\subsection{Prisms}
Prisms are dual to lenses:

\begin{definition}
  Suppose $\C$ has finite coproducts. The \emph{category of prisms} is the category of optics with respect to the coproduct $\sqcup$: $\Prism \defeq \Optic_\sqcup$.
\end{definition}

Just as optics for $\times$ correspond to a pair of maps $\fget : S \to A$ and $\fput : S \times A \to S$, optics for $\sqcup$ correspond to pairs of maps $\freview : A \to S$ and $\fmatching : S \to S \sqcup A$. These names are taken from the Haskell \lenslib{} library.
\begin{align*}
  \Prism((S, S'), (A, A')) &= \int^{M \in \C} \C(S, M \sqcup A) \times \C(M \sqcup A', S') \\
                                  &\cong \int^{M \in \C} \C(S, M \sqcup A) \times \C(M, S') \times \C(A, S') && \text{(universal property of coproduct)} \\
                                  &\cong \C(S, S' \sqcup A) \times \C(A', S') && \text{(Yoneda reduction)}
\end{align*}
If we are given a prism $\rep{l}{r} : (S, S') \hto (A, A')$ then associated $\freview$ and $\fmatching$ morphisms are given by $\freview = r \inr$ and $\fmatching = (r\inl \sqcup A)l$

The concrete laws for prisms are the obvious duals to the lens laws:
\begin{align*}
  \fmatching \; \freview &= \inr \\
  [\id_S, \freview] \fmatching &= \id_S \\
  (\fmatching \sqcup A) \fmatching &= \mathrm{in}_{1,3} \, \fmatching
\end{align*}

In the \lenslib{} library documentation the third law is missing, on account of following:

\begin{proposition}
  When $\C = \Set$, the third law is implied by the other two.
\end{proposition}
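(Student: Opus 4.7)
The plan is to reduce the verification to a pointwise check, exploiting that in $\Set$ every element of a coproduct lies in exactly one summand. Given $s \in S$, I would analyse the two morphisms $(\fmatching \sqcup A)\,\fmatching$ and $\mathrm{in}_{1,3}\,\fmatching$ applied to $s$ by case analysis on whether $\fmatching(s)$ has the form $\inl(s')$ or $\inr(a)$.

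In the right-summand case $\fmatching(s) = \inr(a)$, both sides evaluate immediately to the third-summand injection of $a$ in $S \sqcup A \sqcup A$: the map $\fmatching \sqcup A$ acts as the identity on the right factor, and $\mathrm{in}_{1,3}$ sends $\inr(a)$ into the third slot by definition. Neither of the first two laws is needed in this branch.

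In the left-summand case $\fmatching(s) = \inl(s')$, the second concrete prism law $[\id_S, \freview]\,\fmatching = \id_S$ forces $s' = [\id_S, \freview](\inl(s')) = s$, so in fact $\fmatching(s) = \inl(s)$. The left-hand side then computes to $\inl(\fmatching(s)) = \inl(\inl(s))$, while the right-hand side is $\mathrm{in}_{1,3}(\inl(s)) = \inl(\inl(s))$, and they agree.

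There is essentially no obstacle here: the argument is a straightforward two-way case split whose only substantive ingredient is the observation that law 2 pins down $\fmatching(s)$ on the left summand. It is worth noting that law 1 is never used, and that the proof is genuinely $\Set$-specific---the case split on elements has no analogue in a general category with binary coproducts, which is why the \lenslib{} documentation's omission of the third law is justified only in that setting.
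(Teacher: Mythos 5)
Your proof is correct. It is close in spirit to the paper's argument---both are pointwise case analyses exploiting that a coproduct in $\Set$ is a disjoint union---but the decomposition is different and yours is more economical. The paper first uses the first law $\fmatching\,\freview = \inr$ to deduce that $\freview$ is injective, writes $S \cong C \sqcup A$ for a complement $C$, uses the second law to determine $\fmatching$ on each piece of $S$, and only then checks the third law by cases on $s$. You instead case-split directly on which summand $\fmatching(s)$ lands in, observing that the second law alone forces $\fmatching(s) = \inl(s)$ in the left-summand case while the right-summand case holds for free. The two case splits coincide once the first law is assumed (it forces $\fmatching(s) = \inr(a)$ precisely when $s$ lies in the image of $\freview$), but your version establishes the sharper statement that in $\Set$ the third law follows from the second law alone, with the first law genuinely unused. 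Both arguments are equally $\Set$-specific, as you correctly note, and both explain why the duality with the lens $\fput\fput$ law (which is \emph{not} redundant) breaks: elements of a coproduct belong to a canonical summand, whereas elements of a product do not come from a canonical factor.
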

\begin{proof}
  The key is that for any map $f : X \to Y$ in $\Set$, the codomain $Y$ is equal to the union of $\im f$ and its complement. The first law implies that $\freview$ is injective, so $S \cong C \sqcup A$ for some complement $C$. Identifying $A$ with its image in $S$, the second law implies that if $a\in A \subset S$ then $\fmatching(a) = \inr(a)$ and if $c\in C \subset S$ then $\fmatching(c) = \inl(c)$. The third law can then be verified pointwise by checking both cases $a\in A \subset S$ and $c\in C \subset S$ separately.
\end{proof}

The following is then exactly the dual of Proposition~\ref{prop:lawful-lens-laws}.
\begin{proposition}\label{prop:lawful-prism-laws}
  If $p : S \hto A$ is a lawful prism then the associated $\fmatching$ and $\freview$ functions satisfy the concrete prism laws. \qed
\end{proposition}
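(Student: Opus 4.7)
The plan is to run the proof of Proposition~\ref{prop:lawful-lens-laws} with products replaced by coproducts throughout. First I would give a concrete description of $\Twoptic_\sqcup(S, A)$ by alternating applications of the coproduct universal property (splitting $\C(M_i \sqcup A, -)$ into a product of two hom-sets) and the version $KX \cong \int^C KC \times \C(X, C)$ of Yoneda reduction used for $\Lens$. Eliminating both residual variables should yield an explicit isomorphism
\[
\Psi : \Twoptic_\sqcup(S, A) \xrightarrow{\;\cong\;} \C(S, S \sqcup A \sqcup A) \times \C(A, S \sqcup A) \times \C(A, S)
\]
with a formula directly dual to the $\Phi$ that appears in the lens proof.

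Next, given a concrete prism $(\freview, \fmatching)$ arising from the optic $\rep{\fmatching}{[\id_S, \freview]}$ under the isomorphism $\Prism((S,S'),(A,A')) \cong \C(S, S \sqcup A) \times \C(A, S)$ established just before the proposition, I would compute the image under $\outside$, $\once$ and $\twice$. The calculation $\outside(\rep{\fmatching}{[\id_S, \freview]}) = [\id_S, \freview]\fmatching$ shows that $\outside(p) = \id_S$ is exactly the $\fget\fput$-analogue $[\id_S, \freview]\fmatching = \id_S$. Applying $\Psi$ to $\once(p)$ and $\twice(p)$ and comparing components should reproduce, in some order, a trivial identity on $\freview$ together with the two remaining concrete laws $\fmatching\,\freview = \inr$ and $(\fmatching \sqcup A)\fmatching = \mathrm{in}_{1,3}\,\fmatching$.

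The entire proof is mechanical: once the dual isomorphism $\Psi$ is in hand, each concrete law drops out as the equality of one component of $\Psi(\once(p))$ with the corresponding component of $\Psi(\twice(p))$, and the result actually strengthens to an ``iff'' in the same way the lens proposition does. The main obstacle is purely notational bookkeeping, namely tracking which coproduct injections appear where: the role played by $[\id_S, \fget]$ and the projections $\pi_1, \pi_2$ in the lens case is played in the prism case by the copairing $[\id_S, \freview]$ together with the injections $\inl, \inr$ and their iterated analogues into $S \sqcup A \sqcup A$. No new categorical ideas beyond those already used for lenses are required.
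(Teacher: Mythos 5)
Your proposal is correct and is essentially the paper's own argument made explicit: the paper proves this proposition in one line by declaring it ``exactly the dual of Proposition~\ref{prop:lawful-lens-laws},'' and your plan carries out precisely that dualization, with the right concrete form of $\Twoptic_\sqcup(S,A)$, the right optic representative $\rep{\fmatching}{[\id_S,\freview]}$, and the correct matching of $\outside(p)=\id_S$ with the law $[\id_S,\freview]\fmatching=\id_S$ and of the componentwise comparison of $\once(p)$ and $\twice(p)$ with the remaining two laws. Your observation that the statement strengthens to an ``iff'' is also consistent with the duality the paper invokes.
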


\subsection{Isos}

For any category $\C$, there is a unique action of the terminal category $1$ on $\C$ that fixes every object.

\begin{proposition}
  The category of optics for this action is isomorphic to $\C \times \C^\op$.
\end{proposition}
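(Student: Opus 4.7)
The plan is to exploit the triviality of the indexing category: since $1$ has a single object $\star$ and only the identity morphism, the coend defining the hom-sets of $\Optic_1$ collapses. First I would observe that the unique action sends $\star$ to $\id_\C$, so $M \act X = X$ for all $X \in \C$ and all morphisms act as themselves. The integrand at $\star$ is then $\C(S, A) \times \C(A', S')$, and since there are no non-identity morphisms in $1$ to generate any coend relation,
\[
\Optic_1((S, S'), (A, A')) = \int^{M \in 1} \C(S, M \act A) \times \C(M \act A', S') = \C(S, A) \times \C(A', S'),
\]
which is exactly $(\C \times \C^\op)((S, S'), (A, A'))$.

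Next I would verify that the category structure on $\Optic_1$ agrees with that on $\C \times \C^\op$ under this identification. The identity optic on $(S, S')$ is $\rep{\id_S}{\id_{S'}}$ (the unitors being identities, as $I \act X = X$), which matches $(\id_S, \id_{S'})$. The composition rule $\rep{l_2}{r_2} \circ \rep{l_1}{r_1} = \rep{(M \act l_2) l_1}{r_1 (M \act r_2)}$ reduces to $\rep{l_2 l_1}{r_1 r_2}$ since the action on morphisms is trivial, which matches composition in $\C \times \C^\op$.

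Finally, I would conclude by observing that the functor $\iota : \C \times \C^\op \to \Optic_1$ of Proposition~\ref{prop:iota-functor} is the witnessing isomorphism: it is the identity on objects, and under the bijection above it sends $(f, g)$ to $\rep{f}{g}$, which is a bijection on hom-sets. The functoriality of $\iota$ is already established. There is no real obstacle here — the content of the proposition is simply that when there is only one possible residual, all the quotienting and bookkeeping inherent in the $\Optic$ construction becomes vacuous.
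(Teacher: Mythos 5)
Your proposal is correct and takes essentially the same route as the paper: collapse the coend over the terminal category to the integrand at $\star$, obtaining $\C(S,A)\times\C(A',S')$, and then observe that composition (and identities) match those of $\C\times\C^\op$. You supply slightly more detail on the composition check and the role of $\iota$ than the paper does, but the argument is the same.
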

\begin{proof}
  \begin{align*}
    \Optic_1((S, S'), (A, A')) &= \int^{M \in 1} \C(S, M \act A) \times \C(M \act A', S') \\
                               &\cong \C(S, \star \act A) \times \C(\star \act A', S') \\
                               &\cong \C(S, A) \times \C(A', S')
  \end{align*}
  where $\star$ denotes the object of $1$. Composition in $\Optic_1((S, S'), (A, A'))$ does indeed correspond to composition in $\C \times \C^\op$.
\end{proof}

\begin{proposition}
  An iso $\rep{l}{r}$ is lawful iff (as expected) $l$ and $r$ are mutual inverses.
\end{proposition}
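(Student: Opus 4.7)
The plan is to exploit the fact that when the acting category is terminal, every coend over the residual collapses, so both $\Optic_1$ and $\Twoptic_1$ admit coend-free descriptions, and the lawfulness conditions reduce to equalities of tuples of morphisms.

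First I would recompute $\Twoptic_1(S, A)$ in the same spirit as the previous proposition: since $1$ has only one object $\star$ (with only identity morphisms), the double coend
\[
\Twoptic_1(S, A) = \int^{M_1, M_2 \in 1} \C(S, M_1 \act A) \times \C(M_1 \act A, M_2 \act A) \times \C(M_2 \act A, S)
\]
is just $\C(S, A) \times \C(A, A) \times \C(A, S)$, with no nontrivial quotient. Under the identification of the previous proposition, the map $\once$ sends $\rep{l}{r}$ to the triple $(l, \id_A, r)$, and $\twice$ sends it to $(l, r;l, r)$.

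For the forward direction, if $\rep{l}{r}$ is lawful then $\outside(\rep{l}{r}) = l;r = \id_S$ directly, and the equality $\once(\rep{l}{r}) = \twice(\rep{l}{r})$ holds in $\C(S,A) \times \C(A,A) \times \C(A,S)$ precisely when the middle components match, i.e.\ $r;l = \id_A$. Together these two equations say that $l$ and $r$ are mutual inverses. Conversely, if $l$ and $r$ are mutual inverses then $\outside(\rep{l}{r}) = l;r = \id_S$ is immediate, and $(l, r;l, r) = (l, \id_A, r)$ gives the second law; this direction is already subsumed by the earlier proposition that any optic whose components are mutual inverses is lawful.

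There is no real obstacle here: the argument is essentially bookkeeping, the key observation being that triviality of the acting category eliminates the coend relations in $\Twoptic_1$, so lawfulness becomes componentwise equality of representatives rather than an equivalence-class comparison.
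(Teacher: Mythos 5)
Your proposal is correct and follows essentially the same route as the paper: both collapse $\Twoptic_1(S,A)$ to $\C(S,A)\times\C(A,A)\times\C(A,S)$ using the triviality of the acting category, and then read off $l;r=\id_S$ from the first law and $r;l=\id_A$ from the middle component of $\once$ versus $\twice$. The only (harmless) addition is your observation that the converse direction also follows from the earlier general proposition about mutually inverse representatives.
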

\begin{proof}
$\Twoptic_\M(S, A)$ specialises in this case to just
  \[ \C(S, A) \times \C(A, A) \times \C(A, S) \]
  The condition $\outside(\rep{l}{r}) = \id_S$ is the claim that $rl = \id_S$, and $\once(\rep{l}{r}) = (l, \id_A, r)$ is equal to $\twice(\rep{l}{r}) = (l, lr, r)$ iff $lr = \id_A$.
\end{proof}

\subsection{Coalgebraic Optics}\label{sec:coalgebraic}

There is a common pattern in many of the examples to follow: for every object $A \in \C$, the evaluation-at-$A$ functor $- \act A : \M \to \C$ has a right adjoint, say $R_A : \C \to \M$. To fix notation, let \[\overrightarrow{(-)} : \C(F \act A, S) \to \M(F, R_{A} S) : \overleftarrow{(-)}\] denote the homset bijection, so the unit and counit are:
    \begin{equation*}
    \begin{aligned}[t]
  	\eta_F &: F \to R_A (F \act A) \\
    \eta_F &:= \overrightarrow{\id_{F \act A}}
    \end{aligned}
    \qquad\qquad\qquad
    \begin{aligned}[t]
    \varepsilon_{S} &: (R_A S) \act A \to S \\
    \varepsilon_{S} &:= \overleftarrow{\id_{R_{A} S}}
    \end{aligned}
\end{equation*}
It is shown in~\cite[Section 6]{ANoteOnActions} that, at least in the case $\M$ is right closed, to give such an action is equivalent to giving a ``copowered $\M$-category $\C$''. In most cases of interest to us, however, $\M$ is not right closed. 

When we have such a right adjoint, we can always find a concrete description of an optic:
\begin{align*}
\Optic_\M((S, S'), (A, A')) &= \int^{F \in \M} \C(S, F\act A) \times \C(F\act A', S') \\
&\cong \int^{F \in \M} \C(S, F \act A) \times \M(F, R_{A'} S') \\
&\cong \C(S,  (R_{A'} S') \act A)
\end{align*}
A concrete optic is therefore a map $\funzip : S \to (R_{A'} S') \act A$. The above isomorphism sends $\funzip$ to the element $\rep{\funzip}{\varepsilon_{S'}}$. In the other direction, given $\rep{l}{r}$ we have $\funzip = (\overrightarrow{r} \act A)l$.

\begin{theorem}\label{thm:optics-are-coalgebras}
A concrete optic $\funzip : S \to  (R_{A} S) \act A$ is lawful iff it is a coalgebra for the comonad $X \mapsto (R_{A} X) \act A$.
\end{theorem}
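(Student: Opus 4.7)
The plan is to reduce $\Twoptic_\M(S, A)$ to a concrete form analogous to the reduction $\Optic_\M(S, A) \cong \C(S, (R_A S) \act A)$ already performed in the section. Writing $T X := (R_A X) \act A$ for the comonad in question, I would compute
\begin{align*}
\Twoptic_\M(S, A) &= \int^{M_1, M_2 \in \M} \C(S, M_1 \act A) \times \C(M_1 \act A, M_2 \act A) \times \C(M_2 \act A, S) \\
&\cong \int^{M_1, M_2 \in \M} \C(S, M_1 \act A) \times \M(M_1, R_A(M_2 \act A)) \times \M(M_2, R_A S) \\
&\cong \int^{M_1 \in \M} \C(S, M_1 \act A) \times \M(M_1, R_A((R_A S) \act A)) \\
&\cong \C(S, R_A((R_A S) \act A) \act A) = \C(S, T^2 S),
\end{align*}
by applying the adjunction $- \act A \dashv R_A$ to each of the last two factors and then performing Yoneda reduction over $M_2$ and over $M_1$ in turn.

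Next I would trace the representative $\rep{\funzip}{\varepsilon_S}$ through $\once$ and $\twice$ under this isomorphism. For $\once$, the middle morphism is $\id_{(R_A S) \act A}$, whose transpose across the adjunction is $\eta_{R_A S}$, so $\once(\rep{\funzip}{\varepsilon_S})$ corresponds to $(\eta_{R_A S} \act A) \circ \funzip = \delta_S \circ \funzip$, where $\delta_S$ is the comultiplication of $T$. For $\twice$, the middle morphism is $\funzip \circ \varepsilon_S$, whose transpose $R_A(\funzip \circ \varepsilon_S) \circ \eta_{R_A S}$ simplifies via the triangle identity $R_A(\varepsilon_S) \circ \eta_{R_A S} = \id_{R_A S}$ to $R_A(\funzip)$; thus $\twice(\rep{\funzip}{\varepsilon_S})$ corresponds to $(R_A(\funzip) \act A) \circ \funzip = T(\funzip) \circ \funzip$. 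Finally, $\outside(\rep{\funzip}{\varepsilon_S}) = \varepsilon_S \circ \funzip$ directly.

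Collecting these, the two lawfulness conditions translate to $\varepsilon_S \circ \funzip = \id_S$ and $\delta_S \circ \funzip = T(\funzip) \circ \funzip$, which are precisely the counit and coassociativity axioms for a $T$-coalgebra, giving the desired equivalence. The only delicate step is the Yoneda reduction over $M_2$: one views $\M(M_1, R_A(- \act A))$ as a covariant functor of its free argument and collapses it at $M_2 = R_A S$ against the factor $\M(M_2, R_A S)$; everything else is bookkeeping with the triangle identity of $- \act A \dashv R_A$.
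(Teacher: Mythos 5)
Your proposal is correct and follows essentially the same route as the paper: the same adjunction-plus-Yoneda reduction of $\Twoptic_\M(S,A)$ to $\C(S, T^2S)$, the same identification of $\once$ with $\delta_S\circ\funzip$ via $\overrightarrow{\id} = \eta_{R_AS}$, and the same triangle-identity simplification of $\twice$ to $T(\funzip)\circ\funzip$. The paper merely records the general formula $\Phi(\repthree{l}{c}{r}) = (\overrightarrow{(\overrightarrow{r}\act A)c}\act A)l$ before specialising, whereas you specialise directly; the content is identical.
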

\begin{proof}
By adjointness and Yoneda reduction we have an isomorphism \[\Phi : \Twoptic_\M(S, A) \to \C(S, R_A (R_A S \act A) \act A)\] given by
\begin{align*}
&\Twoptic_\M(S, A) \\
&= \int^{M_1, M_2 \in \M} \C(S, M_1 \act A) \times \C(M_1 \act A, M_2 \act A) \times \C(M_2 \act A, S) \\
&\cong \int^{M_1, M_2 \in \M} \C(S, M_1 \act A) \times \M(M_1, R_A (M_2 \act A)) \times \M(M_2, R_A S) \\
&\cong \int^{M_1, M_2 \in \M} \C(S, M_1 \act A) \times \M(M_1, R_A (M_2 \act A)) \times \M(M_2, R_A S) \\
&\cong \int^{M_1 \in \M} \C(S, M_1 \act A) \times \M(M_1, R_A (R_A S \act A)) \\
&\cong \C(S, R_A (R_A S \act A) \act A)
\end{align*}
  which evaluated on an element $\repthree{l}{c}{r}$ is
  \begin{align*}
    \Phi(\repthree{l}{c}{r}) &= (\overrightarrow{(\overrightarrow{r} \act A)c} \act A)l
  \end{align*}

So now interpreting the optic laws, we find
\[\outside(\rep{\funzip}{\varepsilon_{S} }) = \varepsilon_{S} \; \funzip = \id_S \] is exactly the coalgebra counit law, and equality of
  \begin{align*}
    \Phi(\once(\rep{\funzip}{\varepsilon_S}))
    &= \Phi(\repthree{\funzip}{\id_{(R_{A} S) \act A}}{\varepsilon_S }) \\
    &= (\overrightarrow{(\overrightarrow{\varepsilon_S} \act A)\id_{(R_{A} S) \act A}} \act A)\funzip \\
    &= (\overrightarrow{(\id_{R_A S} \act A)} \act A)\funzip \\
    &= (\overrightarrow{\id_{R_A S \act A}} \act A)\funzip \\
    &= (\eta_{R_A S} \act A) \funzip \\
    \Phi(\twice(\rep{\funzip}{\varepsilon_S }))
    &= \Phi(\repthree{\funzip}{\funzip \; \varepsilon_S}{\varepsilon_S }) \\
    &= (\overrightarrow{(\overrightarrow{\varepsilon_S} \act A)(\funzip \; \varepsilon_S)} \act A)\funzip \\
    &= (\overrightarrow{(\funzip \; \varepsilon_S)} \act A)\funzip \\
    &= (R_A (\funzip) \act A)\funzip
  \end{align*}
is exactly the coalgebra comultiplication law.
\end{proof}

\subsection{Setters}\label{sec:setters}

\begin{definition}
  The \emph{category of setters} $\Setter_\C$ is the category of optics for the action of $[\C, \C]$ on $\C$ by evaluation.
\end{definition}

To devise the concrete form of a setter, we use the following proposition. This is a generalisation of~\cite[Proposition 2.2]{SecondOrderFunctionals}, and helps to explain why the store comonad is so important in the theory of lenses.

\begin{proposition}
If $\C$ is powered over $\Set$ then the evaluation-at-$A$ functor $-A : [\C, \C] \to \C$ has a right adjoint given by $S \mapsto S^{\C(-, A)}$.

If $\C$ is copowered over $\Set$ then $-A : [\C, \C] \to \C$ has a left adjoint given by $S \mapsto \C(A, -) \bullet S$, where $\bullet$ denotes the copower.
\end{proposition}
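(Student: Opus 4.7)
The plan is to establish each adjunction by a direct chain of natural isomorphisms, using the defining (co)power adjunctions together with Yoneda reduction.

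For the right adjoint, first I would verify that $R_A(S) := S^{\C(-,A)}$ is indeed a functor $\C \to \C$: a morphism $f : X \to Y$ induces $f^* : \C(Y,A) \to \C(X,A)$, which by functoriality of the power $S^{(-)} : \Set^\op \to \C$ produces $S^{\C(X,A)} \to S^{\C(Y,A)}$. Then, for any $F \in [\C,\C]$, I would compute the hom in $[\C,\C]$ as an end of natural transformations and chain the following isomorphisms:
\begin{align*}
[\C,\C](F, R_A(S))
&= \int_{X \in \C} \C\bigl(F(X), S^{\C(X,A)}\bigr) \\
&\cong \int_{X \in \C} \Set\bigl(\C(X,A), \C(F(X), S)\bigr) \qquad \text{(power adjunction)} \\
&\cong \C(F(A), S) \qquad \text{(Yoneda reduction)}.
\end{align*}
The last step uses Lemma~\ref{lem:yoneda-reduction} applied to the contravariant functor $K : \C^\op \to \Set$ defined by $K(X) = \C(F(X), S)$, evaluated at $A$.

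For the left adjoint, the argument is dual. Define $L_A(S) := \C(A,-) \bullet S$, which is covariant in its argument since $\C(A,-)$ is covariant and the copower $(-) \bullet S : \Set \to \C$ is a functor. Then for any $G \in [\C, \C]$:
\begin{align*}
[\C,\C](L_A(S), G)
&= \int_{X \in \C} \C\bigl(\C(A,X) \bullet S, G(X)\bigr) \\
&\cong \int_{X \in \C} \Set\bigl(\C(A,X), \C(S, G(X))\bigr) \qquad \text{(copower adjunction)} \\
&\cong \C(S, G(A)) \qquad \text{(Yoneda reduction)},
\end{align*}
this time applying Lemma~\ref{lem:yoneda-reduction} to the covariant functor $H(X) = \C(S, G(X))$.

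In both cases the chains are natural in the free variables ($F$ and $S$ in the first, $S$ and $G$ in the second), because every step is built from universal properties. There is no real obstacle here: the argument is a routine manipulation of ends, the only subtlety being keeping track of variance when applying Yoneda reduction, which is why I would be careful to phrase $K$ and $H$ explicitly as (co)variant functors before invoking Lemma~\ref{lem:yoneda-reduction}.
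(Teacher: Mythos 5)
Your proof is correct and follows essentially the same route as the paper: the paper runs the identical chain (power/copower adjunction plus Yoneda reduction in its end form) merely starting from $\C(FA,S)$ rather than from the functor-category hom, so the two arguments are the same isomorphisms read in opposite directions. The extra remarks on functoriality of $S \mapsto S^{\C(-,A)}$ and naturality are fine but not needed beyond what the paper records.
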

\begin{proof}
For the first, we have
\begin{align*}
\C(FA, S)
&\cong \int_X \Set(\C(X, A), \C(FX, S)) \\
&\cong \int_X \C(FX, S^{\C(X, A)}) \\
&\cong [\C, \C](F, S^{\C(-, A)})
\end{align*}
and for the second,
\begin{align*}
\C(S, FA)
&\cong \int_X \Set(\C(A, X), \C(S, FX)) \\
&\cong \int_X \C(\C(A, X) \bullet S, FX) \\
&\cong [\C, \C](\C(A, -) \bullet S, F)
\end{align*}
\end{proof}

Recall that any category with coproducts is copowered over $\Set$ and any category with products is powered over $\Set$.

We could immediately use the previous section to give a coalgebraic description of setters and their laws, but with a little manipulation we get a form that looks more familiar:
\begin{align*}
  \Setter_\C((S, S'), (A, A')) &= \int^{F \in [\C, \C]} \C(S, FA) \times \C(FA', S') \\
                               &\cong \int^{F \in [\C, \C]} [\C, \C](\C(A, -) \bullet S, F) \times \C(FA', S') \\
                               &\cong \C(\C(A, A') \bullet S, S') \\
                               &\cong \Set(\C(A, A'), \C(S, S'))
\end{align*}

In the Haskell \lenslib{} library, the map $\C(A, A) \to \C(S,S)$ corresponding to a setter is called $\fover$: we think of a setter as allowing us to apply a morphism $A \to A$ over some parts of $S$. Tracing through the isomorphisms, the optic corresponding to $\fover$ is $\rep{l}{r}$ where $l : S \to \C(A, A) \bullet S$ is the inclusion with the identity morphism $\id_A$ and $r : \C(A, A') \bullet S \to S'$ is the transpose of $\fover$ along the adjunction defining the copower.

The laws for setters in this form are a kind of functoriality:

\begin{proposition}
A setter $p : S \hto A$ is lawful iff
\begin{align*}
\fover(\id_A) &= \id_S \\
\fover(f)\fover(g)&= \fover(fg)
\end{align*}
\end{proposition}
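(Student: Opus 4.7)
The plan is to apply Theorem~\ref{thm:optics-are-coalgebras}. Assuming $\C$ is also powered over $\Set$, the preceding proposition gives the right adjoint $R_A : S \mapsto S^{\C(-, A)}$ to the evaluation functor $-A : [\C, \C] \to \C$. The induced comonad on $\C$ from Section~\ref{sec:coalgebraic} is then $T(X) = R_A(X) A = X^{\C(A, A)}$, which is precisely the reader comonad for the endomorphism monoid $(\C(A, A), \circ, \id_A)$. By the theorem, the setter $p$ corresponds to a concrete $\funzip : S \to S^{\C(A, A)}$, and it is lawful iff $\funzip$ is a $T$-coalgebra.

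Next, I would use the power adjunction $\C(S, S^{\C(A, A)}) \cong \Set(\C(A, A), \C(S, S))$ to transpose $\funzip$ to $\fover$, and check that the two coalgebra axioms correspond to the two stated conditions. The counit of $T$ is $\varepsilon_S(g) = g(\id_A)$, so $\varepsilon_S \circ \funzip = \id_S$ transposes exactly to $\fover(\id_A) = \id_S$. For the comultiplication, I would unpack that $\delta_S : S^{\C(A, A)} \to S^{\C(A, A) \times \C(A, A)}$ is induced by composition in $\C(A, A)$, and verify that both sides of $\delta_S \circ \funzip = T\funzip \circ \funzip$ transpose to maps $\C(A, A) \times \C(A, A) \to \C(S, S)$ given respectively by $(f, g) \mapsto \fover(f \circ g)$ and $(f, g) \mapsto \fover(f) \circ \fover(g)$.

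There is no real obstacle: once the comonad is recognised as the reader comonad of a monoid, it is a standard fact that its coalgebras are in bijection with actions of that monoid, and a monoid action of $\C(A, A)$ on $S$ is exactly a monoid homomorphism $\fover : \C(A, A) \to \C(S, S)$. One could alternatively give a direct proof using the copower description, computing $\Twoptic_{[\C, \C]}(S, A) \cong \Set(\C(A, A) \times \C(A, A), \C(S, S))$ by the same Yoneda manipulations used earlier and then reading off $\once$ and $\twice$, but the coalgebraic route is more conceptual.
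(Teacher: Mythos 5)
Your proposal is correct, but it takes a genuinely different route from the paper. The paper's proof (which is terse) is exactly the ``alternative'' you mention in your last sentence: it uses the \emph{left} adjoint $S \mapsto \C(A,-)\bullet S$ to evaluation-at-$A$ to compute $\Twoptic_{[\C,\C]}(S,A) \cong \Set(\C(A,A)\times\C(A,A),\C(S,S))$ by the same Yoneda reductions used for $\Optic$, and then reads off $\once$ and $\twice$ component-wise, leaving the verification to the reader. You instead go through the \emph{right} adjoint $S \mapsto S^{\C(-,A)}$ and Theorem~\ref{thm:optics-are-coalgebras}, identifying the induced comonad as $X \mapsto X^{\C(A,A)}$ and its coalgebras as monoid actions of $\C(A,A)$ on $S$; this is more conceptual and the bookkeeping (counit is projection at $\id_A$, comultiplication is precomposition with composition in $\C(A,A)$) does come out to exactly the two stated laws, in the right order. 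Two small costs of your route are worth flagging: (i) it needs $\C$ to be powered over $\Set$, an extra hypothesis beyond the copower that is already required just to define $\fover$, whereas the paper's computation uses only the copower; and (ii) you should explicitly check that the power-transpose of $\funzip$ coincides with the $\fover$ of the statement, which is defined via the copower isomorphism --- they do agree, since both send $f$ to $r \circ Ff \circ l$ for a representative $\rep{l}{r}$ with residual $F$, but this identification is doing real work and shouldn't be left implicit. With those two points addressed, your argument is complete and arguably more illuminating than the paper's, since it exhibits the setter laws as an instance of the general coalgebraic pattern of Section~\ref{sec:coalgebraic}.
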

\begin{proof}
The key is concretely describing $\Twoptic_{[\C, \C]}(S, A)$ as \[ \Set( \C(A, A) \times \C(A, A), \C(S, S) ).\] We leave the verification that the conditions are equivalent to the reader. 
\end{proof}

This characterisation of setters is maybe a little odd, in that we have ended up with a function of $\Set$s, rather than a description internal to $\C$. If we modify our definition of $\Setter$, we do get an internal characterisation. Suppose $\C$ is cartesian closed and let $\Strong_\C$ be the category of \emph{strong functors} on $\C$.

\begin{definition}[\cite{StrongFunctors}]\label{def:strong-functor}
  A \emph{(left) strong functor} is a functor $F : \C \to \C$ equipped with a natural transformation called the \emph{strength}:
  \begin{align*}
    \theta_{A,B} : A \otimes F B \to F(A \otimes B)
  \end{align*}
  such that the strength commutes with the unitor:
  \[
    \begin{tikzcd}
      I \otimes F A \ar[r, "\theta_{1,A}"] \ar[d, "\cong" left]  & F(I \otimes A) \ar[d, "\cong" right] \\
      F A \ar[r, equals] & F A
    \end{tikzcd}
  \]
  and with associativity:
  \[
    \begin{tikzcd}
      (A \otimes B) \otimes F C \ar[rr, "\theta_{A \otimes B, C}"] \ar[d, "\alpha_{A,B,FC}" left]  && F((A \otimes B) \otimes C) \ar[d, "F\alpha_{A,B,FC}" right] \\
      A \otimes (B \otimes F C) \ar[r, "A \otimes \theta_{B,C}" below] & A \otimes F(B \otimes C) \ar[r, "\theta_{A, B\otimes C}" below] & F(A \otimes (B \otimes C))
    \end{tikzcd}
  \]
  A \emph{strong natural transformation} $\tau : (F,\theta) \Rightarrow (G,\theta')$ is a natural transformation that respects the strengths. There is an evident category $\Strong(\C)$ of strong endofunctors and strong natural transformations, and a forgetful functor $U : \Strong(\C) \to [\C, \C]$.
\end{definition}
Then, again, $\Strong_\C$ acts on $\C$ by evaluation. We leave it to the reader to verify there is a natural isomorphism \[\C(S, FA) \cong \Strong_\C(\homC(A, -) \times S, F),\] which we can use to describe optics for this action as elements of $\C(\homC(A, A'), \homC(S,S'))$.


\subsection{Traversals}
In this section we work in the case $\C = \Set$. Traversals allow us to traverse through a data structure, accumulating applicative actions as we go. We begin by reviewing the definitions of applicative and traversable functors~\cite{AnInvestigationOfTheLawsOfTraversals}.

\begin{definition}
An \emph{applicative functor} $F : \C \to \C$ is a lax monoidal functor with a strength compatible with the monoidal structure, in the sense that
\[
\begin{tikzcd}[column sep = large]
A \otimes FB \otimes FC \ar[r, "{\theta_{A, B} \otimes FC}"] \ar[d, swap, "{A \otimes \phi_{B, C}}"] & F(A \otimes B) \otimes FC \ar[d, "{\phi_{A \otimes B, C}}"] \\
A \otimes F(B \otimes C) \ar[r, swap, "{\theta_{A, B \otimes C}}"] & F(A \otimes B \otimes C)
\end{tikzcd}
\]
commutes. An \emph{applicative natural transformation} is one that is both monoidal and strong. Applicative functors and natural transformations form a monoidal category $\App$ with the tensor given by functor composition.
\end{definition}

\begin{definition}
A \emph{traversable functor} is a functor $T : \C \to \C$ equipped with a distributive law $\delta_F : TF \to FT$ for $T$ over the action of $\App$ on $\C$ by evaluation.

Explicitly, this means that the diagrams
\[
  \begin{tikzcd}
    TF \ar[r, "\delta_F"] \ar[d, swap, "T\alpha"] & FT \ar[d, "T\alpha"] \\
    TG \ar[r, swap, "\delta_G"] & GT
  \end{tikzcd} \hspace{1cm}
  \begin{tikzcd}
    TFG \ar[dr, swap, "\delta_F G"] \ar[rr, "\delta_{FG}"] &  & FGT \\
    & FTG \ar[ur, swap, "F \delta_G"] &
  \end{tikzcd} \hspace{1cm}
  \begin{tikzcd}
    T\id_\C \ar[r, bend left, "\id_T"] \ar[r, bend right, swap, "\delta_{\id_\C}"] & \id_\C T
  \end{tikzcd}
\]
in $[\C, \C]$ commute.
\end{definition}

\begin{definition}
The category $\Traversal$ of traversals is the category of optics for the action of $\Traversable$ on $\Set$ given by evaluation. (Yes, the names $\Traversal$/$\Traversable$ are confusing!)
\end{definition}


It is known that traversable functors correspond to coalgebras for a particular parameterised comonad. See~{\cite[Definitions 4.1 and 4.2]{SecondOrderFunctionals}}, also~\cite{AlgebrasForParameterisedMonads} for the relevant definitions of parameterised comonads and coalgebras.

\begin{proposition}[{\cite[Theorem 4.10, Proposition 5.4]{SecondOrderFunctionals}}]
Traversable structures on a functor $T : \Set \to \Set$ correspond to parameterised coalgebra structures
\begin{align*}
t_{A, B} : TA \to UR^*_{A, B}(T B)
\end{align*}
where $UR^*_{X,Y}$ is the parameterised comonad
\begin{align*}
UR^*_{X, Y} Z = \Sigma_{n\in \bN} X^n \times \Set(Y^n,Z)
\end{align*}
Moreover, this correspondence forms an isomorphism of categories between $\Traversable$ and the Eilenberg-Moore category of coalgebras for $UR^*_{-, -}$, which we denote $\E$. \qed
\end{proposition}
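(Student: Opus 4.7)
The plan is to identify $UR^*_{A, B}$ as a kind of \emph{free applicative functor on one $A$-generator evaluated at $B$-inputs}, so that a coalgebra structure on $T$ codifies precisely the distributive laws $\delta_F$ for \emph{every} applicative $F$ via the universal property. First I would equip $UR^*$ with its parameterised comonad structure: the counit $UR^*_{A,A} Z \to Z$ sends $(n, \vec a, k) \mapsto k(\vec a)$, and the comultiplication $UR^*_{A, B} Z \to UR^*_{A, C}(UR^*_{C, B} Z)$ sends $(n, \vec a, k)$ to $(n, \vec a, \vec c \mapsto (n, \vec c, k))$. Verifying the comonad laws is routine.

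Next I would note that each $UR^*_{A, B}$ is naturally a (strong) applicative functor, with $\phi : UR^*_{A, B}Y \times UR^*_{A, B}Z \to UR^*_{A, B}(Y \times Z)$ concatenating shapes and pairing the continuations. The key structural observation is the following universal property: for any applicative $F$ and any set map $f : A \to FB$, there is a unique applicative natural transformation $\hat f : UR^*_{A, B} \Rightarrow F$ extending $f$ (sending $(n, \vec a, k) \in UR^*_{A, B}Z$ to $F(k) \circ \phi_n \circ (f \vec a)$ where $\phi_n$ is the iterated monoidal structure of $F$). This is the content of $UR^*_{A, B}$ being the free applicative generated by $A$ with outputs evaluated at $B$.

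The two directions of the correspondence then come from instantiating this universal property. For the forward direction, given a distributive law $\delta$, define $t_{A, B} := \delta_{UR^*_{A, B}} \circ T(\eta_{A, B})$ where $\eta_{A,B} : A \to UR^*_{A, B}B$ sends $a \mapsto (1, (a), \vec b \mapsto b_1)$. For the reverse, given a coalgebra $t$, construct $\delta_F : TF \Rightarrow FT$ at $A$ by $TFA \xrightarrow{t_{FA, A}} UR^*_{FA, A}(TA) \xrightarrow{\widehat{\id_{FA}}_{TA}} F(TA)$, using the universal extension of the identity $FA \to FA$. One then checks that the counit law of the coalgebra corresponds to the identity-applicative law $\delta_{\id} = \id$, and the coalgebra coassociativity corresponds to the composition law $\delta_{FG} = F\delta_G \circ \delta_F G$; naturality in $A, B$ in the coalgebra corresponds to naturality of $\delta$ in the applicative argument.

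The main obstacle is checking that the two constructions are mutually inverse, and in particular that every distributive law is determined by its value at the $UR^*$-applicatives. This reduces to showing that for any applicative $F$ and $f : A \to FB$, the composite $F \delta_F \circ T\hat f = \hat f T \circ \delta_{UR^*}$ (by naturality of $\delta$ in $F$), which pins down $\delta_F$ uniquely from $\delta_{UR^*_{A, B}}$, i.e.\ from $t$. Finally, morphisms of traversable functors are by definition natural transformations compatible with every $\delta_F$; under the correspondence these become exactly the natural transformations commuting with $t$, i.e.\ morphisms in the Eilenberg--Moore category $\E$. This gives the isomorphism $\Traversable \cong \E$.
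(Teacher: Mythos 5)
This proposition is imported by the paper verbatim from the cited reference with a \qed{} and no argument, so there is no in-paper proof to compare against; your sketch is, in outline, exactly the argument of that source, and it is sound. The crux is your ``key structural observation'': $UR^*_{A,B} = \Sigma_n A^n \times \Set(B^n, -)$ is the free applicative on the copower $A \cdot \Set(B,-)$, so that $\App(UR^*_{A,B}, F) \cong \Set(A, FB)$ naturally in $F$; everything else is transport of structure along this representability, and your two constructions and the determination argument ($\delta_F$ at $B$ equals $\widehat{\id_{FB}}_{TB} \circ t_{FB,B}$, obtained by naturality of $\delta$ at the applicative morphism $\hat f$) are the right way to see that they are mutually inverse. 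Two small cautions. First, the axiom-matching is more intertwined than your summary suggests: for instance, the coalgebra counit law is equivalent to $\delta_{\id} = \id$ only \emph{modulo} naturality of $\delta$ at the applicative morphism $\varepsilon = \widehat{\id_A} : UR^*_{A,A} \Rightarrow \id$, and recovering the composition law $\delta_{FG} = F\delta_G \circ \delta_F G$ for \emph{arbitrary} applicatives from coassociativity (which only gives the instance $F = UR^*_{A,C}$, $G = UR^*_{C,B}$) again routes through the determination argument; this is where the bulk of the verification lives, and your sketch correctly gestures at it without carrying it out. Second, ``$F\delta_F \circ T\hat f$'' should read ``$\delta_F \circ T\hat f$''. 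Note also that the strength conditions in the paper's definition of $\App$ are vacuous over $\Set$, which is why your element-wise reasoning is legitimate here but would need care in a general base category.
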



\begin{lemma}
  For any objects $A, B \in \Set$ and traversable functor $F$, \[\Set(FA, B) \cong \Traversable(F, \Sigma_n {(-)}^n \times \Set(A^n,B))\]
naturally in $B$ and $F$. In other words, the functor \[(B \mapsto \Sigma_n {(-)}^n \times \Set(A^n,B)) : \Set \to \Traversable\] is right adjoint to the evaluation-at-$A$ functor $-A : \Traversable \to \Set$.
\end{lemma}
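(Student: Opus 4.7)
The plan is to exhibit the claimed bijection directly, using the presentation of traversable functors as $UR^*$-coalgebras from the previous proposition. Write $G : \Set \to \Set$ for the functor $G(X) = \Sigma_n X^n \times \Set(A^n, B) = UR^*_{X, A}(B)$. The first step is to equip $G$ with a canonical traversable structure coming from the comultiplication of the parameterised comonad: define $t^G_{X,Y} : G(X) \to \Sigma_m X^m \times \Set(Y^m, G(Y))$ by
\[ t^G_{X,Y}(n, \vec{x}, f) \;:=\; \bigl(n,\; \vec{x},\; \vec{y} \mapsto (n, \vec{y}, f)\bigr). \]
A direct check verifies the coalgebra axioms, so $G$ defines an object of $\E \cong \Traversable$. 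Morally, $G$ is the cofree $UR^*$-coalgebra on $B$ at parameter $A$, and the whole lemma is the universal property of this cofree construction.

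The forward half of the adjunction, $\Traversable(F, G) \to \Set(FA, B)$, sends a traversable natural transformation $\alpha : F \Rightarrow G$ to the composite $FA \xrightarrow{\alpha_A} GA \xrightarrow{\epsilon} B$, where $\epsilon(n, \vec{a}, f) := f(\vec{a})$ is the counit of the comonad at $B$. In the opposite direction, given $g : FA \to B$, define a natural transformation $\alpha^g : F \Rightarrow G$ by
\[ \alpha^g_X(y) \;:=\; (n,\, \vec{x},\, g \circ h), \qquad\text{where } t^F_{X, A}(y) = (n, \vec{x}, h). \]
Naturality of $\alpha^g$ in $X$ is immediate from naturality of $t^F_{-, A}$ in its first argument.

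To finish, I would verify the two round-trips. Starting from $g$ and computing $\epsilon \circ \alpha^g_A$ yields $y \mapsto g(h(\vec{a}))$, where $t^F_{A, A}(y) = (n, \vec{a}, h)$; by the counit law for the coalgebra $F$ this equals $g(y)$, so we recover $g$. Starting from $\alpha$ and setting $g := \epsilon \circ \alpha_A$, we need $\alpha_X = \alpha^g_X$. This, together with the fact that the $\alpha^g$ above is genuinely a coalgebra morphism and not merely a natural transformation, is the main technical step. Both claims follow by expanding the coalgebra-morphism equation $t^G_{X, A} \circ \alpha_X = UR^*_{X, A}(\alpha_A) \circ t^F_{X, A}$ with the explicit formula for $t^G$, which forces the three components of $\alpha_X(y)$ to agree with those of $\alpha^g_X(y)$; for $\alpha^g$ itself, the coassociativity half of the coalgebra law on $F$ is what makes the same equation hold. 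This is the most computational part but is routine once the formulas are in hand. Naturality of the bijection in $F$ and in $B$ is then immediate from the formulas.
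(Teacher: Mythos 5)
Your proof is correct, but it takes a more hands-on route than the paper. The paper disposes of this lemma in three lines by citing a general result (Proposition~6 of the parameterised-monads paper) that the coalgebra category $\E$ for a parameterised comonad sits in a \emph{parameterised adjunction} $L_T \dashv R_T$, and then simply fixing the parameter at $A$ to extract an ordinary adjunction. You instead re-derive the content of that cited result in the special case at hand: you equip $G = UR^*_{-,A}B$ with the cofree coalgebra structure given by the comultiplication, write down the unit and counit of the adjunction explicitly, and check the two round-trips by direct computation (correctly identifying that one direction uses the counit law of the coalgebra $F$ and the other uses the coassociativity law together with the fact that $\alpha^g = UR^*_{-,A}(g)\circ t^F_{-,A}$ is forced to be a coalgebra morphism). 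Your formulas all typecheck and the verification goes through; the only implicit step you share with the paper is the identification of $\Traversable$-morphisms with coalgebra morphisms via the preceding proposition. What the paper's approach buys is brevity and the reassurance that nothing is special about $\Set$ or about $UR^*$; what yours buys is a self-contained argument with explicit formulas for both adjuncts, which is genuinely useful if one wants to compute with the resulting concrete description of traversals. Either is acceptable; if you keep yours, you should state explicitly that $t^G$ is the comultiplication of the parameterised comonad (so that the coalgebra axioms for $G$ are exactly the comonad axioms and need not be checked by hand).
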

\begin{proof}
By~\cite[Proposition 6]{AlgebrasForParameterisedMonads}, there is a parameterised adjunction $L_T \dashv R_T$, where
\begin{align*}
L_T : \Set \times \E &\to \Set \\
(X, (F, f)) &\mapsto FX \\
R_T : \Set \times \Set &\to \E \\
(Y, Z) &\mapsto (UR^*_{-, Y} Z, \varepsilon)
\end{align*}
where $\varepsilon$ is the counit of $UR^*_{-, -}$. Evaluating these with the fixed parameter $A$, we get an ordinary adjunction
\begin{align*}
L_T(A) : \E &\to \Set \\
(F, f) &\mapsto FA \\
R_T(A) : \Set &\to \E \\
Z &\mapsto (UR^*_{-, A} Z, \varepsilon)
\end{align*}
But this is exactly the adjunction we were trying to show.
\end{proof}

We can then use the coalgebraic pattern from earlier to reach the same concrete description of traversals as found in~\cite{ProfunctorOptics}.
\begin{align*}
\Traversal((S, S'), (A, A')) &\cong \Set(S, \Sigma_n A^n \times \Set(A'^n,S'))
\end{align*}

The concrete laws for this representation are the coalgebra laws. These laws, however, are not the ones usually presented for traversals. Instead, versions of the profunctor laws are used, see Section~\ref{sec:profunctor-optics}.




\subsection{Polymorphic Optics}
Haskell's optics allow \emph{polymorphic updates}, where the type of the codomain of the lens may be changed by an update, causing a corresponding change in the type of the domain. As an example, we permitted to use a lens into the \mintinline{haskell}{first} entry of a tuple in the following way:
\begin{minted}{haskell}
set first (1, 5) "hello" == ("hello", 5)
\end{minted}
This has changed the type from \mintinline{haskell}{(Int, Int)} to \mintinline{haskell}{(String, Int)}.

Polymorphic optics can be captured by the coend formalism as follows. Any action of a monoidal category $\M \times \C \to \C$ can be extended to act object-wise on a functor category:
\begin{align*}
  \M \times [\D, \C] &\to [\D, \C] \\
  (M, F) &\mapsto  M \act (F-)
\end{align*}

So in the above example, we have the product $\times$ acting pointwise on the functor category $\Set \to \Set$. Our example \mintinline{haskell}{first} is then an optic $F \hto G$, where $F = (-) \times \mintinline{haskell}{Int}$ and $G$ is the identity functor.

Given such a polymorphic optic in $[\D, \C]$, we can always `monomorphise' to obtain an ordinary optic in $\C$.
\begin{proposition}
  There is a functor
  \begin{align*}
    \mathsf{mono} : \D \times \D^\op \times \Optic_{[\D, \C]} \to \Optic_\C
  \end{align*}
  that sends an object $(D, D') \in \D \times \D^\op$ and optic $\rep{l}{r} : (F, F') \hto (G, G')$ in $\Optic_{[\D, \C]}$ to the optic $\rep{l_D}{r_{D'}} : (FD, F'D') \hto (GD, G'D')$ in $\Optic_\C$. For fixed $(D, D) \in \D \times \D^\op$, this functor preserves lawfulness.
\end{proposition}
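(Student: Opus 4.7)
The plan is to first verify that $\mathsf{mono}$ is well-defined on coend representatives, then check functoriality in both components, and finally verify that the two lawfulness conditions are preserved when we restrict to the diagonal.

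For well-definedness on objects (viewing the optic $p$ as data): an optic $\rep{l}{r}$ in $\Optic_{[\D, \C]}$ with residual $H \in \M$ consists of natural transformations $l : F \Rightarrow H \act G$ and $r : H \act G' \Rightarrow F'$, using that the action of $\M$ on $[\D, \C]$ is pointwise. Evaluating at $D$ and $D'$ yields morphisms $l_D : FD \to H \act GD$ and $r_{D'} : H \act G'D' \to F'D'$ in $\C$, a representative of an optic in $\Optic_\C$ with the same residual $H$. The generating coend relation in $\Optic_{[\D, \C]}$, namely $\rep{(g \act G)l}{r} \sim \rep{l}{r(g \act G')}$ for $g : H \to K$ in $\M$, evaluates at $D, D'$ to precisely the corresponding generating relation in $\Optic_\C$ with the same $g$, so the assignment descends to the coend. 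For the action on morphisms of $\D \times \D^\op$, given $(\alpha : D \to E, \beta : E' \to D')$ and $p = \rep{l}{r} : (F, F') \hto (G, G')$, I would define $\mathsf{mono}(\alpha, \beta, p) := \rep{l_D ; (H \act G\alpha)}{(H \act G'\beta); r_{D'}}$. Functoriality in the $\D \times \D^\op$ component reduces to functoriality of $G, G'$ and of the action; functoriality in the optic component follows because composition of optics is given by a pointwise formula that commutes with evaluation at $D$; and preservation of identities is immediate because the unit of the action is by definition pointwise.

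Turning to the restriction $\mathsf{mono}(D, D, -) : \Lawful_{[\D, \C]} \to \Lawful_\C$: given a lawful $p = \rep{l}{r} : (F, F) \hto (G, G)$, the first law $l;r = \id_F$ is an equality of natural transformations $F \Rightarrow F$, and evaluating at $D$ yields $l_D;r_D = \id_{FD}$, which is precisely $\outside(\mathsf{mono}(D, D, p)) = \id_{FD}$. For the second law, the equality $\once(p) = \twice(p)$ in $\Twoptic_{[\D, \C]}(F, G)$ is witnessed by a finite chain of generating coend relations, each one indexed by a morphism in $\M$. By the same evaluation-at-$D$ argument used for well-definedness, this chain transports to a chain in $\Twoptic_\C(FD, GD)$ witnessing $\once(\mathsf{mono}(D, D, p)) = \twice(\mathsf{mono}(D, D, p))$; what makes this work is that both $\once$ and $\twice$ commute with pointwise evaluation, because $\id_{H \act G}$ and the composite $l \circ r$ evaluate at $D$ to $\id_{H \act GD}$ and $l_D \circ r_D$ respectively.

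The main subtlety is not in any single step but in keeping track of the bookkeeping around the coend relations, in particular verifying that the generating relation in $\Twoptic_{[\D, \C]}$---which is indexed by morphisms in $\M$ and involves the extended pointwise action on $[\D, \C]$---really does become the generating relation in $\Twoptic_\C$ after evaluation. This is essentially automatic because the extended action on $[\D, \C]$ differs from the given action on $\C$ only by the pointwise wrapping, so evaluation at $D$ is a structure-preserving operation at the level of generators, and the whole argument is really just the observation that evaluation at $D$ is natural with respect to all the data of an optic and its laws.
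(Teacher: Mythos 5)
Your proposal is correct and follows essentially the same route as the paper: evaluation at $D$ (resp.\ $D'$) strictly commutes with the pointwise action, so well-definedness on coend representatives, functoriality in the optic variable, and the transport of the chains of generating relations witnessing $\outside(p)=\id$ and $\once(p)=\twice(p)$ all go through exactly as in the paper's change-of-action propositions. The one under-justified point is bifunctoriality on the product $\D \times \D^\op \times \Optic_{[\D,\C]}$: your single formula $\rep{l_D ; (H \act G\alpha)}{(H \act G'\beta); r_{D'}}$ is one of the two factorisations $\iota(G\alpha, G'\beta)\circ\rep{l_D}{r_{D'}}$ and $\rep{l_E}{r_{E'}}\circ\iota(F\alpha,F'\beta)$, and functoriality on composite morphisms of the product category requires these to agree, which holds precisely because $l$ and $r$ are \emph{natural} transformations --- this is the point the paper singles out, and your separate-variable functoriality checks (functoriality of $G$, $G'$ and pointwiseness of optic composition) do not by themselves cover it.
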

\begin{proof}
  On an object $(D, D') \in \D \times \D^\op$, that we get a functor $\Optic_{[\D, \C]} \to \Optic_\C$ is essentially the same proof as Proposition~\ref{prop:change-of-action} but with different functors on each side of the lens: the evaluation-at-$D$ functor $[\D, \C] \to \C$ on the left and evaluation-at-$D'$ on the right.

  For functoriality in $\D \times \D^\op$, given $(f, g) : (D_1, D'_1) \to (D_2, D'_2) \in \D \times \D^\op$ and an object $(F, F') \in \Optic_{[\D, \C]}$, there is an induced lens $\iota(Ff, F'g) : (FD_1, F'D'_1) \hto (FD_2, F'D'_2)$. Bifunctoriality of $\mathsf{mono}$ is ensured by the naturality of each $l$ and $r$ in the morphisms of $\Optic_{[\D, \C]}$.
\end{proof}

\subsection{Linear Lenses}\label{sec:linear-lenses}
\newcommand{\ev}{\mathsf{ev}}
\newcommand{\coev}{\mathsf{coev}}

If $\C$ is closed monoidal but not necessarily cartesian, we can still define the category of \emph{linear lenses} to be $\Optic_\otimes$. The internal hom provides a right adjoint to the evaluation-at-$A$ functor, so we have immediately
\begin{align*}
  \Optic_\otimes((S, S'), (A, A')) &\cong \C(S, \homC(A',S') \otimes A)
\end{align*}
where $\homC(A', S')$ denotes the internal hom. If $\C$ is cartesian, this is of course isomorphic to the set of $(\fget, \fput)$ functions discussed earlier.

We cannot possibly use the three $\fput$/$\fget$ style lens laws in this setting as we lack projections, but specialising the coalgebra laws gives us:

\begin{proposition}\label{prop:concrete-linear-lawful}
  A linear lens $p : S \hto A$ is lawful iff the following two concrete laws for $\funzip$ hold:
  \begin{align*}
    \ev_{A, S} \; \funzip &= \id_S && \textsc{(Rezip)} \\
    (\coev_{\homC(A, S), A} \otimes A)\funzip &= ((\funzip \circ -) \otimes A)\funzip && \textsc{(ZipZip)}
  \end{align*}
  where \[ \funzip \circ - : \homC(A, S) \to \homC(A, \homC(A, S) \otimes A) \] denotes internal composition and \[\coev_{\homC(A, S), A} : \homC(A, S) \to \homC(A, \homC(A, S) \otimes A)\] is coevaluation.
  \qed
\end{proposition}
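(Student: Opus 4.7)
The plan is to obtain this as an immediate instance of Theorem~\ref{thm:optics-are-coalgebras}. The first step is to observe that for closed monoidal $\C$ the evaluation-at-$A$ functor $-\otimes A : \C \to \C$ has the internal hom $R_A := \homC(A, -)$ as right adjoint, with unit the coevaluation $\coev_{X, A} : X \to \homC(A, X \otimes A)$ and counit the evaluation $\ev_{A, S} : \homC(A, S) \otimes A \to S$. Plugging these into the pattern of Section~\ref{sec:coalgebraic}, the concrete description of a linear lens is a map $\funzip : S \to \homC(A, S) \otimes A$, and the induced comonad on $\C$ is $T X = \homC(A, X) \otimes A$ with counit $\ev_{A, X}$ and comultiplication $\coev_{\homC(A, X), A} \otimes A$.

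Next I would invoke Theorem~\ref{thm:optics-are-coalgebras} verbatim: the optic $\rep{\funzip}{\ev_{A, S}}$ is lawful iff $\funzip$ is a $T$-coalgebra, i.e.\ iff the following two diagrams commute:
\begin{align*}
\ev_{A, S} \circ \funzip &= \id_S, \\
(\homC(A, \funzip) \otimes A) \circ \funzip &= (\coev_{\homC(A, S), A} \otimes A) \circ \funzip.
\end{align*}
The first equation is literally Rezip. For the second, the remaining task is to recognise that the functorial action $\homC(A, \funzip) : \homC(A, S) \to \homC(A, \homC(A, S) \otimes A)$ coincides with the internal-composition map $\funzip \circ -$ appearing in the statement of ZipZip.

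That identification is essentially a definitional matter: for any morphism $h : X \to Y$ in a closed monoidal category, $\homC(A, h)$ is the transpose along $-\otimes A \dashv \homC(A, -)$ of the composite $h \circ \ev_{A, X}$, which is exactly ``postcompose with $h$'' at the level of generalised elements, i.e.\ the internal composition $h \circ -$. Once this is noted, the second coalgebra law reads as ZipZip and the proof is complete.

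There is no substantive obstacle in this argument; all the categorical work has already been done in Theorem~\ref{thm:optics-are-coalgebras}. The one point worth flagging explicitly (either in the proof or as a preceding remark) is the matching of the abstract functoriality of $\homC(A, -)$ with the concrete internal-composition morphism, since the notation $\funzip \circ -$ in the statement of ZipZip could otherwise be mistaken for something more than bookkeeping.
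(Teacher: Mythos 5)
Your proposal is correct and follows exactly the route the paper intends: the paper states this proposition with no written proof, relying on the immediately preceding remark that the internal hom is right adjoint to $-\otimes A$ so that the result is just Theorem~\ref{thm:optics-are-coalgebras} specialised to the comonad $X \mapsto \homC(A,X)\otimes A$. Your identification of the unit of the adjunction with coevaluation and of $\homC(A,\funzip)$ with the internal postcomposition map $\funzip \circ -$ is precisely the bookkeeping the paper leaves implicit.
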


We have essentially rederived the result given in~\cite[Section 3.2]{RelatingAlgebraicAndCoalgebraic} for ordinary lenses, but we note that cartesianness was not required.

\subsection{Effectful Optics}
\newcommand{\monact}{\rtimes}

Many proposed definitions of effectful lenses~\cite{ReflectionsOnMonadicLenses} have modified one or both of $\fget$ and $\fput$ to produce results wrapped in a monadic action. There are disadvantages to this approach: it is not obvious what the laws ought to be and there is no clear generalisation to other optic variants. The general definition of optic given in Section~\ref{sec:optics} suggests we instead work with the Kleisli category $\C_T$ of some monad $(T, \eta, \mu) : \C \to \C$.

\begin{definition}
The Kleisli category $\C_T$ of a monad $T$ has the same objects as $\C$, with morphisms $X \to Y$ in $\C_T$ given by morphisms $X \to TY$ in $\C$. Identity morphisms are given by the unit of $T$, and the composite of two morphisms $f : X \to Y$ and $g : Y \to Z$ in $\C_T$ is given by
\begin{align*}
X \xrightarrow{f} TY \xrightarrow{Tg} TTZ \xrightarrow{\mu_Z} TZ
\end{align*}

For $f : X \to Y$ in $\C_T$, we write $\underline{f} : X \to TY$ for its underlying morphism in $\C$.
\end{definition}

Working in a Kleisli category presents its own set of difficulties. The product in $\C$ is a monoidal product in a $\C_T$ only when the monad in question is \emph{commutative}, which rules out many monads of interest. A premonoidal structure~\cite{PremonoidalCategories} is not sufficient: composition of optics would in that case not be well defined.

But this does not preclude the existence of monoidal actions on $\C_T$. In fact, there is a monoidal action that has long been used under a different guise:

\begin{definition}[{\cite{NotionsOfComputationAndMonads}}]
A \emph{strong monad} $T : \C \to \C$ on a monoidal category $(\C, \otimes, I)$ is a monad that is strong as a functor (Definition~\ref{def:strong-functor}), and such that the strength commutes with the unit and multiplication:
\[
  \begin{tikzcd}
    A \otimes B \ar[d, swap, "A \times \eta_B"] \ar[dr, "\eta_{A \times B}"] & \\
    A \otimes TB \ar[r, swap, "\theta_{A, B}"] & T(A \otimes B)
  \end{tikzcd} \hspace{1cm}
  \begin{tikzcd}
    A \otimes T^2 B \ar[r, "\theta_{A, TB}"] \ar[d, swap, "A \otimes \mu_B"] & T(A \otimes TB) \ar[r, "T\theta_{A, B}"] & T^2(A \otimes B) \ar[d, "\mu_{A \otimes B}"] \\
    A \otimes TB \ar[rr, swap, "\theta_{A, B}"] & & T(A \times B)
  \end{tikzcd}
\]
\end{definition}

\begin{proposition}
If $T : \C \to \C$ is a strong monad then $\C$ acts on $\C_T$ by $X \act Y := X \otimes Y$.
\end{proposition}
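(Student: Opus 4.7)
The plan is to construct a monoidal functor $a : \C \to [\C_T, \C_T]$ whose value at $X$ is the endofunctor sending $Y$ to $X \otimes Y$. The essential ingredient is the strength $\theta_{X,Y} : X \otimes TY \to T(X \otimes Y)$, which allows a Kleisli arrow $f : Y \to Z$ (that is, $\underline{f} : Y \to TZ$ in $\C$) to be tensored on the left by $X$ to yield a morphism $X \otimes Y \to T(X \otimes Z)$.

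First, for each fixed $X \in \C$ I would define the candidate endofunctor $a(X) : \C_T \to \C_T$ on morphisms by
\[ \underline{a(X)(f)} \;:=\; \theta_{X, Z} \circ (X \otimes \underline{f}). \]
Preservation of identities is precisely the first strong-monad axiom, $\theta_{X, Y} \circ (X \otimes \eta_Y) = \eta_{X \otimes Y}$. Preservation of composition is the main calculation: expanding $\underline{a(X)(g \circ f)} = \theta_{X,W} \circ (X \otimes (\mu_W \circ T\underline{g} \circ \underline{f}))$ and then pushing $\theta$ inward using naturality of $\theta$ in its second argument reduces everything to the second strong-monad axiom $\mu_{X \otimes W} \circ T\theta_{X,W} \circ \theta_{X, TW} = \theta_{X,W} \circ (X \otimes \mu_W)$. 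This is the one step where a nontrivial coherence needs to be applied, but it matches exactly.

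Second, I would make $X \mapsto a(X)$ a functor $\C \to [\C_T, \C_T]$. For $h : X \to X'$ in $\C$ the natural transformation $a(h) : a(X) \Rightarrow a(X')$ has components lifted through $\eta$, namely $\eta_{X' \otimes Y} \circ (h \otimes Y)$. Naturality with respect to a Kleisli morphism $f : Y \to Z$ unfolds to a computation in $\C$ where both sides simplify via the monad triangle identity ($\mu \circ T\eta = \id$) and reduce to the naturality of $\theta$ in its first argument. Functoriality in $h$ is then immediate from functoriality of $\otimes$.

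Finally, I would equip $a$ with monoidal structure. The coherence isomorphisms $a(X) \circ a(X') \Rightarrow a(X \otimes X')$ and $\id_{\C_T} \Rightarrow a(I)$ are induced by the associator $\alpha$ and left unitor $\lambda$ of $\C$, again transported into $\C_T$ via $\eta$; their naturality with respect to Kleisli morphisms follows from naturality of $\theta$ combined with naturality of $\alpha$ and $\lambda$ in $\C$, and the pentagon and triangle identities for the monoidal functor $a$ are inherited directly from those of $(\C, \otimes, I)$. I expect the main obstacle to be bookkeeping rather than conceptual: the single substantive use of the strong-monad axioms is in establishing functoriality of each $a(X)$; everything else is coherence-theoretic, following from naturality of $\theta$ and the monoidal structure of $\C$.
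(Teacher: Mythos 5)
Your proposal is correct and follows essentially the same route as the paper: define $a(X)$ on Kleisli arrows by $\theta_{X,Z}\circ(X\otimes\underline{f})$ (with functoriality resting on the two strong-monad axioms, a step the paper delegates to Power--Robinson), define $a$ on morphisms of $\C$ by postcomposing $h\otimes Y$ with the unit $\eta$, and derive monoidality from the strength's compatibility with the associator and unitor. The only difference is that you spell out the composition-preservation and naturality calculations that the paper cites or asserts.
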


The crucial difference between this and a monoidal structure on $\C_T$ is that we only demand $X$ be functorial with respect to \emph{pure functions} in $\C$, whereas $Y$ must be functorial with respect to \emph{computations} in $\C_T$. We will write this action as $X \monact Y$ to highlight the different roles played by $X$ and $Y$.

\begin{proof}
Suppose $T$ is a strong monad with strength $\theta_{A, B} : A \otimes T B \to T(A \otimes B)$. For $A \in \C$, we have a functor $A \otimes - : \C_T \to \C_T$ which on a morphism $f : X \to Y$ in $\C_T$ is defined to be the composite
\begin{align*}
A \otimes X \xrightarrow{A \otimes \underline{f}} A \otimes TY \xrightarrow{\theta_{A, Y}} T(A \otimes Y)
\end{align*}
For details, see~\cite[Theorem 4.2]{PremonoidalCategories}. Our goal is to show this extends to a monoidal functor $a : \C \to [\C_T, \C_T]$.

A morphism $f : A \to B$ in $\C$ induces a natural transformation $A \otimes - \Rightarrow B \otimes -$ of functors $\C_T \to \C_T$, with components $A \otimes X \to T(B \otimes X)$ given by composing $A \otimes X \to B \otimes X$ with the unit of the monad. Naturality follows by the naturality of the strength and the unit of $T$.

Monoidality of $a$ is shown exactly by the commutative diagrams in the definition of strong functor, i.e.\ that the strength commutes with associator and left unitor of $\C$.

\end{proof}

Suppose $\C$ is a monoidal closed category and $T : \C \to \C$ is a strong monad. Then the evaluation-at-$A$ functor has a right adjoint:
\begin{align*}
\C_T(M \monact A', S')
&= \C(M \times A', T S') \\
&\cong \C(M, \homC(A', T S'))
\end{align*}
Using the coalgebraic description, we see that concrete effectful lenses consist of a single morphism in $\C$ \[\munzip : S \to T(\homC(A', T S') \otimes A).\] The optic laws in this case specialise to:

\begin{proposition}
A concrete effectful lens is lawful iff
  \begin{align*}
    \mu_S T(\ev_{A, TS}) \; \munzip &= \eta_S \\
    T(\eta_{\homC(A, TS) \otimes A}\coev_{\homC(A, TS), A} \otimes A)\munzip &= T((\munzip \circ_T -) \otimes A)\munzip
  \end{align*}
  where \[ \munzip \circ_T - : \homC(A, TS) \to \homC(A, T(\homC(A, TS) \otimes A)) \] denotes internal Kleisli composition and \[\coev_{\homC(A, TS), A} : \homC(A, TS) \to \homC(A, \homC(A, TS) \otimes A) \] is coevaluation. \qed
\end{proposition}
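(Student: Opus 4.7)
The plan is to present this as a direct specialisation of Theorem~\ref{thm:optics-are-coalgebras}. First I would verify that the setup of Section~\ref{sec:coalgebraic} applies: the acting monoidal category is $\C$, it acts on the category $\C_T$ via $X \monact Y = X \otimes Y$ (established by the preceding proposition), and the evaluation-at-$A$ functor $- \monact A : \C \to \C_T$ admits the right adjoint $R_A := \homC(A, T-) : \C_T \to \C$ (shown in the paragraph just before the statement). Therefore $\Optic_\C((S,S),(A,A)) \cong \C_T(S, \homC(A, TS) \monact A) \cong \C(S, T(\homC(A, TS) \otimes A))$, and Theorem~\ref{thm:optics-are-coalgebras} tells us that $\munzip$ is lawful iff it is a coalgebra in $\C_T$ for the comonad $G X := \homC(A, TX) \monact A$.

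Next I would translate the two coalgebra laws, which live in $\C_T$, into equations in $\C$. The counit $\varepsilon_X : GX \to X$ of the adjunction is the $\C_T$-morphism corresponding to $\id_{\homC(A, TX)}$ under the transpose; unwinding the transpose, its underlying $\C$-morphism is $\ev_{A, TX}$. Hence the Kleisli equation $\varepsilon_S \circ \munzip = \id_S$ translates, via the Kleisli composition formula, into $\mu_S \cdot T(\ev_{A, TS}) \cdot \munzip = \eta_S$, which is exactly the $\textsc{Rezip}$-style first law in the statement.

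For the comultiplication law, I would identify $\delta_S : G S \to G G S$ as $\eta^{\text{adj}}_{R_A S} \monact A$, where $\eta^{\text{adj}}_M : M \to \homC(A, T(M \otimes A))$ is the unit of the $- \monact A \dashv \homC(A, T-)$ adjunction. Transposing the identity on $M \monact A$ along this adjunction yields $\eta^{\text{adj}}_M = \homC(A, \eta_{M \otimes A}) \circ \coev_{M, A}$, which accounts for the combination $\eta_{\homC(A, TS) \otimes A} \cdot \coev_{\homC(A, TS), A}$ that appears in the left-hand side. Likewise, $R_A(\munzip) : R_A S \to R_A(R_A S \monact A)$ is computed via the adjunction as the transpose of $\varepsilon \circ \munzip$, which is precisely postcomposition with $\munzip$ in the internal Kleisli sense, i.e.\ $\munzip \circ_T -$. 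Then the coalgebra comultiplication equation $\delta_S \circ \munzip = R_A(\munzip) \monact A \circ \munzip$ in $\C_T$, after applying the Kleisli composition formula (producing the outer $T(-)$ in both sides), becomes the second equation displayed in the statement.

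The main obstacle is the bookkeeping for the comultiplication law: one has to patiently unpack $\eta^{\text{adj}}$ and $R_A(\munzip)$ through the $- \monact A \dashv \homC(A, T-)$ adjunction, interleaving the monad's own unit and multiplication coming from Kleisli composition, and then confirm that the resulting $\C$-morphisms are exactly the ones written in $\textsc{ZipZip}$. Beyond that, everything follows mechanically from Theorem~\ref{thm:optics-are-coalgebras}, so no further genuine content is needed.
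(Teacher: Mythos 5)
Your proposal is correct and follows exactly the route the paper intends: the paper states this proposition with no written proof, presenting it as an immediate specialisation of Theorem~\ref{thm:optics-are-coalgebras} to the action $\monact$ of $\C$ on $\C_T$ with right adjoint $\homC(A, T-)$, and your unwinding of the counit, the adjunction unit $\homC(A,\eta_{M\otimes A})\circ\coev_{M,A}$, and $R_A(\munzip)$ through Kleisli composition supplies precisely the bookkeeping the paper leaves to the reader.
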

  Or, if you prefer do-notation, the two laws are:
\begin{multicols}{2}
\begin{minted}{haskell}
do (c, a) <- munzip s
   c a
==
return s
\end{minted}
~\columnbreak
\begin{minted}{haskell}
do (c, a) <- munzip s
   let f a' = do
     s' <- c a'
     munzip s'
   return (f, a)
==
do (c, a) <- munzip s
   let f a' = (c, a')
   return (f, a)
\end{minted}
\end{multicols}

The inclusion of $\C$ into $\C_T$ preserves the action of $\C$, so there is an induced inclusion $\Optic_\otimes \to \Optic_\monact$.

If we choose a specific monad, we can hope to simplify the description of a concrete effectful optic and its laws.

%
%
%

\subsubsection{Writer Lenses}
We begin with a simple example. Suppose $\C$ has finite products.
\begin{definition}
The \emph{writer monad} for a monoid $W$ is defined by
\begin{align*}
T_W X = X \times W
\end{align*}
The unit, multiplication of $T_W$ are given by pairing with the unit and multiplication of $W$, and the strength is simply the associativity morphism.
\end{definition}

We can find a more explicit description of concrete effectful lenses for this monad.
\begin{align*}
\Optic((S, S'), (A, A'))
&= \int^{M \in \C} \C_{T_W}(S, M \monact A) \times \C_{T_W}(M \monact A', S') \\
&= \int^{M \in \C} \C(S, M \times A \times W) \times \C_{T_W}(M \monact A', S') \\
&\cong \int^{M \in \C} \C(S, M) \times \C(S, A\times W) \times \C_{T_W}(M \monact A', S') \\
&\cong \C_{T_W}(S, A) \times \C_{T_W}(S \times A', S')
\end{align*}
Fortunately, concrete writer lenses correspond to $\fget$ and $\fput$ functions in the Kleisli category of $T_W$.

\subsubsection{Stateful Lenses}

Suppose $\C$ is cartesian closed.

\begin{definition}
The \emph{state monad} with state $Q$ is defined by
\begin{align*}
T_Q X = \homC(Q, X \times Q)
\end{align*}
\end{definition}

We call optics for the action $\monact : \C \times \C_{T_Q} \to \C_{T_Q}$ \emph{stateful lenses}. We can find a concrete description that is closer to that for ordinary lenses:
\begin{align*}
\Optic_\monact((S, S'), (A, A'))
&= \int^{M \in \C} \C_{T_Q}(S, M \monact A) \times \C_{T_Q}(M \monact A', S') \\
&= \int^{M \in \C} \C(S, \homC(Q, M \times A \times Q)) \times \C_{T_Q}(M \monact A', S') \\
&\cong \int^{M \in \C} \C(S \times Q, M \times A\times Q) \times \C_{T_Q}(M \monact A', S') \\
&\cong \int^{M \in \C} \C(S \times Q, M) \times \C(S \times Q, A\times Q) \times \C_{T_Q}(M \monact A', S') \\
&\cong \C(S \times Q, A \times Q) \times \C_{T_Q}((S \times Q) \monact A', S') \\
&\cong \C_{T_Q}(S, A) \times \C_{T_Q}(S \times Q \times A', S')
\end{align*}
By analogy with ordinary lenses, let us call these maps $\mget$ and $\mput$. 
The induced composition of effectful lenses is a little intricate, and is possibly best explained in code. The composite $\mget$ is straightforward, just the composite of $\mget_1$ and $\mget_2$ in the Kleisli category. For $\mput$ however, there is some curious plumbing of the state into different places. Tracing through the isomorphism, two stateful lenses $(\mget_1, \mput_1) : (T, T') \hto (S, S')$ and $(\mget_2, \mput_2) : (S, S') \hto (A, A')$ compose as follows. 
\begin{minted}{haskell}
mget t = do
  s <- mget1 t
  mget2 s

mput t q a = do 
  start <- getState
  s <- mget1 t
  q' <- getState
  putState start

  s' <- mput1 s q' a
  mput2 t q s'
\end{minted}

\begin{proposition}
A stateful lens given by
\begin{minted}{haskell}
mget :: s -> State q a
mput :: s -> q -> a -> State q s
\end{minted}
is lawful iff the following three laws hold: \\
\begin{minipage}{\textwidth}
\begin{multicols}{3}
\begin{minted}{haskell}
do
  q <- getState
  a <- mget s
  mput s q a
==
return s
\end{minted}
~\columnbreak
\begin{minted}{haskell}
do s' <- mput s q a
   mget s'
==
return a
\end{minted}
~\columnbreak
\begin{minted}{haskell}
let (s', q')
  = runState (mput s q1 a1) q2
in mput s' q' a2
==
mput s q1 a2
\end{minted}
\end{multicols}
\end{minipage}
By analogy we call these the $\fget\fput$, $\fput\fget$ and $\fput\fput$ laws.
\end{proposition}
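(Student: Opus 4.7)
The plan is to follow Proposition~\ref{prop:lawful-lens-laws} as a template, using the internal-hom adjunction $\C_{T_Q}(X, Y) \cong \C(X \times Q, Y \times Q)$ at every Kleisli hom-set. First I would give a concrete description of $\Twoptic_\monact(S, A)$ by mimicking the coend calculation already used in the text for $\Optic_\monact$: apply the adjunction to each of the three factors of the coend, use the universal property of product on the factors whose left-hand side involves $M_1$ or $M_2$, and then invoke Yoneda reduction twice to contract out $M_1$ and $M_2$. I expect an isomorphism of the form
\begin{align*}
\Twoptic_\monact(S, A) &\cong \C(S \times Q, A \times Q) \times \C(S \times Q \times A \times Q, A \times Q) \\
&\quad \times \C(S \times Q \times A \times Q \times A \times Q, S \times Q),
\end{align*}
reflecting the fact that each additional optic slot contributes both a fresh copy of $A$ (from the intermediate value) and a fresh copy of $Q$ (from state threading).

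Next I would evaluate $\outside$, $\once$ and $\twice$ on the canonical representative $\rep{l}{\mput}$ with residual $M = S \times Q$, where $l : S \to (S \times Q) \monact A$ in $\C_{T_Q}$ is the morphism whose Haskell form is \texttt{do q <- get; a <- mget s; return ((s,q), a)}. The Kleisli composite $l\mathbin{;}\mput$ unwinds exactly to the left-hand side of the first stated law, so $\outside(p) = \eta_S$ is precisely the $\fget\fput$ axiom. For $\once$ the middle slot is $\eta_{(S \times Q) \times A}$, while for $\twice$ it is $\mput\mathbin{;}l$; tracing both through the isomorphism above and comparing the three components pairwise, the first components agree tautologically (giving $\mget$), the second-component equality becomes the $\fput\fget$ law, and the third-component equality becomes the $\fput\fput$ law. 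Both directions of the biconditional then follow because the displayed description of $\Twoptic_\monact(S, A)$ is an isomorphism.

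The main obstacle will be bookkeeping. The hom-adjunction introduces a fresh copy of $Q$ at every Kleisli step, so the residuals $M_1, M_2$ come equipped with their own state threads before being eliminated by Yoneda; care is needed to keep track of which $Q$ is the ``captured'' initial state (appearing as the second argument of $\mput$) and which is the ``current'' state being threaded through the computation, exactly as reflected in the asymmetric composition rule for stateful lenses displayed just before the statement. Once this is set up carefully, the remaining verifications are routine applications of naturality of the strength, the monad laws for $T_Q$, and the universal property of the product.
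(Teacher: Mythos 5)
The paper omits the proof of this proposition entirely, so there is no argument of the author's to compare against; your proposal is the evident and, I believe, intended route, namely the state-monad instance of the template of Proposition~\ref{prop:lawful-lens-laws}. I checked the one step you only assert: applying $\C_{T_Q}(X,Y) \cong \C(X \times Q, Y \times Q)$ to each factor of the coend, splitting off $M_1$ and $M_2$ by the universal property of the product and contracting them by Yoneda reduction does give
\[
\Twoptic_\monact(S,A) \cong \C(S\times Q, A \times Q) \times \C(S \times Q \times A \times Q, A \times Q) \times \C(S \times Q \times A \times Q \times A \times Q, S \times Q),
\]
with residual $M = S \times Q$ and canonical representative $\rep{l}{\mput}$ exactly as you describe (the ordering inside $l$ matters: the ambient state must be captured \emph{before} $\mget$ runs, since $\mget$ may modify it, and this is what makes $\outside$ come out as the first displayed law). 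Evaluating the analogue of the isomorphism $\Phi$ on $\once$ and $\twice$ then gives, componentwise, $\mget$ in both first slots; $\eta_A$ versus ``$\mput$ then $\mget$'' in the second slots, which is the stated $\fput\fget$ law; and $\mput$ applied to the untouched residual $(s,q_1)$ versus $\mput$ applied to the residual $(s',q')$ recaptured after the first $\mput$ in the third slots, which is the stated $\fput\fput$ law. The only genuinely delicate point, which you correctly flag as bookkeeping, is that in the third component the projections in $\Phi$ discard both the $A$-value and the ambient state output by the middle slot, replacing them with fresh variables; this is exactly why $q_2$ enters the $\fput\fput$ law only as the state in which the first $\mput$ is run, and why the $\mget$ buried inside $l$ leaves no trace there. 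With that done, both directions of the biconditional follow from $\Phi$ being an isomorphism, as you say. For contrast, the paper's own machinery would also allow specialising the coalgebra laws of Theorem~\ref{thm:optics-are-coalgebras} (as it does for linear lenses) and then translating from $\munzip$ to the $(\mget,\mput)$ presentation; your direct computation of $\Twoptic_\monact$ avoids that translation and lands on the stated form of the laws more transparently.
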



Of course, this notion of effectful lens may not be useful! It is hard to get intuition for the meaning of the laws, but they seem to suffer from the same deficiency that other attempts at effectful lenses do: they are too strong. The $\fget\fput$ law here appears easier to satisfy than the $\mathsf{MGetPut_0}$ law of~\cite{ReflectionsOnMonadicLenses}, as $\mput$ is given access to the original state. However, our $\fput\fget$ law seems very restrictive: no matter what auxiliary state is provided, $\fput$ting then $\fget$ting must leave the state unchanged.

\subsection{Further Examples}
The dedicated reader may enjoy deriving the concrete representation and laws for the following optic varieties:

\begin{itemize}
\item \emph{``Achromatic'' Lenses}~\cite[Section 5.2]{ProfunctorOpticsThesis} are lenses that also admit an operation $\fcreate : A \to S$. These are optics for the action of $\C$ on itself by $M \act A = (M \sqcup 1) \times A$, or equivalently, of the category of pointed objects of $\C$ on $\C$ by cartesian product. Concrete achromatic lenses $(S, S') \hto (A, A')$ are elements of the set \[\C(S, \homC(A', S') \sqcup 1) \times \C(S, A) \times \C(A', S').\]
\item \emph{Affine Traversals}~\cite{SecondOrderFunctionals} allow access to a target that may or may not be present. Suppose $\C$ is cartesian closed and has binary coproducts. Let $\mathsf{Aff}$ be the category $\C \times \C$, equipped with the monoidal structure
\begin{align*}
  (P', Q') \otimes (P, Q) &= (P' \sqcup (Q' \times P) , Q' \times Q)
\end{align*}
The category $\mathsf{Aff}$ acts on $\C$ by $(P, Q) \act A = P \sqcup (Q \times A)$, in fact, $\mathsf{Aff}$ is cooked up to act on $\C$ exactly by the closure of the actions $- \times A$ and $- \sqcup A$ under composition. A concrete affine traversal is an element of \[\C(S, S' \sqcup (\homC(A', S') \times A)).\]

Affine traversals are described in the folklore as pairs of maps $\C(S, A \sqcup S') \times \C(S\times A', S')$. Such a pair does determine an affine traversal, but gives more information than is necessary: the right-hand map need not be defined at all $S$.


\item \emph{Grates}~\cite{GratesPost} are optics for the contravariant action of a monoidal closed category $\C$ on itself by $X \act A \mapsto \homC(X, A)$. Concretely these correspond to morphisms \[ \C(\homC(\homC(S, A), A'), S'). \]
\end{itemize}

\section{The Profunctor Encoding}\label{sec:profunctor-optics}
To use optics in practice, one could take the definition of the optic category and translate it almost verbatim into code---using an existential type in place of the coend. In Haskell syntax, lenses would be defined as:
\begin{minted}{haskell}
data Lens s s' a a' = forall m. Lens {
  l :: s -> (m, a),
  r :: (m, a') -> s'
}
\end{minted}
This not the approach usually taken by implementations! Instead the somewhat indirect \emph{profunctor encoding} is used. (This is not quite true for the Haskell \lenslib{} library, for a few reasons \lenslib{} uses the closely related \emph{van Laarhoven encoding}, see Section~\ref{sec:van-laarhoven}. The Purescript \texttt{purescript-profunctor-lenses} library~\cite{PurescriptLibrary} does use the profunctor encoding directly.)

The equivalence between the profunctor encoding and optics as described earlier has been explored in~\cite{ProfunctorOptics} and~\cite{ProfunctorOpticsPost}. We begin by reviewing this equivalence from a categorical perspective before investigating how the optic laws manifest in this setting.

\subsection{Tambara Modules}
Let $I = \C(-,{=}) : \C \hto \C$ be the identity profunctor and $\odot$ be profunctor composition, written in diagrammatic order. The following section generalises definitions that first appeared in~\cite[Section 3]{Doubles} for monoidal categories to the more general case of a monoidal action.

\begin{definition}
  Suppose a category $\C$ is acted on by $(\M, \otimes, I)$ and let $P \in \Prof(\C, \C)$ be a profunctor. A \emph{Tambara module structure for $\M$ on $P$} is a family of maps:
  \begin{align*}
    \zeta_{A,B,M} : P(A,B) \to P(M \act A, M\act B)
  \end{align*}
  natural in $A$ and $B$, dinatural in $M$, and such that $\zeta$ commutes with the action of $\M$:
  \[
    \begin{tikzcd}
      P(A,B) \ar[r, "\zeta_{A,B,M}"] \ar[d, "\zeta_{A, B, N\otimes M}" swap] & P(M \act A, M \act B) \ar[d, "\zeta_{M \act A, M \act B, N}" right] \\
      P((N\otimes M) \act A), (N\otimes M) \act B) \ar[r, "\alpha_{N, M, A}" swap] & P(N\act (M\act A), N\act (M \act B))
    \end{tikzcd}
    \qquad
    \begin{tikzcd}
      P(A,B) \ar[r, "\zeta_{A,B,I}"] \ar[dr, equal] & P(I\act A, I\act B) \ar[d, "{P(\lambda_A^{-1}, \lambda_B)}" right] \\
      & P(A, B)
    \end{tikzcd}
  \]
  for all $A, B \in \C$ and $N, M \in \M$.
\end{definition}

Note that the identity profunctor $I$ has a canonical Tambara module structure $\zeta_{A, B, M} : \C(A, B) \to \C(M \act A, M \act B)$ for any $\M$, given by functoriality.

If $P, Q \in \Prof(\C, \C)$ are equipped with module structures $\zeta$ and $\xi$ respectively, there is a canonical module structure on $P \odot Q$. Given $M \in \M$ and $A,B \in \C$, the structure map ${(\zeta \odot \xi)}_{A,B,M}$ is induced by
\begin{align*}
  &P(A,C) \times Q(C,B)  \\
  \xrightarrow{\zeta_{A,C,M} \times \xi_{C,B,M}} \quad& P(M\act A, M\act C) \times Q(M\act C, M\act B) \\
  \xrightarrow{\copr_{M\act C}} \quad&\int^{C \in \C} P(M\act A, C) \times Q(C, M\act B) \\
  = \quad&(P \odot Q)(M\act A, M\act B)
\end{align*}

\begin{definition}
  There is a category $\Tamb_\M$ of Tambara modules and natural transformations that respect the module structure, in the sense that for any $l : P \to Q$, the diagram
  \[
    \begin{tikzcd}
      P(A,B) \ar[r, "\zeta_{A,B,M}"] \ar[d, "l_{A,B}" left] & P(M\act A, M\act B) \ar[d, "l_{M\act A, M\act B}" right] \\
      Q(A,B) \ar[r, "\xi_{A,B,M}" swap] & Q(M \act A, M \act B)
    \end{tikzcd}
  \]
  commutes.
\end{definition}

This category is monoidal with respect to $\odot$ as given above with monoidal unit $I$. There is an evident forgetful functor $U : \Tamb_\M \to \Prof(\C, \C)$ that is strong monoidal. This forgetful functor has both a left and right adjoint; important for us is the left adjoint: (The right adjoint to $U$ is described in~\cite{NotionsOfComputationAsMonoids}, used there to investigate Haskell's \mintinline{haskell}{Arrow} typeclass.)

\begin{definition}[{\cite[Section 5]{Doubles}}]
  Let $\Pastro_\M : \Prof(\C, \C) \to \Tamb_\M$ be the functor:
  \begin{align*}
    \Pastro_\M(P) := \int^{M \in \M}  \C(-, M\act {=}) \odot P \odot \C(M\act -, {=})
  \end{align*}
  Or, in other words,
  \begin{align*}
    \Pastro_\M(P)(A,B) := \int^{M \in \M} \int^{C,D \in \C} \C(A, M\act C) \times P(C,D) \times  \C(M \act D, B)
  \end{align*}
  The module structure $\zeta_{A,B,M} : \Pastro_\M P(A,B) \to \Pastro_\M P (M\act A, M\act B)
  $ is induced by the maps
  \begin{align*}
    &\C(A, N\act C) \times P(C,D) \times  \C(N\act D, B) \\
    \xrightarrow{\text{functoriality}} \quad& \C(M\act A, M\act N\act C) \times P(C,D) \times  \C(M\act N\act D, M\act B) \\
    \xrightarrow{\copr_{M\otimes N}} \quad&\int^{N \in \M} \int^{C,D \in \C} \C(M\act A, N\act C) \times P(C,D) \times  \C(N\act D, M\act B) \\
    = \quad&\Pastro_\M P (M \act A, M \act B)
  \end{align*}
  for all $C, D \in \C$ and $N \in \M$. Equationally, this is $\zeta_{A,B,M}(\repthree{l}{p}{r} ) = \repthree{M\act l}{p}{M\act r} $.
\end{definition}

\begin{proposition}
  $\Pastro_\M : \Prof(\C, \C) \to \Tamb_\M$ is left adjoint to $U : \Tamb_\M \to \Prof(\C, \C)$.
\end{proposition}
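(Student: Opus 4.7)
The plan is to establish the adjunction by constructing an explicit natural bijection
\[ \Tamb_\M(\Pastro_\M P, Q) \cong \Prof(\C,\C)(P, UQ). \]

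First, I would unpack the left-hand side. A morphism $f : \Pastro_\M P \to Q$ in $\Tamb_\M$ is a natural transformation $f_{A,B} : \Pastro_\M P(A, B) \to Q(A, B)$, so by the universal property of the coend defining $\Pastro_\M P$ (together with Yoneda reduction absorbing the $\C(A, M \act C)$ and $\C(M \act D, B)$ factors), it corresponds to a family of maps
\[ g_{M, C, D} : P(C, D) \to Q(M \act C, M \act D) \]
dinatural in $M, C, D$. The extra condition that $f$ be a morphism of Tambara modules translates, after tracing the definition of the Tambara structure on $\Pastro_\M P$ (namely $\zeta_{A,B,M}^{\Pastro_\M P}(\repthree{l}{p}{r}) = \repthree{M \act l}{p}{M \act r}$), into the factorisation condition
\[ g_{M, C, D} = \zeta^Q_{C, D, M} \circ g_{I, C, D}, \]
where $\zeta^Q$ is the Tambara structure on $Q$. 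The natural candidate for the bijection is therefore to send $f$ to $\alpha := g_{I, -, -} : P \to UQ$, and conversely to send $\alpha : P \to UQ$ to the map $f$ defined on representatives by
\[ f_{A, B}(\repthree{l}{p}{r}) \;:=\; Q(l, r)\bigl(\zeta^Q_{C, D, M}(\alpha_{C, D}(p))\bigr). \]

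Next I would verify that these assignments are well-defined and mutually inverse. Well-definedness of $f$ on the coend amounts to dinaturality in $M$, $C$, $D$; this follows from naturality of $\alpha$ in $C, D$, dinaturality of $\zeta^Q$ in $M$, and functoriality of $Q$. That $f$ respects the Tambara structure follows from the second Tambara axiom (compatibility of $\zeta^Q$ with the action of $\M$). One direction of the inverse computation, $g \mapsto \alpha \mapsto f$ recovering $g$, uses the factorisation condition above; the other direction, $\alpha \mapsto f \mapsto \alpha'$, uses the unit law $\zeta^Q_{C, D, I} = Q(\lambda_C^{-1}, \lambda_D)$ on $P(C, D) \to Q(I \act C, I \act C) \cong Q(C, D)$ to conclude $\alpha' = \alpha$.

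Finally I would observe that naturality of this bijection in both $P$ and $Q$ is essentially automatic from how it was constructed: naturality in $P$ follows because the assignment is pointwise on $P$, and naturality in $Q$ follows because the Tambara structure $\zeta^Q$ is itself natural in $Q$ (a morphism of Tambara modules commutes with the structure maps by definition). The unit $\eta_P : P \to U \Pastro_\M P$ can then be read off as $\eta_P(p) = \repthree{\id}{p}{\id}$ with residual $I$, corresponding to the identity Tambara morphism on $\Pastro_\M P$. The main obstacle is the bookkeeping in checking the factorisation condition --- in particular ensuring that naturality of $f$ in $A, B$, combined with the Tambara-morphism condition, really does force $g_{M, C, D}$ to depend on $M$ only through the Tambara structure of $Q$; once this is pinned down, all the other verifications are routine coend manipulations.
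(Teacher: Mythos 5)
Your proposal is correct and is essentially the paper's argument: the transpose formula $f_{A,B}(\repthree{l}{p}{r}) = Q(l,r)\,\zeta^Q_{C,D,M}\,\alpha_{C,D}(p)$ is exactly the paper's $\hat f$, forced in both cases by the observation that every element of $\Pastro_\M P$ is generated from $\repthree{\id}{p}{\id}$ by the Tambara structure map and the profunctor action, and the verifications (well-definedness on the coend, compatibility with $\zeta$ via the tensor axiom and naturality) coincide. The only difference is presentational --- you phrase the adjunction as an explicit hom-set bijection obtained by Yoneda-reducing the coend, whereas the paper phrases it via the unit $\eta_P(p) = \repthree{\id}{p}{\id}$ and a universal-arrow uniqueness argument.
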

\begin{proof}
  For any $P \in \Prof(\C, \C)$, there is a map $\eta : P \hto U \Pastro_\M P$, given by $\eta(p) = \repthree{\id_A}{p}{\id_B}$. Suppose we have an element $\repthree{l}{p}{r} \in \Pastro_\M P(A,B)$. One can check that this element is equal to
  \begin{align*}
    \repthree{l}{p}{r} = (\Pastro_\M P(l, r)) \zeta_{A, B, M} \; \eta(p)
  \end{align*}
  where $\zeta_{A, B, M}$ is the module structure map for $\Pastro_\M P$.

  If $T \in \Tamb_\M$ is a Tambara module with structure map $\xi$, we would like to show that for any map $f : P \hto UT$ there exists a unique $\hat f : \Pastro_\M P \hto T$ so that $f$ factors as \[P \xrightarrow{\eta} U \Pastro_\M P \xrightarrow{U\hat f} UT. \]

  The data of such a map $\hat f : \Pastro_\M P \hto T$ is a natural transformation between the underlying profunctors. For the factorisation property to hold we must have that $\hat{f}\eta(p) = f(p)$ for any $p \in P(A,B)$, but then the action on the remainder of $\Pastro_\M P(A, B)$ is fixed:
  \begin{align*}
    \hat{f}(\repthree{l}{p}{r}) 
    &= \hat{f}(\Pastro_\M P(l, r) \zeta_{A, B, M} \; \eta(p)) \\
    &=T(l, r) \; \xi_{A, B, N} \; f(p)
  \end{align*}
  This establishes uniqueness. It remains to show that $\hat{f}$ so defined is actually a Tambara module morphism, but this is easy:
  \begin{align*}
    \hat{f}\zeta_{A,B,N}(\repthree{l}{p}{r})
    &= \hat{f}(\repthree{N\act l}{p}{N\act r}) && \text{(definition of $\zeta$)}\\
    &= T(N\act l, N\act r) \; \xi_{A, B, N \otimes M} \; f(p) && \text{(definition of $\hat{f}$)}\\
    &= T(N\act l, N\act r) \xi_{M\act A,M\act B,N} \; \xi_{A, B, M} \; f(p) && \text{($\xi$ commutes with tensor in $\M$)} \\
    &= \xi_{A,B,N} T(l, r) \; \xi_{A, B, M} \; f(p) && \text{(naturality of $\xi$)} \\
    &= \xi_{A,B,N} \hat{f} (\repthree{l}{p}{r}) && \text{(definition of $\hat{f}$)}
  \end{align*}
\end{proof}

\begin{corollary}
  $\Pastro_\M$ (and therefore also $U \Pastro_\M$) is oplax monoidal.
\end{corollary}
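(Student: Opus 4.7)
The plan is to invoke the standard doctrinal-adjunction principle (originally Kelly): whenever $F \dashv G$ is an adjunction and $G$ carries a lax monoidal structure, the left adjoint $F$ automatically inherits a canonical oplax monoidal structure, with all its constraints obtained by transposing constraints of $G$ across the adjunction. The preceding proposition established $\Pastro_\M \dashv U$, and immediately after defining $\Tamb_\M$ we noted that $U : \Tamb_\M \to \Prof(\C,\C)$ is strong (hence in particular lax) monoidal, because $\odot$ and $I$ on $\Tamb_\M$ were defined so as to lift those of $\Prof(\C,\C)$ on the nose. So the result is immediate from this general fact.

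To make the structure concrete, I would spell out the two constraints. The binary constraint $\Pastro_\M(P\odot Q)\to \Pastro_\M(P)\odot \Pastro_\M(Q)$ is defined as the transpose across $\Pastro_\M \dashv U$ of the composite
\[
P\odot Q \xrightarrow{\eta_P \odot \eta_Q} U\Pastro_\M(P)\odot U\Pastro_\M(Q) \xrightarrow{\cong} U\bigl(\Pastro_\M(P)\odot \Pastro_\M(Q)\bigr),
\]
where the second arrow is (the inverse of) the strong constraint for $U$ and $\eta$ is the unit of the adjunction constructed in the previous proof. Similarly, the unit constraint $\Pastro_\M(I)\to I$ is the transpose of the identity morphism $I \to UI = I$ in $\Prof(\C,\C)$. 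Explicitly, unfolding the formula for $\Pastro_\M$, the binary constraint sends a representative $\repthree{l}{p\odot q}{r}$ with $p\odot q = \copr_C(p,q)$ to the pair of nested representatives obtained by inserting identity morphisms on either side of $p$ and $q$ separately.

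The pentagon and triangle axioms for this oplax structure do not require a direct check: they are a formal consequence of the corresponding coherences for $U$ together with the triangle identities for $\Pastro_\M \dashv U$, which is the content of doctrinal adjunction. The only step with any real content is verifying naturality of the transposed maps, which follows from naturality of $\eta$ and of the adjunction isomorphism.

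Finally, for $U\Pastro_\M$, I would appeal to the fact that a composite of an oplax monoidal functor with a lax monoidal functor is oplax: explicitly, the constraints for $U\Pastro_\M$ are obtained by postcomposing the oplax constraints of $\Pastro_\M$ with the strong constraints of $U$. The main (only) obstacle in the whole argument is if one insisted on avoiding the general doctrinal-adjunction machinery, in which case one would have to verify pentagon coherence directly from the coend formula defining $\Pastro_\M$; the adjunction approach bypasses this calculation entirely.
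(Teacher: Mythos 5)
Your proof takes exactly the same route as the paper, which simply cites doctrinal adjunction (Kelly 1974): $\Pastro_\M$ is the left adjoint of the strong monoidal $U$, hence canonically oplax monoidal, and your explicit description of the transposed constraints is a correct elaboration of that one-line argument. One small caution: the general claim that a composite of an oplax monoidal functor with a lax monoidal functor is oplax is false as stated---what makes the final step work is that $U$ is \emph{strong}, hence itself oplax, so $U\Pastro_\M$ is a composite of two oplax functors (and your concrete description of its constraints is consistent with this).
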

\begin{proof}
  This follows from abstract nonsense as $\Pastro_\M$ is the left adjoint of a strong monoidal functor, see~\cite{Kelly1974}.
\end{proof}

\subsection{Optics}
\begin{definition}
  For a pair of objects $A, A' \in \C$, the \emph{exchange profunctor} $E_{A, A'}$ is defined to be $\C(-, A) \times \C(A', {=})$.
\end{definition}

Given a profunctor, or indeed a Tambara module, we can evaluate it at any two objects of $\C$. This process is functorial in the choice of Tambara module, giving a functor $(U-)(A,A') : \Tamb_\M \to \Set$.

\begin{lemma}\label{lemma-rep}
  The functor $(U-)(A,A') : \Tamb_\M \to \Set$ is representable: there is a isomorphism $(U-)(A,A') \cong \Tamb_\M(\Pastro_\M E_{A, A'}, -)$
\end{lemma}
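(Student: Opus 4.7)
The plan is to chain two standard isomorphisms: the adjunction $\Pastro_\M \dashv U$ established in the previous proposition, and a double Yoneda reduction for natural transformations out of the exchange profunctor.

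First, I would apply the adjunction to rewrite
\[
\Tamb_\M(\Pastro_\M E_{A, A'}, T) \;\cong\; \Prof(\C, \C)(E_{A, A'}, UT),
\]
naturally in $T$. This reduces the lemma to the purely profunctor-theoretic claim that $\Prof(\C, \C)(E_{A, A'}, P) \cong P(A, A')$, naturally in $P \in \Prof(\C, \C)$.

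Second, I would compute this hom-set as an end and collapse it by Yoneda reduction:
\begin{align*}
\Prof(\C, \C)(E_{A, A'}, P)
&= \int_{(X,Y)} \Set\bigl(\C(X, A) \times \C(A', Y),\, P(X, Y)\bigr) \\
&\cong \int_X \int_Y \Set\bigl(\C(X, A),\, \Set(\C(A', Y), P(X, Y))\bigr) \\
&\cong \int_X \Set\bigl(\C(X, A),\, P(X, A')\bigr) \\
&\cong P(A, A'),
\end{align*}
using Fubini, then Yoneda reduction in $Y$ (applied to the covariant functor $P(X,-)$ at $A'$), then Yoneda reduction in $X$ (applied to the contravariant functor $P(-,A')$ at $A$). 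Both instances are the right-hand versions of Lemma~\ref{lem:yoneda-reduction}.

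Concretely, the composite sends a Tambara morphism $f : \Pastro_\M E_{A, A'} \to T$ to its value at the canonical element $\eta_{E_{A,A'}}(\id_A, \id_{A'}) \in \Pastro_\M E_{A, A'}(A, A')$ coming from the unit of the adjunction; in the reverse direction, an element $t \in (UT)(A, A')$ is sent to the unique extension of $t$ across $\eta$, whose explicit formula was already derived in the proof of the adjunction. Naturality in $T$ follows from naturality of the adjunction bijection and of the Yoneda reductions. I do not anticipate any real obstacle: this is a routine combination of the two tools already in the excerpt, and the only mild subtlety is keeping straight the variance of $P$ in each Yoneda step.
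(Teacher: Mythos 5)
Your proposal is correct and is essentially the paper's own proof run in the opposite direction: the paper starts from $(U-)(A,A')$, expands it by two Yoneda reductions into the end $\int_{X,Y}\Set(E_{A,A'}(X,Y),(U-)(X,Y)) \cong \Prof(E_{A,A'},U-)$, and then applies the adjunction, which is exactly your two steps in reverse. The variance bookkeeping in your Yoneda reductions is right, so there is nothing to fix.
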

\begin{proof}
  We have the chain of isomorphisms:
  \begin{align*}
    &(U-)(A,A') \\
    \cong \;&\int_{X,Y \in \C} \Set(\C(X,A) \times \C(A',Y), (U-)(X,Y)) && \text{(by Yoneda reduction twice)} \\
    =\;&\int_{X,Y \in \C} \Set(E_{A, A'}(X,Y), (U-)(X,Y)) && \text{(by definition)}\\
    \cong \;&\Prof(E_{A, A'}, U-) && \text{(natural transformations as ends)} \\
    \cong \;&\Tamb_\M(\Pastro_\M E_{A, A'}, -) && \text{(by adjointness)}
  \end{align*}
\end{proof}

Note that the value of $\Pastro_\M E_{A, A'}$ at $(X,Y)$ is precisely the set of optics $(X, Y) \hto (A, A')$:
\begin{align*}
\Pastro_\M E_{A, A'} (X, Y)
&= \int^{M \in \M} \int^{C,D \in \C} \C(X, M\act C) \times E_{A, A'}(C,D) \times  \C(M\act D, Y) \\
&= \int^{M \in \M} \int^{C,D \in \C} \C(X, M\act C) \times \C(C, A) \times \C(A', D) \times \C(M\act D, Y) \\
&\cong \int^{M \in \M} \C(X, M\act A) \times \C(M\act A', Y)
\end{align*}
For convenience we identify $\Pastro_\M E_{A, A'}(X,Y)$ with $\Optic_\M((X, Y), (A, A'))$.

We can now show that profunctor optics are precisely optics in the ordinary sense.

\begin{proposition}[Profunctor Optics are Optics]\label{prop:profunctor-optics-are-optics}
  \begin{align*}
    [\Tamb_\M, \Set]((U-)(A,A'),(U-)(S,S')) &\cong \Optic_\M((S, S'), (A, A'))
  \end{align*}
\end{proposition}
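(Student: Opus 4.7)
The plan is to apply the Yoneda lemma in the functor category $[\Tamb_\M, \Set]$. By Lemma~\ref{lemma-rep}, the evaluation functor $(U-)(A,A') : \Tamb_\M \to \Set$ is representable with representing object $\Pastro_\M E_{A, A'}$. So natural transformations out of it are determined by their value at the representing object.

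Concretely, I would chain the following isomorphisms:
\begin{align*}
[\Tamb_\M, \Set]((U-)(A,A'), (U-)(S,S'))
&\cong [\Tamb_\M, \Set](\Tamb_\M(\Pastro_\M E_{A, A'}, -), (U-)(S,S')) \\
&\cong (U \Pastro_\M E_{A, A'})(S, S') \\
&\cong \Optic_\M((S, S'), (A, A'))
\end{align*}
The first step is Lemma~\ref{lemma-rep}. The second step is the Yoneda lemma applied in $[\Tamb_\M, \Set]$: natural transformations from a representable $\Tamb_\M(\Pastro_\M E_{A, A'}, -)$ into any functor $F$ correspond to elements of $F(\Pastro_\M E_{A, A'})$, instantiated here with $F = (U-)(S,S')$. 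The third step is the explicit calculation already recorded immediately before the proposition, where Yoneda reduction on the exchange profunctor $E_{A,A'}$ collapses the two inner coends to yield the standard coend formula for $\Optic_\M((S, S'), (A, A'))$.

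There is essentially no obstacle here, since all the hard work was done in the construction of $\Pastro_\M$ and its adjunction with $U$; the statement of the proposition is really just a repackaging of those facts. If one wanted to be careful, the only thing to unwind is the concrete form of the natural transformation corresponding to an optic $\rep{l}{r}$: it sends a Tambara module element $p \in UT(A,A')$ to $\xi_{M,\_,\_}(T(-)p)$-style data, i.e.\ it uses the Tambara structure map to transport $p$ through the residual $M$ and then the functoriality of $T$ in $l$ and $r$ to land in $UT(S,S')$. This is the formula that makes the profunctor encoding usable in practice, but verifying that it matches the Yoneda correspondence is a routine unwinding rather than a genuine step.
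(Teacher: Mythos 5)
Your proposal is correct and follows exactly the same chain of isomorphisms as the paper: Lemma~\ref{lemma-rep} to replace $(U-)(A,A')$ by the representable $\Tamb_\M(\Pastro_\M E_{A,A'},-)$, the Yoneda lemma in $[\Tamb_\M,\Set]$, and then the identification of $(U\Pastro_\M E_{A,A'})(S,S')$ with $\Optic_\M((S,S'),(A,A'))$ computed just before the proposition. The additional unwinding of the explicit formula for $\tilde{p}$ matches what the paper records in the subsequent corollary.
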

\begin{proof}
  We have the chain of isomorphisms:
  \begin{align*}
    &[\Tamb_\M, \Set]((U-)(A,A'),(U-)(S,S')) \\
    \cong \;&[\Tamb_\M, \Set](\Tamb_\M(\Pastro_\M E_{A, A'}, -), (U-)(S,S'))  && \text{(by Lemma~\ref{lemma-rep})}\\
    \cong \;&(U\Pastro_\M E_{A, A'})(S,S')  && \text{(by Yoneda)} \\
    = \;&\Optic_\M((S, S'), (A, A'))
  \end{align*}
\end{proof}

For $p : (S, S') \hto (A, A')$, let $\tilde{p} : (U-)(A,A') \Rightarrow (U-)(S,S')$ denote the corresponding natural transformation under this isomorphism, and for $t : (U-)(A,A') \Rightarrow (U-)(S,S')$, let $\hat{t} : (S, S') \hto (A, A')$ be the corresponding optic.

\begin{corollary}
  A profunctor optic $t$ is determined by its component at $\Pastro_\M E_{A, A'}$, and furthermore, this component is determined by its value on $\rep{\lambda_A^{-1}}{\lambda_{A'}} \in (U \Pastro_\M E_{A, A'})(A, A')$.
\end{corollary}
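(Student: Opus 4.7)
The plan is to apply the Yoneda lemma twice, using the representation of $(U-)(A,A')$ established in Lemma~\ref{lemma-rep}. Recall that the proof of Proposition~\ref{prop:profunctor-optics-are-optics} factors the isomorphism $[\Tamb_\M, \Set]((U-)(A,A'),(U-)(S,S')) \cong (U\Pastro_\M E_{A, A'})(S,S')$ through two steps: (i) rewrite the domain functor as the representable $\Tamb_\M(\Pastro_\M E_{A,A'}, -)$, and (ii) apply the Yoneda lemma. So both halves of the corollary amount to unpacking what this Yoneda isomorphism says.

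For the first claim, the Yoneda lemma applied to a natural transformation $t : \Tamb_\M(\Pastro_\M E_{A,A'}, -) \Rightarrow (U-)(S,S')$ tells us that $t$ is determined by the single component $t_{\Pastro_\M E_{A,A'}}$ evaluated at $\id_{\Pastro_\M E_{A,A'}}$. In particular, $t$ is determined by this one component.

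For the second claim, it remains to trace the identity $\id_{\Pastro_\M E_{A,A'}}$ through the chain of isomorphisms in Lemma~\ref{lemma-rep}, to identify which element of $(U\Pastro_\M E_{A,A'})(A,A')$ it corresponds to. Under the adjunction $\Pastro_\M \dashv U$, the identity on $\Pastro_\M E_{A,A'}$ transposes to the unit $\eta_{E_{A,A'}} : E_{A,A'} \to U\Pastro_\M E_{A,A'}$, which by the earlier description has components $\eta(p) = \repthree{\id}{p}{\id}$. Evaluating at $(A,A')$ and applying to $(\id_A, \id_{A'}) \in \C(A,A) \times \C(A',A') = E_{A,A'}(A,A')$ produces $\repthree{\id_A}{(\id_A, \id_{A'})}{\id_{A'}}$ in the triple coend presentation of $(U\Pastro_\M E_{A,A'})(A,A')$.

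The only remaining work is to push this triple representative through the Yoneda reductions (over $C$ and $D$) that identify $(U\Pastro_\M E_{A,A'})(X,Y)$ with $\Optic_\M((X,Y),(A,A'))$. Tracking the unit isomorphisms $I \act A \cong A$ and $I \act A' \cong A'$ that this reduction introduces, the residual collapses to $M = I$ and the representative becomes $\rep{\lambda_A^{-1}}{\lambda_{A'}}$, which is by definition the identity optic on $(A,A')$. The main (minor) obstacle is keeping careful track of these unitors so that the image of the identity is indeed $\rep{\lambda_A^{-1}}{\lambda_{A'}}$ and not some other representative of the identity optic.
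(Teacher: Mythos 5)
Your proposal is correct and follows essentially the same route as the paper: both reduce the claim to the first two isomorphisms of Proposition~\ref{prop:profunctor-optics-are-optics} (representability via Lemma~\ref{lemma-rep} followed by the Yoneda lemma). The only cosmetic difference is that you identify the universal element by transposing $\id_{\Pastro_\M E_{A,A'}}$ through the adjunction unit $\eta$, whereas the paper verifies the same fact by writing the explicit formula $\tilde{p}_P = (UP)(l,r)\,\zeta_{A,A',M}$ and evaluating it on $\rep{\lambda_A^{-1}}{\lambda_{A'}}$; your handling of the unitors in the final reduction is the right level of care.
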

\begin{proof}
  This is the content of the first two isomorphisms above. Explicitly, suppose $p = \rep{l}{r}$ with $l : S \to M\act A$ and $r : M\act A' \to S'$. Then for any Tambara module $P$, the component of $\tilde{p}$ at $P$ is
  \begin{align*}
    \tilde{p}_P = (UP)(l,r) \zeta_{A,A',M}
  \end{align*}
  where $\zeta$ is the module structure for $P$. In particular,
  \[
    \tilde{p}_{\Pastro_\M E_{A, A'}}(\rep{\lambda_A^{-1}}{\lambda_{A'}}) = \rep{l}{r}
  \]
\end{proof}

We finish with one final isomorphic description of an optic:

\begin{proposition}
  $\Optic_\M((S, S'), (A, A'))$ is isomorphic to $\Tamb_\M(\Pastro_\M E_{S, S'}, \Pastro_\M E_{A, A'})$.
\end{proposition}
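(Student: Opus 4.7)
The plan is to apply Lemma~\ref{lemma-rep} directly, taking the target Tambara module to be $\Pastro_\M E_{A, A'}$. Recall Lemma~\ref{lemma-rep} gave a natural isomorphism $(U-)(S,S') \cong \Tamb_\M(\Pastro_\M E_{S, S'}, -)$ of functors $\Tamb_\M \to \Set$. Evaluating both sides at $\Pastro_\M E_{A, A'}$ yields
\begin{align*}
\Tamb_\M(\Pastro_\M E_{S, S'}, \Pastro_\M E_{A, A'}) \cong (U\Pastro_\M E_{A, A'})(S, S').
\end{align*}
The right-hand side was already identified (in the discussion preceding Proposition~\ref{prop:profunctor-optics-are-optics}) with $\Optic_\M((S,S'), (A,A'))$ via Yoneda reduction, completing the proof.

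Alternatively, one could start from Proposition~\ref{prop:profunctor-optics-are-optics}, which identifies optics with natural transformations $(U-)(A,A') \Rightarrow (U-)(S,S')$, replace both functors on $\Tamb_\M$ by their representations $\Tamb_\M(\Pastro_\M E_{A, A'}, -)$ and $\Tamb_\M(\Pastro_\M E_{S, S'}, -)$ from Lemma~\ref{lemma-rep}, and then apply the ordinary Yoneda lemma to conclude. This is essentially the same argument phrased as a composition $\Optic \cong [\Tamb_\M, \Set](\mathsf{Yon}(\Pastro_\M E_{A, A'}), \mathsf{Yon}(\Pastro_\M E_{S, S'})) \cong \Tamb_\M(\Pastro_\M E_{S, S'}, \Pastro_\M E_{A, A'})$.

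There is no real obstacle here: the work was done in Lemma~\ref{lemma-rep} and Proposition~\ref{prop:profunctor-optics-are-optics}. The only bookkeeping worth noting is the direction reversal between $(S,S')$ and $(A,A')$, which is what forces the Yoneda embedding $(A, A') \mapsto \Pastro_\M E_{A, A'}$ to be contravariant from the ``optic source/target'' perspective; this is visible already in the previous proposition, and the explicit formula $\tilde{p}_{\Pastro_\M E_{A, A'}}(\rep{\lambda_A^{-1}}{\lambda_{A'}}) = \rep{l}{r}$ from the corollary shows that under the isomorphism an optic $\rep{l}{r}$ corresponds to the unique Tambara-module map $\Pastro_\M E_{S, S'} \to \Pastro_\M E_{A, A'}$ sending $\rep{\lambda_S^{-1}}{\lambda_{S'}}$ to $\rep{l}{r}$.
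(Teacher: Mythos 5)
Your proposal is correct, and your second ``alternative'' route (Proposition~\ref{prop:profunctor-optics-are-optics} followed by the Yoneda lemma for presheaves on $\Tamb_\M$) is exactly the argument the paper gives. Your primary route is a mild streamlining of the same ingredients: evaluating the representability isomorphism of Lemma~\ref{lemma-rep} (with $(S,S')$ in place of $(A,A')$) at the object $\Pastro_\M E_{A,A'}$ and invoking the coend identification $(U\Pastro_\M E_{A,A'})(S,S') \cong \Optic_\M((S,S'),(A,A'))$ gets you there in one step, skipping the detour through $[\Tamb_\M,\Set]$; your closing remark about where $\rep{\lambda_S^{-1}}{\lambda_{S'}}$ is sent also matches the explicit formula the paper records.
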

\begin{proof}
This follows from the previous two propositions and the Yoneda lemma:
\begin{align*}
  &\Optic_\M((S, S'), (A, A')) \\
  &\cong [\Tamb_\M, \Set]((U-)(A,A'),(U-)(S,S')) \\
  &\cong [\Tamb_\M, \Set](\Tamb_\M(\Pastro_\M E_{A, A'}, -),\Tamb_\M(\Pastro_\M E_{S, S'}, -)) \\
    &\cong \Tamb_\M(\Pastro_\M E_{S, S'}, \Pastro_\M E_{A, A'})
\end{align*}
Explicitly, an optic $p = \rep{l}{r}$ corresponds to the natural transformation with components:
\begin{align*}
t_{X, Y} : \Pastro_\M E_{S, S'}(X, Y) \to \Pastro_\M E_{A, A'}(X, Y) \\
t_{X, Y}(\rep{f}{g}) = \rep{(M\act l)f}{g(M\act r)}
\end{align*}
where $M$ is the residual for the representative $\rep{f}{g}$.
This is exactly the formula for optic composition!
\end{proof}

\subsection{Lawful Profunctor Optics}

The next goal is to characterise the profunctor optics that correspond to lawful optics.

The exchange profunctor $E_{A, A}$, hereafter abbreviated to $E_A$, has a comonoid structure, where the comultiplication $\Delta : E_A \to E_A \odot E_A$ and counit $\varepsilon : E_A \to \C$ are given by
\begin{align*}
  \Delta_{X, Y} : (E_A)(X, Y) &\to (E_A \odot E_A)(X, Y) \\
  \Delta_{X, Y}(\rep{f}{g}) &= \repthree{f}{\id_A}{g}  \\
  \varepsilon_{X, Y} : (E_A)(X, Y) &\to \C(X, Y) \\
  \varepsilon_{X, Y}(\rep{f}{g}) &= gf
\end{align*}
respectively. Here we have identified $E_A \odot E_A$ with the profunctor $\C(-, A) \times \C(A, A) \times \C(A, =)$, via the isomorphism
\begin{align*}
E_A \odot E_A 
&= \int^{Z \in \C} E_A(-, Z) \times E_A(Z, =) \\
&= \int^{Z \in \C} \C(-, A) \times \C(A, Z) \times \C(Z, A) \times \C(A, =) \\
&\cong \C(-, A) \times \C(A, A) \times \C(A, =) 
\end{align*}

Because $\Pastro_\M$ is oplax monoidal, the Tambara module $\Pastro_\M E_A$ has an induced comonoid structure, in this case given by
\begin{align*}
  \Delta_{X, Y} : (\Pastro_\M E_A)(X, Y) &\to (\Pastro_\M E_A \odot \Pastro_\M E_A)(X, Y) \\
  \Delta(\rep{l}{r}) &= \repthree{l}{\id_{M\act A}}{r} \\
  \varepsilon_{X, Y} : (\Pastro_\M E_A)(X, Y) &\to \C(X, Y) \\
  \varepsilon(\rep{l}{r}) &= rl
\end{align*}

The connection with lawfulness is hopefully now evident!

\begin{proposition}\label{prop:lawful-if-homomorphism}
  An optic $p : S \hto A$ is lawful iff the corresponding natural transformation $\Pastro_\M E_S \rightarrow \Pastro_\M E_A$ is a comonoid homomorphism.
\end{proposition}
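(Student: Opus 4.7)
The plan is to unpack the comonoid homomorphism condition into its two parts---counit preservation $\varepsilon_A \circ t = \varepsilon_S$ and comultiplication preservation $\Delta_A \circ t = (t \odot t) \circ \Delta_S$---and match each with one of the two optic laws. Write $p = \rep{l}{r}$ with residual $N$, and recall from the previous subsection that the associated natural transformation $t : \Pastro_\M E_S \to \Pastro_\M E_A$ has components $t_{X,Y}(\rep{f}{g}) = \rep{(M\act l)f}{g(M\act r)}$ where $M$ is the residual of $\rep{f}{g}$.

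For the counit, direct substitution yields
\[
(\varepsilon_A \circ t)(\rep{f}{g}) = g(M\act r)(M\act l)f = g(M\act (rl))f, \qquad \varepsilon_S(\rep{f}{g}) = gf,
\]
so $\outside(p) = rl = \id_S$ immediately implies counit preservation. Conversely, evaluating both sides at the distinguished representative $\rep{\lambda_S^{-1}}{\lambda_S} \in \Pastro_\M E_S(S,S)$ (which $t$ sends back to $\rep{l}{r}$ by the final corollary of the previous subsection) collapses them to $rl$ and $\id_S$ respectively, recovering $\outside(p) = \id_S$.

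For the comultiplication, I first identify $(\Pastro_\M E_A \odot \Pastro_\M E_A)(X,Y)$ with the triple coend $\int^{M_1,M_2} \C(X, M_1\act A) \times \C(M_1\act A, M_2\act A) \times \C(M_2\act A, Y)$ by Yoneda reduction on the intermediate variable; this is precisely the presentation used to describe $\Twoptic_\M$. Applying the formulas for $t$, $\Delta$, and profunctor composition, both sides of the comultiplication equation at $\rep{f}{g}$ take the form $\repthree{(M\act l)f}{-}{g(M\act r)}$, with middle morphism $\id_{(M\otimes N)\act A}$ on the left and $M\act (r;l)$ on the right. Instantiating at $\rep{\lambda_S^{-1}}{\lambda_S}$ and reducing via $\lambda$-naturality and the unitor coherence yields exactly $\once(p) = \twice(p)$, so comultiplication preservation implies the second law.

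The converse for comultiplication is the main obstacle: I must show that $\once(p) = \twice(p)$ in $\Twoptic_\M(S,A)$ propagates to equality of the two triple coend elements above for every $\rep{f}{g}$. Mirroring the argument in Proposition~\ref{prop:lawful-category}, each generating coend relation of $\Twoptic_\M(S,A)$---of the form $\repthree{l;(\phi\act A)}{c}{r} \sim \repthree{l}{(\phi\act A);c}{r}$ for $\phi : M_1 \to M_2$ in $\M$, and the symmetric version---lifts, by prepending $(M\act l)f$, appending $g(M\act r)$, and tensoring the index by $M$ via functoriality of the action, to a generating relation of $(\Pastro_\M E_A \odot \Pastro_\M E_A)(X,Y)$. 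Chaining these lifts through the finite witness of $\once(p) = \twice(p)$ delivers the required equality, completing the equivalence.
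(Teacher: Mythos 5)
Your proposal is correct and follows essentially the same route as the paper: the counit condition reduces directly to $\outside(p)=\id_S$, the comultiplication condition is verified by lifting the chain of generating coend relations witnessing $\once(p)=\twice(p)$ exactly as in Proposition~\ref{prop:lawful-category}, and the converse is obtained by specialising to $X=Y=S$ and tracing $\rep{\lambda_S^{-1}}{\lambda_S}$ through the two diagrams. The only difference is presentational: you spell out the identification of $(\Pastro_\M E_A \odot \Pastro_\M E_A)(X,Y)$ with the triple coend and the relation-lifting step in more detail than the paper, which simply cites the earlier argument.
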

\begin{proof}
For $t : \Pastro_\M E_S \rightarrow \Pastro_\M E_A$ to be a comonoid homomorphism means that the following diagrams commute for every $X, Y \in \C$:
  \[
    \begin{tikzcd}
    (\Pastro_\M E_S)(X, Y) \ar[r, "t_{X, Y}"] \ar[d, "\varepsilon_{X,Y}" swap] & (\Pastro_\M E_A)(X, Y) \ar[d, "\varepsilon_{X,Y}"] \\
    \C(X, Y) \ar[r, equals] & \C(X, Y)
    \end{tikzcd}
    \quad
    \begin{tikzcd}
      (\Pastro_\M E_S)(X, Y) \ar[r, "t_{X, Y}"] \ar[d, "\Delta_{X, Y}" swap] & (\Pastro_\M E_A)(X, Y) \ar[d, "\Delta_{X, Y}"] \\
      (\Pastro_\M E_S \odot \Pastro_\M E_S)(X, Y) \ar[r, "(t \odot t)_{X, Y}" swap] & (\Pastro_\M E_A \odot \Pastro_\M E_A)(X, Y)
    \end{tikzcd}
  \]
  Suppose $t$ corresponds to an optic with representative $\rep{l}{r}$ with residual $M$ and we have an element $\rep{f}{g} : (\Pastro_\M E_S)(X, Y)$ with residual $N$. The left diagram requires that 
  \begin{align*}
  g(Nr)(Nl)f = gf,
  \end{align*}
  as an element of $\C(X, Y)$. This is certainly true as $rl = \id_S$. The right diagram claims that 
  \begin{align*}
  \repthree{(N\act l)f}{\id_{N\act M\act A}}{g(N \act r)} = \repthree{(N\act l)f}{(N\act r)(N\act l)}{g(N\act r)}
  \end{align*}
  But this holds by exactly the same argument as used in Proposition~\ref{prop:lawful-category} to show that the composite of lawful optics is lawful: by transplanting the relations showing the second optic law for $\rep{l}{r}$

  For the backward direction, consider the above diagrams specialised to $X = Y = S$. Tracing the element $\rep{\lambda_S^{-1}}{\lambda_S} \in (\Pastro_\M E_S)(S, S)$ around the commutative diagrams yields precisely the first and second optic laws respectively.
\end{proof}

All that is needed to complete the connection with profunctor optics is the following standard result in category theory.

\begin{lemma}
  For an object $X$ in a monoidal category $(\C, \otimes, I)$, a comonoid structure $(X,\Delta,\varepsilon)$ is equivalent to a lax monoidal structure on the functor $\C(X, -) : \C \to \Set$, considering $\Set$ as a monoidal category with respect to $\times$. 
  
  Further, a morphism $(X_1,\Delta_1,\varepsilon_1) \to (X_2,\Delta_2,\varepsilon_2)$ is a comonoid homomorphism iff the induced natural transformation $\C(X_2, -) \Rightarrow \C(X_1, -)$ is monoidal.
\end{lemma}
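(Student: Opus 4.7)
The plan is to use the Yoneda lemma to transport the comonoid structure on $X$ to a lax monoidal structure on $\C(X,-)$, and then verify that each of the defining axioms matches up on the two sides.

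First I would describe the two constructions. Given a comonoid $(X,\Delta,\varepsilon)$, the lax monoidal structure on $\C(X,-)$ has multiplication and unit
\begin{align*}
\mu_{A,B} &: \C(X,A) \times \C(X,B) \to \C(X, A \otimes B), & \mu_{A,B}(f, g) &= (f \otimes g) \circ \Delta, \\
\eta &: 1 \to \C(X, I), & \eta(*) &= \varepsilon.
\end{align*}
In the other direction, given a lax monoidal structure $(\mu, \eta)$ on $\C(X,-)$, Yoneda forces the comonoid structure to be recovered by evaluation on identities: set $\Delta := \mu_{X,X}(\id_X, \id_X) : X \to X \otimes X$ and $\varepsilon := \eta(*) : X \to I$. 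Naturality of $\mu$ in $A$ and $B$ then guarantees $\mu_{A,B}(f,g) = (f \otimes g) \circ \Delta$ for arbitrary $f, g$, so the two constructions are inverse.

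Next I would match the coherence axioms. Coassociativity of $\Delta$ says $(\Delta \otimes X)\Delta = (X \otimes \Delta)\Delta$, which after composing on the left with $f \otimes g \otimes h$ for arbitrary $f,g,h$ is precisely the associativity axiom for $\mu$ applied to the triple $(f,g,h)$. Likewise the left and right counit laws for $\varepsilon$, $(\varepsilon \otimes X)\Delta = \lambda_X^{-1}$ and $(X \otimes \varepsilon)\Delta = \rho_X^{-1}$, translate after precomposition into the unit axioms for the lax monoidal structure involving $\eta$. So the axioms correspond bijectively, proving the first statement.

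For the second statement, a morphism $f : X_1 \to X_2$ induces a natural transformation $f^* : \C(X_2, -) \Rightarrow \C(X_1, -)$ by precomposition. Compatibility of $f^*$ with the multiplications is the equation
\[
(h \otimes k) \circ \Delta_1 \circ f \;=\; \bigl((h \circ f) \otimes (k \circ f)\bigr) \circ \Delta_2
\]
for all $h,k$, and taking $(h,k) = (\id_{X_2},\id_{X_2})$ shows this is equivalent to $\Delta_2 \circ f = (f \otimes f) \circ \Delta_1$, i.e. $f$ respects comultiplication; conversely, the general case follows from this one by naturality. Compatibility with units is the equation $\varepsilon_1 \circ f = \varepsilon_2$ directly. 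Since nothing here is deep, the only real work is the bookkeeping of keeping the contravariant direction straight and writing out the coherence diagrams; this is the step where one must be careful but is the most routine.
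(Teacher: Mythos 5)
Your proposal is correct and is exactly the argument the paper has in mind: the paper's own ``proof'' of this lemma is the single line ``This is a follow-your-nose result!'', so you are simply supplying the routine Yoneda argument (structure maps $\mu_{A,B}(f,g) = (f \otimes g)\circ\Delta$ and $\eta(*) = \varepsilon$, recovered in the other direction by evaluating at identities, with naturality forcing the general formula) that the author declines to write out. One small slip to fix before this could stand as a proof: in the second part your displayed compatibility equation has the subscripts transposed --- as written, $\Delta_1 \circ f$ and $((h\circ f)\otimes(k\circ f))\circ\Delta_2$ do not typecheck for $f : X_1 \to X_2$; the equation should read $(h\otimes k)\circ\Delta_2\circ f = ((h\circ f)\otimes(k\circ f))\circ\Delta_1$, and similarly the unit condition is $\varepsilon_2 \circ f = \varepsilon_1$ rather than $\varepsilon_1\circ f = \varepsilon_2$. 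Your stated conclusion $\Delta_2\circ f = (f\otimes f)\circ\Delta_1$ is the correct one, so this is a bookkeeping typo rather than a gap in the argument.
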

\begin{proof}
This is a follow-your-nose result!
%
\end{proof}

\begin{theorem}
  $p : S \hto A$ is a lawful optic iff the associated natural transformation $\tilde{p} : (U-)(A,A) \Rightarrow (U-)(S,S)$ is monoidal with respect to the canonical lax monoidal structures on $(U-)(A,A)$ and $(U-)(S,S)$.
\end{theorem}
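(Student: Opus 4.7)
The plan is to chain together the three results that immediately precede the theorem: Proposition~\ref{prop:profunctor-optics-are-optics}, Proposition~\ref{prop:lawful-if-homomorphism}, and the preceding lemma. First I would make the ``canonical lax monoidal structure'' on $(U-)(A,A) : \Tamb_\M \to \Set$ precise, by observing that Lemma~\ref{lemma-rep} gives a natural isomorphism $(U-)(A,A) \cong \Tamb_\M(\Pastro_\M E_A, -)$, and that the representable functor on the right inherits a lax monoidal structure from the comonoid structure $(\Delta, \varepsilon)$ on $\Pastro_\M E_A$ via the preceding lemma. Transporting along the isomorphism yields the lax monoidal structure on $(U-)(A,A)$; it would be worth spelling out that this is the ``evident'' one, whose multiplication $P(A,A) \times Q(A,A) \to (P \odot Q)(A,A)$ is the coend inclusion $\copr_A$ and whose unit $1 \to I(A,A) = \C(A,A)$ picks out $\id_A$.

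Next I would combine Proposition~\ref{prop:profunctor-optics-are-optics} with the Yoneda-style chain at the end of the previous subsection: the natural transformation $\tilde{p} : (U-)(A,A) \Rightarrow (U-)(S,S)$ attached to an optic $p : S \hto A$ corresponds, under the isomorphism $\Optic_\M((S,S),(A,A)) \cong \Tamb_\M(\Pastro_\M E_S, \Pastro_\M E_A)$, to a Tambara module morphism $t : \Pastro_\M E_S \to \Pastro_\M E_A$. Applying the second half of the preceding lemma (in the monoidal category $\Tamb_\M$, with $X_1 = \Pastro_\M E_A$ and $X_2 = \Pastro_\M E_S$), $t$ is a comonoid homomorphism precisely when the induced natural transformation $\Tamb_\M(\Pastro_\M E_A, -) \Rightarrow \Tamb_\M(\Pastro_\M E_S, -)$ is monoidal; transporting through Lemma~\ref{lemma-rep} again, this is exactly the statement that $\tilde{p}$ is monoidal. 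Finally, Proposition~\ref{prop:lawful-if-homomorphism} identifies the comonoid homomorphism condition on $t$ with the lawfulness of $p$, closing the loop.

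The only genuinely non-formal step is matching ``canonical'' lax monoidal structures on the two sides. The main obstacle I anticipate is checking that the lax monoidal structure on $(U-)(A,A)$ which the theorem has in mind really is the one transported from the comonoid $(\Pastro_\M E_A, \Delta, \varepsilon)$, rather than some other structure a reader might have guessed. I would handle this by unwinding the isomorphism of Lemma~\ref{lemma-rep} on elements: the bijection sends a Tambara module morphism $\Pastro_\M E_A \to P$ to the image of $\rep{\lambda_A^{-1}}{\lambda_A} \in (U \Pastro_\M E_A)(A,A)$, and under this description the comultiplication $\Delta(\rep{l}{r}) = \repthree{l}{\id_{M \act A}}{r}$ translates into the coend inclusion $\copr_A$, while the counit $\varepsilon(\rep{l}{r}) = rl$ translates into evaluation at $\id_A \in \C(A,A) = I(A,A)$. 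With this bookkeeping complete, the chain of equivalences is purely formal and the theorem follows.
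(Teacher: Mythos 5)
Your proposal is correct and follows essentially the same route as the paper, which proves the theorem by exactly this chain: lawfulness $\Leftrightarrow$ comonoid homomorphism (Proposition~\ref{prop:lawful-if-homomorphism}) $\Leftrightarrow$ monoidality of the induced transformation between representables (the preceding lemma) $\Leftrightarrow$ monoidality of $\tilde{p}$ (Lemma~\ref{lemma-rep}); your extra bookkeeping identifying the canonical lax monoidal structure on $(U-)(A,A)$ is a welcome elaboration the paper leaves implicit. (One trivial slip: for $t : \Pastro_\M E_S \to \Pastro_\M E_A$ the lemma should be applied with $X_1 = \Pastro_\M E_S$ and $X_2 = \Pastro_\M E_A$, though the conclusion you draw is stated correctly.)
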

\begin{proof}
\begin{align*}
& p : S \hto A \text{ is lawful} \\
\Leftrightarrow\; & \Pastro_\M E_S \to \Pastro_\M E_A \text{ is a comonoid homomorphism} \\
\Leftrightarrow\; & \Tamb_\M(\Pastro_\M E_A, -) \Rightarrow \Tamb_\M(\Pastro_\M E_S, -) \text{ is a monoidal natural transformation} \\
\Leftrightarrow\; & (U-)(A, A) \Rightarrow (U-)(S, S) \text{ is a monoidal natural transformation}
\end{align*}
\end{proof}

\subsection{Implementation}
We review quickly how the profunctor encoding is translated into code in the Haskell~\cite{LensLibrary} and Purescript~\cite{PurescriptLibrary} libraries. We define a typeclass for profunctors:
\begin{minted}{haskell}
class Profunctor p where
  dimap :: (a -> b) -> (c -> d) -> p b c -> p a d
\end{minted}
To be considered a valid instance of \mintinline{haskell}{Profunctor}, the function \mintinline{haskell}{dimap} must behave functorially. Now, for each optic variant we wish to define, we create a typeclass for the corresponding Tambara module. In the case of $\Lens$es, this typeclass is named \mintinline{haskell}{Strong}:
\begin{minted}{haskell}
class Profunctor p => Strong p where
  second :: p a b -> p (c, a) (c, b)
\end{minted}
This \mintinline{haskell}{second} function is the equivalent of the structure map $\zeta$ for the Tambara module. We require this map to satisfy the Tambara module coherences, but as with any definition in Haskell, these equations must be checked manually.

Now the type of lenses $(S, S') \hto (A, A')$ is the direct translation of the set of natural transformations $(U-)(A,A') \Rightarrow (U-)(S,S')$:
\begin{minted}{haskell}
type Lens s s' a a' = forall p. Strong p => p a a' -> p s s'
\end{minted}
where we use parametricity in \mintinline{haskell}{p} as a proxy for naturality. A profunctor lens 
\begin{minted}{haskell}
l :: forall p. Strong p => p a a -> p s s
\end{minted}
is lawful if it is monoidal as a natural transformation. In code this is:
\begin{minted}{haskell}
l id == id
l (Procompose p q) == Procompose (l p) (l q)
\end{minted}
where
\begin{minted}{haskell}
data Procompose p q d c where
  Procompose :: p x c -> q d x -> Procompose p q d c
\end{minted}
denotes profunctor/Tambara module composition, once equipped with appropriate \mintinline{haskell}{Profunctor} and \mintinline{haskell}{Strong} instances.

%
%

\subsection{The van Laarhoven Encoding}\label{sec:van-laarhoven}

Some optic variants can be encoded in a profunctor-like style without requiring the full complexity of profunctors. Chronologically this development came before profunctor optics, and was first introduced by Twan van Laarhoven~\cite{VanLaarhovenPost}.

The van Laarhoven encoding for \mintinline{haskell}{Lens}es, \mintinline{haskell}{Traversal}s and \mintinline{haskell}{Setter}s is:
\begin{minted}{haskell}
type Lens s a      = forall f. Functor f     => (a -> f a) -> (s -> f s)
type Traversal s a = forall f. Applicative f => (a -> f a) -> (s -> f s)
type Setter s a    = forall f. Settable f    => (a -> f a) -> (s -> f s)
\end{minted}
What allows such a description to work for these particular optic variants is that the Tambara module that characterises them, $\Pastro_\M E_A$, can be written in the form $\C(-, \mintinline{haskell}{f}=)$ for some \mintinline{haskell}{f} that is an instance of the corresponding typeclass. This is possible in particular for the optic variants that admit a coalgebraic description; the ones for which the evaluation-at-$A$ functor has a right adjoint.

No expressive power is lost by defining an optic to operate only on functions of the shape \mintinline{haskell}{a -> f a'}, as the entire concrete description of the optic can be extracted from its value on that particular Tambara module. The same is not true for other optic variants, and indeed in the Haskell \lenslib{} library, \mintinline{haskell}{Prism}s and \mintinline{haskell}{Review}s take a form much closer to the profunctor encoding. (The \lenslib{} library does not use \emph{precisely} the profunctor encoding even here, for backwards compatibility reasons.)

A consequence is that the laws typically given for $\Traversal$s actually only need to be checked for the applicative functor we earlier called $UR^*$. In Haskell this functor is implemented as \mintinline{haskell}{FunList}~\cite{FunListPost} or \mintinline{haskell}{Bazaar}~\cite{LensLibrary}.

\section{Future Work}

There are many avenues for future exploration!

\subsection{Mixed Optics}

One can generalise the definition of $\Optic$ so that the two halves lie in different categories. Suppose $\C_L$ and $\C_R$ are categories that are acted on by a common monoidal category $\M$. Write these actions as $\actL : \M \to [\C_L, \C_L]$ and $\actR : \M \to [\C_R, \C_R]$ respectively.

\begin{definition}
  Given two objects of $\C_L \times \C_R^\op$, say $(S, S')$ and $(A, A')$, a \emph{mixed optic} $p : (S, S') \hto (A, A')$ for $\actL$ and $\actR$ is an element of the set
  \begin{align*}
    \Optic_{\actL, \actR}((S, S'), (A, A')) := \int^{M \in \M} \C_L(S, M \actL A) \times \C_R(M \actR A', S')
  \end{align*}
\end{definition}
$\Optic_{\actL, \actR}$ forms a category. It is not so clear what notion of lawfulness is appropriate in this setting.

Examples of mixed optics include the \emph{degenerate optics} of the \lenslib{} library: \mintinline{haskell}{Getter}s, \mintinline{haskell}{Review}s and \mintinline{haskell}{Fold}s. The mixed optic formalism also appears able to capture \emph{indexed optics} such as \mintinline{haskell}{IndexedLens}es and \mintinline{haskell}{IndexedTraversal}s~\cite{ProfunctorOpticsPost}.

\subsection{Monotonic Lenses}
In the bidirectional transformation community, the $\fput\fput$ law is often considered too strong. In particular we have seen that in $\Set$, together with the other laws, it implies that $\fget$ must be a projection from a product.

To overcome this we work in $\Cat$, so that the objects under consideration have internal morphisms that we think of as updates. We modify $\fput$ so that instead of accepting an object $a$ of $A$ to overwrite the original in $S$ with, it requires a morphism in $A$ of the form $\fget(s) \to a$. In this way we are restricted in what updates we may perform. This is captured in the following definition:

\begin{definition}[{\cite[Definition 4.1]{LensesFibrationsAndUniversalTranslations}}]
A \emph{c-lens} $S \hto A$ in $\Cat$ is a pair of functors
\begin{align*}
\fget &: S \to A \\
\fput &: (\fget \downarrow \id_{A}) \to S
\end{align*}
such that a version of the three lens laws hold, where $(\fget \downarrow \id_{A})$ denotes the comma category construction.
\end{definition}

We can rewrite this in a form that gives hope for a correspondence with some optic category:

\begin{theorem}
The data of a c-lens $S \hto A$ corresponds to a functor \[ S \to \int [(- / A), S] \] where $(-/A)$ denotes the slice category and $\int$ denotes (confusingly!) the Grothendieck construction.

Furthermore, a c-lens is lawful iff it is a coalgebra for the comonad of the adjunction
\[
\begin{tikzcd}[column sep = large]
{[A^\op, \Cat]} \ar[r, bend left, "\int"] \ar[r, phantom, "\bot" pos = 0.4] & \Cat \ar[l, bend left, "{X \mapsto [(-/A), X]}" below]
\end{tikzcd}
\]
\qed
\end{theorem}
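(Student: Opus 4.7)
The approach has two main movements: first establish the bijection of data (a c-lens corresponds to a functor $S \to \int [(-/A), S]$), then show that the adjunction $\int \dashv [(-/A), -]$ exists and that lawfulness corresponds exactly to being a coalgebra for its induced comonad.

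First, I would unpack the Grothendieck construction. An object of $\int [(-/A), S]$ is a pair $(a, F)$ where $a \in A$ and $F : (a/A) \to S$ is a functor; a morphism $(a, F) \to (a', F')$ combines a morphism in $A$ with a natural transformation comparing $F$ with the reindexing of $F'$. A functor $\Phi : S \to \int [(-/A), S]$ thus produces, for each $s \in S$, both an object $\fget(s) \in A$ (from the $A$-component of $\Phi(s)$) and a functor $F_s : (\fget(s)/A) \to S$. The key observation is that the comma category $(\fget \downarrow \id_A)$ is exactly the category fibered over $S$ whose fiber at $s$ is the coslice $(\fget(s)/A)$, so giving a single functor $\fput$ out of this total category is equivalent to giving the family $\{F_s\}_s$ together with the transition data recording its action on morphisms of $S$ --- and that transition data is exactly what the action of $\Phi$ on morphisms encodes.

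Second, I would establish the adjunction by constructing the bijection
\[
\Cat(\textstyle\int P, X) \;\cong\; [A^\op, \Cat](P, [(-/A), X])
\]
directly. Given $F : \int P \to X$, send it to the natural transformation with component at $a$ given by $p \mapsto F \circ \iota_{a,p}$, where $\iota_{a,p} : (a/A) \to \int P$ is the canonical embedding of the fiber category carrying an object of $(a/A)$ into the appropriate object of $\int P$ via the functorial action of $P$. Conversely, given $\alpha : P \Rightarrow [(-/A), X]$, define $\int P \to X$ by $(a, p) \mapsto \alpha_a(p)(\id_a)$, using the distinguished object $\id_a \in (a/A)$. Mutually inverseness is a standard calculation from the universal property of the Grothendieck construction as an oplax colimit; naturality in $\Cat$ on both sides is automatic.

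Third, the induced comonad on $\Cat$ is $X \mapsto \int [(-/A), X]$, and a coalgebra is precisely a functor $\Phi : S \to \int [(-/A), S]$, which by the first movement is a c-lens. The counit at $X$, read off from the adjunction, is $(a, F) \mapsto F(\id_a)$; so the counit law $\varepsilon_S \circ \Phi = \id_S$ says $F_s(\id_{\fget(s)}) = s$, which is the $\fget\fput$ law (applying $\fput$ to an identity update returns the original). The coassociativity law, expressing that the two ways of applying $\Phi$ twice agree in $\int [(-/A), \int [(-/A), S]]$, unpacks to the $\fput\fput$ law for c-lenses. The remaining law ($\fput\fget$-style) is contained in the requirement that $\Phi$ be a functor at all: its $A$-component records $\fget$, and functoriality of $\Phi$ on the arrows emitted by $\fput$ forces $\fget$ to respect the update, which is the correct generalization of the $\fput\fget$ axiom.

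The main obstacle will be the careful bookkeeping of variances in the Grothendieck construction and the (co)slice categories, and then unpacking the coassociativity axiom into exactly the form of the $\fput\fput$ law used in the cited definition of c-lens. This is morally the same difficulty as in Theorem~\ref{thm:optics-are-coalgebras}, but lifted from $\Set$ to $\Cat$, so the laws acquire an extra layer of 2-dimensional data that must be tracked.
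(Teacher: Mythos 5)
The paper itself offers no proof of this theorem---it is stated in the Future Work section with a bare \qed---so there is nothing to compare against; I am judging your sketch on its own merits. Your overall architecture is the right one and surely the intended one: identify $(\fget \downarrow \id_A)$ as the total category of the family of coslices $(\fget(s)/A)$ so that the pair $(\fget, \fput)$ assembles into a functor $\Phi : S \to \int [(-/A), S]$, establish the adjunction $\int \dashv [(-/A), -]$ by the standard transposition through the initial object $\id_a$ of $(a/A)$, and then unpack the coalgebra axioms. Your counit formula $(a, F) \mapsto F(\id_a)$ is correct, and its identification with the $\fget\fput$ law ($\fput(s, \id_{\fget s}) = s$) is right.

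The gap is in your accounting of the other two laws. You claim the $\fput\fget$ law is ``contained in the requirement that $\Phi$ be a functor at all,'' with coassociativity supplying only $\fput\fput$. This cannot be correct: if functoriality of $\Phi$ already forced $\fget(\fput(s, \phi)) = \mathrm{cod}(\phi)$, then the first sentence of the theorem---that the \emph{data} of an arbitrary (not necessarily lawful) c-lens corresponds to a functor $S \to \int[(-/A), S]$---would be false, since only $\fput\fget$-satisfying c-lenses would be representable. In fact functoriality of $\Phi$ encodes exactly the functoriality of $\fget$ and of $\fput$ (on vertical and cartesian morphisms of the comma category respectively) and nothing more; $\fget \circ F_s$ is unconstrained. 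The $\fput\fget$ law instead comes out of coassociativity: the comparison $\delta_S \circ \Phi = \int[(-/A),\Phi] \circ \Phi$ is an equality of objects of $\int [(-/A), \int[(-/A),S]]$, and such an equality has two components. Computing the comultiplication as $\delta_S(a, F) = (a,\ \phi \mapsto (\mathrm{cod}\,\phi,\ \psi \mapsto F(\psi\phi)))$ and comparing with $(\fget s,\ \phi \mapsto (\fget(F_s(\phi)),\ F_{F_s(\phi)}))$, the first component forces $\fget(F_s(\phi)) = \mathrm{cod}(\phi)$ (the $\fput\fget$ law) and the second forces $F_{F_s(\phi)}(\psi) = F_s(\psi\phi)$ (the $\fput\fput$ law). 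So the counit gives one law and coassociativity gives the other two; with that correction, and the variance bookkeeping you already flag, the argument goes through.
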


It is not clear whether there is an action on $\Cat$ that generates this description as its concrete optics. There doesn't seem to be a natural place for an $A'$ to appear! We remain optimistic:

\begin{conjecture}
c-lenses are the lawful (possibly mixed) optics for some action on $\Cat$.
\end{conjecture}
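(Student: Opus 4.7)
The plan is to apply Theorem~\ref{thm:optics-are-coalgebras} in reverse. The cited theorem identifies lawful c-lenses as coalgebras for the comonad $W_A \defeq \int [(-/A), -]$ on $\Cat$, so it suffices to exhibit a (possibly mixed) monoidal action on $\Cat$ whose induced comonad is isomorphic to $W_A$. By the coalgebraic description of Section~\ref{sec:coalgebraic}, the concrete optics for such an action would then be maps $S \to W_A(S)$ and the lawfulness condition would coincide with the coalgebra laws, matching the description of lawful c-lenses given in the theorem above.

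The natural first guess comes from the adjunction $\int : [A^\op, \Cat] \leftrightarrows \Cat : [(-/A), -]$ that already produces $W_A$: take $\M$ to be $[A^\op, \Cat]$ acting on $\Cat$ by the Grothendieck construction $F \act A \defeq \int F$. The obvious difficulty is that this candidate depends on $A$, whereas the optic formalism requires a single monoidal category acting uniformly on all objects of $\Cat$.

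The core step would be to assemble these fibers into a single monoidal category $\M$ acting on $\Cat$. A plausible candidate is a category of polynomial (or parametric right-adjoint) endofunctors of $\Cat$, with composition as its monoidal structure; each object would carry enough data to restrict to a functor $A^\op \to \Cat$ for every choice of $A$. One would then verify that (a) the evaluation-at-$A$ functor $-\act A : \M \to \Cat$ admits $X \mapsto [(-/A), X]$ as a right adjoint once restricted to the fiber over $A$, (b) the induced comonad agrees on the nose with $W_A$, and (c) composition of the resulting optics recovers the composition of c-lenses from~\cite{LensesFibrationsAndUniversalTranslations}. If a single action cannot be found, the mixed-optic framework buys room: one could let $\actL$ encode the forward direction $\fget$ and $\actR$ encode the update data $\fput$, with each action only required to possess a right adjoint on its own side.

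The main obstacle is pinning down $\M$ and verifying its monoidality in the strict (or at least ordinary) sense: composition of Grothendieck constructions is well-behaved only up to coherent isomorphism, so packaging it as a monoidal action in the sense of Section~\ref{sec:optics} will require either passing to a strictification or extending the framework to handle bicategorical actions. A secondary subtlety is the absence of a second-component category $A'$ in the c-lens data; this matches the restriction in the definition of $\Lawful_\M$ to objects of the form $(A, A)$, but may indicate that the most honest formulation of the conjecture is as a mixed optic in which the right-hand action is degenerate, in the spirit of the degenerate optic variants mentioned in the discussion of mixed optics.
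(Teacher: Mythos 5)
This statement is left as an open conjecture in the paper --- no proof is given there, and your proposal does not close the gap either: it is a strategy outline whose central object, the monoidal category $\M$ and its action on $\Cat$, is never actually constructed. The candidate you name (``a category of polynomial or parametric right-adjoint endofunctors of $\Cat$, with composition as monoidal structure'') is left undefined, and all three verification steps (a)--(c) that you list are deferred. In particular, the obstruction that the fibrewise action $[A^\op,\Cat] \times \Cat \to \Cat$ depends on the target $A$ is exactly the difficulty the paper itself identifies (``there doesn't seem to be a natural place for an $A'$ to appear''), and your proposal acknowledges it without resolving it. Until $\M$ is exhibited and the coend over $\M$ is shown to impose precisely the equivalence relation on the parameter categories $P$ under which c-lenses form a category, nothing has been proved.

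There is also a citation slip worth flagging: the result identifying lawful c-lenses with coalgebras for the comonad $X \mapsto \int [(-/A), X]$ is the unnumbered theorem in the monotonic-lenses subsection, not Theorem~\ref{thm:optics-are-coalgebras}; the latter is the general statement that, \emph{given} an action whose evaluation-at-$A$ functor has a right adjoint, lawful optics are coalgebras for the induced comonad. Your plan of ``applying it in reverse'' is a reasonable reduction --- it correctly isolates what remains to be done --- but the converse direction requires producing the action, not merely observing that the comonad has the right shape. A further unaddressed subtlety is coherence: the Grothendieck construction composes only up to isomorphism, so even once $\M$ is pinned down one must either strictify or extend the paper's (1-categorical) definition of monoidal action, and you correctly note this but again leave it open. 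As it stands the proposal is a research plan consistent with the paper's own stated hopes, not a proof.
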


\subsection{Functor and Monad Transformer Lenses}
These were considered by Edward Kmett~\cite{MonadTransformerLensesTalk} as a method for embedding pieces of a monad transformer stack into the whole. There is some debate about the correct categorical description of monad transformers~\cite{MonadTransformersAsMonoidTransformers, CalculatingMonadTransformersCategoryTheory}, so we do not attempt to say anything precise, but the perspective given here could help in a couple of ways.

Kmett considers optics for the operation of composing two monad transformers. The primary test-case was to embed \mintinline{haskell}{ReaderT} actions into \mintinline{haskell}{StateT} actions, but from the constant-complement perspective, this is impossible: \mintinline{haskell}{StateT} does not factor as the composite of \mintinline{haskell}{ReaderT} with some other monad transformer. In this setting the constant-complement laws may be asking too much, the optic laws given here might be the correct notion of lawfulness for monad transformers.

Also, instead of considering optics within a category of monad transformers, we could instead look at optics for the action of monad transformers on monads. One can indeed define an optic $\mintinline{haskell}{State} \hto \mintinline{haskell}{Reader}$ that uses residual $\mintinline{haskell}{StateT}$. Whether this is lawful or useful is not clear!

\subsection{Learners}
A recent paper in applied category theory~\cite{BackpropAsFunctor} describes a compositional approach to machine learning, with a category whose morphisms describe learning algorithms.

\begin{definition}[{\cite[Definition 2.1]{BackpropAsFunctor}}]
For $A$ and $B$ sets, a \emph{learner} $A \hto B$ is a tuple $(P, I, U, r)$ where $P$ is a set, and $I$, $U$, and $r$ are functions of shape:
\begin{align*}
I &: P \times A \to B \\
U &: P \times A \times B \to P \\
r &: P \times A \times B \to A
\end{align*}
\end{definition}
To form a category, one must consider learners up to an equivalence relation on the sets $P$. There is an alternate slick description of the set of learners $A \hto B$, that goes as follows. Note that the data of a learner is describing an element of the coend
\begin{align*}
\int^{P, Q \in \Set} \Set(P \times A, Q \times B) \times \Set(Q \times B, P \times A)
\end{align*}
via the isomorphisms
\begin{align*}
&\int^{P, Q \in \Set} \Set(P \times A, Q \times B) \times \Set(Q \times B, P \times A) \\
&\cong \int^{P, Q \in \Set} \Set(P \times A, Q) \times \Set(P \times A, B) \times \Set(Q \times B, P \times A) \\
&\cong \int^{P \in \Set} \Set(P \times A, B) \times \Set(P \times A \times B, P \times A) \\
&\cong \int^{P \in \Set} \Set(P \times A, B) \times \Set(P \times A \times B, P) \times \Set(P \times A \times B, A)
\end{align*}
Composition of learners can be defined analogously to composition for optics. This perspective explains the slight fussing around required in dealing with equivalence classes of learners, and suggests a generalisation to other monoidal categories.

%
%
%

\bibliographystyle{alpha}
\bibliography{optics.bib}
\end{document}